\theoremstyle{plain}
\newtheorem{thmx}{Theorem}
\renewcommand{\thethmx}{\Alph{thmx}} 
\newtheorem{thm}{Theorem}[section]  
\newtheorem{lem}[thm]{Lemma}
\newtheorem{claim}[thm]{Claim}
\newtheorem{proposition}[thm]{Proposition}
\newtheorem{cor}[thm]{Corollary}
\newtheorem{property}[thm]{Property}
\newtheorem{corx}[thmx]{Corollary} 
\newtheorem{conjecture}[thm]{Conjecture}
\theoremstyle{definition}
\newtheorem{dfn}[thm]{Definition}
\theoremstyle{remark}
\newtheorem{rem}[thm]{Remark} 
\newtheorem{example}[thm]{Example}
\numberwithin{equation}{thm}  
\theoremstyle{plain}
\newlist{thmlist}{enumerate}{1}
\setlist[thmlist]{wide = 0pt, labelwidth = 2em, labelsep*=0em, itemindent = 0pt, leftmargin = \dimexpr\labelwidth + \labelsep\relax, noitemsep,topsep = 1ex, font=\normalfont, label=(\roman*), ref=\thethm.(\roman{thmlisti})}
\newlist{thmenum}{enumerate}{1} 
\setlist[thmenum]{wide = 0pt, labelwidth = 2em, labelsep*=0em, itemindent = 0pt, leftmargin = \dimexpr\labelwidth + \labelsep\relax, noitemsep,topsep = 1ex, font=\normalfont, label=(\roman*), ref=\thethmx.(\roman{thmenumi})}
\newlist{corlist}{enumerate}{1} 
\setlist[corlist]{wide = 0pt, labelwidth = 2em, labelsep*=0em, itemindent = 0pt, leftmargin = \dimexpr\labelwidth + \labelsep\relax, noitemsep,topsep = 1ex, font=\normalfont, label=(\roman*), ref=\thecorx.(\roman{corlisti})}
\crefname{lem}{Lemma}{Lemmas} 
\crefname{conjecture}{Conjecture}{Conjectures}
\crefname{thm}{Theorem}{Theorems}
\crefname{proposition}{Proposition}{Propositions}
\crefname{dfn}{Definition}{Definitions}
\crefname{rem}{Remark}{Remarks}
\crefname{cor}{Corollary}{Corollaries}
\crefname{corx}{Corollary}{Corollaries}
\crefname{problem}{Problem}{Problems}
\crefname{property}{Property}{Propertys}
\crefname{thmx}{Theorem}{Theorems}
\crefname{claim}{Claim}{Claims}
\crefname{assumption}{Assumption}{Assumptions}
\crefname{main}{Main Theorem}{Main Theorems}
\def\ep{\varepsilon}
\def\Res{{\rm Res}}
\def\Sp{{\rm Sp}}
\newcommand{\cR}{\mathcal{R}}
\newcommand{\cS}{\mathcal{S}}
\newcommand*{\rom}[1]{\expandafter\@slowromancap\romannumeral #1@}
\newcommand{\crefnames}[3]{%
	\@for\next:=#1\do{%
		\expandafter\crefname\expandafter{\next}{#2}{#3}%
	}%
}
\newcommand{\cA}{\mathcal A}
\newcommand{\cC}{\mathcal C}
\newcommand{\cD}{\mathcal D}
\newcommand{\cI}{\mathcal I}
\newcommand{\cO}{\mathcal O}
\newcommand{\bB}{\mathbb{B}}
\newcommand{\bC}{\mathbb{C}}
\newcommand{\bD}{\mathbb{D}}
\newcommand{\bF}{\mathbb{F}}
\newcommand{\bP}{\mathbb{P}}
\newcommand{\bQ}{\mathbb{Q}}
\newcommand{\bR}{\mathbb{R}}
\newcommand{\bZ}{\mathbb{Z}}
\newcommand{\xsp}{X^{\! \rm sp}}
\newcommand{\kg}{\mathfrak{g}}
\def\db{\bar{\partial}}
  \def\spec{\textrm{Spec}\,}
 \def\d{\partial}
\def\sn{\sqrt{-1}}
\def\End{{\rm \small  End}}
\def\Sym{{\rm Sym}}
\def\sp{{\rm sp}}
\newcommand{\Spab}{\mathrm{Sp}_{\mathrm{sab}}}
\newcommand{\Sph}{\mathrm{Sp}_{\mathrm{h}}}
\newcommand{\Spp}{\mathrm{Sp}_{\mathrm{p}}}
\newcommand{\Spalg}{\mathrm{Sp}_{\mathrm{alg}}}
\newcommand{\Hom}{{\rm Hom}}
\newcommand{\diae}{{}^\diamond\! E}
 \title[Hyperbolicity and  representations of $\pi_1$]{Hyperbolicity and fundamental groups of complex quasi-projective varieties (II): via non-abelian Hodge theories}
\date{\today} 
\author[B. Cadorel]{Beno\^{i}t Cadorel} 
\email{benoit.cadorel@univ-lorraine.fr}
\address{Institut \'Elie Cartan de Lorraine, Universit\'e de Lorraine, F-54000 Nancy,
	France}
\urladdr{http://www.normalesup.org/~bcadorel/} 
\author[Y. Deng]{Ya Deng}
\email{ya.deng@math.cnrs.fr, deng@imj-prg.fr}
\address{CNRS,  
	Institut de Math\'ematiques de Jussieu-Paris Rive Gauche,
	Sorbonne Universit\'e, Campus Pierre et Marie Curie,
	4 place Jussieu, 75252 Paris Cedex 05, France}
\urladdr{https://ydeng.perso.math.cnrs.fr}
\author[K. Yamanoi]{Katsutoshi Yamanoi}
\email{yamanoi@math.sci.osaka-u.ac.jp}
\address{Department of Mathematics, Graduate School of Science, Osaka University, Toyonaka,  Osaka 560-0043, Japan} 
\urladdr{https://sites.google.com/site/yamanoimath/}
\begin{document}

	\begin{abstract} 
	This is Part~II of a series of three papers. We studies the   hyperbolicity of complex quasi-projective varieties $X$   in the presence of a big and reductive representation $\varrho: \pi_1(X)\to {\rm GL}_N(\mathbb{C})$. For any Galois conjugate variety $X^\sigma$ with $\sigma \in {\rm Aut}(\mathbb{C}/\mathbb{Q})$, we prove the generalized Green–Griffiths–Lang conjecture. When $\varrho$ is  furthermore large, we show that the special subsets of $X^\sigma$ describing the non-hyperbolicity locus coincide, and that this locus is proper exactly when $X$ is of log general type. Moreover, if the Zariski closure of $\varrho(\pi_1(X))$ is semisimple, we prove that there exists  a proper Zariski closed subset $Z \subsetneqq X^\sigma$ such that every subvariety not contained in $Z$ is of log general type and all entire curves in $X^\sigma$ are contained in $Z$. This result extends the theorems of the third author (2010) and of Campana–Claudon–Eyssidieux (2015) from projective to quasi-projective varieties, and yields stronger conclusions even in the projective case.
	\end{abstract}  

\subjclass{32Q45,   32H25, 14D07,    14K20,   53C43}
\keywords{pseudo Picard hyperbolicity,   generalized Green-Griffiths-Lang conjecture,    Harmonic mapping to Bruhat-Tits buildings,    variation of Hodge structures,  Galois conjugate}


\maketitle
  \tableofcontents

	 \section{Introduction} 
	 \subsection{Hyperbolicity and fundamental groups}
The celebrated Bombieri--Lang conjecture states that if $X$ is a variety of general type defined over a number field $k$, then the set of $k$-rational points of $X$ is contained in a proper Zariski closed subset of $X$. Its complex analogue is the Green--Griffiths--Lang conjecture, which asserts that for a complex projective variety $X$ of general type there exists a proper Zariski closed subset $Z \subset X$ containing all non-constant entire curves $f:\bC \to X$. This property, commonly referred to as the \emph{hyperbolicity} of algebraic varieties, is a central topic in complex geometry. For recent progress we refer the reader to the survey of Demailly \cite{Dem20}, especially on the hyperbolicity of general hypersurfaces of high degree in projective space, as well as to the survey of the third author \cite{yamanoi2015kobayashi} from the viewpoint of Nevanlinna theory.

In our first paper \cite{CDY25} of this series,  we proved the \emph{generalized} Green--Griffiths--Lang conjecture (cf.~\cref{conj:GGL}) for complex quasi-projective varieties $X$ admitting a morphism $a:X \to A$ to a semi-abelian variety with $\dim X = \dim a(X)$. From the viewpoint of fundamental groups, this corresponds to the existence of a big representation of $\pi_1(X)$ into the abelian group $\bZ^m$. In the present paper, we study the case where $\pi_1(X)$ admits a Zariski dense and  big representation into a linear algebraic group $G$, which is semisimple, or more generally, reductive. 

Recall that a linear algebraic group $G$ over a field $K$ is called semisimple if it has no non-trivial connected normal solvable algebraic subgroups defined over the algebraic closure of $K$. 
Throughout, we follow the convention that semisimple algebraic groups are non-trivial.
A representation $\varrho:\pi_1(X)\to G(K)$ is said to be \emph{big}, or \emph{generically large} in the terminology of \cite{Kol95}, if for any closed irreducible subvariety $Z \subset X$ of positive dimension containing a \emph{very general} point of $X$, the image $\varrho\bigl({\rm Im}[\pi_1(Z^{\rm norm})\to \pi_1(X)]\bigr)$ is infinite, where $Z^{\rm norm}$ denotes the normalization of $Z$ (see \cref{def:big representation}). A stronger notion of largeness also exists: $\varrho$ is called \emph{large} if this condition holds for every positive-dimensional closed subvariety $Z \subset X$.

The study of hyperbolicity properties of complex projective varieties with big and reductive representations of the fundamental groups 
originates in the work of Mok and Zuo \cite{Mok92,Zuo96}. 
Subsequently, this question was addressed in \cite{Yam10,CCE15}. More precisely, let $X$ be a smooth complex projective variety endowed with a big and Zariski-dense representation $\varrho:\pi_1(X)\to G(\bC)$, where $G$ is a semisimple linear algebraic group over $\bC$. The third author \cite[Proposition~2.1]{Yam10} proved that $X$ admits no Zariski-dense entire curves $f:\bC \to X$, while Campana--Claudon--Eyssidieux \cite[Theorem~1]{CCE15} established that $X$ is of general type.  

In light of the generalized  Green-Griffiths-Lang conjecture (cf. \cref{conj:GGL}), however, one expects an even stronger form of hyperbolicity to hold for \emph{quasi-projective varieties}.  
In addition, it is hoped that the complex hyperbolicity is stable under Galois conjugate varieties $X^{\sigma}$, where $\sigma\in {\rm Aut}(\bC/\bQ)$ is an automorphism (e.g., \cite[VIII, Conjecture 1.3]{Lan97}).
However, since the induced map $X^{\sigma}\to X$ may not be continuous in the classical topology,  the stability of the property of the representations $\pi_1(X^{\sigma})\to G(\bC)$ is unclear.

In this paper,  we confirm these expectations by extending and strengthening the theorems of Campana--Claudon--Eyssidieux and the third author to the setting of complex quasi-projective varieties.

	 \begin{thmx}[=\cref{thm:20220819}]\label{main2}
	 	Let $X$ be a complex   quasi-projective normal variety and let $G$ be a semisimple algebraic group over $\bC$. If  $\varrho:\pi_1(X)\to G(\bC)$ is a big  and Zariski dense representation, then for any automorphism $\sigma\in {\rm Aut}(\bC/\bQ)$,  there is a proper Zariski closed subset $Z\subsetneqq X^\sigma$ where $X^\sigma$ is the Galois conjugate variety of $X$ under $\sigma$ such that
	 	\begin{thmenum}
	 		\item \label{main:log general type}    any closed  subvariety   of $X^\sigma$ not contained in $Z$ is  of log general type. In particular, $X^\sigma$ is of log general type.
\item \label{main:pseudo Picard}   Any holomorphic map $f:\bD^*\to X^\sigma$ from the punctured disk $\bD^*$ to $X^\sigma$ with $f(\bD^*)\not\subseteq Z$ extends to  a holomorphic map from the disk $\bD$ to a projective compactification $\overline{X^\sigma}$ of $X^\sigma$ (i.e. $X^\sigma$ is \emph{pseudo Picard hyperbolic}).   In particular,  all entire curves in $X^\sigma$ lie on $Z$ (i.e. $X^\sigma$ is \emph{pseudo Brody hyperbolic}).
	 	\end{thmenum} 
	 \end{thmx}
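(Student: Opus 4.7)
The plan is to exploit non-abelian Hodge theory, as the paper's subtitle suggests, reducing the hyperbolicity question to a statement about a canonical ``Shafarevich-type'' morphism attached to $\varrho$, and then to handle the Galois-conjugate part separately by showing that the relevant invariants are algebraic.

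First, using Corlette--Simpson--Mochizuki theory of tame pure imaginary harmonic bundles on quasi-projective varieties, I would associate to the big reductive representation $\varrho$ a multi-valued period-type map. Following Simpson's deformation to a $\mathbb{C}$-VHS (combined with Mochizuki's rigidity theorems in the tame parabolic setting), I expect to reduce to the case where $\varrho$ is the monodromy of a polarized complex variation of Hodge structure with tame singularities at infinity. This step must preserve bigness and Zariski-density; bigness is preserved because the character variety components fix the kernels generically, and Zariski-density is preserved along the deformation since the Zariski closure can only become larger after specialization.

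Next, I would construct a Shafarevich-type morphism $\mathrm{sh}_\varrho: X \to \mathrm{Sh}_\varrho(X)$ whose fibers are maximal positive-dimensional subvarieties on which $\varrho$ restricted to $\pi_1$ of a normalization has finite image, extending Eyssidieux's projective Shafarevich construction to the quasi-projective setting (this is presumably done in the body of the paper using the tame harmonic bundle machinery). Bigness of $\varrho$ forces $\mathrm{sh}_\varrho$ to be essentially birational, so its exceptional locus $Z_0$ is a proper Zariski closed subset. Combining the curvature properties of the period domain (Griffiths' horizontal negativity, plus Mochizuki--Brunebarbe--Deng-type bigness of logarithmic cotangent bundles for period maps with tame singularities) with the analysis of entire curves and holomorphic maps $\bD^\ast\to X$ pulled back through $\mathrm{sh}_\varrho$, I would deduce both (i) log general type of subvarieties not in $Z_0$, and (ii) Picard extension across the punctured disk, by applying Nevanlinna-theoretic extension results for maps into bases of VHS (essentially the quasi-projective analogue of the pseudo Picard hyperbolicity statements the third author established in the projective setting).

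For the Galois-conjugate part, the main difficulty is that the continuous bijection $\pi_1(X)\simeq \pi_1(X^\sigma)$ does not exist, and bigness is a priori a classical-topological notion. The key is to characterize bigness algebraically via the Shafarevich morphism: $\varrho$ is big if and only if $\dim \mathrm{Sh}_\varrho(X) = \dim X$, a statement about algebraic morphisms and hence Galois-invariant. Combined with Simpson's principle that the locus of representations underlying a $\mathbb{C}$-VHS is invariant under $\mathrm{Aut}(\bC/\bQ)$ (the $\sigma$-conjugate of a polarized $\mathbb{C}$-VHS remains a $\mathbb{C}$-VHS after a twist), one obtains on $X^\sigma$ a big Zariski-dense representation underlying a $\mathbb{C}$-VHS, so the preceding arguments apply verbatim with $Z := Z_0^\sigma$. \textbf{The main obstacle} I anticipate is precisely this Galois step: ensuring that bigness is preserved requires an intrinsic, purely algebraic description of the Shafarevich morphism (independent of the classical topology used to define $\pi_1$), which in turn needs the full strength of the non-abelian Hodge correspondence for quasi-projective bases, including the correct parabolic/tame behavior at the boundary divisor and descent of the VHS structure through $\sigma$.
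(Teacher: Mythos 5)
There is a genuine gap, and it lies at the very first step of your plan.

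\textbf{The deformation to a $\mathbb{C}$-VHS does not preserve the hypotheses.} You propose to use Simpson's/Mochizuki's deformation to reduce to the case where $\varrho$ is the monodromy of a tame $\mathbb{C}$-VHS, asserting that bigness and Zariski density survive this. Neither claim is sound. The $\mathbb{C}$-VHS point is a \emph{specialization} (the limit under the $\mathbb{C}^*$-action on the Dolbeault moduli space), and under specialization the Zariski closure of the image of a representation can only shrink, not grow; a Zariski-dense family can degenerate to a non-dense (even finite) monodromy. Likewise the kernel can jump along the family, so bigness is not preserved either. The paper avoids this entirely by a case split that you have collapsed: when $\varrho$ is \emph{rigid}, Mochizuki's theorem (\cref{thm:vhs}) says $\varrho$ \emph{already is} a $\mathbb{C}$-VHS — no deformation needed — and one further subdivides according to whether $\varrho$ is bounded at all non-archimedean places. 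When $\varrho$ is \emph{non-rigid}, the argument is not VHS-theoretic at all: \cref{lem:20220819} produces a Zariski-dense, big, \emph{unbounded} representation $\varrho':\pi_1(X)\to G(F)$ into a non-archimedean local field $F$ (carefully tracking $\ker\varrho'\subset\ker\varrho$ by a word-set argument to preserve bigness), and one then applies \cref{thm:main33}, whose proof is a harmonic-map-to-building / spectral-cover argument (\cref{sec:reduction,sec:spectral}), logically independent of the period-domain curvature estimates you invoke. Your plan has no route through this non-rigid case.

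\textbf{The Galois step is also unjustified as stated.} You propose to make bigness Galois-invariant by characterizing it through $\dim\mathrm{Sh}_\varrho(X)=\dim X$, arguing that this is an algebraic statement. But the Shafarevich morphism is constructed from $\pi_1(X)$ in the classical topology, and there is no reason for $\mathrm{Sh}_\varrho$ to commute with $\sigma\in\mathrm{Aut}(\mathbb{C}/\mathbb{Q})$ — that invariance is essentially as hard as the statement you want to prove. The paper's actual mechanism (\cref{thm:conjugate}) is different and more robust: by Cassels' embedding theorem one finds a prime $p$ and an embedding into $\mathbb{Q}_p$ making all matrix entries of $\varrho(\gamma_i)$ $p$-adic units, so $\varrho$ lands in $\mathrm{GL}_n(\mathbb{Z}_p)$, hence extends continuously to the profinite completion $\widehat{\pi_1(X)}$. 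Since $\widehat{\pi_1(X)}\simeq\widehat{\pi_1(X^\sigma)}$ canonically (both are the étale fundamental group), one pulls back to obtain $\tau:\pi_1(X^\sigma)\to\mathrm{GL}_n(\overline{\mathbb{Q}_p})\simeq\mathrm{GL}_n(\mathbb{C})$, and one checks \emph{directly} that the Zariski closures of $\varrho(\mathrm{Im}[\pi_1(Y)\to\pi_1(X)])$ and $\tau(\mathrm{Im}[\pi_1(Y^\sigma)\to\pi_1(X^\sigma)])$ agree (as abstract algebraic groups), giving bigness and density for $\tau$. This sidesteps the Shafarevich map altogether. Your appeal to absolute constructibility and $\sigma$-twists of $\mathbb{C}$-VHS would only even be relevant in the rigid subcase, and is not what the paper does.

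In short: your proposal reproduces roughly the rigid/bounded subcase (Case 1.2 in the paper's \cref{thm:20220819}) but is missing the non-archimedean machinery that carries the rigid/unbounded and non-rigid cases, and the Galois transfer rests on an unproved invariance of the Shafarevich map rather than the profinite-completion argument the paper actually uses.
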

It is worth noting that every pseudo Picard hyperbolic variety is pseudo Brody hyperbolic. 
We also mention that the two conditions for the representation $\varrho$ in \cref{main2} are essential to conclude the two statements in \cref{main2}, as discussed in \cref{rem:sharp}. 

For the proof of \cref{main2}, we make substantial use of the following two technical results:
\begin{itemize}
	\item the existence, for any Zariski dense representation $\tau:\pi_1(X)\to G(K)$ with $G$ a reductive group over a non-archimedean local field $K$, of $\tau$-equivariant pluriharmonic maps from the universal cover $\widetilde{X}$ of $X$ to the Bruhat–Tits building $\Delta(G)$ of $G$, as constructed in \cite{BDDM,DM24};  
	\item the Nevanlinna theory developed in \cite{CDY25} for holomorphic maps from ramified coverings of $\bD^*$ with small ramifications to quasi-projective varieties of log general type and maximal quasi-Albanese dimension.
\end{itemize}

It is noteworthy that the condition of bigness for the representations $\varrho$ in \cref{main2} is not particularly restrictive, unlike the requirement for a large representation.  In fact, in \cref{lem:kollar} we demonstrate that any linear representation of $\pi_1(X)$ can be factored through a big representation after taking a finite \'etale cover. This result, combined with \cref{main2}, yields a factorization theorem for linear representations of $\pi_1(X)$.
 	 \begin{corx}\label{main}
 	 	Let $X$ be a complex    quasi-projective  normal variety and let $G$ be a semisimple algebraic group over $\bC$. 
If  $\varrho:\pi_1(X)\to G(\bC)$ is a Zariski dense representation, then there  exist
a  finite \'etale cover \(\nu:\widehat{X}\to X\), a birational and proper morphism \(\mu:\widehat{X}'\to \widehat{X}\), a dominant morphism $f:\widehat{X}'\to Y$  with connected general fibers, and a   big   and Zariski dense representation \(\tau : \pi_{1}(Y) \to G(\bC)\)  such that
 	\begin{itemize}
 	\item   \(f^{\ast} \tau = (\nu\circ\mu)^{\ast}\varrho\). 
 			\item There is a proper Zariski closed subset $Z\subsetneqq Y$ such that any closed   subvariety of $Y$ not contained in $Z$ is  of log general type.  
 			\item  $Y$ is pseudo Picard hyperbolic, and in particular pseudo Brody hyperbolic.  
 	\end{itemize}  
 In particular,   $X$ is not weakly special and does not contain Zariski-dense entire curves. 
 \end{corx}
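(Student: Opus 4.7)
The plan is to combine two ingredients already provided in the paper: the factorization Lemma \cref{lem:kollar}, which packages any linear representation of $\pi_1(X)$ as the pullback of a \emph{big} representation after passing to a finite étale cover and a birational modification, together with Theorem \cref{main2} applied in the identity-Galois case, and then to deduce the ``in particular'' statements by lifting curves.

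The first step is to apply \cref{lem:kollar} directly to $\varrho$. This produces the data $\nu:\widehat{X}\to X$, $\mu:\widehat{X}'\to\widehat{X}$, $f:\widehat{X}'\to Y$, and a big representation $\tau:\pi_1(Y)\to G(\bC)$ satisfying $f^{\ast}\tau=(\nu\circ\mu)^{\ast}\varrho$, which is the first bullet. One then needs to upgrade $\tau$ to a Zariski-dense representation. By standard facts, $\mu_{\ast}$ is surjective on fundamental groups (proper birational morphism of normal varieties) and $\nu_{\ast}\pi_1(\widehat{X})$ has finite index in $\pi_1(X)$ (finite étale cover); hence $\varrho\bigl((\nu\circ\mu)_{\ast}\pi_1(\widehat{X}')\bigr)$ is a finite-index subgroup of the Zariski-dense subgroup $\varrho(\pi_1(X))\subset G$. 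Because $G$ is connected (semisimple algebraic groups being connected in the paper's convention), the Zariski closure of any finite-index subgroup of $\varrho(\pi_1(X))$ is a finite-index algebraic subgroup of $G$, hence equals $G$ by connectedness. Therefore $\tau(f_{\ast}\pi_1(\widehat{X}'))=\varrho\bigl((\nu\circ\mu)_{\ast}\pi_1(\widehat{X}')\bigr)$ is Zariski dense in $G$, and \emph{a fortiori} so is $\tau(\pi_1(Y))$. The non-triviality of $G$ then forces $\dim Y>0$.

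Next I would apply Theorem \cref{main2} to the triple $(Y,G,\tau)$ with $\sigma=\mathrm{id}$. This immediately delivers a proper Zariski-closed subset $Z\subsetneqq Y$ such that every closed subvariety of $Y$ not contained in $Z$ is of log general type (in particular $Y$ itself, since $Y\not\subset Z$), and $Y$ is pseudo Picard hyperbolic, hence pseudo Brody hyperbolic. This is the remainder of the stated bullets.

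Finally, for the two ``in particular'' assertions: the composition $f\circ\mu^{-1}:\widehat{X}\dashrightarrow Y$ is a dominant rational map from the finite étale cover $\widehat{X}$ of $X$ onto the positive-dimensional log general type variety $Y$, so $X$ is not weakly special. Any entire curve $g:\bC\to X$ lifts through $\nu$ (as $\pi_1(\bC)=0$) to $\widehat{g}:\bC\to\widehat{X}$, and the lift to $\widehat{X}'$ exists generically and extends to all of $\bC$ by properness of $\mu$; if $g$ is Zariski dense then so is $\widehat{g}$, hence so is its lift $\widehat{g}'$, and $f\circ\widehat{g}':\bC\to Y$ is a Zariski-dense entire curve in $Y$, contradicting $f(\widehat{g}'(\bC))\subset Z\subsetneqq Y$. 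Since the substantive inputs \cref{lem:kollar} and \cref{main2} do the heavy lifting, the main obstacle here is the bookkeeping---the transfer of Zariski density through the finite-index subgroup and the lifting of entire curves across the proper birational $\mu$.
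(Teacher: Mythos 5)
Your proposal is correct and follows essentially the same route as the paper's own derivation: apply \cref{lem:kollar} to factor through a big representation, then invoke \cref{main2} with $\sigma=\mathrm{id}$, and finally lift entire curves through $\nu$ and $\mu$ to contradict pseudo Brody hyperbolicity of $Y$. You actually spell out the transfer of Zariski density (via the finite-index image and connectedness of $G$) and the lifting of a Zariski-dense entire curve across the proper birational $\mu$ in more detail than the paper, which just asserts these steps.
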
 
Note that by Campana \cite{Cam11}, a quasi-projective variety $X$ is \emph{weakly special} if for any finite \'etale cover $\widehat{X}\to X$ and any proper birational modification $\widehat{X}'\to \widehat{X}$, there exists no   dominant morphism $\widehat{X}'\rightarrow Y$  with $Y$ a positive-dimensional quasi-projective normal variety of log general type.

\cref{main} generalizes the previous work by Mok \cite{Mok92}, Corlette-Simpson \cite{CS08}, and Campana-Claudon-Eyssidieux \cite{CCE15}, in which they proved similar factorisation results.

\subsection{On the generalized Green-Griffiths-Lang conjecture}\label{subsec:20230427}
Building upon \cref{main2}, we further investigate the generalized Green-Griffiths-Lang conjecture (cf. \Cref{conj:GGL}) and its relation to the non-hyperbolicity locus of a smooth quasi-projective variety $X$, under the weaker assumption that $\pi_1(X)$ admits a big and reductive representation. 
Specifically, we introduce four special subsets of $X$ that measure the non-hyperbolicity locus from different perspectives, as defined in \cref{def:special2}. Our main result, given in \cref{main:GGL}, establishes the equivalence of several properties of the conjugate variety $X^\sigma$ under the assumption that $\varrho:\pi_1(X)\to {\rm GL}_N(\bC)$ is a big and reductive representation, and for any automorphism $\sigma\in {\rm Aut}(\bC/\bQ)$. Additionally, we provide a further result regarding the special subsets, as stated in \cref{main:special}.
\begin{dfn}[Special subsets] \label{def:special2}
	Let $X$ be a smooth quasi-projective variety.
	\begin{thmlist}
		\item $\Spab(X) := \overline{\bigcup_{f}f(A_0)}^{\mathrm{Zar}}$, where $f$ ranges over all non-constant rational maps $f:A\dashrightarrow X$ from all non-trivial semi-abelian varieties $A$ to $X$ such that $f$ is regular $A_0\to X$ on a Zariski open subset $A_0\subset A$ whose complement $A\backslash A_0$ has codimension at least two;
		\item $\Sph(X) := \overline{\bigcup_{f}f(\mathbb{C})}^{\mathrm{Zar}}$, where $f$ ranges over all non-constant holomorphic maps from $\mathbb{C}$ to $X$;
		\item $\Spalg(X) := \overline{\bigcup_{V} V}^{\mathrm{Zar}}$, where $V$ ranges over all positive-dimensional closed subvarieties of $X$ which are not of log general type;
		\item $\Spp(X) := \overline{\bigcup_{f}f(\bD^*)}^{\mathrm{Zar}}$, where $f$ ranges over all holomorphic maps from the punctured disk $\bD^*$ to $X$ with essential singularity at the origin, i.e., $f$ has no holomorphic extension $\bar{f}:\mathbb D\to\overline{X}$ to a projective compactification $\overline{X}$.
	\end{thmlist}
	\end{dfn}

The first two sets $\Spab(X)$ and $\Sph(X)$ are introduced by Lang for the compact case.
He made the following two conjectures (cf. \cite[I, 3.5]{Lan97} and \cite[VIII, Conjecture 1.3]{Lan97}):
\begin{itemize}
\item
$\Spab(X)\subsetneqq X$ if and only if $X$ is of general type.
\item
$\Spab(X)=\Sph(X)$.
\end{itemize}
The first assertion implicitly include the following third conjecture:
\begin{itemize}
\item
$\Spab(X)=\Spalg(X)$.
\end{itemize}

The original two conjectures imply the famous strong Green-Griffiths conjecture that varieties of (log) general type are pseudo Brody hyperbolic.
Here we note that, by definition, $X$ is pseudo Brody hyperbolic if and only if $\Sph(X)\subsetneqq X$.
Similarly, $X$ is pseudo Picard hyperbolic if and only if $\Spp(X)\subsetneqq X$.

\begin{thmx}[=\cref{thm:GGL}]\label{main:GGL}
	Let $X$ be a complex  smooth  quasi-projective  variety admitting a big and reductive representation  $\varrho:\pi_1(X)\to {\rm GL}_N(\bC)$. 
Then for any  automorphism $\sigma\in {\rm Aut}(\bC/\bQ)$, the following properties are equivalent:
	\begin{enumerate}[font=\normalfont, label=(\alph*)] 
		\item $X^\sigma$ is of log general type. 
		\item   $\Spp(X^\sigma)\subsetneqq X^\sigma$.
		\item  $\Sph(X^\sigma)\subsetneqq X^\sigma$.
		\item  $\Spalg(X^\sigma)\subsetneqq X^\sigma$.
\item  $\Spab(X^\sigma)\subsetneqq X^\sigma$.
	\end{enumerate} 
\end{thmx}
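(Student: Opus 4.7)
The plan is to reduce the reductive setting to the semisimple case already handled by \cref{main2}, combined with results of \cite{CDY25} on maps into semi-abelian varieties. Decomposing $\varrho$ into its semisimple and abelian parts yields two algebraic constructions---a Shafarevich morphism $\mathrm{sh}_{\mathrm{ss}}$ for the former and a morphism $a$ to a semi-abelian variety for the latter---whose combined map is generically finite by bigness of $\varrho$, allowing hyperbolicity properties to be transferred between $X^\sigma$ and the two factors.

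\textbf{Setup.} Let $H := \overline{\varrho(\pi_1(X))}^{\mathrm{Zar}}$, a reductive group; after a finite étale cover of $X$, I may assume $H$ is connected, with the almost-direct product decomposition $H = Z(H)^0 \cdot [H,H]$. This yields a Zariski-dense semisimple representation $\varrho_{\mathrm{ss}} : \pi_1(X) \to (H/Z(H)^0)(\bC)$ and an abelian representation $\varrho_{\mathrm{ab}} : \pi_1(X) \to H^{\mathrm{ab}}(\bC)$ into a torus, the latter producing via the quasi-Albanese construction a morphism $a : X \to A$ to a semi-abelian variety; taking the Shafarevich morphism of $\varrho_{\mathrm{ss}}$ gives $\mathrm{sh}_{\mathrm{ss}} : X \to Y$ with $\pi_1(Y)$ carrying a big Zariski-dense semisimple representation. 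Since $[H,H] \cap Z(H)^0$ is finite, bigness of $\varrho$ forces $\Phi = (\mathrm{sh}_{\mathrm{ss}}, a) : X \to Y \times A$ to be generically finite onto its image $W$.

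\textbf{Main implications $(a) \Rightarrow (b)$ and $(a) \Rightarrow (d)$.} Assuming $X^\sigma$ is of log general type, $Y^\sigma$ inherits a big Zariski-dense semisimple representation (by compatibility of the Shafarevich construction with Galois conjugation), hence by \cref{main2} is pseudo Picard hyperbolic with exceptional locus $Z_Y \subsetneqq Y^\sigma$. For any $f : \bD^* \to X^\sigma$ with essential singularity, $a^\sigma \circ f$ always extends to a projective compactification of $A^\sigma$, while $\mathrm{sh}_{\mathrm{ss}}^\sigma \circ f$ either has essential singularity (so its image lies in $Z_Y$, forcing $f(\bD^*) \subset (\mathrm{sh}_{\mathrm{ss}}^\sigma)^{-1}(Z_Y)$) or extends holomorphically, in which case $\Phi^\sigma \circ f$ extends and generic finiteness of $\Phi^\sigma$ (together with continuity at the limit point) forces $f$ itself to extend, contradicting the essential singularity. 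The same fiberwise analysis, combined with $Y^\sigma$ being of log general type, shows every positive-dimensional subvariety of $X^\sigma$ not of log general type must project into $Z_Y$, placing $\Spalg(X^\sigma)$ in a proper closed subset.

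\textbf{Converses and main obstacle.} The implications $(d) \Rightarrow (a)$ (else $X^\sigma$ itself contributes to $\Spalg$) and $(d) \Rightarrow (e)$ (images of semi-abelian varieties fail to be of log general type by Kawamata--Ueno) are immediate. Given (b) and (d), the inclusion $\Sph(X^\sigma) \subset \Spp(X^\sigma) \cup \Spalg(X^\sigma)$ holds by a Liouville-type argument: for any entire $f : \bC \to X^\sigma$, the map $g(z) = f(1/z) : \bD^* \to X^\sigma$ either has essential singularity (so $f(\bC) \subset \Spp(X^\sigma)$) or extends, giving a rational curve whose trace in $X^\sigma$ is $\bA^1$-like and hence in $\Spalg(X^\sigma)$. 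The converses $(c), (e) \Rightarrow (a)$ are proved by contrapositive: if $X^\sigma$ is not of log general type, since $Y^\sigma$ is, the logarithmic Kawamata addition formula applied to $W \to Y^\sigma$ forces general fibers to be translates of positive-dimensional semi-abelian subvarieties of $A^\sigma$, which are densely covered by entire curves and by images of semi-abelian varieties, yielding $\Sph(X^\sigma) = \Spab(X^\sigma) = X^\sigma$. The principal obstacle is the Galois-equivariance of the Shafarevich construction: one must verify that $\mathrm{sh}_{\mathrm{ss}}^\sigma$ genuinely serves as the Shafarevich morphism of a big Zariski-dense semisimple representation of $\pi_1(Y^\sigma)$, so that \cref{main2} truly applies; this rests on the fact that big reductive representations into $\mathrm{GL}_N(\bC)$ underlie non-abelian Hodge data preserved by $\sigma$.
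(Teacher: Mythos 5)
Your high-level strategy --- decompose the reductive group into semisimple and abelian (torus) parts, factor the first through a morphism to a variety $Y$ carrying a big semisimple representation and the second through a morphism to a semi-abelian variety $A$, observe that the combined map $X\to Y\times A$ is generically finite onto its image by bigness, and transfer hyperbolicity --- is indeed the skeleton of the paper's proof (\cref{lem:202305101} produces exactly such a map $g:X\to A\times Y$ with $\Spp(Y)\subsetneqq Y$ and $\Spalg(Y)\subsetneqq Y$).

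The critical gap is in your argument for $(a)\Rightarrow(b)$: you assert that for $f:\bD^*\to X^\sigma$ with essential singularity, ``$a^\sigma\circ f$ always extends to a projective compactification of $A^\sigma$.'' This is false when $A^\sigma$ has a nontrivial torus factor. For $A^\sigma=\bG_m$ with $a^\sigma=\mathrm{id}$, the map $z\mapsto e^{1/z}$ has an essential singularity and does not extend to $\bP^1$. This is exactly the hard case, and the entire point of the paper's reliance on \cref{cor:GGL2} (which is \cite[Corollary 10.8]{CDY25}, a genuinely non-trivial result established via Nevanlinna theory in Part~I of the series): one needs a pseudo-Picard-hyperbolicity statement for quasi-projective varieties mapping generically finitely into $A\times Y$ with $Y$ pseudo-Picard-hyperbolic and with the abelian/semi-abelian factor taken seriously. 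Your treatment dismisses the semi-abelian contribution as trivial, which collapses the hardest part of the theorem.

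Secondary issues: (i) you invoke a Shafarevich morphism $\mathrm{sh}_{\mathrm{ss}}$ for the semisimple part, whose existence in the quasi-projective setting is itself a delicate theorem; the paper instead uses Koll\'ar's factorization through a big representation (\cref{lem:kollar}), which is weaker but suffices and is proved in the paper. (ii) In your extension step, ``generic finiteness of $\Phi^\sigma$ together with continuity at the limit point forces $f$ itself to extend'' elides the difficulty that $\Phi^\sigma$ is only generically finite onto its image, so one must handle branching, the indeterminacy locus, and the compactification carefully; this too is part of what \cref{cor:GGL2} packages. (iii) In the converse direction you assert ``since $Y^\sigma$ is [of log general type]'' without justification; this needs \cref{main2} applied to $Y$. (iv) The Galois-conjugation step is more cleanly handled by the paper's order of operations: one first uses \cref{thm:conjugate} to replace $X^\sigma$ by a variety $X'$ with a big reductive representation, then runs the whole construction on $X'$, rather than constructing $\mathrm{sh}_{\mathrm{ss}}$ and $a$ on $X$ and then trying to verify their Galois equivariance.
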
 

We note that the implication $(a)\implies (c)$ in \cref{main:GGL} establishes the strong Green-Griffiths conjecture for $X^\sigma$ for all automorphism $\sigma\in {\rm Aut}(\bC/\bQ)$, provided a big and reductive representation  $\varrho:\pi_1(X)\to {\rm GL}_N(\bC)$ exists.

As for the second and third conjectures of Lang, we obtain the following theorem under the stronger assumption when $\pi_1(X)$ admits a \emph{large} and reductive representation.

\begin{thmx}[=\cref{thm:special}]\label{main:special}
	Let $X$ be a smooth quasi-projective   variety admitting a large and reductive representation $\varrho:\pi_1(X)\to {\rm GL}_N(\bC)$. Then   for any  automorphism $\sigma\in {\rm Aut}(\bC/\bQ)$,  
	\begin{thmenum}
		\item   the four special subsets defined in \cref{def:special2} are the same, i.e., $$\Spalg(X^\sigma)=\Spab(X^\sigma)=\Sph(X^\sigma)=\Spp(X^\sigma).$$  
		\item These special subsets are conjugate under  automorphism   $\sigma$,  i.e., $$\Sp_{\bullet}(X^\sigma)=\Sp_{\bullet}(X)^\sigma,$$
		where $\Sp_{\bullet}$  denotes any of $\Spalg$, $\Spab$, $\Sph$ or $ \Spp$.
	\end{thmenum}
\end{thmx}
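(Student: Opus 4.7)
The plan is to derive both parts by applying \cref{main:GGL} to every positive-dimensional closed subvariety of $X^\sigma$, exploiting largeness of $\varrho$ to produce the required big reductive representation on each such subvariety.

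For part~(i), the inclusions $\Spab(X^\sigma)\subseteq\Spalg(X^\sigma)$, $\Spab(X^\sigma)\subseteq\Sph(X^\sigma)$, and $\Spab(X^\sigma)\subseteq\Spp(X^\sigma)$ are elementary: the Zariski closure of the image of a non-constant rational map from a semi-abelian variety $A$ has non-positive log Kodaira dimension, and composing such a map with a Zariski-dense one-parameter subgroup $\bC\to A$ avoiding the codimension-$\geq 2$ indeterminacy locus (and then, if needed, with $z\mapsto 1/z:\bD^*\to\bC$) produces an entire curve or an essentially singular punctured-disk map with the same Zariski closure. The key claim is that the three reverse inclusions also hold. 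To see this, I would first establish the following general fact: for any positive-dimensional closed subvariety $V\subseteq X^\sigma$, setting $W=V^{\sigma^{-1}}\subseteq X$ and taking a log resolution $\widetilde W\to W^{\rm norm}$, the restriction $\varrho|_{\pi_1(\widetilde W)}$ is large by hypothesis; after passing to its semisimplification one obtains a big reductive representation on $\widetilde W$, so that \cref{main:GGL} applies to $\widetilde W$ with the automorphism $\sigma$. Its conclusion gives the dichotomy that if $V$ is not of log general type then all four special subsets of $V$ equal $V$, and hence $V\subseteq\Spab(X^\sigma)$.

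With this in hand, take any $f:\bD^*\to X^\sigma$ with essential singularity and let $V:=\overline{f(\bD^*)}^{\rm Zar}$. If $V$ were of log general type, pseudo Picard hyperbolicity of $V$ furnished by \cref{main:GGL} would force $\Spp(V)\subsetneqq V$, contradicting the Zariski density of $f(\bD^*)$ in $V$; hence $V$ is not of log general type, so $V\subseteq\Spab(X^\sigma)$, which gives $\Spp(X^\sigma)\subseteq\Spab(X^\sigma)$. Replacing $f$ by an entire curve (with pseudo Brody in place of pseudo Picard), or directly applying the dichotomy to any non-log-general-type subvariety, yields $\Sph(X^\sigma)\subseteq\Spab(X^\sigma)$ and $\Spalg(X^\sigma)\subseteq\Spab(X^\sigma)$, closing the chain. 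Part~(ii) is then immediate: $\Spalg$ and $\Spab$ are defined by purely algebraic data—log general type is preserved by field automorphisms, and semi-abelian varieties with their rational maps are transported bijectively by $\sigma$—so $\Spalg(X^\sigma)=\Spalg(X)^\sigma$ and $\Spab(X^\sigma)=\Spab(X)^\sigma$ are tautological, and applying part~(i) to both $X$ and $X^\sigma$ gives $\Sph(X^\sigma)=\Spab(X^\sigma)=\Spab(X)^\sigma=\Sph(X)^\sigma$, and likewise for $\Spp$.

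The main technical obstacle is guaranteeing that the hypotheses of \cref{main:GGL} actually descend from $X$ to each log resolution $\widetilde W$. Largeness of $\varrho|_{\pi_1(\widetilde W)}$ follows formally from largeness on $X$, but the Zariski closure of its image in ${\rm GL}_N(\bC)$ need not be reductive; one must check that passage to the semisimplification preserves enough largeness on the relevant strata to invoke \cref{main:GGL}. The genuinely delicate stratum is the one where the restricted image becomes virtually unipotent, and there I would argue separately that such a $W$ already factors through a non-trivial semi-abelian quotient via a Shafarevich/abelian-Shafarevich argument on the unipotent radical (in the spirit of \cite{CDY25}), so that $V\subseteq\Spab(X^\sigma)$ is obtained directly without appealing to \cref{main:GGL}.
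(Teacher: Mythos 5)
Your overall strategy matches the paper's: for every positive-dimensional subvariety $V$ (or the Zariski closure of a problematic curve/punctured-disk map) desingularize, apply \cref{main:GGL} via largeness, conclude that any $V$ not of log general type lies in every special subset, and transfer part~(ii) by the algebraicity of $\Spalg$ and $\Spab$. The difference in which set you take as the pivot ($\Spab$ vs.\ the paper's $\Spalg$) and whether you conjugate the representation or conjugate the subvariety is purely cosmetic.

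The genuine problem is the step where you obtain a big \emph{and reductive} representation on the log resolution $\widetilde{W}$. You correctly observe that the Zariski closure of $\varrho({\rm Im}[\pi_1(\widetilde{W})\to\pi_1(X)])$ is not obviously reductive, but your proposed fix --- pass to the semisimplification --- is unsound, and you acknowledge as much: semisimplification can kill the image on exactly the subvarieties where one needs it to stay infinite, so bigness is not obviously preserved. Your fallback, a ``Shafarevich/abelian-Shafarevich argument on the unipotent radical'' in the virtually unipotent stratum, is not developed into an actual argument, and as stated it is not clear it could be: a large representation with unipotent image on a subvariety $W$ does not by itself produce a non-trivial semi-abelian quotient of $W$ without further input. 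The paper sidesteps all of this by invoking the fact --- stated without comment in Steps~1 and~2 of its proof --- that $\iota^*\varrho$ is \emph{already} reductive. This is a nontrivial but known consequence of non-abelian Hodge theory: by \cref{moc}, a reductive $\varrho$ corresponds to a tame pure imaginary harmonic bundle, and after restricting to a smooth closed subvariety (or a log resolution thereof) the pluriharmonicity, tameness and pure imaginarity persist, so the restricted flat bundle is again semisimple. Once you replace your semisimplification step by this restriction theorem, no case analysis is needed and the rest of your argument goes through exactly as written. (Two smaller points: your ``big'' claim is what is actually available and needed for $\iota^*\varrho$ --- it is not large, since it dies on the exceptional fibers; and the easy inclusions you derive directly via one-parameter subgroups are simply quoted in the paper from \cite[Lemma~2.1]{CDY25}, so that part of your write-up just re-proves a cited lemma.)
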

Let us mention that, building on the methods and results of this paper, \cref{main2,main:GGL,main:special} were extended by the first and third authors in \cite{DY23b} to the setting of big representations $\pi_1(X)\to {\rm GL}_N(K)$ with ${\rm char}\, K>0$.

Let us also mention that in \cite{Bru22}, Brunebarbe independently discussed related results in the compact case.
However, the approach adopted here differs substantially from his.
For instance, our method crucially applies a strong result of Campana–Păun \cite{CP19} in the proof of \cref{main:log general type}, even in the compact case (see also the proof of \cref{prop:log general type}), whereas the arguments in \cite{Bru22} rely essentially on earlier work \cite{CCE15,Yam10}.

 \subsection{Results in non-abelian Hodge  theory}\label{sec:techniques}
 We believe that some of the new techniques developed in the proof of \cref{main2} are of significant interest in their own right. One such technique is a reduction theorem for Zariski dense representations $\varrho: \pi_1(X)\to G(K)$, where $G$ is a reductive algebraic group defined over a non-Archimedean local field $K$.
 \begin{thmx}[=\cref{thm:KZreduction}] \label{main3}
 	Let $X$ be a complex   quasi-projective normal variety, and let $\varrho:\pi_1(X)\to {\rm GL}_N(K)$ be a reductive representation where $K$ is non-archimedean local field.  Then there exists a quasi-projective normal variety $S_\varrho$ and a dominant morphism $s_\varrho:X\to S_\varrho$ with connected general fibers, such that  for any connected Zariski closed  subset $T$ of $X$, the following properties are equivalent:
 	\begin{enumerate}[label={\rm (\alph*)}]
 		\item \label{item bounded} the image $\rho({\rm Im}[\pi_1(T)\to \pi_1(X)])$ is a bounded subgroup of $G(K)$.
 		\item \label{item normalization} For every irreducible component $T_o$ of $T$, the image $\rho({\rm Im}[\pi_1(T_o^{\rm norm})\to \pi_1(X)])$ is a bounded subgroup of $G(K)$.
 		\item \label{item contraction}The image $s_\varrho(T)$ is a point.
 	\end{enumerate} 
 \end{thmx}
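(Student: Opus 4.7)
The plan is to construct $s_\varrho$ as a Katzarkov--Zuo style reduction associated with a $\varrho$-equivariant pluriharmonic map from $\widetilde X$ to the Bruhat--Tits building $\Delta(G)$ of $G(K)$, and then to verify the chain of implications (a)$\Rightarrow$(b)$\Rightarrow$(c)$\Rightarrow$(a) via the dictionary between bounded subgroups of $G(K)$ and fixed points in $\Delta(G)$. I would begin by replacing $X$ by a resolution and fixing a smooth log compactification $(\overline X, D)$ with simple normal crossing boundary. Since $\varrho$ is reductive, the results of \cite{BDDM,DM24} supply a $\varrho$-equivariant, locally Lipschitz, tame pluriharmonic map $u:\widetilde X\to \Delta(G)$. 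The $(1,0)$-part of its complexified differential produces, on some finite Galois cover $X'\to X$, holomorphic spectral one-forms $\eta_1,\dots,\eta_r$---the eigenvalues of $\partial u$ inside a fixed apartment---which extend with logarithmic poles along the preimage of $D$. These spectral forms cut out an involutive holomorphic distribution on $X'$ whose Stein-factorized leaf space, descended through the Galois action to $X$, yields the desired dominant morphism $s_\varrho:X\to S_\varrho$ onto a quasi-projective normal variety with connected general fibers; algebraicity of $S_\varrho$ follows from the standard combination of Hodge-theoretic control on the spectral data and GAGA over a projective compactification.

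For the equivalences, (a)$\Rightarrow$(b) is immediate, since $\pi_1(T_o^{\rm norm})\to \pi_1(X)$ factors through $\pi_1(T)\to \pi_1(X)$. For (b)$\Rightarrow$(c), I would invoke the Bruhat--Tits fixed-point theorem: every bounded subgroup of $G(K)$ fixes a point of the complete CAT(0) space $\Delta(G)$. Consequently, for each irreducible component $T_o$, a lift of the composite $\widetilde{T_o^{\rm norm}}\to \widetilde X\to \Delta(G)$ is equivariant for a group fixing some $p\in \Delta(G)$; the plurisubharmonic function $d(\cdot,p)^2\circ u$ then descends to $T_o^{\rm norm}$, and via energy-minimality and the tameness of $u$ near $D$ must be constant, forcing $u$ to be constant along a resolution of $T_o^{\rm norm}$. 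This makes the pulled-back spectral forms vanish on $T_o^{\rm norm}$, so $s_\varrho$ contracts each component $T_o$; since $T$ is connected and any two of its irreducible components are joined by a chain of pairwise intersecting components inside $T$, these contracted images must coincide, and $s_\varrho(T)$ is a single point. For (c)$\Rightarrow$(a), if $T$ lies in a fiber $F$ of $s_\varrho$, the construction of $s_\varrho$ forces $u$ to be constant along (a resolution of) $F$, so $\varrho({\rm Im}[\pi_1(F)\to \pi_1(X)])$ fixes a point of $\Delta(G)$, hence is bounded; its subgroup $\varrho({\rm Im}[\pi_1(T)\to \pi_1(X)])$ is therefore also bounded.

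The main obstacle I expect is the algebraicity and global control of $s_\varrho$ in the quasi-projective setting: one must establish that the spectral forms extend meromorphically with logarithmic poles along $D$, that the resulting holomorphic distribution is algebraic and integrable, and that its analytic leaf space genuinely descends to a quasi-projective normal variety, which requires the fine asymptotic analysis of tame equivariant harmonic maps together with Hodge-theoretic semipositivity. A secondary delicacy lies in the implication (b)$\Rightarrow$(c) for singular or reducible $T$: the image $\varrho({\rm Im}[\pi_1(T)\to \pi_1(X)])$ may a priori be strictly larger than the subgroup generated by the images from $\pi_1(T_o^{\rm norm})$, so (a) is not an obvious consequence of (b); it is precisely the passage through the harmonic-map vanishing and the contraction in $S_\varrho$ that, combined with the connectedness of $T$, lets us recover (a) from (b).
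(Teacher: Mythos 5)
Your overall architecture mirrors the paper's: produce a $\varrho$-equivariant pluriharmonic map into the Bruhat--Tits building, extract spectral one-forms, and build the reduction from them. But there are two places where the proposal diverges from the paper in ways that matter, and at least one of them is a genuine gap.

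The first is the construction of $s_\varrho$. You propose to take the leaf space of the foliation cut out by the spectral one-forms on a Galois cover and descend it, invoking ``Hodge-theoretic control and GAGA'' for algebraicity. The paper instead takes a detour that makes algebraicity automatic: the spectral one-forms define a \emph{partial quasi-Albanese morphism} $\alpha:X^{\rm sp}\to\cA$ (via \cref{lem:critpointalb}), which is an honest morphism to a semi-abelian variety; $s_\varrho$ is then the quasi-Stein factorization of the induced map $X\to \cA/\widetilde{G}$. This sidesteps the leaf-space integrability and algebraicity questions entirely. In the quasi-projective setting, making the leaf space both Hausdorff and quasi-projective is not ``standard'' -- it is precisely the delicate part -- and the paper's route exists to avoid it. Your proposal does not close this gap.

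The second issue is the constancy argument in (b)$\Rightarrow$(c). You propose to use subharmonicity of $d(u(\cdot),p)^2$ together with ``energy-minimality and tameness'' to conclude $u$ is constant on (a resolution of) $T_o^{\rm norm}$. This is the classical compact-case maneuver, but a bounded plurisubharmonic function on a \emph{non-compact} variety need not be constant, and the needed Liouville-type statement does not come for free from tameness. The paper instead argues via the uniqueness theorem for pluriharmonic maps of logarithmic energy growth from \cite{DM24}: the constant map at the fixed point $p$ is an equivariant harmonic competitor of log energy growth, so by uniqueness of energy density $|\nabla u|^2=0$ a.e., hence $u$ is constant by \cref{lem:density}. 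This uniqueness theorem is one of the genuinely new inputs needed to move the compact-case argument to the quasi-projective setting, and your proposal does not supply a substitute for it.

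Your observation that (b) is not formally stronger than (a) for singular/reducible $T$ -- and that the cycle of implications recovers (a) from (b) via contraction in $S_\varrho$ -- matches the paper's and is correct. The treatment of reducible $T$ in the (c)$\Rightarrow$(a) step also requires some care (passing through connected components of the regular locus and desingularizations of dominating components of $p^{-1}(W)$), which the paper spells out and your proposal compresses into ``the construction forces $u$ to be constant along a resolution of $F$''; this is acceptable as a sketch but hides real work.
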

It is worth noting that if $X$ is projective, the equivalence between \Cref{item bounded} and \Cref{item contraction} has been established by Katzarkov \cite{Kat97}, Eyssidieux \cite[Proposition 1.4.7]{Eys04}, and Zuo \cite{Zuo96}. 
However, the equivalence between \Cref{item bounded} and \Cref{item normalization}, not involving $s_{\varrho}$, seems to be new to the literature, even for the projective setting.
One of the building blocks of the proof of \cref{thm:KZreduction} is based on previous results by Brotbek, Daskalopoulos, Mese, and the second author \cite{BDDM,DM24} on the existence of $\varrho$-equivariant pluriharmonic mappings to Bruhat-Tits buildings (an extension of Gromov-Schoen's theorem to quasi-projective cases)  with suitable energy growth at infinity, and the construction of logarithmic symmetric differential forms via these harmonic mappings.

The following theorem is a crucial component in the proof of \cref{main2}.
 \begin{thmx}[=\cref{thm:main33}] \label{main6}
	Let $X$ be a smooth quasi-projective variety.  Let $G$ be an almost simple algebraic group defined over a non-archimedean local field $K$.  Suppose that $\varrho:\pi_1(X)\to G(K)$ is a big and unbounded Zariski dense representation. Then:
	\begin{thmenum}
		\item  \label{main:lgt}$\Spalg(X)\subsetneqq X$. 
	\item  \label{main:PPH} $X$ is pseudo Picard hyperbolic.
\end{thmenum}
\end{thmx}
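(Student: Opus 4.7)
The plan is to combine three ingredients: the reduction morphism of \cref{main3} (=\cref{thm:KZreduction}), the $\varrho$-equivariant pluriharmonic map $u\colon \widetilde{X}\to \Delta(G)$ furnished by \cite{BDDM,DM24} together with the associated Katzarkov--Zuo--Eyssidieux construction of logarithmic symmetric differentials, and the Nevanlinna-theoretic extension criteria of \cite{CDY25}.

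First, I would apply \cref{main3} to obtain a dominant morphism $s_\varrho\colon X\to S_\varrho$ with connected general fibers, characterized by the property that a closed subvariety is contracted if and only if $\varrho$ has bounded image on each irreducible component of its normalization. Since $\varrho$ is unbounded, $\dim S_\varrho\geq 1$. A central preliminary step is to deduce from bigness, Zariski density and the almost simplicity of $G$ that $s_\varrho$ is \emph{generically finite}: a positive-dimensional generic fiber $F$ passing through a very general point would by bigness have $\varrho(\pi_1(F^{\mathrm{norm}}))$ infinite, and by \cref{main3} bounded in $G(K)$; exploiting that $G$ is almost simple together with a monodromy argument over $S_\varrho$ using the Zariski density of $\varrho(\pi_1(X))$, one derives a contradiction.

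Next, from $u$ with its prescribed energy growth at infinity, I would apply the spectral cover construction (in the quasi-projective form developed for the proof of \cref{main3}) to produce a line subsheaf $\mathcal{L}\subset \mathrm{Sym}^m\Omega^1_Y(\log D)$ on a log smooth compactification $(Y,D)$ of $X$. The generic finiteness of $s_\varrho$ ensures that $\mathcal{L}$ is big on $X$; denote its base locus by $\Sigma\subsetneqq X$, and set $Z:=\Sigma\cup E$, where $E\subsetneqq X$ is the proper Zariski closed exceptional subset furnished by the definition of bigness of $\varrho$. For part (i), any closed subvariety $V\not\subset Z$ avoids $\mathrm{Bs}(\mathcal{L})$, so restricting and composing with the surjection $\Omega^1_Y(\log D)|_V\twoheadrightarrow \Omega^1_V(\log(D|_V))$ on the smooth locus yields a big line subsheaf of $\mathrm{Sym}^m\Omega^1_V(\log(D|_V))$ (enlarging $Z$ if necessary by the foliation-tangent locus of $\mathcal{L}$ to ensure non-vanishing of the restriction); the Bogomolov-type theorem of Campana--P\u{a}un \cite{CP19} then forces $V$ to be of log general type, proving $\Spalg(X)\subset Z\subsetneqq X$. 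For part (ii), given a holomorphic $f\colon\bD^*\to X$ with $f(\bD^*)\not\subset Z$, part (i) implies that the Zariski closure of $f(\bD^*)$ is of log general type; combining the pullback $f^*\mathcal{L}$ with the harmonic energy estimate for $u\circ \tilde f$ into the $\mathrm{CAT}(0)$ building $\Delta(G)$ and the Nevanlinna extension criterion of \cite{CDY25} (after possibly enlarging $Z$ to lie in the maximal quasi-Albanese locus), one concludes that $f$ extends holomorphically to $\bD\to\overline X$.

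The hard part will be the generic finiteness of $s_\varrho$ in the first step: in the non-archimedean setting, bigness only guarantees infinite image on general positive-dimensional subvarieties, which is strictly weaker than unboundedness of this image (for instance, $G(\mathcal O_K)$ is an infinite bounded Zariski dense subgroup of $G(K)$). Bridging this gap, so as to rule out positive-dimensional bounded fibers of $s_\varrho$ through very general points, requires a careful exploitation of the almost simplicity of $G$ together with the monodromy of $s_\varrho$; the remaining steps mainly package the harmonic-map and Nevanlinna machinery built up in the earlier sections and in \cite{CDY25,CP19}.
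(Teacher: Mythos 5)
Your opening steps—reducing via the Bruhat--Tits fixed-point argument to show that fibers of the relevant morphism through very general points must be finite, using \cref{lem:BT} (bounded normal subgroup of a Zariski-dense unbounded subgroup of an almost simple $G$ is finite) combined with bigness—are essentially the paper's Step~2 in the proof of \cref{thm:spectral cover}, and you correctly identify the subtlety that infinite does not imply unbounded. The genuine gap occurs immediately afterward, at the sentence ``The generic finiteness of $s_\varrho$ ensures that $\mathcal L$ is big on $X$.'' This does not follow. The $\widetilde G$-invariant symmetric differential one builds from the spectral one-forms gives only a non-trivial morphism $\mathcal O_{\overline X}(E)\to\Sym^N\Omega_{\overline X}(\log D)$, where $E$ is the divisorial part of the branch image $\pi(R)$; $\mathcal O_{\overline X}(E)$ is effective but has no reason to be big, and generic finiteness of the partial quasi-Albanese map $\alpha:\xsp\to\cA$ does not upgrade it. Indeed, the claim that the generic rank of the spectral one-forms on $\xsp$ equals $\dim X$ (which is what would make your $\mathcal L$ big) is precisely a statement that the authors flag as unproven in general; your proposal would resurrect that open claim.

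The paper circumvents this entirely. After \cref{genericallyfinite} gives $\dim\xsp=\dim\alpha(\xsp)$, it proves in \cref{spectral general type} that $\xsp$ \emph{itself} is of log general type: one gets $\bar\kappa\geq 0$ from the map to the semi-abelian variety, and then rules out a positive-dimensional log-Iitaka fibration because a very general fiber $F$ with $\bar\kappa(F)=0$ and generically finite quasi-Albanese would have abelian $\pi_1(F)$ by \cref{lem:abelian pi}, contradicting bigness and almost simplicity via \cref{lem:BT}. This Iitaka-fibration step has no analogue in your proposal and is indispensable. Then \cref{prop:log general type} ``spreads'' positivity down from $\xsp$ to $X$: since $\pi$ is \'etale outside $R$, $X\setminus\pi(R)$ is of log general type, so $K_{\overline X}+E+D$ is big; paired with the map $\mathcal O(E)\to\Sym^N\Omega_{\overline X}(\log D)$, Campana--P\u{a}un yields $K_{\overline X}+D$ big. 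It is the log general type of the cover — not bigness of a line subsheaf — that powers the argument. The whole package (cover of log general type with maximal quasi-Albanese dimension, \'etale outside the common vanishing locus of the forms) is abstracted as \cref{property:20250914}, shown to be inherited by generic subvarieties in \cref{claim:202509132}, and fed into the Nevanlinna result \cref{thm:20250911}; your part~(ii), which gestures at combining $f^*\mathcal L$ with an energy estimate, would need to be replaced by this inheritance argument to close.
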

A significant building block in the proof of \cref{main6} is \cite[Thm A]{CDY25}  on the Big Picard type theorem for holomorphic curves into quasi-projective varieties with maximal quasi-Albanese dimension. 

 
 \medspace
 
 \subsection{Structure of the series and further developments}
 \begin{figure}
 	\centering
 	\begin{tikzpicture}[>=stealth, node distance=1.7cm, every node/.style={rectangle, draw, align=center}]	
 		\node (A) {\cref{main6}}; 
 				\node (X) [above left of = A]  {\cref{sec:reduction}: Construction of spectral cover}; 
 				\node (E) [below left of = X] {\cref{main3}};
 				 		\node (W) [below right of = A] {\cref{main2}};
 				 		\node (F) [above right of = W] {\cref{sec:rigid}: $\bC$-VHS};
 		\node (B) [below left of=W] {\cref{main}};
 		\node (C) [below right of=W] {\cref{main:GGL}};
 		\node (D) [below of= C] {\cref{main:special}}; vv

 			 			\path[->] (X) edge (A);
 			 			\path[->] (X) edge (E);
 			 						\path[->] (A) edge (W);
 		\path[->] (W) edge (B);
 		\path[->] (W) edge (C);
 		\path[->] (F) edge (W);
 		\path[->] (C) edge (D);vv
 	\end{tikzpicture}
 	\caption{Relationships between Main Theorems}
 	\label{fig:relationships}
 \end{figure}
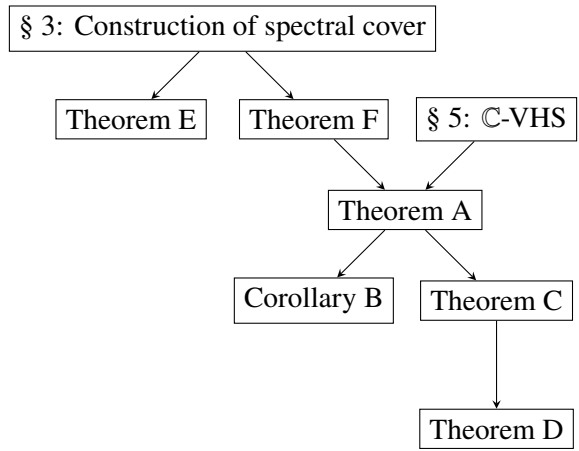

This paper constitutes Part~II of the long preprint~\cite{CDY22original} on arXiv, which has been divided into three parts for journal submission. It corresponds to Sections~2 and~5--9 of~\cite{CDY22original}. 
Part~I of the series, corresponding to Sections~3 and~4 of~\cite{CDY22original} and available on arXiv as~\cite{CDY25}, establishes the main results for the case of abelian representations using \emph{Nevanlinna theory}. 
The present paper is based mainly on non-abelian Hodge theory. 
The methods developed here differ substantially from those in~\cite{CDY25}; assuming Theorems~B and~C therein, the arguments of the present paper are largely independent of that work, so that the current paper can be read independently.
Part~III of this series, which covers the remaining sections of~\cite{CDY22original}, will be devoted to applications of the results from Part~I and the present paper. 
In particular, we will study the fundamental groups of Campana’s special varieties and establish a structure theorem inspired by a conjecture of Kollár~\cite{Kol95}.

Since its appearance in 2022, the preprint~\cite{CDY22original} has already found several further applications (see, for instance,~\cite{DY23,DY23b}). 
Moreover, because certain delicate points in earlier works on the projective case require careful clarification, we have aimed to make this paper as self-contained as possible. 
Our goal is therefore to present the arguments in a clear and accessible manner, even for readers not familiar with the compact case. 
For example, we provide a detailed account of the construction of unbounded representations arising from given non-rigid representations (cf.~\cref{sec:20251108}). 
 In addition, the construction of the spectral covering associated with a non-archimedean local system in the quasi-projective setting is presented in a self-contained manner as part of the proof of~\cref{main3} (cf.~\cref{sec:reduction}). 
This construction relies on the existence of equivariant pluriharmonic maps into Euclidean buildings in the quasi-projective setting—a generalization of the projective case established by Gromov–Schoen in \cite{GS92}—as developed by Brotbek, Daskalopoulos, Mese, and the second author in \cite{BDDM,DM24}. 

Finally, because the paper presents results from different perspectives, we provide in Figure~\ref{fig:relationships} a diagram summarizing the relationships among the main theorems for the reader’s convenience.

 

\subsection*{Convention and notation.}In this paper, we use the following conventions and notations:
\begin{itemize}[noitemsep]
	\item Quasi-projective varieties and their closed subvarieties are usually assumed to be irreducible unless otherwise stated, while Zariski closed subsets might be reducible.
	\item Fundamental groups are always referred to as topological fundamental groups.
	\item If $X$ is a complex space, its normalization is denoted by $X^{\mathrm{norm}}$.
		\item For any scheme $S$, let  $S_{\rm red}$ denote its reduction.
	\item A birational map $f:X\to Y$ between quasi-projective normal varieties is \emph{proper in codimension one} if there is an open set $Y^\circ\subset Y$ with $Y\backslash Y^\circ$ of codimension at least two such that $f$ is proper over $Y^\circ$.
	\item  
	For a finitely generated group $\Gamma$, a field $K$ and a representation $\varrho:\Gamma\to \mathrm{GL}_{N}(K)$, we say that $\varrho$ is a \emph{reductive} representation if the Zariski closure of $\varrho(\Gamma)$ in $\mathrm{GL}_N$ is a reductive algebraic group. 
	When the characteristic of $K$ is zero, this is equivalent to $\varrho$ being semisimple (cf. \cite[Theorem 22.42]{Mil17}). 
	This terminology emphasizes the geometric and group-theoretic properties of the image of $\varrho$, rather than the decomposition of the representation.
\end{itemize} 

\subsection*{Acknowledgment.}  
We would like to thank Michel Brion, Patrick Brosnan, Yohan Brunebarbe, Frédéric Campana, Benoît Claudon, Philippe Eyssidieux, Hisashi Kasuya, Chikako Mese, Gopal Prasad, Guy Rousseau, and Carlos Simpson for many helpful discussions. 
We are particularly grateful to Ariyan Javanpeykar for his   interest in this work and for his valuable remarks. 
B.C.\ and Y.D.\ acknowledge support from the ANR grant Karmapolis (ANR-21-CE40-0010). 
K.Y.\ acknowledges support from the JSPS Grant-in-Aid for Scientific Research (C)~22K03286.

\section{Preliminaries and notation}\label{sec:pre}

\subsection{Logarithmic forms.} Let \((\overline{X}, D)\) be a smooth log-pair, {\em i.e.\ } the data of a smooth projective variety  \(\overline{X}\), and of a simple normal crossing divisor \(D\) on it. We will sometimes denote by \(T_{1}(\overline{X}, D) := H^{0}(\overline{X}, \Omega_{\overline{X}}(\log D))\) the space of logarithmic forms. Similarly, if \(X\) is a smooth quasi-projective variety, we will write \(T_{1}(X) := T_{1}(\overline{X}, D)\), where \((\overline{X}, D)\) is any smooth log-pair compactifying \(X\). Note that \(T_{1}(X)\) depends only on \(X\), but not on the choice of \(\overline{X}\).

\subsection{Semi-abelian varieties and quasi-Albanese morphisms.} In this subsection we collect some results on quasi-Albanese morphisms of quasi-projective varieties  and semi-abelian varieties. We refer the readers to \cite{NW13,Fuj15} for further details. 

 A commutative complex algebraic  group $\cA$ is called a \emph{semi-abelian variety} if there is a short exact sequence of complex algebraic groups  
$$
1 \rightarrow H \rightarrow \cA \rightarrow \cA_0 \rightarrow 1
$$
where $\cA_0$ is an abelian variety and $H \cong (\bC^*)^\ell$.

Let \(X\) be a smooth complex quasi-projective variety. The quasi-Albanese variety of \(X\) is the semi-abelian variety
\[
	\cA_{X}=T_{1}(X)^*/\varrho(H_1(X,\bZ)) 
\]
where $\varrho(\gamma):=(\eta\mapsto \int_{\gamma}\eta)$. If we fix a base point \(\ast \in X\), and we denote by \(e \in \mathcal{A}_{X}\) the unit element, then there is a natural morphism of pointed varieties \((X, \ast) \to (\cA_{X}, e)\) given by integration along paths, exactly as in the projective setting. Also, any pointed morphism \((X, \ast) \to (B, e)\) to a semi-abelian variety factors uniquely through \(\mathcal{A}_{X}\) via a morphism of semi-abelian varieties \(\mathcal{A}_{X} \to B\).

Note that the image of the quasi-Albanese morphism is only a constructible set, which is neither closed nor open in general.
\medskip

The following lemma will permit to prove factorisation results for quasi-Albanese morphisms. 
\begin{lem} \label{lem:critpointalb}
	Let \(X\) be a smooth quasi-projective variety, and let \(q : X \to \cA_{X}\) be its quasi-Albanese morphism. Let \(S \subset T_{1}(X)\) be a set of logarithmic forms on \(X\), and let \(B \subset \mathcal{A}_{X}\) be the largest semi-abelian subvariety on which all \(\eta \in S\) vanish, using the natural identification \(T_{1}(\cA_{X}) \cong T_{1}(X)\). Let
	\[
		r : X \to \mathcal{A}_{X}/B
	\]
	be the quotient map. Then, for any morphism \(f : Y \to X\) from a   quasi-projective variety \(Y\), \(r(f(Y))\) is a point if and only for any \(\eta \in S\), one has \(f^{\ast}(\eta) = 0\).
\end{lem}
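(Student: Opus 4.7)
My plan rests on two ingredients: the identification of $T_{1}(X)$ with the space of translation-invariant logarithmic $1$-forms on $\cA_{X}$ via $q^{\ast}$, and the universal property of the quasi-Albanese morphism.

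The first step is to describe the forms in $S$ on $\cA_{X}$. The quotient map $\pi : \cA_{X} \to \cA_{X}/B$ is a surjective morphism of semi-abelian varieties, so $\pi^{\ast}$ identifies $T_{1}(\cA_{X}/B)$ with the subspace of $T_{1}(\cA_{X})$ consisting of invariant log forms vanishing on $B$ (invariant forms on a semi-abelian variety being determined by their value at the identity). By the very definition of $B$, each $\eta \in S$ therefore equals $\pi^{\ast}\eta'$ for some $\eta' \in T_{1}(\cA_{X}/B)$. The \emph{only if} direction then follows instantly: if $r(f(Y))$ is a single point, the composition $r \circ f = \pi \circ q \circ f$ is constant, and hence $f^{\ast} \eta = (r \circ f)^{\ast} \eta' = 0$ for every $\eta = \pi^{\ast}\eta' \in S$.

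For the converse, suppose $f^{\ast}\eta = 0$ for all $\eta \in S$. After replacing $Y$ by a resolution of singularities, which preserves both the image $f(Y)$ and the vanishing of $f^{\ast}\eta$, we may assume $Y$ is smooth and connected. Fix a base point $\ast \in Y$ and set $a := q(f(\ast)) \in \cA_{X}$. The universal property of the quasi-Albanese applied to the pointed morphism $t_{-a} \circ q \circ f : (Y, \ast) \to (\cA_{X}, 0)$ produces a morphism of semi-abelian varieties $g : \cA_{Y} \to \cA_{X}$ satisfying $q \circ f = t_{a} \circ g \circ q_{Y}$, where $q_{Y} : Y \to \cA_{Y}$ denotes the quasi-Albanese morphism of $Y$ and $t_{a}$ is translation by $a$. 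Since translations act trivially on invariant forms and $q_{Y}^{\ast} : T_{1}(\cA_{Y}) \to T_{1}(Y)$ is an isomorphism, the hypothesis forces $g^{\ast} \eta = 0$ in $T_{1}(\cA_{Y})$ for every $\eta \in S$.

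Set $C := g(\cA_{Y})$, a connected algebraic subgroup of $\cA_{X}$ and thus a semi-abelian subvariety. The morphism $g$ factors as the surjection $\cA_{Y} \twoheadrightarrow C$ followed by the inclusion $C \hookrightarrow \cA_{X}$; since pullback of invariant forms under a surjective homomorphism of semi-abelian varieties is injective, the vanishing $g^{\ast}\eta = 0$ is equivalent to $\eta|_{C} = 0$ for every $\eta \in S$. The maximality of $B$ then yields $C \subset B$, so $q(f(Y)) \subset t_{a}(C) \subset t_{a}(B)$ lies in a single coset of $B$, and consequently $r(f(Y)) = \{\pi(a)\}$ is a point. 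The one mildly delicate step is the factorisation through $\cA_{Y}$, but this is precisely the universal property of the quasi-Albanese recalled earlier in the paper, so no serious obstacle is expected.
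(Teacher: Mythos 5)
Your proof is correct and follows essentially the same route as the paper's: resolve $Y$ to smoothness, invoke the universal property of the quasi-Albanese to produce a homomorphism $g : \cA_{Y} \to \cA_{X}$, and translate the vanishing of $f^{\ast}\eta$ into $g(\cA_{Y}) \subset B$ via injectivity of pullback along the surjection onto $g(\cA_{Y})$ and the maximality of $B$. The only cosmetic difference is that you handle the base-point normalization explicitly by introducing the translation $t_{a}$, whereas the paper silently re-chooses base points so the diagram is pointed — since translations act trivially on $T_{1}$ and the conclusion is translation-invariant, the two formulations are equivalent.
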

\begin{proof}
	After taking a resolution of singularities of $Y$, we assume that $Y$ is smooth. Choose base points on \(X\) and \(Y\) so that one has a diagram of pointed spaces
	\[
		\begin{tikzcd}
			X \arrow[r, "q"] & \cA_{X} \arrow[r, "p"] & \cA_{X}/B \\
			Y \arrow[u, "f"] \arrow[r, "u"] & \cA_{Y} \arrow[u, "g"]
		\end{tikzcd},
	\]
	where the semi-abelian varieties are pointed by their unit elements, and \(g, p\) are morphisms of algebraic groups.

	The previous diagram shows that \(p \circ q\) sends \(f(Y)\) to a point if and only if \(g(\cA_{Y}) \subset B\). By definition of \(B\), this is true if and only if \(g^{\ast}(\eta) = 0\) for any \(\eta \in S \subset T_{1}(\cA_{X}) = T_{1}(X)\). Since \(u^{\ast} g^{\ast} = f^{\ast} q^{\ast}\), and this \(q^{\ast}\) realizes the identification \(T_{1}(\cA_{X}) \cong T_{1}(X)\), this is equivalent to the second condition of the lemma.
\end{proof}
The above morphism $r:X\to \cA_X/B$ is called the \emph{partial quasi-Albanese morphism} induced by $S$.  

\begin{rem}
	 \cref{lem:critpointalb} enables us to define quasi-Albanese morphisms  for quasi-projective normal varieties $X$.     Let $Y$ be a desingularization of $X$. Let $S\subset T_1(Y)$ be the set of log one forms which vanish  on each fiber of $Y\to X$.  Then for the partial quasi-Albanese morphism $Y\to \cA$ induced by $S$, by \cref{lem:critpointalb} each fiber of $Y\to X$ is  contracted to one point.  Hence $Y\to \cA$ factors through $X\to \cA$. Note that this construction does not depend on the desingularization of $X$. Such $X\to \cA$ is called the quasi-Albanese morphism of $X$.
\end{rem}

We will also need two standard results permitting to evaluate the Kodaira dimension of subvarities of semi-abelian varieties (see \cite[Propositions 5.6.21 \& 5.6.22]{NW13}).  
\begin{proposition} \label{prop:Koddimabb}
	Let \(\cA\) be a semi-abelian variety. 
	\begin{enumerate}[label=(\alph*)]
		\item Let \(X \subset \cA\) be a closed subvariety. Then \(\overline{\kappa}(X) \geq 0\) with equality if and only if \(X\) is a translate of a semi-abelian subvariety.
		\item Let \(Z \subset \cA\) be a Zariski closed subset. Then \(\overline{\kappa}(\cA - Z) \geq 0\) with equality if and only if \(Z\) has no component of codimension \(1\).  \qed
	\end{enumerate}
\end{proposition}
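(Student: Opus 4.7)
The plan rests on a single structural fact: every semi-abelian variety $\cA$ admits a smooth projective equivariant compactification $(\overline{\cA},D_\cA)$ with simple normal crossing boundary such that $\Omega^{1}_{\overline{\cA}}(\log D_\cA)$ is the trivial sheaf, its global sections being exactly $T_1(\cA)$. Equivalently, the log canonical divisor $K_{\overline{\cA}}+D_\cA$ is linearly equivalent to zero. This is proved by combining the toroidal completion of the toric part $(\bC^{\ast})^{\ell}$ with the analogous (trivial) fact on the abelian quotient. Both parts of the proposition will be obtained by pulling back invariant log forms from $\overline{\cA}$ to a log resolution.

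\textbf{Proof of (a).} Let $d=\dim X$, and take a smooth log compactification $(\overline{Y},D_Y)$ of a desingularization of $X$, together with a morphism $\pi:\overline{Y}\to \overline{\cA}$ extending $\overline{Y}\setminus D_Y\to X\hookrightarrow \cA$. Pulling back the trivial log cotangent bundle and taking the $d$-th wedge gives
\[
\wedge^{d}T_1(\cA)\otimes \cO_{\overline{Y}} \;\longrightarrow\; \Omega^{d}_{\overline{Y}}(\log D_Y)=\cO_{\overline{Y}}\bigl(K_{\overline{Y}}+D_Y\bigr).
\]
At any smooth point $y$ of $X\hookrightarrow \cA$ the cotangent map $T^{*}_{\cA,\pi(y)}\to T^{*}_{\overline{Y},y}$ is surjective, so the displayed map is generically surjective onto the line bundle on the right. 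Hence $H^{0}(\overline{Y},K_{\overline{Y}}+D_Y)\neq 0$ and $\overline{\kappa}(X)\geq 0$. For the equality case, when $\overline{\kappa}(X)=0$ I would invoke Kawamata's structure theorem: let $H\subset \cA$ be the identity component of the stabilizer of $X$, consider the quotient $q:\cA\to \cA/H$, and apply Ueno--Kawamata to the image $q(X)\subset \cA/H$, which has trivial connected stabilizer and must therefore be of log general type as soon as $\dim q(X)>0$. Since the generic fiber of $q|_{X}$ is a translate of $H$ and $\overline{\kappa}(X)\geq \overline{\kappa}(q(X))$, the hypothesis $\overline{\kappa}(X)=0$ forces $q(X)$ to be a point, so $X$ is a translate of the semi-abelian subvariety $H$.

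\textbf{Proof of (b) and main obstacles.} Write $Z=Z_1\cup Z_2$ with $Z_1$ the union of codimension-one components of $Z$ and $\codim_\cA Z_2\geq 2$. Choose a log resolution $\mu:\overline{Y}\to \overline{\cA}$ on which the total transform of $D_\cA\cup \overline{Z}_1$ is snc, and set $D_Y := \mu^{-1}(D_\cA\cup \overline{Z}_1)_{\rm red}$; then $\overline{Y}\setminus D_Y$ is a smooth log compactification of $\cA\setminus Z$. Because $K_{\overline{\cA}}+D_\cA\sim 0$, a direct computation on $\mu$ gives $K_{\overline{Y}}+D_Y\sim \mu^{\ast}\overline{Z}_1 + E$ for a $\mu$-exceptional effective divisor $E$, so that $\mu_{\ast}\cO_{\overline{Y}}(m(K_{\overline{Y}}+D_Y))=\cO_{\overline{\cA}}(m\overline{Z}_1)$ and therefore $\overline{\kappa}(\cA-Z)=\kappa(\overline{\cA},\overline{Z}_1)$. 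If $Z_1=\emptyset$, this is $0$. If $Z_1\neq\emptyset$, the required inequality $\kappa(\overline{\cA},\overline{Z}_1)\geq 1$ is the Iitaka--Kawamata theorem that every nonzero effective divisor on a smooth equivariant compactification of a semi-abelian variety has Iitaka dimension at least one; a conceptual argument translates $\overline{Z}_1$ by a one-parameter subgroup of $\cA$ not stabilizing $Z_1$ to produce an algebraic family of distinct divisors linearly equivalent to $\overline{Z}_1$, forcing $h^{0}(m\overline{Z}_1)\to\infty$. The main obstacles in both parts are the equality statements; the pullback-and-wedge trick delivers the inequality $\overline{\kappa}\geq 0$ formally, but the characterizations of equality are genuine inputs from the structure theory of semi-abelian varieties à la Ueno--Kawamata, and a fully self-contained proof would have to develop that machinery.
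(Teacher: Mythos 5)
The paper does not prove this proposition at all: the \verb|\qed| and the citation to \cite[Propositions 5.6.21 \& 5.6.22]{NW13} indicate that the statement is quoted as a known fact from Noguchi--Winkelmann, whose proofs rely precisely on the Ueno--Kawamata structure theory for (semi-)abelian varieties. Your outline therefore goes well beyond what the paper itself does. The pullback-and-wedge argument for the inequality \(\overline{\kappa}\geq 0\) in both (a) and (b) is correct and clean, and your handling of the equality case in (a) via the quotient by the stabilizer is exactly the Kawamata--Ueno mechanism that \cite{NW13} implements.

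The sketch of the equality case in (b), however, has two genuine gaps. First, the claimed identity \(\overline{\kappa}(\cA-Z)=\kappa(\overline{\cA},\overline{Z}_1)\) is not automatic: writing \(K_{\overline Y}+D_Y=\mu^{*}(K_{\overline{\cA}}+D_\cA+\overline Z_1)+\sum a_iE_i\) on a log resolution, the exceptional coefficients \(a_i\) can be negative when \(\overline Z_1\) has multiplicity \(\geq 3\) at a point (e.g.\ a single blow-up at a point of multiplicity \(k\) gives \(a=2-k\)), so one only obtains \(\overline{\kappa}(\cA-Z)\leq \kappa(\overline{\cA},\overline{Z}_1)\). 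Since what is needed is the \emph{lower} bound \(\overline{\kappa}(\cA-Z)\geq 1\) when \(Z_1\neq\emptyset\), the reduction goes in the wrong direction. Second, the ``conceptual argument'' that translating \(\overline Z_1\) by a one-parameter subgroup of \(\cA\) yields a family of \emph{linearly} equivalent divisors is false when \(\cA\) has a nontrivial abelian quotient: on an abelian variety, \(t_a^{*}\cL\) differs from \(\cL\) by an element of \(\mathrm{Pic}^0\), so the translates are only algebraically equivalent. One either needs the theorem of the square (combining \(t_a^{*}D+t_{-a}^{*}D\sim 2D\)) or, more directly, Kawamata's formula \(\overline{\kappa}(\cA-Z_1)=\dim\cA-\dim \mathrm{St}(Z_1)^{0}\), which gives \(\geq 1\) whenever \(Z_1\) is a nonempty proper divisor. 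To your credit you flag these as non-self-contained inputs, but the sketch as written would not survive being made precise.
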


\begin{lem}[{\cite[Lemmas 1.3 \& 1.4]{CDY25}}]
	Let $\alpha:X\to \cA$ be  a  (possibly non-proper)   morphism  from a smooth quasi-projective variety $X$ to a semi-abelian variety $\cA$ with   $\overline{\kappa}(X)=0$. 
	\begin{thmlist}
		\item  \label{lem:abelian pi0} if $\alpha$ is birational, then there exists a Zariski closed subset $Z\subset \cA$ of codimension at least two such  that $\alpha$ is   isomorphic
		over $\cA\backslash Z$. 
		\item \label{lem:abelian pi} If   $\dim X=\dim \alpha(X)$.	
		Then  $ \pi_1(X)$ is   abelian.  
		\qed
	\end{thmlist} 
\end{lem}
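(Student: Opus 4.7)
The plan splits naturally along the two parts, with (ii) reducing to (i) via a Stein-type factorization.

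\textbf{Part (i).} The strategy is to exploit the triviality of the log canonical class of a smooth toroidal compactification of $\cA$ together with the hypothesis $\bar\kappa(X)=0$. Let $U \subseteq X$ be the maximal open subset on which $\alpha$ restricts to an open immersion, set $V := \alpha(U) \subseteq \cA$, and $Z := \cA \setminus V$. By \cref{prop:Koddimabb}(b), one has $\bar\kappa(\cA \setminus Z) \geq 0$, with equality if and only if $Z$ has no codimension one component; so it suffices to prove $\bar\kappa(V) = 0$. Via the isomorphism $U \simeq V$, this reduces to proving $\bar\kappa(U) = \bar\kappa(X) = 0$. The key geometric input is that every divisorial component $E$ of $X \setminus U$ must be $\alpha$-exceptional, i.e.\ $\alpha(E)$ has codimension $\geq 2$ in $\cA$: otherwise a rank count at the generic point of $E$ would force $\alpha$ to be a local isomorphism there, contradicting the maximality of $U$. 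Since exceptional divisors for a birational morphism to a variety with trivial log canonical class are rigid, they contribute no new log sections, and one obtains $\bar\kappa(U) = 0$ as required.

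\textbf{Part (ii).} The plan is to reduce to (i). First, factor $\alpha = g \circ \beta$ with $\beta \colon X \to X'$ birational and $g \colon X' \to \overline{\alpha(X)}$ finite, taking $X'$ to be the normalization of $\overline{\alpha(X)}$ in the function field of $X$. Monotonicity of $\bar\kappa$ under generically finite surjections gives $\bar\kappa(\overline{\alpha(X)}) \leq \bar\kappa(X) = 0$, and \cref{prop:Koddimabb}(a) then shows that $\overline{\alpha(X)}$ is a translate of a semi-abelian subvariety $\cA' \subseteq \cA$. Since $\bar\kappa(X') = 0$ and $g \colon X' \to \cA'$ is finite, $g$ must be \'etale (by a classical Kawamata/Iitaka-type result), so $X'$ itself inherits the structure of a semi-abelian variety. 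Applying part (i) to $\beta$ produces $Z \subset X'$ of codimension $\geq 2$ over which $\beta$ restricts to an isomorphism. Combining the Lefschetz-type equality $\pi_1(X' \setminus Z) \cong \pi_1(X')$ (removing a codimension $\geq 2$ subset of a smooth variety does not affect $\pi_1$) with the surjection $\pi_1(X' \setminus Z) \cong \pi_1(\beta^{-1}(X' \setminus Z)) \twoheadrightarrow \pi_1(X)$ coming from the open immersion $\beta^{-1}(X' \setminus Z) \hookrightarrow X$ exhibits $\pi_1(X)$ as a quotient of $\pi_1(\cA')$, which is abelian, so $\pi_1(X)$ is abelian.

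\textbf{Main obstacle.} The delicate step is establishing the equality $\bar\kappa(U) = \bar\kappa(X) = 0$ in part (i): a priori, removing a closed subset from a smooth variety can strictly increase the log Kodaira dimension, and one must crucially combine the $\alpha$-exceptional nature of the divisors in $X \setminus U$ with the triviality $K_{\bar\cA} + D_\cA \sim 0$ of the log canonical of the target to rule out any new log sections. Once this is in place, the rest of the argument flows from \cref{prop:Koddimabb} and standard facts about semi-abelian varieties.
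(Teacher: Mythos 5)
Your reduction of part (ii) to part (i) via the normalization $X' \to \overline{\alpha(X)}$, Kawamata's characterization of finite covers of semi-abelian varieties with $\bar\kappa=0$, and the $\pi_1$-surjectivity in codimension $\geq 2$ is sound, and this is essentially the expected argument. The problem lies in part (i), precisely at the point you flag as the main obstacle: your resolution of it does not actually work. The step from ``every divisorial component $E_i$ of $X\setminus U$ is $\alpha$-exceptional'' to ``$\bar\kappa(U)=\bar\kappa(X)=0$'' requires $\kappa\bigl(K_{\overline{X}}+D_X+\sum\overline{E}_i\bigr)=\kappa\bigl(K_{\overline{X}}+D_X\bigr)$, and ``exceptional divisors are rigid'' (\emph{i.e.}\ $\kappa(\sum\overline{E}_i)=0$) does not yield this, because $\kappa(A+B)$ can strictly exceed $\max(\kappa(A),\kappa(B))$. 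Concretely: blow up $\mathbb{P}^2$ at two points, let $\widetilde{L}$ be the strict transform of the line through them and $E_1$ an exceptional divisor; then $\kappa(\widetilde{L})=0$ (it is a $(-1)$-curve) but $\kappa(\widetilde{L}+E_1)=\kappa(\mu^*L-E_2)=1$. So adding an effective exceptional divisor to a rigid divisor can raise $\kappa$, and your reasoning gives no control on this. The correct argument has to exploit that $K_{\overline{X}}+D_X$ is the divisor of $\overline{\alpha}^*\omega_{\mathcal{A}}$ and in particular is the pullback of a trivial bundle plus discrepancy terms, together with precise bookkeeping of which boundary components are $\overline{\alpha}$-exceptional; your proposal never establishes that $K_{\overline{X}}+D_X$ has the form ``$\overline{\alpha}^*(\text{something})+\text{effective exceptional}$'', which is exactly the content needed and is logically intertwined with the statement you are trying to prove.

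There is also a smaller gap: even granting $\bar\kappa(U)=0$ and hence $\operatorname{codim} Z\geq 2$ for $Z=\mathcal{A}\setminus\alpha(U)$, this does not by itself give ``$\alpha$ is an isomorphism over $\mathcal{A}\setminus Z$,'' since $\alpha^{-1}(\mathcal{A}\setminus Z)$ may be strictly larger than $U$ (components of $X\setminus U$ can map into $\alpha(U)$). You need to further enlarge $Z$ by $\overline{\alpha(X\setminus U)}$ (still codimension $\geq 2$ by the exceptionality claim) and then verify that $\alpha^{-1}(\mathcal{A}\setminus Z)\to\mathcal{A}\setminus Z$ is both injective and surjective; this is not hard once $\operatorname{codim} Z\geq 2$ is known, but it is a step your write-up skips.
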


\subsection{Covers and Galois morphisms.} \label{sec:covGal}

If \(X\) is a complex variety, its fundamental group will be denoted by \(\pi_{1}(X)\), and its universal cover will usually be denoted by \(\pi : \widetilde{X} \to X\).

\begin{dfn}[Galois morphism]
	 	A finite map $\gamma: X \rightarrow Y$ of varieties is called \emph{Galois  with group $G$} if there exists a finite group $G \subset \operatorname{Aut}(X)$ such that $\gamma$ is isomorphic to the quotient map. 
	 \end{dfn}

	 If \(Y\) is a complex manifold and \(p : X \to Y\) is an étale cover, then there exists a Galois cover \(p' : X' \to Y\) factoring through \(p\) such that any other such Galois cover factors through \(p'\). In the case where \(X\) is connected, the Galois cover \(p' : X' \to Y\) is described as follows.
Let $F$ be the fiber of $p$ over $y\in Y$.
Then $\pi_1(Y,y)$ acts on $F$ transitively by lifting a closed path through $y$ starting with a point $x\in F$.
This defines a group morphism \(\varrho : \pi_{1}(Y,y) \to \mathfrak{S}(F)\), where $\mathfrak{S}$ stands for the permutation.
Now the covering $p':X'\to Y$ corresponds to the finite $\pi_1(Y,y)$-set $\mathrm{Im}(\varrho)$.
Since this set is a coset space of the normal subgroup $\mathrm{Ker}(\varrho)$, $p':X'\to Y$ is a Galois cover.
Note that we have a surjective map $\mathrm{Im}(\varrho)\to F$ of finite $\pi_1(Y,y)$-sets defined by $\sigma\mapsto \sigma(x_0)$, where $x_0\in F$ is a fixed point.
Hence \(p' \) factoring through \(p\).

			 Now, if \(p : X \to Y\) is a finite morphism of normal quasi-projective varieties, one can form the normalization of \(X\) in the Galois closure of \(\mathbb{C}(X)/\mathbb{C}(Y)\). This is a finite Galois morphism \(p' : X' \to Y\), factoring through \(X\); over the locus \(Y_{\circ}\) where \(p\) is \'etale on $p^{-1}(Y_{\circ})$, it identifies with the Galois closure defined previously. In particular, \(p'\) is \'etale over \(Y_{\circ}\).

\subsection{Fundamental groups of algebraic varieties}
Throughout this paper, we will make use of the following well-known result on fundamental groups from \cite[Theorem 2.1]{Ara16}:
\begin{lem}\label{lem:fun}
	Let $\mu:X'\to X$  be a bimeromorphic proper morphism between irreducible complex normal analytic variety.  Then $\mu_*:\pi_1(X')\to\pi_1(X)$ is surjective, and it is an isomorphism if both $X'$ and $X$ are smooth. Moreover, for any proper closed analytic subset $A\subset X$, $\pi_1(X\backslash A)\to \pi_1(X)$ is surjective.  \qed
\end{lem}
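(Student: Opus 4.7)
I would prove the three assertions in the order: the last statement first, then the surjectivity of $\mu_{*}$, and finally the isomorphism in the smooth case. The last statement serves as the key technical input for the first two.

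To prove $\pi_{1}(X\setminus A)\twoheadrightarrow\pi_{1}(X)$, the plan is to combine two codimension arguments. Since $X$ is irreducible and normal, the singular locus $X_{\mathrm{sing}}$ is a proper closed analytic subset of complex codimension at least two, and the local topological cone structure of normal analytic germs provides local deformation retractions showing that $\pi_{1}(X_{\mathrm{reg}})\twoheadrightarrow \pi_{1}(X)$ is surjective. On the smooth locus, $A':=A\cap X_{\mathrm{reg}}$ is a closed analytic subset of real codimension at least two in a complex manifold, and a standard transversality argument (loops and homotopies can be perturbed off such a subvariety) shows that $\pi_{1}(X_{\mathrm{reg}}\setminus A')\twoheadrightarrow \pi_{1}(X_{\mathrm{reg}})$ is surjective. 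Composing the two surjections yields the claim, since $X\setminus A\supset X_{\mathrm{reg}}\setminus A'$.

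For the surjectivity of $\mu_{*}$, let $B\subset X$ be the smallest closed analytic subset such that $\mu$ restricts to an isomorphism $X'\setminus E\xrightarrow{\sim} X\setminus B$, where $E:=\mu^{-1}(B)\subset X'$; both are proper closed analytic subsets. Applying the third part to $(X,B)$ and to $(X',E)$ yields a commutative square
\[
\begin{tikzcd}
\pi_{1}(X'\setminus E) \arrow[r, two heads] \arrow[d, "\cong"'] & \pi_{1}(X') \arrow[d, "\mu_{*}"] \\
\pi_{1}(X\setminus B) \arrow[r, two heads] & \pi_{1}(X)
\end{tikzcd}
\]
whose horizontal arrows are surjective and whose left vertical arrow is induced by the restriction isomorphism; hence $\mu_{*}$ is surjective.

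For the isomorphism in the smooth case, my plan is to use Hironaka's resolution and weak factorisation in the smooth complex analytic category to reduce to the model case of a single blowup $\mu\colon\mathrm{Bl}_{Z} X\to X$ along a smooth centre $Z\subset X$ of complex codimension $c\geq 2$; surjectivity of $\mu_{*}$ is already known, so only injectivity remains. A van Kampen computation then applies: choose a tubular neighbourhood $V$ of $Z$ in $X$ and set $U=X\setminus Z$, $U'=\mu^{-1}(U)\cong U$, $V'=\mu^{-1}(V)$. Then $V'\to Z$ is a $\mathbb{P}^{c-1}$-bundle, so $\pi_{1}(V')\cong \pi_{1}(Z)$, while $U\cap V\cong U'\cap V'$ deformation retracts onto an $S^{2c-1}$-bundle over $Z$ whose fibres are simply connected as $c\geq 2$. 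Comparing the two van Kampen diagrams for $X$ and $X'$ gives the desired isomorphism. The main obstacle is this last step: one has to justify reducing to smooth blowups in the (possibly non-algebraic) analytic setting, where weak factorisation is more delicate; a shortcut is to cite the analytic version of Kollár's theorem on fundamental groups under bimeromorphic morphisms, or directly \cite[Theorem 2.1]{Ara16}.
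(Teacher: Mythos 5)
The paper does not actually prove this lemma: it is recalled as a known result (the \(\qed\) sits inside the statement) and attributed to \cite[Theorem 2.1]{Ara16}. Your proposal therefore attempts something the paper itself does not.

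For the two surjectivity assertions the sketch is sound. The factorization of \(\pi_1(X_{\mathrm{reg}}\setminus A')\to\pi_1(X)\) through \(\pi_1(X\setminus A)\), the even real-codimension transversality argument on the regular locus, and the commutative square that transports surjectivity across the isomorphism \(X'\setminus E\cong X\setminus B\) are all correct in outline. You should, however, isolate where normality actually enters: the surjection \(\pi_1(X_{\mathrm{reg}})\twoheadrightarrow\pi_1(X)\) requires local irreducibility (connected cone links), not merely that \(X_{\mathrm{sing}}\) has codimension at least two; for a non-normal germ such as a node the local push-off argument fails, so normality is doing real work and should be flagged as such.

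The genuine gap is in the smooth case. Weak factorization is established for proper birational morphisms of smooth algebraic varieties, and for bimeromorphic maps between \emph{compact} complex manifolds; it is not available for an arbitrary proper bimeromorphic morphism of possibly non-compact complex manifolds, which is the generality this lemma asserts. So the reduction to a single smooth blow-up is not justified as stated. (The other standard route --- that fibers of a proper bimeromorphic morphism of smooth varieties are simply connected --- also rests on deep rational-connectedness theorems of Koll\'ar and Takayama, not on an elementary van Kampen argument.) You flag this difficulty yourself and propose to fall back on citing \cite[Theorem 2.1]{Ara16} for the hardest step; at that point your proposal and the paper coincide, and it is the citation rather than the argument that is carrying the load.
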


We also recall the following result. 
\begin{lem}[{see \cite[Proposition 1.3]{Cam91}, \cite[Proposition 2.10]{Kol95}}]\label{lem:finiteindex}
Let $f : X \to Y$ be a dominant morphism between   quasi-projective varieties, with $Y$ normal. Then the image of $f_{\ast} : \pi_{1}(X) \to \pi_{1}(Y)$ has finite index in $\pi_{1}(Y)$. \qed
\end{lem}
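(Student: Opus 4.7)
The plan is to reduce to the case of a smooth surjective morphism and then construct a finite étale factorization via Stein factorization. By \cref{lem:fun}, passing to a resolution of singularities of $X$ and restricting both $X$ and $Y$ to Zariski open dense subsets only affects the fundamental groups by surjective maps; consequently it suffices to prove the finite-index conclusion after such reductions. Combined with generic smoothness and the normality of $Y$, this allows me to assume that $X$ and $Y$ are both smooth and that $f\colon X\to Y$ is smooth and surjective.

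I would then fix smooth projective compactifications of $X$ and $Y$, resolve the indeterminacy of $f$ to obtain a projective morphism $\bar f\colon \bar X'\to \bar Y$, and apply the classical Stein factorization $\bar X'\to\bar W\to\bar Y$ with $\bar X'\to\bar W$ proper with geometrically connected fibers and $\bar W\to\bar Y$ finite. Using generic smoothness of $\bar f$ over $Y$ together with Zariski--Nagata purity of the branch locus, after a further shrinking of $Y$ one can ensure that $W := \bar W\times_{\bar Y}Y \to Y$ is finite étale. After one more shrinking to avoid the closed locus of $\bar W$ where fibers of $\bar X'\to\bar W$ are contained in $\bar X'\setminus X$, every fiber of the induced morphism $X\to W$ becomes a dense open subset of the corresponding irreducible fiber of $\bar X'\to\bar W$, hence both non-empty and connected. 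Thus $f$ factors as $X\to W\to Y$ with $X\to W$ smooth and surjective with connected fibers and $W\to Y$ finite étale; in particular $\pi_1(W)$ has finite index in $\pi_1(Y)$.

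The final step is to prove that $\pi_1(X)\to\pi_1(W)$ is surjective. For any connected finite étale cover $\widetilde W\to W$, the base change $\widetilde X := X\times_W\widetilde W \to \widetilde W$ is smooth and surjective with connected fibers. Smooth morphisms of complex varieties are open, so every connected component of $\widetilde X$ has open image in $\widetilde W$; connectedness of $\widetilde W$ then forces each component to surject onto $\widetilde W$, and connectedness of the fibers forces $\widetilde X$ itself to be connected. Combining this surjectivity with the finite index of $\pi_1(W)$ in $\pi_1(Y)$ yields the claim. The principal technical obstacle lies in the second paragraph: one must carefully manage the non-properness of $f$ when isolating the finite étale factor of the Stein factorization, which requires the compactification step together with appropriate shrinkings to simultaneously avoid the branch locus and the loci where the fibers of $\bar X'\to\bar W$ fail to meet $X$ in a dense open set.
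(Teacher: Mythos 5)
The paper does not prove this lemma: it is imported from Campana and Koll\'ar, with the end-of-proof marker placed directly after the statement. Your blind proof follows the same Stein-factorization route that the classical proofs use, and the overall plan — reduce to a smooth surjective $f$, produce a factorisation $X\to W\to Y$ with $W\to Y$ finite \'etale and $X\to W$ smooth surjective with connected fibres, then conclude — is the right one.

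There is, however, a genuine gap in your last step. You infer "$\pi_1(X)\to\pi_1(W)$ is surjective" from the fact that $X\times_W\widetilde W$ is connected for every connected \emph{finite} \'etale cover $\widetilde W\to W$. That hypothesis only says that $\Gamma':=\mathrm{Im}[\pi_1(X)\to\pi_1(W)]$ surjects onto every finite quotient of $\pi_1(W)$; it does not force $\Gamma'=\pi_1(W)$, nor even $[\pi_1(W):\Gamma']<\infty$, unless one has residual-finiteness or separability properties of $\pi_1(W)$ that cannot be assumed here (recall that fundamental groups of smooth projective varieties need not be residually finite). The remedy is to run the same openness-plus-connected-fibres argument for an \emph{arbitrary} connected topological cover $\widetilde W\to W$, in particular for the cover corresponding to $\Gamma'$ itself: the base change $\widetilde X:=X\times_W\widetilde W\to\widetilde W$ is still a surjective submersion with connected fibres (this is purely analytic and does not use algebraicity of $\widetilde W$), and since $X\to W$ lifts to $X\to\widetilde W$, the cover $\widetilde X\to X$ has a section; connectedness of $\widetilde X$ then forces $\widetilde X=X$, hence $\widetilde W=W$, hence $\Gamma'=\pi_1(W)$.

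Two smaller imprecisions. First, the set of $w\in\bar W$ with $\bar f^{-1}(w)\subset\bar X'\setminus X$ is the complement of the image of the open set $X$ under the proper map $\bar X'\to\bar W$; it is constructible but not obviously closed, so one should instead shrink $Y$ to avoid its \emph{closure}, which is a proper closed subset because $X$ has dense image in $\bar W$. Second, you invoke that the general fibre of $\bar X'\to\bar W$ is \emph{irreducible}, not merely connected; this is true but deserves a line: since $\bar X'$ is smooth hence normal, the generic fibre of the Stein factorisation is normal and connected, hence irreducible, and in characteristic zero geometrically irreducible, so general closed fibres are irreducible and removing the boundary divisor cannot disconnect them.
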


We now give the definition of a big representation of fundamental groups, also known as a generically large representation as introduced by Koll\'ar in \cite{Kol95}:
\begin{dfn}[Big representation]\label{def:big representation}
	Let $X$ be a quasi-projective normal variety.  Let $\varrho:\pi_1(X)\to G(K)$ be a representation, where $G$ is an algebraic group defined over some field $K$. We say that $\varrho$ is a \emph{big representation} if there are at most countably many Zariski  closed subvarieties $Z_i\subsetneqq X$ so that for every positive dimensional    closed subvariety $Y\subset X$ so that   $Y\not\subset \cup Z_i$, the image $\varrho\big({\rm Im}[\pi_1(Y^{\rm norm})\to \pi_1(X))] \big)$ is infinite.  The points in $X-\cup_{i}Z_i$ are called \emph{very general points} in $X$.
\end{dfn} 
\begin{rem}
	In a more recent work by the second and third authors \cite{DY23}, it has been established that, for a quasi-projective normal variety $X$ and a reductive representation $\varrho:\pi_1(X)\to {\rm GL}_N(\bC)$, if we consider any closed subvariety $Z$, the image $\varrho\big({\rm Im}[\pi_1(Z^{\rm norm})\to \pi_1(X))] \big)$ is infinite if and only if $\varrho\big({\rm Im}[\pi_1(Z)\to \pi_1(X))] \big)$. Consequently, when the representation is reductive, in \cref{def:big representation} we can define the big representation without taking the normalization of $Y$. 
\end{rem}

 \subsection{Notions of pseudo Picard hyperbolicity}
Let us first recall the definition of pseudo Picard hyperbolicity introduced in \cite{Denarxiv}.  
Let $f:\bD^*\to X$ be a holomorphic map from the punctured disk $\bD^*$ to a quasi-projective variety $X$.
If $f$ extends to  a holomorphic map $\mathbb D\to\overline{X}$ from the disk $\bD$ to some projective compactification $\overline{X}$ of $X$, then the same holds for any projective compactification $\overline{X}'$ of $X$, as $\overline{X}$ and $\overline{X}'$ are birational.

\begin{dfn}[pseudo Picard hyperbolicity]\label{def:Picard}
	Let $X$ be a  smooth quasi-projective variety, and let $\overline{X}$ be a smooth projective compactification. 
 $X$ is called \emph{pseudo-Picard hyperbolic} if there is a   Zariski closed proper subset $Z\subsetneq X$ so that any holomorphic map  $f:\bD^*\to X$ with $f(\bD^*)\not\subset X$ extends to a holomorphic map $\bar{f}:\bD\to \overline{X}$. 
	If $Z=\varnothing$, $X$ is simply called \emph{Picard hyperbolic}.
\end{dfn}
As shown in \cite[Proposition 1.7]{CD21}, 
pseudo Picard hyperbolic varieties exhibit the following algebraic properties. 

\begin{proposition} \label{extension theorem} 
	Let $X$ be a smooth quasi-projective variety that is pseudo Picard hyperbolic. Then any meromorphic map $f:Y\dashrightarrow X$ from another smooth quasi-projective variety $Y$ to $X$ with $f(Y)\not\subset \mathrm{Sp_p}(X)$ is \emph{rational}. 
\end{proposition}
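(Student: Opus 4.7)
The plan is to extend $f$ to a meromorphic map between smooth projective compactifications and then invoke Chow's theorem to deduce algebraicity, hence rationality. First, I fix smooth projective compactifications $\overline{Y}\supset Y$ with boundary $D=\overline{Y}\setminus Y$ a simple normal crossing divisor, and $\overline{X}\supset X$. After composing with $X\hookrightarrow \overline{X}$ and resolving the indeterminacy locus by a birational modification of $Y$ (which does not affect rationality of $f$), I may assume $f$ is a holomorphic morphism $Y\to \overline{X}$. Let $\overline{\Spp(X)}$ denote the closure of $\Spp(X)$ in $\overline{X}$; by hypothesis $W:=f^{-1}\bigl(\overline{\Spp(X)}\bigr)$ is a proper analytic subset of $Y$.

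The next step is to extend $f$ meromorphically across a generic point of each boundary component. Fix an irreducible component $D_i$ of $D$ and a very general point $p\in D_i$ lying on no other component of $D$ and not in the closure $\overline{W}\cap D_i$. Pick local coordinates $(z_1,\ldots,z_n)$ on $\overline{Y}$ near $p$ with $D_i=\{z_1=0\}$, and a small polydisk neighborhood $U\cong \mathbb{D}^n$, so that $U\cap Y=\{z_1\neq 0\}\cap U$. For a dense set of parameters $w=(z_2,\ldots,z_n)\in \mathbb{D}^{n-1}$, the slice $f_w:\mathbb{D}^*_{z_1}\to X$ has image not contained in $\Spp(X)$, because the chosen $p$ avoids $\overline{W}\cap D_i$. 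By pseudo Picard hyperbolicity (\cref{def:Picard}), each such $f_w$ extends to a holomorphic map $\mathbb{D}_{z_1}\to \overline{X}$.

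I then argue that these fiberwise extensions assemble into a meromorphic extension of $f$ in a neighborhood of a generic point of each $D_i$. Embed $\overline{X}\hookrightarrow \mathbb{P}^N$ and consider the closure $\overline{\Gamma}$ of the graph of $f|_{U\cap Y}$ in $U\times \overline{X}$. The projection $\overline{\Gamma}\to U$ is proper by compactness of $\overline{X}$, and the fiberwise extensions show that a generic fiber over $D_i\cap U$ is a single point. An application of Siu's extension theorem for meromorphic maps (equivalently Bishop's theorem, after verifying that $\overline{\Gamma}$ has locally finite volume with respect to a Fubini--Study metric, which follows from the uniform control provided by the fiberwise extensions) shows that $\overline{\Gamma}$ is analytic in $U\times \overline{X}$. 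Combining this with the Hartogs-type extension of meromorphic maps across analytic subsets of codimension at least two, I obtain a meromorphic map $\tilde f:\overline{Y}\dashrightarrow \overline{X}$ extending $f$.

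To conclude, the graph of $\tilde f$ is a closed analytic subset of the projective variety $\overline{Y}\times \overline{X}$, hence algebraic by Chow's theorem. Therefore $\tilde f$ is rational, and so is $f$. The main obstacle I anticipate is the third paragraph: passing from the fiberwise holomorphic extensions given by pseudo Picard hyperbolicity to a genuine meromorphic extension of $f$ across the boundary in a neighborhood of a generic point of each $D_i$. This requires either verifying the volume or propreness hypothesis needed for Siu--Bishop, or working directly with local Weierstrass-type representations of $\tilde f$, but in both cases the argument reduces to standard analytic geometry once the fiberwise extensions are in hand.
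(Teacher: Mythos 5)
Your proposal follows essentially the same route as the paper's proof: reduce to extension across a generic smooth point of the boundary divisor of a compactification, slice locally into one-dimensional punctured disks, extend each slice across $0$ by pseudo-Picard hyperbolicity, reassemble the slicewise extensions into a meromorphic extension via a Siu-type theorem, and conclude by Chow. Two technical points are worth flagging, though. First, by resolving the indeterminacy of $f:Y\dashrightarrow X$ into a morphism $f':Y'\to\overline{X}$ \emph{before} slicing, your slices $f'_w:\bD^*\to\overline{X}$ may take values in $\overline{X}\setminus X$ along the exceptional locus of the modification, and pseudo-Picard hyperbolicity of $X$ does not apply to a map into $\overline{X}$. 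You would need an extra genericity condition on $p$ (e.g.\ avoid the closure of $(f')^{-1}(\overline{X}\setminus X)$ meeting $D_i$) so that $f'_w{}^{-1}(\overline{X}\setminus X)$ is finite and you can shrink the punctured disk before applying hyperbolicity; you don't address this. The paper sidesteps the issue entirely by keeping $f$ as a meromorphic map, observing that a meromorphic map from a one-dimensional manifold into a projective variety is automatically holomorphic, so generic slices are honest holomorphic maps into $X$. Second, for the reassembly step the right tool is not Bishop's theorem with a volume estimate (checking locally bounded volume of the graph is nontrivial extra work) but rather Siu's extension theorem \cite[p.~442, $(\ast)$]{Siu75}, which is tailor-made for precisely this situation: it extends a meromorphic map across a hypersurface once a dense family of one-dimensional slices is known to extend holomorphically, with no volume hypothesis. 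Invoking that result directly closes the gap you flagged in your last paragraph; the codimension-$\geq 2$ strata of $D$ are handled by \cite[Theorem~1]{Siu75}, as you also note.
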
 
Although we have stated this proposition only for dominant meromorphic maps $f:Y\dashrightarrow X$ in \cite[Proposition 1.7]{CD21}, the same proof works for the proof of \cref{extension theorem}.  We provide it here for completeness. 
\begin{proof}[Proof of \cref{extension theorem}]
Let $\overline{Y}$ be a smooth projective compactification of $Y$ such that $D:=\overline{Y}\backslash Y$ is a simple normal crossing divisor.   Let $\overline{X}$ be a smooth projective compactification of $X$.  Note that  $f$ is rational if and only if $f$ extends to a meromorphic map $\bar{f}:\overline{Y}\dashrightarrow \overline{X}$. It suffices to check that this property holds in a neighborhood of any point of \(D\).  By \cite[Theorem 1]{Siu75}, any meromorphic map from a Zariski open set $W^\circ$ of a complex manifold $W$ to a compact K\"ahler manifold $\overline{X}$ extends to a meromorphic map from $W$ to $\overline{X}$ provided that the codimension of $W-W^\circ$ is at least 2.  It then suffices to consider the extensibility of $f$ around smooth points on $D$. Pick any such point $p\in D$ and choose a coordinate system $(\Omega;z_1,\ldots,z_n)$ centered at $p$ such that $\Omega\cap D=(z_1=0)$. The theorem follows if we can prove that $f:\bD^*\times \bD^{n-1}\dashrightarrow X$ extends to a meromorphic map $\bD^{n}\dashrightarrow \overline{X}$. 

  Denote by $S$  the indeterminacy locus 
 of $f|_{\bD^*\times \bD^{n-1}}:\bD^*\times \bD^{n-1}\dashrightarrow X$, which is a closed subvariety of $\bD^*\times \bD^{n-1}$  of codimension at least two.  Since we assume that $f(Y)\not\subset\Sp_p(X)$, there is thus a dense open set $W\subset \bD^{n-1}$ such that for any $z\in W$, each slice $\bD^*\times \{z\}\not\subset S$ and $f(\bD^*\times \{z\}-S)\not\subset \Sp_p(X)$. Then the restriction   $f|_{\bD^*\times \{z\}}:\bD^*\times \{z\}\dashrightarrow X$ is well-defined and  holomorphic. Then  $f:\bD^*\times \{z\}\to X$   extends  to  a holomorphic map $\bD\times   \{z\}\to \overline{X}$ for each $z\in W$.   We then apply the theorem of Siu in \cite[p.442,  ($\ast$)]{Siu75} to conclude that $f|_{\bD^*\times \bD^{n-1}}$ extends to a meromorphic map  $\bD^{n}\dashrightarrow \overline{X}$. This implies that  $f$ extends to a meromorphic map $\bar{f}:\overline{Y}\dashrightarrow \overline{X}$. By the Chow theorem, $f$ is rational.   
\end{proof}

A direct consequence of \cref{extension theorem}  is the following uniquness of algebraic structure of pseudo Picard hyperbolic varieties. 

\begin{cor}
Let $X$ and $Y$ be smooth quasi-projective varieties such that there exists an analytic isomorphism $\varphi:Y^{\rm an}\to X^{\rm an}$ of associated complex spaces.
Assume that $X$ is pseudo Picard hyperbolic. 
Then $\varphi$ is an algebraic isomorphism. \qed
\end{cor}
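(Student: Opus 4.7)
The plan is to apply Proposition~\ref{extension theorem} directly to $\varphi$ and then upgrade the resulting rational map to an algebraic isomorphism. Since $\varphi:Y^{\rm an}\to X^{\rm an}$ is a biholomorphism, it is in particular a holomorphic---hence meromorphic---map from $Y$ to $X$, whose image $\varphi(Y)=X$ is certainly not contained in the proper Zariski closed subset $\Spp(X)\subsetneqq X$, using that $X$ is pseudo Picard hyperbolic. Hence Proposition~\ref{extension theorem} asserts that $\varphi$ is rational, and its proof (via Siu's extension theorem) shows that $\varphi$ in fact extends to a rational map $\overline{\varphi}:\overline{Y}\dashrightarrow \overline{X}$ between smooth projective compactifications.

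Next, I would show that this rational map is in fact a regular morphism everywhere on $Y$. Because $\overline{X}$ is projective and $\overline{Y}$ is smooth, the indeterminacy locus of $\overline{\varphi}$ has codimension at least two in $\overline{Y}$. Let $\Gamma\subset\overline{Y}\times\overline{X}$ denote the Zariski closure of the algebraic graph of $\overline{\varphi}$ over its locus of regularity; it is irreducible of dimension $\dim Y$, and its analytification $\Gamma^{\rm an}$ contains the analytic graph of $\varphi^{\rm an}$, which is itself an irreducible closed analytic subvariety of $Y^{\rm an}\times X^{\rm an}$ biholomorphic to $Y^{\rm an}$. As both are irreducible analytic sets of the same dimension agreeing on a dense open subset, they coincide over $Y^{\rm an}$. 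In particular, every fiber of $\Gamma\to\overline{Y}$ above a point of $Y$ is a single point, so $\overline{\varphi}$ has no indeterminacy on $Y$, and $\varphi:Y\to X$ is a morphism of algebraic varieties.

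Finally, it remains to upgrade this morphism to an algebraic isomorphism. The analytic biholomorphism $\varphi^{\rm an}$ is bijective with everywhere invertible differential, so $\varphi$ is both bijective and étale, the latter being a local condition verifiable analytically via GAGA. An étale injective morphism of varieties is an open immersion by Zariski's Main Theorem, and since $\varphi$ is surjective it must then be an algebraic isomorphism.

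The main subtlety lies in the second step: arguing that a rational map between quasi-projective varieties which is everywhere holomorphic is necessarily a regular morphism. This is standard but deserves care because $X$ itself is only quasi-projective, so one must genuinely pass to the projective compactifications $\overline{X},\overline{Y}$ before invoking the codimension-two bound on the indeterminacy locus. Beyond this point, I do not anticipate significant obstacles.
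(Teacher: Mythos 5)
Your proof is correct and fills in exactly the routine details that the paper leaves implicit with its ``\qed'' (the paper treats this corollary as a direct consequence of Proposition~2.8, tacitly invoking the same upgrade from ``rational'' to ``regular isomorphism'' that you spell out). The three steps---rationality via the extension proposition, regularity on $Y$ via the graph argument, and the upgrade to isomorphism via bijective plus \'etale---are each standard, correctly ordered, and correctly justified. One small point worth tightening in your step~2: rather than appealing only to the equality of two irreducible analytic sets of the same dimension agreeing on a dense open (which leaves a mild question about whether $\Gamma^{\rm an}\cap(Y^{\rm an}\times\overline{X}^{\rm an})$ is itself irreducible), the cleaner argument is that every point of $\Gamma$ is an analytic limit of points on the graph over the regularity locus $U$, and for $y\in Y$ the continuity of $\varphi$ on $Y$ forces the second coordinate of any such limit point over $y$ to be $\varphi(y)$; this shows the fiber of $\Gamma\to\overline{Y}$ over each $y\in Y$ is a singleton, which is what you need. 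Your final step via Zariski's Main Theorem (injective \'etale morphism is an open immersion, surjective open immersion is an isomorphism) is the standard way to finish, and the \'etaleness is correctly obtained from invertibility of the analytic differential.
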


A classical result due to Borel \cite{Bor72} and Kobayashi-Ochiai \cite{KO71} is that quotients of bounded symmetric domains by torsion-free lattices are Picard hyperbolic. The second author has proved a similar result for algebraic varieties that admit a complex variation of Hodge structures.
\begin{thm}[{\cite[Theorem A]{Denarxiv}}]\label{thm:PicardVHS}
	Let $X$ be a smooth quasi-projective variety. Assume that there is a complex variation of  Hodge structures on $X$ whose period mapping is injective at one  point. Then $X$ is pseudo Picard hyperbolic. \qed
\end{thm}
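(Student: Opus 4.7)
The plan is to isolate a proper Zariski closed subset $Z \subsetneqq X$ capturing the degeneracy locus of the period mapping, and then show that maps $f : \bD^* \to X$ with $f(\bD^*) \not\subset Z$ cannot have essential singularities at the puncture. Let $\Phi : \widetilde{X} \to D$ denote the lifted period map to the classifying space $D$ of the given $\bC$-VHS, equivariant under the associated monodromy representation $\pi_1(X) \to G(\bC)$. The natural candidate for $Z$ is the union of the positive-dimensional fibers of the map induced by $\Phi$, together with the analytic locus where $d\Phi$ fails to be injective. Under the hypothesis that the period mapping is injective at one point, this $Z$ is a proper Zariski closed subset, and it descends from $\widetilde{X}$ to $X$ because it is monodromy-invariant.

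The technical heart would be to equip $X \setminus Z$ with a Finsler pseudometric of negative holomorphic sectional curvature, constructed from Hodge-theoretic data. By Griffiths' classical curvature computations, the horizontal tangent bundle of $D$ carries a Hermitian metric whose holomorphic sectional curvature along horizontal directions is bounded above by a negative constant. Pulling this back via $\Phi$ gives a possibly degenerate pseudometric $h$ on $X$; the injectivity-at-one-point hypothesis ensures that $h$ is nondegenerate at some point, and a Zariski-density argument then identifies the full degeneracy locus of $h$ with $Z$ (up to lower-dimensional strata).

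With this pseudometric in hand, I would establish pseudo Picard hyperbolicity in two steps. First, an Ahlfors--Royden Schwarz lemma for Finsler pseudometrics with bounded negative curvature yields $f^{*}h \leq C \cdot h_{\bD^{*}}$, where $h_{\bD^{*}}$ is the Poincar\'e metric, so that $f$ has locally finite hyperbolic area in a punctured neighborhood of the origin. Second, one invokes an asymptotic analysis of $\bC$-VHS on $\bD^{*}$: by tameness of polarizable $\bC$-harmonic bundles (Simpson, Mochizuki) together with an appropriate nilpotent orbit theorem in this setting, the limiting Hodge filtration of the pulled-back variation at $0$ exists in a suitable toroidal compactification of $\Gamma \backslash D$. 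Combining the area bound with this asymptotic structure should rule out essential singularities and produce an extension $\bar{f} : \bD \to \overline{X}$ to a smooth projective compactification.

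The main obstacle I expect lies in bridging the analytic extension, obtained through $\Gamma \backslash D$ and its boundary, with an honest algebraic extension into a chosen projective compactification $\overline{X}$ of $X$. In the classical $\bZ$-PVHS setting this is handled by Borel's quasi-unipotency theorem together with the Borel--Kobayashi--Ochiai extension result. In the $\bC$-VHS setting, where $\bZ$-structures and quasi-unipotency are unavailable, the extension must be produced purely from the growth estimate coming from negative curvature together with the tame asymptotics of harmonic bundles, and one has to verify carefully that the boundary point obtained corresponds to a point of $\overline{X}$ rather than only to a point of an abstract compactification of the period domain quotient. Carrying out this compatibility argument in a way that is genuinely algebraic on $\overline{X}$ is the delicate part of the proof.
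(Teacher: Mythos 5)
This theorem is stated without proof in the paper (it is quoted from \cite{Denarxiv}), so there is no internal argument to compare against; I am measuring your proposal against the proof in that cited reference. Your first two paragraphs do capture the actual mechanism: pull back the Griffiths negative-curvature metric on horizontal tangent vectors of the period domain via the period map to obtain a Finsler pseudometric $h$ on $T_X$ with holomorphic sectional curvature bounded above by a negative constant; immersivity of the period map at one point forces $h$ to be nondegenerate away from a proper Zariski-closed subset, which serves as the exceptional locus $Z$.

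The genuine gap is in the extension step, and you flag it yourself without resolving it. You propose to route the extension through a compactification of $\Gamma\backslash D$ (nilpotent orbits, limiting Hodge filtrations, then Borel and Kobayashi--Ochiai), but as you observe, in the $\bC$-VHS setting the monodromy $\Gamma$ need not be arithmetic or even discrete, so $\Gamma\backslash D$ need not admit a usable compactification, and this route does not go through. The actual argument in \cite{Denarxiv} avoids $\Gamma\backslash D$ entirely: one fixes a smooth log compactification $(\overline{X},D)$ of $X$, uses Mochizuki's norm estimates for tame nilpotent harmonic bundles to show that the Hodge metric on the graded pieces, and hence $h$, has at most Poincar\'e-type growth near $D$ (i.e.\ $h$ is dominated by a metric with logarithmic poles along $D$), and then applies a Big Picard criterion stated purely in terms of $(\overline{X},D,h)$: a Finsler pseudometric on $X$ with uniformly negative holomorphic sectional curvature \emph{and} Poincar\'e growth near $D$ forces every holomorphic $f:\bD^*\to X$, with $f(\bD^*)$ not contained in the degeneracy locus of $h$, to extend holomorphically across the puncture into $\overline{X}$. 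That criterion is established by Nevanlinna-theoretic means (a logarithmic derivative lemma / tautological inequality on the log pair), not by any analysis of the target period domain quotient. Your Ahlfors--Schwarz estimate $f^{*}h\leq C\,h_{\bD^{*}}$ is a useful intermediate step, but it does not by itself force extension into $\overline{X}$; the missing ingredient is precisely the comparison of $h$ with a fixed reference metric on $\overline{X}$ near the boundary, i.e.\ the Poincar\'e growth estimate coming from Mochizuki's asymptotic analysis.
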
 

We conclude this subsection by recalling from \cref{subsec:20230427} the strong Green-Griffiths and Lang conjectures, which are fundamental problems in the study of hyperbolicity of algebraic varieties (cf. \cite[I, 3.5]{Lan97} and \cite[VIII, Conj. 1.3]{Lan97}).
Taking into account \Cref{main:GGL}, we  include the pseudo Picard hyperbolicity into the statement as well, and formulate the \emph{generalized  Green-Griffiths-Lang conjecture} as follows.  
\begin{conjecture}\label{conj:GGL}
Let $X$ be a smooth quasi-projective variety.  Then the following properties are equivalent:
	\begin{thmlist} 
		\item  $X$ is of log general type;
	\item  $X$ is pseudo Picard hyperbolic, i.e., $\Spp(X)\subsetneqq X$; 
	\item  $X$ is pseudo Brody hyperbolic, i.e., $\Sph(X)\subsetneqq X$; 
	\item  
	$\Spab(X)\subsetneqq X$; 
	\item  
	$X$ is strongly of log general type, i.e., $\Spalg(X)\subsetneqq X$.
	\end{thmlist}
\end{conjecture}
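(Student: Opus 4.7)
My plan is to isolate the implications by difficulty rather than attempt a uniform treatment, since the conjecture packages several long-standing open problems. I would first dispose of the easier inclusions among the four special subsets, reducing matters to the fundamental direction $\mathrm{(i)} \Rightarrow \mathrm{(ii)}$, which amounts to the strong Green--Griffiths--Lang conjecture in its sharpest quasi-projective form.

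Among the inclusions: $\Spab(X) \subset \Spalg(X)$ is immediate from \cref{prop:Koddimabb}, since the image of a non-constant rational map from a semi-abelian variety is positive-dimensional of log Kodaira dimension at most zero. For a non-constant entire curve $f:\bC \to X$, I would consider $g(z) := f(1/z)$ on $\bD^{\ast}$: either $g$ has an essential singularity at the origin and $f(\bC)$ contributes to $\Spp(X)$, or $g$ extends holomorphically to $\bar{g}:\bD \to \overline{X}$, in which case $f$ and $\bar{g}$ glue to a morphism $\bP^{1} \to \overline{X}$ whose rational image $C$ meets $X$ in a positive-dimensional closed subvariety $C \cap X$ not of log general type (Picard's theorem forces $C \cap X$ to be $\bP^{1}$ minus at most two points, given the non-constant map $f:\bC \to C \cap X$). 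This yields $\Sph(X) \subset \Spp(X) \cup \Spalg(X)$. For the implication $\mathrm{(i)} \Rightarrow \mathrm{(v)}$, I would invoke the additivity theorem of Campana--P\u{a}un: if $\Spalg(X) = X$, then $X$ is covered by a Chow family of positive-dimensional subvarieties of non-log-general type, and additivity applied to the associated fibration contradicts $\overline{\kappa}(X) = \dim X$, in exactly the spirit of the paper's own argument for \cref{main:log general type}.

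The genuinely hard direction is $\mathrm{(i)} \Rightarrow \mathrm{(ii)}$, that a smooth quasi-projective variety of log general type is pseudo Picard hyperbolic. My plan is a two-stage approach: first, whenever $\pi_{1}(X)$ admits a big reductive representation into ${\rm GL}_{N}(\bC)$, apply \cref{main:GGL} directly; second, in the remaining situations, attempt to manufacture such a representation from the log-general-type hypothesis, for instance by passing to a Shafarevich-type quotient, by exploiting logarithmic symmetric differentials provided by Miyaoka-type positivity of $\Omega^{1}_{\overline{X}}(\log D)$, or by bootstrapping through intermediate Iitaka fibrations. When the Shafarevich base itself is of log general type and carries a large reductive $\pi_{1}$-representation, one applies the first stage there and pulls the pseudo-Picard property back to $X$ via \cref{extension theorem}.

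The principal obstacle is the regime of log-general-type varieties whose fundamental group is trivial or admits no non-trivial reductive linear representation---most saliently, simply connected projective varieties of general type. For such varieties the non-abelian Hodge-theoretic toolkit developed in the paper cannot be brought to bear, and one is forced into genuinely different techniques: jet differentials, Demailly's holomorphic Morse inequalities, or a direct Nevanlinna-theoretic analysis built on positivity of $\Omega^{1}_{\overline{X}}(\log D)$. This case has resisted every current approach, and I would expect the conjecture to remain out of reach in full generality until a new idea is introduced specifically for this simply connected regime.
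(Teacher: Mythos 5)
The statement you were asked to address is labeled \texttt{conjecture} in the paper and is \emph{not} proved there in full generality: the paper formulates it as the generalized Green--Griffiths--Lang conjecture (gathering several long-standing open problems, as you say) and establishes it only under the additional hypothesis of a big (resp.\ large) reductive representation of $\pi_1(X)$, as in \cref{main:GGL,main:special}. Your closing assessment---that the conjecture remains out of reach, in particular for simply connected varieties of general type where the non-abelian Hodge machinery offers no foothold---is accurate, and this matches the paper's own positioning of the statement.

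However, some of the implications you file under the ``easier'' heading are themselves open or have gaps in the sketch you give. Most seriously, (i)$\Rightarrow$(v) (log general type implies strongly of log general type) is \emph{not} disposed of by the Campana--P\u{a}un argument as you describe it. Having $\Spalg(X) = X$ means only that the Zariski closure of the union of all non-log-general-type subvarieties is all of $X$; it does not produce a dominant Chow family of such subvarieties of a fixed dimension to which Fujino-type additivity can be applied. The paper's use of Campana--P\u{a}un in \cref{prop:log general type} is in a quite different configuration: there one starts from a spectral covering and concrete logarithmic symmetric differentials and uses \cite[Corollary~8.7]{CP19} to upgrade bigness of $K_{\overline{X}}+E+D$ to bigness of $K_{\overline{X}}+D$. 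That is not an argument that (i)$\Rightarrow$(v) in general, and indeed (i)$\Rightarrow$(v) is itself part of the strong Lang conjecture and open. Similarly, your citation of \cref{prop:Koddimabb} for $\Spab(X)\subset\Spalg(X)$ is not quite right as stated: that proposition controls the log Kodaira dimension of a subvariety \emph{of} a semi-abelian variety, not of the \emph{image} of a semi-abelian variety under a rational map into $X$; the actual chain of inclusions among the $\Sp_{\bullet}$ in the paper rests on \cite[Lemma~2.1]{CDY25}. Finally, the second stage of your two-stage plan---``manufacturing'' a big reductive representation from the log-general-type hypothesis---is not a proof strategy but a restatement of the difficulty: no such construction exists, and producing one is essentially as hard as the conjecture itself. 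So the honest conclusion stands, but the ledger of what is known versus open should be corrected: among (i)--(v), only (v)$\Rightarrow$(i) is formal, (ii)$\Rightarrow$(iii)$\Rightarrow$(iv) follow from \cite[Lemma~2.1]{CDY25}, and \emph{every} implication from (i) is open in general.
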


Note that if $X$ is of log general type, then the conjugate variety $X^\sigma:=X\times_\sigma\bC$ under  $\sigma\in {\rm Aut}(\bC/\bQ)$ is also of log general type.   Therefore, by \cref{conj:GGL}, if $X$ is pseudo Brody (Picard) hyperbolic, it is conjectured that $X^\sigma$ is also pseudo Brody (Picard) hyperbolic (cf. \cite[p. 179]{Lan97}). 
This problem remains quite open and is currently an active area of research.

 \section{Some factorisation results}\label{sec:fac}
 In this section, we prove several preliminary results concerning fibrations $X\to Y$ of quasi-projective varieties.
 In particular, we prove \cref{lem:kollar} and subsequently derive \cref{main} from \cref{main2}, using \cref{lem:kollar}.

\begin{lem}[Quasi-Stein factorisation]\label{lem:Stein}
	Let $f:X\to Y$ be a morphism between smooth quasi-projective varieties. Then $f$ factors through   morphisms $\alpha:X\to S$ and $\beta:S\to Y$ such that
	\begin{enumerate}[label={\rm (\alph*)}]
		\item $S$ is a quasi-projective normal variety;
		\item   $\alpha$ is a dominant morphism whose general fibers are connected, but not necessarily proper or surjective;
		\item $\beta$ is a finite morphism.
	\end{enumerate}
Such a factorisation is unique.
\end{lem}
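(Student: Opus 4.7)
The plan is to mimic the classical Stein factorisation, but since $f$ is not assumed proper, to work at the level of function fields rather than through $\mathrm{Spec}_{Y}(f_{\ast}\sO_{X})$. First I would reduce to the case where $f$ is dominant: replacing $Y$ by the reduced closure $W := \overline{f(X)}^{\mathrm{red}} \subset Y$, the morphism $f$ becomes dominant onto $W$, and any finite morphism $S \to W$ composed with the closed immersion $W \hookrightarrow Y$ remains finite. Hence I may assume from now on that $f$ is dominant.

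Next, let $L$ be the algebraic closure of $\bC(Y)$ inside $\bC(X)$. Since $\bC(X)/\bC(Y)$ is finitely generated, $L/\bC(Y)$ is a finite field extension, and the normalisation $\beta : S \to Y$ of $Y$ in $L$ is a finite morphism with $S$ an irreducible normal quasi-projective variety satisfying $\bC(S) = L$. The inclusion $L \subset \bC(X)$ induces a dominant rational map $\alpha : X \dashrightarrow S$ with $\beta \circ \alpha = f$ as rational maps; to upgrade $\alpha$ to an honest morphism I would form the fibre product $T := X \times_{Y} S$, whose first projection $p_{1} : T \to X$ is finite because $\beta$ is. Since $\mathrm{char}\,\bC = 0$, the extension $L/\bC(Y)$ is separable, hence $\bC(X) \otimes_{\bC(Y)} L$ decomposes as a product of fields containing a distinguished factor isomorphic to $\bC(X)$, arising from the inclusion $L \hookrightarrow \bC(X)$. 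This factor picks out an irreducible component $T_{0} \subset T$ for which $p_{1}|_{T_{0}} : T_{0} \to X$ is finite and birational. Because $X$ is smooth and in particular normal, a finite birational morphism to $X$ must be an isomorphism, so $p_{1}|_{T_{0}}$ is an isomorphism, and composing its inverse with the second projection $T_{0} \to S$ produces the desired morphism $\alpha : X \to S$. Dominance is automatic, and since $\bC(S) = L$ is algebraically closed in $\bC(X)$, the standard function-field criterion implies that the general fibres of $\alpha$ are geometrically connected.

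For uniqueness, suppose $f = \beta' \circ \alpha'$ is any other such factorisation. Finiteness (hence closedness) of $\beta'$ combined with dominance of $\alpha'$ forces $\beta'(S') = W$, so $\bC(W) \hookrightarrow \bC(S')$ is finite; the connected-fibres hypothesis on $\alpha'$ together with normality of $S'$ force $\bC(S')$ to be algebraically closed in $\bC(X)$, whence $\bC(S') = L = \bC(S)$. Both $S$ and $S'$ are therefore normal varieties finite over $W$ with the same function field $L$, so each coincides with the normalisation of $W$ in $L$, yielding a unique isomorphism $S \cong S'$ over $W$ that intertwines the two factorisations. The main subtlety I foresee is Step~2 above, namely promoting the rational map $\alpha$ into a morphism on all of $X$; the essential inputs there are the separability afforded by characteristic zero, which isolates a distinguished component of the fibre product, and the normality of $X$, which is needed to invoke Zariski's main theorem.
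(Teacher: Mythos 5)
Your proof is correct but takes a genuinely different route from the paper. The paper chooses a partial smooth compactification $\overline{X} \supset X$ such that $f$ extends to a \emph{projective} morphism $\bar f : \overline{X} \to Y$, applies the classical Stein factorization $\overline{X} \to S \to Y$ to this proper map, and then simply restricts $\overline{X} \to S$ to $X$; the work is off-loaded onto the standard existence of such a compactification and the standard properties of proper Stein factorization. You instead work entirely at the level of function fields: after replacing $Y$ by $W := \overline{f(X)}$, you identify $S$ intrinsically as the normalization of $W$ in the relative algebraic closure $L$ of $\bC(W)$ inside $\bC(X)$, and the nontrivial content becomes promoting the induced rational map $X \dashrightarrow S$ to an actual morphism. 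Your mechanism for doing so — passing to the component $T_0 \subset X \times_W S$ corresponding to the distinguished factor $\bC(X)$ of the \'etale algebra $\bC(X)\otimes_{\bC(W)} L$ and invoking that a finite birational morphism onto the normal variety $X$ is an isomorphism — is sound (one should take the reduced structure on $T_0$ before applying this, but that is harmless). The uniqueness argument is also fine: the connected-general-fibre hypothesis for a competing $\alpha'$ forces $\bC(S')$ to be algebraically closed in $\bC(X)$ by the same fibre-product trick, so $\bC(S') = L$ and $S'$ is again the normalization of $W$ in $L$. Compared with the paper, your argument is a bit longer but more self-contained and makes explicit exactly where normality and characteristic zero (separability) are used; the paper's compactification argument is faster but leaves the compatibility with the extension and the independence of the choice of $\overline{X}$ to the reader.
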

\begin{proof}
	Let $\overline{X}$ be a partial smooth compactification of $X$ such that $f$ extends to a projective morphism $\bar{f}:\overline{X}\to Y$. Take the Stein factorization of $\bar{f}$ and we obtain a proper surjective morphism  $\bar{\alpha}:\overline{X}\to S$ with connected fibers   and a finite morphism $\beta:S\to Y$. Then $\alpha:X\to S$ is defined to be the restriction of $\bar{\alpha}$ to $X$, which is dominant with connected general fibers. It is easy to see that this construction does not depend on the choice of $\overline{X}$. 
\end{proof}
The previous factorisation will be called \emph{quasi-Stein factorisation} in this paper. 
  \begin{lem}\label{lem:normal}
  		Let $f:X\to Y$ be a dominant morphism between quasi-projective varieties such that $X$ is smooth.
  		Then for a connected component $F$ of a general fiber, one has $ {\rm Im}[\pi_1(F)\to \pi_1(X)]\triangleleft \pi_1(X)$. 
  \end{lem}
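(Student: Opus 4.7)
The plan is to reduce to a dominant morphism with connected general fibers via quasi-Stein factorisation, and then extract the normality from the long exact homotopy sequence of a topological fibration obtained on a suitable Zariski open of the base.

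First, by \cref{lem:Stein} factor $f=\beta\circ\alpha$ with $\alpha:X\to S$ dominant with connected general fibers and $\beta:S\to Y$ finite. For a very general $y\in f(X)$, the set $\beta^{-1}(y)\cap\alpha(X)$ consists of finitely many general points of $S$, and $f^{-1}(y)=\bigsqcup_{s\in\beta^{-1}(y)}\alpha^{-1}(s)$, where each $\alpha^{-1}(s)$ is either empty or connected. Hence every connected component $F$ of a general fiber of $f$ is a general fiber of $\alpha$, and the problem reduces to proving ${\rm Im}[\pi_1(F)\to\pi_1(X)]\triangleleft\pi_1(X)$ under the additional assumption that $f=\alpha$ has connected general fibers.

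Next, compactify $\alpha$ to a projective morphism $\bar\alpha:\overline X\to S$ with $\overline X$ smooth and $D:=\overline X\setminus X$ a simple normal crossing divisor; this can be arranged after replacing $X$ by a suitable birational model (which does not affect $\pi_1$, by \cref{lem:fun}) and shrinking $S$. By generic smoothness together with Ehresmann's theorem applied to the Whitney stratified pair $(\overline X,D)$, there is a Zariski open $S^\circ\subset S$ such that $\alpha^\circ:X^\circ:=\alpha^{-1}(S^\circ)\to S^\circ$ is a $C^\infty$ locally trivial fibration with connected fiber $F$. The associated long exact sequence of homotopy groups
$$\pi_1(F)\xrightarrow{\ \iota\ }\pi_1(X^\circ)\to\pi_1(S^\circ)\to 1$$
identifies ${\rm Im}(\iota)$ with $\ker[\pi_1(X^\circ)\to\pi_1(S^\circ)]$, which is therefore normal in $\pi_1(X^\circ)$.

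Finally, \cref{lem:fun} applied to the open inclusion $X^\circ\hookrightarrow X$ yields a surjection $\pi_1(X^\circ)\twoheadrightarrow\pi_1(X)$ under which ${\rm Im}(\iota)$ maps onto ${\rm Im}[\pi_1(F)\to\pi_1(X)]$; since the image of a normal subgroup under a surjective group homomorphism is normal, this gives the desired conclusion. The only non-formal ingredient is the generic topological local triviality of $\alpha^\circ$, which I would cite from the Verdier--Thom isotopy theorem (or deduce directly from Ehresmann applied to the stratified pair $(\overline X, D)$); I do not anticipate any further substantive obstacle.
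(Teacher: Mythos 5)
Your proposal is correct and follows essentially the same route as the paper: quasi-Stein factorisation (\cref{lem:Stein}) to reduce to connected general fibers, generic topological local triviality to obtain the exact homotopy sequence $\pi_1(F)\to\pi_1(X^\circ)\to\pi_1(S^\circ)\to 1$ identifying ${\rm Im}[\pi_1(F)\to\pi_1(X^\circ)]$ with $\ker[\pi_1(X^\circ)\to\pi_1(S^\circ)]$, and then \cref{lem:fun} to push normality forward along the surjection $\pi_1(X^\circ)\twoheadrightarrow\pi_1(X)$. You merely supply more explicit detail at two points the paper treats tersely: the bookkeeping that a connected component of a general $f$-fiber is a general $\alpha$-fiber, and the appeal to Verdier--Thom isotopy (after compactification) for the local triviality which the paper simply asserts.
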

\begin{proof}
By \cref{lem:Stein}, we may assume that $f:X\to Y$ has connected general fibers. 
 There is a Zariski open set $Y^\circ\subset Y$ such that denoting $X^\circ:=f^{-1}(Y^\circ)$, the restriction $f|_{X^\circ}:X^\circ\to Y^\circ$ is a smooth morphism, and moreover is a topologically locally trivial fibration over $Y^\circ$.  
 Then the fibers of $f|_{X^\circ}$ consist of one irreducible component $F$ and satisfies a short exact sequence
 $$
 \pi_1(F)\to \pi_1(X^\circ)\to \pi_1(Y^\circ)\to 0.
 $$
 It follows that  $ {\rm Im}[\pi_1(F)\to \pi_1(X^\circ)]\triangleleft \pi_1(X^\circ)$. Note that $\pi_1(X^\circ)\to \pi_1(X)$ is surjective by \cref{lem:fun}.  Hence
$ {\rm Im}[\pi_1(F)\to \pi_1(X)]\triangleleft \pi_1(X)$. 
\end{proof}

\begin{lem}\label{lem:factor0}
	Let $f:X\to Y$ be a dominant morphism between smooth  quasi-projective varieties with  general fibers connected.  Let $\varrho:\pi_1(X)\to G(K)$ be a representation whose image is torsion free, where $G$ is a linear algebraic group defined on some field $K$.  If for the general fiber $F$, $\varrho({\rm Im}[\pi_1(F)\to \pi_1(X)])$ is trivial, then there is a commutative diagram
	\[
		\begin{tikzcd}
			X' \arrow[r, "\mu"] \arrow[d, "f'"] & X \arrow[d, "f"] \\
			Y' \arrow[r, "\nu"] & Y
		\end{tikzcd}
	\]
	where
	\begin{enumerate}[label=(\alph*)]
		\item \(\mu\) is a proper birational morphism,
		\item  \(\nu\) is a birational, not necessarily proper morphism ;
		\item  \(f'\) is dominant; 
	\end{enumerate} 
	 and a representation \(\tau : \pi_{1}(Y') \to G(K)\) such that $f'^*\tau=\mu^*\varrho$.  
	 \end{lem}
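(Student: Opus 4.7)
The plan proceeds in three stages: descend $\varrho$ to a representation on a Zariski open of $Y$; modify $X$ and $Y$ birationally so that every codimension-one boundary divisor in the new $Y$ is dominated by a codimension-one divisor in the new $X$; and use the torsion-free hypothesis together with a meridian-killing argument to extend the descended representation.

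First, I would pick a dense Zariski open $Y^\circ\subset Y$ such that $f^\circ := f|_{X^\circ}\colon X^\circ:=f^{-1}(Y^\circ)\to Y^\circ$ is a topologically locally trivial fibration with connected fiber $F$. Such an open exists for any dominant morphism of smooth complex algebraic varieties with connected general fibers (by Verdier's generic triviality theorem). The homotopy long exact sequence yields
$\pi_1(F)\to\pi_1(X^\circ)\to\pi_1(Y^\circ)\to 1$.
Combined with the surjection $\pi_1(X^\circ)\twoheadrightarrow\pi_1(X)$ of \cref{lem:fun} and the vanishing of $\varrho$ on the image of $\pi_1(F)\to\pi_1(X)$, this uniquely determines a representation $\sigma\colon \pi_1(Y^\circ)\to G(K)$ satisfying $\sigma\circ f^\circ_{*}=\varrho|_{\pi_1(X^\circ)}$.

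Second, using Raynaud--Gruson flattening combined with Hironaka's resolution of singularities, I would construct a commutative square $\nu\circ f'=f\circ\mu$ with $\mu\colon X'\to X$ proper birational and $X'$ smooth, with $\nu\colon Y'\to Y$ birational and $Y'$ smooth (where $\nu$ is obtained as a proper birational modification of $Y$ composed with an open inclusion excising those codimension-one divisors lying outside the image of the flattened morphism, which is why the statement allows non-proper $\nu$), and with $f'\colon X'\to Y'$ dominant. The construction is made so that $\nu$ is an isomorphism over $Y^\circ$ and every irreducible codimension-one component $D'_j$ of $Y'\setminus \nu^{-1}(Y^\circ)$ is dominated by some irreducible codimension-one component $E'_j\subset X'$ of $f'^{-1}(D'_j)$, with a well-defined ramification index $m_j$ along $E'_j$.

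Third, let $\sigma'$ be the pullback of $\sigma$ to $\pi_1(\nu^{-1}(Y^\circ))$ via the identification $\nu^{-1}(Y^\circ)\cong Y^\circ$. Since $E'_j$ is a divisor in the smooth variety $X'$, the meridian $\gamma_{E'_j}$ around $E'_j$ is trivial in $\pi_1(X')$, so $(\mu^{*}\varrho)(\gamma_{E'_j})=1$. The local normal form of $f'$ at a generic point of $E'_j$ gives $f'_{*}(\gamma_{E'_j})=\gamma_{D'_j}^{m_j}$, and therefore $\sigma'(\gamma_{D'_j})^{m_j}=1$. The torsion-free assumption on $\varrho(\pi_1(X))$, which contains $\sigma'(\gamma_{D'_j})$, then forces $\sigma'(\gamma_{D'_j})=1$ for every $j$. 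Since higher-codimension strata in $Y'\setminus\nu^{-1}(Y^\circ)$ contribute no new relations for the smooth $Y'$, the meridians of the codimension-one components generate the kernel of $\pi_1(\nu^{-1}(Y^\circ))\twoheadrightarrow\pi_1(Y')$, and $\sigma'$ descends to $\tau\colon\pi_1(Y')\to G(K)$. The compatibility $\tau\circ f'_{*}=\mu^{*}\varrho$ holds on $\pi_1(\mu^{-1}(X^\circ))$ by construction, hence on all of $\pi_1(X')$ by the surjection of \cref{lem:fun}.

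The main technical obstacle is the birational construction in Step 2: arranging that every codimension-one boundary component of $Y'$ admits a dominating codimension-one preimage in $X'$. This is precisely where flattening is essential, to force equidimensionality over the boundary divisors, and to identify the divisors one must possibly excise in order to keep $\mu$ proper.
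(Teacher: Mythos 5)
Your proposal is correct and follows essentially the same route as the paper: descend $\varrho$ over a Zariski-open $Y^\circ$ via generic local triviality, use Raynaud--Gruson flattening to arrange that every residual boundary divisor of $Y'$ is dominated by a divisor of $X'$ (excising the undominated ones, which is exactly why $\nu$ is allowed to be non-proper), and conclude by the meridian-plus-torsion-freeness argument combined with the codimension-two purity of $\pi_1$ on a smooth $Y'$. The only cosmetic differences are the order of operations---the paper first compactifies $f$ to a proper $\bar{f}:\overline{X}\to Y$, flattens $\bar{f}$ to obtain equidimensionality over all of $Y_1$, and only then chooses $Y^\circ$ and excises---and a small slip of wording in your last paragraph: the excision is what forces $\nu$ (not $\mu$) to possibly fail properness, while $\mu$ stays proper precisely because one excises only those divisors whose preimage in $X'$ is empty.
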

\begin{proof} 
	\noindent{\em Step 1. Compactifications and first reduction step.} 
	  We take a partial smooth compactification $\overline{X}$ of $X$  so that $f$ extends to a projective surjective morphism $\bar{f}:\overline{X}\to Y$ with connected fibers.   	 

	  \begin{claim} We may assume that \(\bar{f}:\overline{X} \to Y\) is equidimensional.
	  \end{claim}
	  Indeed, by Hironaka-Gruson-Raynaud's flattening theorem, there is a birational proper morphism $Y_1\to Y$ from a smooth quasi-projective variety $Y_1$ so that for the irreducible component  $T$ of $\overline{X}\times_YY_1$ which dominates $Y_1$, the induced morphism $f_{T} :=T\to Y_1$ is surjective, proper and flat.         In particular, the fibers of $f_{T}$ are equidimensional.  
	 Consider the normalization map $\nu:\overline{X}_1\to T$. Then the induced morphism $f_1:\overline{X}_1\to Y_1$ still has equidimensional fibers.  Write $\mu:\overline{X}_1\to \overline{X}$ for the induced proper birational morphism, and let $X_1:=\mu^{-1}(X)$.  Note that $\pi_1(X_1)\to \pi_1(X)$ is an isomorphism by \cref{lem:fun} below.

	 Then one has a diagram
	 \[
		 \begin{tikzcd}
		 X_{1} \arrow[r] \arrow[d] & X \arrow[d] \\
		 Y_{1} \arrow[r] & Y
		 \end{tikzcd}
         \]
	 where the horizontal maps are proper birational, and the two spaces on the left satisfy the hypotheses of the proposition if we take the representation induced on \(\pi_{1}(X_{1})\). Clearly, it suffices to show the result where \(X\) (resp. \(Y\)) is replaced by \(X_{1}\) (resp. \(Y_{1}\)). In the following, we may also replace \(\overline{X}\)  (resp. $Y$) by \(\overline{X}_{1}\) and $Y$ (resp. $Y_1$). 
	 \medskip
	
	\noindent
	{\em Step 2. Induced representation on an open subset of \(Y\).}  Consider a Zariski open set $Y^\circ\subset Y$ such that    $X^\circ:=f^{-1}(Y^\circ)$   is  a topologically locally trivial fibration over $Y^\circ$ with connected fibers $F$. Then  we have a short exact sequence
$$
\pi_1(F)\to \pi_1(X^\circ)\to \pi_1(Y^\circ)\to 0
$$
	By our assumption,  $\varrho({\rm Im}[\pi_1(F)\to \pi_1(X)])$ is trivial. Hence we can pass to the quotient, which yields a representation $\tau:\pi_1(Y^\circ)\to G(K)$ so that $\varrho|_{\pi_1(X^\circ)}= f^{\ast}\tau$. 
	\medskip

	\noindent
	{\em Step 3. Reducing \(Y\), we may assume that all divisorial components of \(Y - Y^{\circ}\) intersect \(f(X)\).} Denote by  $E$ the sum of prime divisors of $Y$ contained in the complement $Y\backslash Y^\circ$. We decompose $E=E_1+E_2$ so that $E_1$ is the sum of prime divisors of $E$ that do not intersect \(f(X)\).  We replace $Y$ by $Y\backslash E_1$.  Then for any prime divisor $P$ contained in $Y\backslash Y^\circ$, $f^{-1}(P)\cap X\neq \varnothing$.  
	\medskip

	\noindent
	{\em Step 4. Extension of the representation to the whole \(\pi_{1}(Y)\).}

	Let \(D\) be a divisorial component of \(Y - Y^{\circ}\). By what has been said above, \(f^{-1}(D) \neq \varnothing\). Since \(\bar{f} : \overline{X }\to Y\) is equidimensional, then for any prime component \(P\) of \(f^{-1}(D)\), the morphism \(f|_{P} : P \to D\) is dominant. Also, since \(X\) is normal, \(X\) is smooth at the general points  of \(P\).

This allows to find a point \(x \in P_{reg}\) (resp. \(y \in D_{reg}\)) with local coordinates \((z_{1}, \dotsc z_{m})\) (resp. \((w_{1}, \dotsc, w_{n})\)) around \(x\) (resp. \(y\)), adapted to the divisors, such that \(f^{\ast}(w_{1}) = z_{1}^{k}\) for some \(k \geq 1\).
	Hence the meridian loop $\gamma$ around the general point of \(P\)  is mapped to $\eta^k$ where $\eta$ is the meridian loop around \(D\). On the other hand, since \(\gamma\) is trivial in $\pi_1(X)$, it follows that
$$
0=\varrho(\gamma)=\tau(\eta^k).
$$    
Hence $\tau(\eta)$ is a torsion element. Since we have assumed that the image of $\varrho$ does not contain torsion element, then $\tau(\eta)$ has to be trivial. Hence $\tau$ extends to the smooth locus of $D$.

	Since this is true for any divisorial component \(D \subset Y - Y^{\circ}\), this shows that \(\tau\) extends to \(\pi_{1}(Y^{\circ\circ})\), where \(Y^{\circ} \subset Y^{\circ\circ}\) and \(Y - Y^{\circ\circ}\) has codimension \(\geq 2\) in \(Y\). However, since \(Y\) is smooth, we have \(\pi_{1}(Y) \cong \pi_{1}(Y^{\circ\circ})\), so \(\tau\) actually extends to \(\pi_{1}(Y)\).
\end{proof}
Note that the above proof is   more difficult than the compact cases, since $f(X)$ is only a constructible subset of $Y$ and $f$ is not proper. 


 Based on \cref{lem:factor0} we prove the following factorisation result which is important in proving \cref{main}. 
 \begin{proposition}\label{lem:kollar}
 	Let $X$ be a quasi-projective normal variety.  Let $\varrho:\pi_1(X)\to G(K)$ is a representation, where $G$ is a linear algebraic group defined over a field $K$ of zero characteristic.  Then there is a diagram
	 \[
		 \begin{tikzcd}
			 \widetilde{X} \arrow[r, "\mu"] \arrow[d, "f"] & \widehat{X} \arrow[r, "\nu"] & X\\
			 Y                                             &  &
		 \end{tikzcd}
	 \]
	 where \(Y\) and \(\widetilde{X}\) are smooth quasi-projective varieties, and
	 \begin{enumerate}[label=(\alph*)]
 		\item $\nu:\widehat{X}\to X$ is a finite étale cover;
		\item \(\mu : \widetilde{X} \to \widehat{X}\) is a birational proper morphism;
		\item $f : \widetilde{X} \to Y$ is a dominant morphism with connected general fibers;
 	\end{enumerate}
		such that there exists a big representation \(\tau : \pi_{1}(Y) \to G(K)\) with  $f^*\tau=(\nu\circ \mu)^*\varrho$.   \end{proposition}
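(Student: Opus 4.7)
The plan is to (i) reduce to torsion-free image via Selberg's lemma, (ii) construct a Shafarevich-type fibration $f:\widetilde{X}\to Y$ collapsing the subvarieties with trivial image, (iii) descend the representation via \cref{lem:factor0}, and (iv) conclude bigness by the maximality inherent in the construction.

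First, since ${\rm char}(K)=0$ and $\varrho(\pi_1(X))$ is a finitely generated linear group, Selberg's lemma supplies a torsion-free normal subgroup of finite index. The corresponding finite étale Galois cover $\nu:\widehat{X}\to X$ yields $\nu^*\varrho$ with torsion-free image, and I may assume $\widehat{X}$ is smooth after absorbing a resolution into $\mu$. Next, I would apply the Shafarevich map construction of Kollár \cite{Kol95}: the countably many components of the Hilbert scheme parametrize families of irreducible closed $Z\subset\widehat{X}$ with $(\nu^*\varrho)({\rm Im}[\pi_1(Z^{\rm norm})\to\pi_1(\widehat{X})])=\{1\}$, and a standard countability/maximality argument (using van Kampen to glue two such subvarieties meeting in positive dimension into a larger one of the same kind) produces a dominant rational map $\widehat{X}\dashrightarrow Y_0$ whose general fiber is maximal among such subvarieties through a very general point. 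Resolving indeterminacies followed by the quasi-Stein factorisation \cref{lem:Stein} yields the proper birational $\mu:\widetilde{X}\to\widehat{X}$ and a dominant $f:\widetilde{X}\to Y$ with connected general fibers.

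By \cref{lem:normal}, the image ${\rm Im}[\pi_1(F)\to\pi_1(\widetilde{X})]$ of a general fiber $F$ is normal in $\pi_1(\widetilde{X})$, and it lies in $\ker((\nu\circ\mu)^*\varrho)$ by construction. Since this representation has torsion-free image, \cref{lem:factor0} applies (up to a further birational modification of $Y$ absorbed in the diagram) and delivers $\tau:\pi_1(Y)\to G(K)$ with $f^*\tau=(\nu\circ\mu)^*\varrho$. For bigness: if $\tau$ were not big, there would exist a positive-dimensional irreducible $W\subset Y$ through a very general point with $\tau({\rm Im}[\pi_1(W^{\rm norm})\to\pi_1(Y)])$ finite, hence trivial by torsion-freeness. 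An irreducible component $V$ of $f^{-1}(W)$ dominating $W$ satisfies $\dim V>\dim F$, and the factorization $V^{\rm norm}\to W^{\rm norm}\to Y$ combined with $f^*\tau=(\nu\circ\mu)^*\varrho$ shows $(\nu\circ\mu)^*\varrho({\rm Im}[\pi_1(V^{\rm norm})\to\pi_1(\widetilde{X})])=\{1\}$, so $\mu(V)\subset\widehat{X}$ would be a subvariety with trivial $\nu^*\varrho$-image of dimension exceeding the Shafarevich fiber, a contradiction.

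The main obstacle I anticipate is the Shafarevich construction itself: producing a well-defined algebraic fibration from the countable family of "trivial-monodromy" subvarieties requires the delicate gluing input that positive-dimensional intersections force mergers, together with a careful countability argument on very general points in the quasi-projective setting (where the Hilbert/Chow scheme is not proper). A secondary obstacle is to ensure, in the bigness step, that the dominant component $V$ of $f^{-1}(W)$ descends to a strictly higher-dimensional subvariety of $\widehat{X}$, which uses the birationality of $\mu$ so that $V$ cannot be entirely contained in an exceptional locus.
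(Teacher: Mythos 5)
Your proposal follows essentially the same route as the paper's proof: Selberg's lemma to reduce to a torsion-free image after a finite \'etale cover, Koll\'ar's Shafarevich map (citing~\cite{Kol95}, where the paper cites the equivalent~\cite{Kol93}) to produce the quotient fibration, \cref{lem:factor0} to descend the representation, and a maximality argument for bigness. The only cosmetic difference is that you prove bigness by contradiction (using that a larger subvariety with trivial monodromy through a very general point would contradict the maximality of the Shafarevich fiber) whereas the paper argues directly that $\tau\bigl({\rm Im}[\pi_1(W^{\rm norm})\to\pi_1(Y)]\bigr)$ contains the image under $\varrho$ of $\pi_1(Z^{\rm norm})$ for a dominating component $Z$ of $f^{-1}(W)$ avoiding the countable family, hence is infinite; the two are logically equivalent, and your version implicitly relies on $\tau$ having torsion-free image, which does hold because $\tau(\pi_1(Y))=\varrho(\pi_1(X^\circ))$ in the construction of \cref{lem:factor0}.
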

Note that when $X$ is projective, this result is proved in \cite[Theorem 4.5]{Kol93}.   

\begin{proof}[Proof of \cref{lem:kollar}]
	{\em Step 1. We may assume that \(\varrho\) has a torsion free image.}
	Since the image $\varrho(\pi_1(X))$ is a finitely generated linear group, by a theorem of Selberg, there is a finite index normal subgroup $\Gamma\subset \varrho(\pi_1(X))$ which is torsion free.  Take an \'etale cover $\nu:\widehat{X}\to X$ with fundamental group $\pi_1(\widehat{X})=\varrho^{-1}(\Gamma)$.  Then the image of $\nu^*\varrho$ is torsion free.

	In the following, we may replace \(X\) by \(\widehat{X}\) and \(\varrho\) by \(\nu^{\ast}\varrho\) to assume that \(\mathrm{Im}(\varrho)\) is torsion free.
	\medskip

\noindent	{\em Step 2. We find a  suitable model of the Shafarevich map.}
	Denote by $H:={\ker} (\varrho)$, which is a normal subgroup of $\pi_1(X)$.  We apply \cite[Corollary 3.5 \& Remark 4.1.1]{Kol93} to conclude that there is a normal quasi-projective variety ${\rm Sh}^H(X)$ and a dominant rational map ${\rm sh}_{X}^H:X\dashrightarrow {\rm Sh}^H(X)$ so that
\begin{enumerate}[label=(\roman*)]
	\item there is a Zariski open set $X^\circ\subset X$ which does not meet the indeterminacy locus of ${\rm sh}_{X}^H|_{X^{\circ}}$ such that the fibers of ${\rm sh}_{X}^H|_{X^{\circ}}$
  	are closed in $X$; 
	\item  ${\rm sh}_{X}^H : X^{\circ} \to \mathrm{Sh}^{H}(X)$ has connected general fibers;
	\item \label{VG} there are at most countably many  closed subvarieties $Z_i\subsetneqq X$ so that for  every    closed subvariety $W\subset X$ such that   $W\not\subset \cup Z_i$, the image $\varrho\big({\rm Im}[\pi_1(W^{\rm norm})\to \pi_1(X))] \big)$ is finite (and thus trivial since the image of $ \varrho$ is torsion free by Step 1) if and only if ${\rm sh}_{X}^H(W)$ is a point. 
\end{enumerate}

Let us first take a resolution of singularities $Y_1\to {\rm Sh}^H(X)$, and then a birational proper morphism $X_1\to X$ from a smooth quasi-projective variety $X_1$ which resolves the indeterminacy of $X\dashrightarrow Y_1$. Then the induced dominant morphism
 $
f_1:X_1\to  Y_1
$ 
	fulfills the conditions of \cref{lem:factor0}. Thus, one has a commutative diagram as follows:
	\[
		\begin{tikzcd}
			X_{1}' \arrow[r, "\mu"] \arrow[d, "f_{1}'"] & X_{1} \arrow[r, "q"] \arrow[d, "f_{1}"] & X \arrow[d, dashed] \\ 
			Y_{1}' \arrow[r, "\nu"]            &  Y_{1} \arrow[r] & \mathrm{Sh}^{H}(X) 
		\end{tikzcd}
	\]
	where 
	\begin{enumerate}[label=(\alph*)]
	\item \(\mu\) is a proper birational morphism; 
	\item \(\nu\) is a birational (not necessarily proper) morphism,  
	\item  \(f_{1}'\) is a dominant morphism.
	\end{enumerate}
	Moreover, one has a representation $\tau:\pi_1(Y_{1}')\to G(K)$ so that \((f_{1}')^*\tau\) identifies with the pullback \(\mu^{\ast} q^{\ast} \varrho\).  
	
Therefore, there   are at most countably many  closed subvarieties $Z'_i\subsetneqq X_1'$ such that for  each   closed subvariety $Z\subset X_1'$ with 
\begin{itemize}
	\item $Z\not\subset \cup Z'_i$;
	\item   $f_1'(Z)$ is not a point,
\end{itemize}
 the map $Z\to (q\circ\mu)(Z)$ is proper birational, and ${\rm sh}_X^H((q\circ\mu)(Z))$ is not a point.  Write $\widetilde{Z}:=(q\circ\mu)(Z)$.  By \Cref{VG} $\varrho\big({\rm Im}[\pi_1(\widetilde{Z}^{\rm norm})\to \pi_1(X))] \big)$ is infinite.  By \cref{lem:fun} below, $\pi_1(Z^{\rm norm})\to \pi_1(\widetilde{Z}^{\rm norm})$ is surjective. Hence $(q\circ\mu)^*\varrho\big({\rm Im}[\pi_1( {Z}^{\rm norm})\to \pi_1(X_1'))] \big)$ is infinite. 
	\medskip

	\noindent
	{\em Step 3. We check that \(\tau\) is big and obtain the required diagram.} Let \(\widetilde{X} := X_{1}'\), \(Y := Y_{1}'\) and \(f := f_{1}'\). To simplify the notation, let us denote by the same letter \(\varrho\) the pullback \(\varrho : \pi_{1}(\widetilde{X}) \to G(K)\). Hence $\varrho=f^*\tau$. Recall that by Step 2 there   are at most countably many  closed subvarieties $Z'_i\subsetneqq \widetilde{X}$ such that for  every   closed subvariety $Z\subset \widetilde{X}$ with $Z\not\subset \cup Z'_i$ and $f(Z)$ not a point, the image $\varrho\big({\rm Im}[\pi_1(Z^{\rm norm})\to \pi_1(\widetilde{X}))] \big)$ is infinite. 
	
	Since $f:\widetilde{X}\to Y$ is dominant with connected general fibers,   for a subvariety $W$ of $Y$ containing a very general point, there is an irreducible component $Z$ of $f^{-1}(W)$ which dominates $W$ and $Z\not\subset \cup Z'_i$. Hence $\tau({\rm Im}[\pi_1(Z^{\rm norm})\to \pi_1(Y)])=\varrho({\rm Im}[\pi_1(Z^{\rm norm})\to \pi_1(\widetilde{X})])$ is infinite.  Since the morphism $\pi_1(Z^{\rm norm})\to \pi_1(Y)$ factors through $\pi_1(W^{\rm norm})\to \pi_1(Y)$, it follows that  $\tau({\rm Im}[\pi_1(W^{\rm norm})\to \pi_1(Y)])$ is infinite. Therefore $\tau$ is big.
	\end{proof}
	
	\begin{proof}[Derivation of \cref{main} from \cref{main2}]
	The derivation is straightforward using \cref{lem:kollar}.
	Indeed, given a Zariski dense representation $\varrho:\pi_1(X)\to G(\bC)$ in the statement of \cref{main}, we apply \cref{lem:kollar} to get the objects: a  finite \'etale cover \(\nu:\widehat{X}\to X\), a birational and proper morphism \(\mu:\widetilde{X}\to \widehat{X}\), a dominant morphism $f:\widetilde{X}\to Y$  with connected general fibers, and a big representation \(\tau : \pi_{1}(Y) \to G(\bC)\) such that \(f^{\ast} \tau = (\nu\circ\mu)^{\ast}\varrho\).
	Since $\varrho$ is Zariski dense, the property \(f^{\ast} \tau = (\nu\circ\mu)^{\ast}\varrho\) yields that $\tau$ is also Zariski dense.
	Since $G$ is semisimple, we may apply \cref{main2} for \(\tau : \pi_{1}(Y) \to G(\bC)\) to conclude that $Y$ is strongly of log-general type, i.e., $\Spalg(Y)\subsetneqq Y$, and pseudo Picard (hence Brody) hyperbolic.
	Moreover by our convention that $G$ is non-trivial, we have $\dim Y>0$.
	This, together with the definition of weakly special, shows that $X$ is not weakly special.
	Moreover $X$ cannot contain a Zariski-dense entire curve. This is because the existence of such a curve in $X$ would induce one in $\widetilde{X}$ hence in $Y$, which contradicts that $Y$ is pseudo Brody hyperbolic.
	\end{proof}
\begin{rem}
	In \cite{DY23b}, the second and third authors extended \cref{lem:kollar} to the case where the field $K$ has positive characteristic. 
	The main difficulty is that Selberg's theorem on the virtual torsion-freeness of finitely generated linear groups in characteristic zero does not hold in positive characteristic.
\end{rem}

	\section[A reduction theorem]{Non-abelian Hodge theories in the non-archimedean setting}\label{sec:reduction}
	 	In this section, we prove \cref{main3}. Our argument relies on two key results from \cite{BDDM,DM24}: 
	\begin{itemize}
		\item the existence of a $\varrho$-equivariant harmonic map 
		$u : \widetilde{X} \to \Delta(G)$ 
		from the universal cover of $X$ to the Bruhat-Tits building $\Delta(G)$ of $G$, whose energy growth at infinity is logarithmic;
		\item the construction of logarithmic symmetric differential forms on $X$ via this harmonic map $u$.
	\end{itemize} 
	When $X$ is compact, \cref{main3} was proved by Katzarkov \cite{Kat97}, Zuo \cite{Zuo96}, and Eyssidieux \cite{Eys04}.  
	In the non-compact case, several subtle and technically involved issues arise, and we provide as many details as possible in the proof of \cref{main3}.
	
As an independent interest and for potential further applications, we start from a preliminary of detailed and more general construction of spectral coverings in \cref{subsec:spectral}. 
These constructions are crucial for the proof of \cref{main6} in the next section.

\subsection{Preliminary for constructing Spectral covering}\label{subsec:spectral}

Let $X$ be a quasi-projective variety.
Let $\mathcal{E}$ be an algebraic locally free sheaf over $X$. 
We consider a graded ring $B_{X,\mathcal{E}}=\bigoplus_{i\geq 0} \Gamma(X,\Sym^i \mathcal{E})$ and the graded ring extension $B_{X,\mathcal{E}}[T]=B_{X,\mathcal{E}}\otimes_{\mathbb C}\mathbb C[T]$ equipped with a natural grading, where $T$ has weight one.
We consider a homogeneous element of the form
	\begin{align}\label{eq:char}
		P(T) = T^n+ \sigma_1 T^{n-1} + \cdots + \sigma_n\in B_{X,\mathcal{E}}[T]
	\end{align}
	where $\sigma_i \in \Gamma(X, \Sym^i \mathcal{E})$.
	Let $U\subset X$ be a non-empty open subset (in the complex topology).
	We say that $P(T)$ splits over $U$, if there exist holomorphic sections $\eta_1,\ldots,\eta_n\in \Gamma(U,\mathcal{E}^{\mathrm{an}}|_U)$ such that 
	$$
	P(T)=(T+\eta_1)\cdots(T+\eta_n)
	$$
	holds in the ring $B_{U,\mathcal{E}^{\mathrm{an}}|_U}[T]$, where $B_{U,\mathcal{E}^{\mathrm{an}}|_U}=\bigoplus_{i\geq 0} \Gamma(U,\Sym^i\mathcal{E}^{\mathrm{an}}|_U)$.
Let $\varphi:X'\to X$ be a morphism from another quasi-projective variety.
Then we define $\varphi^*P(T)$ by
$$
\varphi^*P(T)=T^n+ (\varphi^*\sigma_1) T^{\ell-1} + \cdots + (\varphi^*\sigma_n),
$$
where $\varphi^*\sigma_i \in \Gamma(X', \Sym^i (\varphi^*\mathcal{E}))$ is a pull-back of $\sigma_i$.
Then $\varphi^*P(T)\in B_{X',\varphi^*\mathcal{E}}[T]$, where $B_{X',\varphi^*\mathcal{E}}=\bigoplus_{i\geq 0} \Gamma(X',\Sym^i(\varphi^* \mathcal{E}))$.

\begin{proposition}\label{lem:spectral}
Let $X$ be a smooth quasi-projective variety.
Let $\mathcal{E}$ be an algebraic locally free sheaf over $X$. 
Let $P(T)$ be a homogeneous polynomial of degree $n$ as given in \eqref{eq:char}.
	Suppose there exists a non-empty open subset $U\subset X$ such that $P(T)$ splits over $U$.
	Then there exists a finite Galois cover $\pi:\Sigma\to X$ from a quasi-projective normal variety $\Sigma$, satisfying the following conditions.
	\begin{thmlist}
	\item
   $\pi^*P(T)$ splits over $\Sigma$, i.e., there exists $\eta_1,\ldots,\eta_n\in\Gamma(\Sigma, \pi^*\mathcal{E})$ such that 
   $$
	\pi^*P(T)=(T+\eta_1)\cdots(T+\eta_n).
	$$
           \item\label{item:extension}
The subset $\{\eta_1,\ldots,\eta_n\}\subset \Gamma(\Sigma, \pi^*\mathcal{E})$ is invariant under the natural action of $\mathrm{Aut}(\Sigma/X)$ on $\Gamma(\Sigma, \pi^*\mathcal{E})$, and the induced action of $\mathrm{Aut}(\Sigma/X)$ on this subset is faithful.
\item\label{item:20251211}
If $\varphi:Y\to X$ is a dominant morphism from a quasi-projective normal variety $Y$ such that $\varphi^*P(T)$ splits over $Y$, then the morphism $Y\to X$ factors as $Y\to \Sigma\overset{\pi}{\to} X$.
\item\label{item:ramified}
	The Galois morphism $\pi:\Sigma \to X$ is \'etale outside
	\[
		R := \{ z \in \Sigma \mid \exists\, i \neq j \text{ such that } \eta_i\not=\eta_j \text{ and } (\eta_i - \eta_j)(z) = 0 \},
		\]
		and it satisfies $\pi^{-1}(\pi(R)) = R$.
		\end{thmlist}
\end{proposition}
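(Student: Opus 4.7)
The plan is to construct $\Sigma$ as the normalization of a suitably chosen irreducible component of the $X$-scheme $\Sigma^{\mathrm{split}}$ representing the functor of \emph{ordered} splittings of $P$. Concretely, $\Sigma^{\mathrm{split}}$ is the closed subscheme of the $n$-fold self-fiber-product $\mathbb{V}(\mathcal{E})^{\times_{X} n}$ of the total space $\mathbb{V}(\mathcal{E})$ of $\mathcal{E}$, cut out by the equations
$$e_k(\tau_1,\ldots,\tau_n)=\varpi^{*}\sigma_k,\qquad k=1,\ldots,n,$$
where $\tau_i$ is the tautological section of $\varpi^{*}\mathcal{E}$ coming from the $i$-th projection and $\varpi$ is the structure morphism to $X$. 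The symmetric group $S_n$ acts on $\Sigma^{\mathrm{split}}$ by permuting the factors, and this functorial description makes the universal property~(iii) morally automatic.

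First I would show that $\Sigma^{\mathrm{split}}\to X$ is finite. Substituting $T=-\tau_i$ in the universal identity $P(T)=\prod_{j}(T+\tau_j)$ gives the relation $\tau_i^n-\sigma_1\tau_i^{n-1}+\cdots+(-1)^n\sigma_n=0$ in $\Sym^n\varpi^{*}\mathcal{E}$; pairing with $\lambda^n$ for $\lambda\in\mathcal{E}^{\vee}$ produces a monic scalar polynomial relation for $\langle\lambda,\tau_i\rangle$ with coefficients pulled back from $X$, and the valuative criterion applied to integrally closed DVRs mapping to $X$ then shows that the affine morphism $\Sigma^{\mathrm{split}}\to X$ is proper, hence finite. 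The hypothesis that $P$ splits over a complex-open subset $U\subset X$ next provides a holomorphic section $U\to(\Sigma^{\mathrm{split}})^{\mathrm{an}}$, and since $\Sigma^{\mathrm{split}}$ is equidimensional of dimension $\dim X$, the Zariski closure of its image is an irreducible component $\Sigma_{0}^{\mathrm{split}}\subset\Sigma^{\mathrm{split}}$. Set $\Sigma:=(\Sigma_{0}^{\mathrm{split}})^{\mathrm{norm}}$, $H:=\mathrm{Stab}_{S_n}(\Sigma_{0}^{\mathrm{split}})$, and let $K$ be the kernel of the induced $H$-action on $\Sigma_{0}^{\mathrm{split}}$; the quotient $G:=H/K$ identifies with $\mathrm{Aut}(\Sigma/X)$, and $\Sigma/G=X$ by the fundamental theorem on symmetric functions (the $S_n$-invariant functions on $\Sigma^{\mathrm{split}}$ being pulled back from $X$), so $\pi:\Sigma\to X$ is Galois.

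Properties~(i) and~(ii) follow from the construction: the pullbacks to $\Sigma$ of the tautological sections yield $\eta_i\in\Gamma(\Sigma,\pi^{*}\mathcal{E})$ satisfying $\pi^{*}P=\prod_{i}(T+\eta_i)$, and the kernel of the $H$-action on the \emph{set} $\{\eta_1,\ldots,\eta_n\}$ coincides with $K$, so the induced $G$-action is faithful. For the universal property~(iii), an ordered splitting on $Y$ gives a morphism $Y\to\Sigma^{\mathrm{split}}$ landing in some component dominating $X$; I would show that $S_n$ acts transitively on such components by reducing to the locus where $P$ has distinct roots (over which $\Sigma^{\mathrm{split}}$ is an \'etale $S_n$-torsor), so a suitable relabeling of the roots moves the morphism into $\Sigma_{0}^{\mathrm{split}}$ and hence lifts to $\Sigma$ by normality of $Y$. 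For the ramification assertion~(iv), at any $z\in\Sigma\setminus R$ the values $\eta_i(z)$ separate the equivalence classes of the relation ``$\eta_i=\eta_j$ on $\Sigma$'', which forces the inertia group of $z$ in $G$ to be trivial so that $\pi$ is \'etale at $z$; the identity $\pi^{-1}(\pi(R))=R$ then follows from the $G$-invariance of each divisor $\{\eta_i=\eta_j\}\subset\Sigma$.

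The most delicate step, and what I expect to be the main obstacle, is the careful treatment of repeated roots throughout the argument: one must pass correctly from an analytic section on $U$ to an algebraic irreducible component of $\Sigma^{\mathrm{split}}$, identify $\mathrm{Aut}(\Sigma/X)$ as the quotient $H/K$ rather than $H$ itself, and verify that the action on the set of roots (as opposed to the ordered tuple) remains faithful after passing to this quotient. The combinatorial argument for transitivity of $S_n$ on the irreducible components of $\Sigma^{\mathrm{split}}$ dominating $X$ used in~(iii) is similarly subtle when roots may coincide on a generic locus.
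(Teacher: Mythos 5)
Your construction is genuinely different from the paper's. The paper works entirely at the level of the function field $K = K(X)$: it shows $\xi^*P(T)$ factors into linear forms over $\overline{K}$ (using \cref{lem:202511131}), lets $F$ be the finite Galois extension of $K$ generated by the coefficients of those forms, takes $\Sigma$ to be the normalization of $X$ in $F$, and then uses the Gauss-norm lemma \cref{lem:20251113} (via \cref{lem:202511132}) to extend the generic linear forms to global sections $\eta_i \in \Gamma(\Sigma,\pi^*\mathcal{E})$. The \'etaleness in \cref{item:ramified} is then obtained from \cref{lem:202511141} (reduction modulo a discrete valuation, residue fields of characteristic zero) together with purity of branch locus. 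You instead introduce the universal $X$-scheme $\Sigma^{\mathrm{split}}$ of ordered splittings, prove finiteness over $X$ via an integrality argument on the scalar functions $\mathrm{ev}_\lambda(\tau_i)$ (this is the geometric avatar of the Gauss-norm lemma, and your pairing with $\lambda^n$ is essentially the ring map $\mathrm{ev}_\lambda:\Sym^\bullet\mathcal{E}\to\mathcal{O}_X$), pick out a component via the analytic section over $U$, normalize, and extract the Galois data from the $S_n$-action. Property \cref{item:20251211} becomes essentially tautological in your setup, which is a genuine advantage; the ramification analysis via inertia groups replaces the paper's appeal to \cref{lem:202511141} and to purity.

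Two details deserve more care than you give them. First, the justification of $\Sigma/G = X$ by ``the fundamental theorem on symmetric functions'' is not the right mechanism: that theorem describes the $S_n$-invariants of the full scheme $\Sigma^{\mathrm{split}}$, whereas you need to know that $G=H/K$ exhausts $\mathrm{Aut}(\Sigma/X)$ and that the $G$-invariants of the single normalized component $\Sigma$ give $\mathcal{O}_X$. The correct argument is at the function-field level: $K(\Sigma)$ is the splitting field of $\xi^*P$ over $K(X)$ (hence Galois), and every element of $\mathrm{Gal}(K(\Sigma)/K(X))$ permutes the multiset $\{\eta_1,\ldots,\eta_n\}$ and therefore lifts to some $\sigma\in S_n$ stabilizing $\Sigma_0^{\mathrm{split}}$; this gives surjectivity of $H/K\to\mathrm{Aut}(\Sigma/X)$ and brings you back to the paper's picture. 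Second, for the $S_n$-transitivity on components dominating $X$ used in \cref{item:20251211}, you cannot reduce to the distinct-root locus when $P$ has a repeated factor generically; instead note that the components correspond to double cosets $\mathrm{Gal}\backslash S_n/\mathrm{Stab}$, on which $S_n$ still acts transitively because the Galois action and the $S_n$-action commute and $S_n$ acts transitively on orderings. With these fixes the route is sound, and it is arguably more transparent than the paper's chain of field-theoretic lemmas.
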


We remark that the assertion (iii) determines the covering $\Sigma\to X$ uniquely.
Before proving this proposition, we begin with three preliminary lemmas on polynomials whose coefficients lie in various fields.
The main object is a homogeneous polynomial $f$ in variables $T,x_1,\ldots,x_l$ given by:
\begin{equation}\label{eqn:2025111410}
f=T^n+a_1T^{n-1}+\cdots+a_n,
\end{equation}
where each $a_i$ is a homogeneous polynomial in $x_1,\ldots,x_l$ of degree $i$.
We are interested in complete factorization into linear forms of the following type:
\begin{equation}\label{eqn:2025111411}
f=(T+g_1)\cdots(T+g_n),
\end{equation}
where each $g_j$ is of the form 
\begin{equation}\label{eqn:2025111412}
g_j=c_{j1}x_1+\cdots+c_{jl}x_l.
\end{equation}
We say that $f$ completely factorizes into linear forms over a field $L$ if $c_{ji}\in L$ for all $c_{ji}$.

\begin{lem}\label{lem:20251113}
Let $F$ be a non-archimedean valuation field, and let $R$ be the valuation ring of $F$.
Let $f\in F[T,x_1,\ldots,x_l]$ be a homogeneous polynomial of degree $n$ as in \eqref{eqn:2025111410}.
Suppose that $f$ completely factorizes into linear forms as in \eqref{eqn:2025111411} over $F$.
If $f\in R[T,x_1,\ldots,x_l]$, then $g_j\in R[x_1,\ldots,x_l]$ for all $j=1,\ldots,n$.
\end{lem}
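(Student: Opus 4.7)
The plan is to invoke the Gauss lemma for non-archimedean valuations. Let $v:F\to \mathbb{R}\cup\{+\infty\}$ denote the valuation on $F$, so that $R=\{x\in F : v(x)\geq 0\}$. I would extend $v$ to a \emph{Gauss norm} on the polynomial ring $F[x_1,\ldots,x_l,T]$ by declaring
$$
v\Bigl(\sum_{\alpha,k} c_{\alpha,k}\, x^{\alpha} T^{k}\Bigr) \;=\; \min_{\alpha,k} v(c_{\alpha,k}).
$$
Iterating the classical one-variable Gauss lemma (viewing the polynomial ring successively as $F[x_1][x_2]\cdots[x_l][T]$), this extended $v$ is multiplicative, i.e. $v(h_1 h_2)=v(h_1)+v(h_2)$ for all $h_1,h_2\in F[x_1,\ldots,x_l,T]$.

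With this in hand, the hypothesis $f\in R[T,x_1,\ldots,x_l]$ gives $v(f)\geq 0$, while each factor $T+g_j = T + \sum_{i=1}^{l} c_{ji} x_i$ has Gauss norm
$$
v(T+g_j) \;=\; \min\bigl(0,\, v(c_{j1}),\ldots, v(c_{jl})\bigr) \;=\; \min\bigl(0,\, v(g_j)\bigr).
$$
Applying multiplicativity to the factorization \eqref{eqn:2025111411}, we obtain
$$
0 \;\le\; v(f) \;=\; \sum_{j=1}^{n} v(T+g_j) \;=\; \sum_{j=1}^{n} \min\bigl(0,\, v(g_j)\bigr).
$$

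Every summand on the right is $\leq 0$, so the inequality forces each $\min(0, v(g_j))=0$, i.e.\ $v(g_j)\geq 0$ for every $j$. By definition of the Gauss norm this means $v(c_{ji})\geq 0$ for all $i,j$, hence $c_{ji}\in R$ and $g_j\in R[x_1,\ldots,x_l]$, as required. I do not anticipate any serious obstacle: the only point to be careful about is verifying the multiplicativity of the extended Gauss norm on the multivariable polynomial ring, which reduces to iterating the well-known one-variable statement. The homogeneity of $f$ and of the $g_j$ plays no role in the argument; only the fact that the $c_{ji}$ lie a priori in $F$ is used.
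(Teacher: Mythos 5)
Your proof is correct, but it takes a slightly different route from the paper's. The paper works with the Gauss norm on $F[x_1,\ldots,x_l]$ only (not including $T$) and argues by contradiction: after sorting so that $\|g_1\|\geq\cdots\geq\|g_n\|$ and assuming $\|g_1\|>1$, it takes $m$ maximal with $\|g_m\|>1$, uses multiplicativity to show that $g_1\cdots g_m$ strictly dominates every other product $g_{j_1}\cdots g_{j_m}$, and then invokes the ultrametric triangle inequality to conclude $\|a_m\|=\|g_1\|\cdots\|g_m\|>1$, contradicting $a_m\in R[x_1,\ldots,x_l]$. Your version instead extends the Gauss norm to $F[T,x_1,\ldots,x_l]$ so that the factor $T+g_j$ has norm $\min(0,v(g_j))$ because of the unit leading coefficient in $T$; multiplicativity then gives $0\leq v(f)=\sum_j\min(0,v(g_j))$ with each summand nonpositive, forcing each to vanish. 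Both proofs rest on the same core tool (multiplicativity of the Gauss norm, i.e.\ the non-archimedean Gauss lemma), but yours is a direct argument that avoids both the sorting and the identification of a strictly dominant term in the elementary symmetric function $a_m$, and is arguably cleaner for it. The one thing to state, as you note, is that multiplicativity over $F[x_1,\ldots,x_l,T]$ follows by iterating the one-variable Gauss lemma; the paper needs the same fact for $F[x_1,\ldots,x_l]$, so this is not an extra burden.
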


\begin{proof}
We denote by $|\cdot|$ the norm on $F$, and by $||\cdot||$ the Gauss  norm on the polynomial ring $F[x_1,\ldots,x_l]$.
For each $g_j$ as in \eqref{eqn:2025111412}, by changing the indexes $j$, we may assume that $||g_1||\geq ||g_2||\geq \cdots\geq ||g_n||$.
It is enough to show $||g_1||\leq 1$.

Assume, for the sake of contradiction, that $||g_1||>1$. Let $m$ be the maximum index such that $||g_m||>1$. 
Since the Gauss norm is a multiplicative norm, we have $||g_1\cdots g_m||>||g_{j_1}\cdots g_{j_m}||$ for any sequence $1\leq j_1<\ldots <j_m\leq n$ other than $j_i=i$.
Hence the strong triangle inequality of Gauss norm implies 
$||a_m||=||g_1||\cdots||g_m||>1$.
This contradicts the assumption $a_m\in R[x_1,\ldots,x_l]$.
Therefore, we have $||g_j||\leq 1$ for all $j=1,\ldots,n$, which means $g_j\in R[x_1,\ldots,x_l]$.
\end{proof}

\begin{lem}\label{lem:202511131}
Let $K$ be a field and let $L$ be a field extension of $K$.
Let $f\in K[T,x_1,\ldots,x_l]$ be a homogeneous polynomial of degree $n$ as in \eqref{eqn:2025111410}.
Suppose that $f$ completely factorizes into linear forms as in \eqref{eqn:2025111411} over $L$.
Then $f$ completely factorizes into linear forms over the field $\overline{K}\cap L$.
In particular, $f$ completely factorizes into linear forms over $\overline{K}$.
\end{lem}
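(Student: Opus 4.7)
The plan is to exploit the homogeneity of $f$ via a specialization argument, reducing the problem to a univariate polynomial over $K$ whose roots are precisely (up to sign) the coefficients $c_{ji}$. Write the hypothetical factorization as $f = \prod_{j=1}^n (T+g_j)$ with $g_j = \sum_{i=1}^l c_{ji} x_i \in L[x_1,\ldots,x_l]$. The goal is to show that every coefficient $c_{ji}$ lies in $\overline{K}\cap L$.

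First I would fix an index $i \in \{1,\ldots,l\}$ and specialize by setting $x_k = 0$ for all $k \neq i$ and $x_i = 1$. Since each $a_m$ is homogeneous of degree $m$ in $x_1,\ldots,x_l$, one has $a_m(0,\ldots,1,\ldots,0) = \alpha_{m,i}$ for some $\alpha_{m,i} \in K$, so $f$ specializes to
\[
p_i(T) := T^n + \alpha_{1,i} T^{n-1} + \cdots + \alpha_{n,i} \in K[T].
\]
Applying the same specialization to the factorization yields the identity $p_i(T) = \prod_{j=1}^n (T + c_{ji})$ in $L[T]$. Hence each $-c_{ji}$ is a root of $p_i \in K[T]$, giving $c_{ji} \in \overline{K}$; combined with $c_{ji}\in L$ this shows $c_{ji}\in\overline{K}\cap L$.

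Letting $(i,j)$ range over all pairs shows every $g_j$ has coefficients in $\overline{K}\cap L$, so the identity $f = \prod_{j}(T+g_j)$ is a factorization into linear forms over $\overline{K}\cap L$. The final assertion that $f$ splits over $\overline{K}$ is then immediate since $\overline{K}\cap L\subseteq \overline{K}$. I do not anticipate any substantive obstacle: homogeneity is precisely what makes the specialized polynomial land in $K[T]$ while simultaneously exhibiting each $-c_{ji}$ as one of its roots, and everything else is bookkeeping.
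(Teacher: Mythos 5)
Your proof is correct, and it takes a genuinely different and more elementary route than the paper. The paper proves this lemma by contradiction: assuming some $c_{ji}$ is transcendental over $K$, it reduces to the case $\mathrm{tr.deg}_K L = 1$, constructs a discrete valuation on $L$ trivial on $K$ with $|c_{ji}| > 1$, and derives a contradiction from \cref{lem:20251113} (the Gauss-norm bound on the linear factors). Your argument instead specializes $x_i \mapsto 1$, $x_k \mapsto 0$ for $k \neq i$; since each $a_m$ lies in $K[x_1,\ldots,x_l]$, the specialized polynomial $p_i(T)$ has coefficients in $K$, and the specialized factorization $p_i(T) = \prod_j (T + c_{ji})$ exhibits each $-c_{ji}$ as a root of a monic polynomial over $K$, giving $c_{ji} \in \overline{K}\cap L$ directly. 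This bypasses \cref{lem:20251113} entirely and avoids the auxiliary reductions (generation by the $c_{ji}$, finite extension of $K$, transcendence-degree one). The paper's valuation route is presumably chosen for uniformity with the surrounding lemmas, which all run through Gauss norms; your specialization argument is shorter, constructive, and arguably the more natural proof of this particular statement.
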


\begin{proof}
We may assume that $L$ is generated over $K$ by the $c_{ji}$ from \eqref{eqn:2025111412}. 
Suppose, contrary to our claim, that $L$ is transcendental over $K$. 
Without loss of generality, by extending $K$ finitely, we may assume $\mathrm{tr.deg}_K L=1$. 
Let $c_{ji} \in L$ be transcendental over $K$. 
We can then construct a discrete valuation on $L$ which restricts to the trivial valuation on $K$ and satisfies $|c_{ji}|>1$. 
This provides a contradiction to \cref{lem:20251113}. 
Therefore, $L$ is algebraic over $K$.
\end{proof}

\begin{lem}\label{lem:202511141}
Let $F$ be a complete non-archimedian discrete valuation field and let $R\subset F$ be the valuation ring of $F$.
Assume that the residue field of $F$ has characteristic zero.
Let $f\in R[T,x_1,\ldots,x_l]$ be a homogeneous polynomial of degree $n$ as in \eqref{eqn:2025111410} such that $f$ completely factorizes into linear forms as in \eqref{eqn:2025111411} over $\overline{F}$.
Suppose that distinct $g_i$ and $g_j$ have distinct reductions $\overline{g_i}\not=\overline{g_j}$ in $k[x_1,\ldots,x_l]$ under the reduction map, where $k$ is the residue field of the non-archimedian valuation field $\overline{F}$.
Then $f$ completely factorizes into linear forms over $F^{ur}$, where $F^{ur}\subset \overline{F}$ is the maximal unramified extension of $F$.
\end{lem}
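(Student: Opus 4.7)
The strategy is to reduce to Hensel's lemma in one variable via generic specialization of $x_1, \dotsc, x_l$, and to recover the linear factors by inverting a linear system. First, since $F$ is complete and discretely valued, its valuation extends uniquely to $\overline{F}$. Applying \cref{lem:20251113} inside any finite subextension of $\overline{F}$ containing the coefficients of the $g_j$'s, I conclude that each $g_j$ lies in $\overline{R}[x_1, \dotsc, x_l]$, where $\overline{R} \subset \overline{F}$ denotes the valuation ring. Hence the reductions $\overline{g_j} \in k[x_1, \dotsc, x_l]$ are well-defined linear forms, and by hypothesis the assignment $g_j \mapsto \overline{g_j}$ is injective on distinct values.

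Let $k_0 \subset k$ denote the residue field of $F$; being of characteristic zero, $k_0$ is infinite. I then select points $\alpha^{(1)}, \dotsc, \alpha^{(l)} \in R^l$ satisfying: (a) $\det(\overline{\alpha^{(p)}_i})_{p,i} \neq 0$ in $k_0$; and (b) for each $p$ and each pair $(i,j)$ with $\overline{g_i} \neq \overline{g_j}$, the non-zero linear form $\overline{g_i} - \overline{g_j} \in k[x_1, \dotsc, x_l]$ does not vanish at $\overline{\alpha^{(p)}} \in k_0^l$. Condition (a) excludes a proper hypersurface, while condition (b) excludes, for each pair, a proper $k_0$-subspace of $k_0^l$, namely the kernel of the non-zero $k_0$-linear map $k_0^l \to k$ induced by the form. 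Both requirements can be simultaneously satisfied since $k_0$ is infinite.

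Fix $p$ and set $\alpha = \alpha^{(p)}$. Grouping the $g_j$'s by distinct values $h_1, \dotsc, h_r$ with multiplicities $m_s$, the polynomial $f(T, \alpha) \in R[T]$ factors over $\overline{R}$ as $\prod_s (T + h_s(\alpha))^{m_s}$. By (b), the residues $\overline{h_s(\alpha)}$ are pairwise distinct in $k$, hence the reduction $\overline{f(T,\alpha)} = \prod_s (T + \overline{h_s(\alpha)})^{m_s}$ has pairwise coprime factors. Hensel's lemma over the completion $\widehat{F^{ur}}$, whose residue field is $k$, then produces a factorization $f(T, \alpha) = \prod_s q_s$ in $\widehat{F^{ur}}[T]$ with $q_s$ monic of degree $m_s$; matching against the $\overline{F}$-factorization forces $q_s = (T + h_s(\alpha))^{m_s}$, and reading the coefficient of $T^{m_s - 1}$ (with $m_s$ invertible in residue characteristic zero) yields $h_s(\alpha) \in \widehat{F^{ur}}$. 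Being algebraic over $F$, the extension $F(h_s(\alpha))/F$ is finite with value group contained in—and hence equal to—that of $F$, so it is unramified and $h_s(\alpha) \in F^{ur}$.

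Finally, writing $h_s = \sum_i d_{si} x_i$, the relations $h_s(\alpha^{(p)}) = \sum_i d_{si} \alpha^{(p)}_i$ form a linear system over $R$ with coefficient matrix $M = (\alpha^{(p)}_i)$, invertible by (a); solving gives $d_{si} \in F^{ur}$, whence $g_j \in F^{ur}[x_1, \dotsc, x_l]$ for all $j$. The main technical obstacle is the descent from $\widehat{F^{ur}}$ back to $F^{ur}$, combining the Hensel lift with the algebraicity of the specialized roots over $F$ and the equality of value groups between $F$ and $\widehat{F^{ur}}$; the separation hypothesis on the reductions $\overline{g_j}$ is exactly what ensures Hensel's lemma applies in the first place.
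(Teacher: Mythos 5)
Your proof is correct, but it takes a genuinely different route from the paper's. The paper's argument is a short Galois-theoretic one: after reducing to $f$ irreducible (where all $g_i$ are distinct), it observes that any $\sigma\in\mathrm{Gal}(\overline{F}/F^{ur})$ is an inertia element, hence acts trivially on the residue field $k$; since $\sigma$ permutes $\{g_1,\ldots,g_n\}$ while preserving reductions, the distinct-reduction hypothesis forces $\sigma(g_i)=g_i$ for every $i$, so each $g_i$ is Galois-fixed and lies in $F^{ur}[x_1,\ldots,x_l]$. The reducible case then follows by factoring into irreducibles. You instead specialize at $l$ well-chosen points $\alpha^{(p)}\in R^l$ (exploiting that $k_0$ is infinite to avoid the finitely many bad subspaces), apply Hensel's lemma in one variable over $\widehat{F^{ur}}$ to the specialized polynomial $f(T,\alpha)$, read off $h_s(\alpha)\in\widehat{F^{ur}}$ from the coefficient of $T^{m_s-1}$ (using $\mathrm{char}=0$ to invert $m_s$), descend to $F^{ur}$ via the value-group comparison, and recover the coefficients of the $h_s$ by inverting the linear system with matrix $(\alpha^{(p)}_i)$. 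Both proofs use the distinct-reduction hypothesis at the same essential point—you to ensure the residue factorization has coprime factors so Hensel applies, the paper to conclude $\sigma$ fixes each $g_i$—but the paper's route is shorter and conceptually isolates exactly what unramifiedness means (inertia acts trivially on residues), whereas yours is more elementary and constructive, replacing the Galois group entirely by explicit one-variable Hensel lifting plus linear algebra, at the cost of extra work to choose good specialization points, match the Hensel factors to the linear ones, and descend from $\widehat{F^{ur}}$ back to $F^{ur}$.
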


\begin{proof}
We first assume that $f$ is irreducible in $F[T,x_1,\ldots,x_l]$. 
In this case, we have $g_i\not=g_j$ for $i\not=j$. 
Hence $\overline{g_i}\not=\overline{g_j}$ for $i\not=j$. 
Let $\sigma\in \mathrm{Gal}(\overline{F}/F^{ur})$. 
Then $\sigma(g_i)\in \{g_1,\ldots,g_n\}$, and the reduction is fixed: $\overline{\sigma(g_i)}=\overline{g_i}$ as $\sigma$ acts trivially on $k$.
Hence $\sigma(g_i)=g_i$. 
Thus, $g_i\in F^{ur}[x_1,\ldots,x_l]$, which proves the lemma when $f$ is irreducible.

For the general case, let $f=f_1^{\nu_1}\cdots f_k^{\nu_k}$ be the irreducible decomposition of $f$ in $F[T,x_1,\ldots,x_l]$. 
By the argument for the irreducible case, each factor $f_i$ completely factorizes into linear forms over $F^{ur}$. 
This completes the proof.
\end{proof}

Now we return to the geometric situation.
Let $X$ be a quasi-projective variety and let $\mathcal{E}$ be an algebraic locally free sheaf on $X$.
Assume that we are given a polynomial $P(T)$ as in \eqref{eq:char}.
Let $K$ be the rational function field of $X$ and let $\xi:\mathrm{Spec}(K)\to X$ be the schematic generic point.
Set $E_K=\Gamma(\mathrm{Spec}(K),\xi^*\mathcal{E})$, which is a vector space over the field $K$.
If we choose a basis $x_1,\ldots,x_l\in E_K$, then $B_{K,\xi^*\mathcal{E}}=\bigoplus_{i\geq 0} \Sym^i E_K$ is identified with the polynomial ring $K[x_1,\ldots,x_l]$ over $K$, where $l=\mathrm{rank}(\mathcal{E})$.
We have the pull-back $\xi^*P(T)\in B_{K,\xi^*\mathcal{E}}[T]$, which is a homogeneous polynomial of degree $n$.

Now if $P(T)$ splits over $X$, then $\xi^*P(T)\in K[T,x_1,\ldots,x_l]$ completely factorizes into linear forms over $K$.
The following lemma shows that the converse is also true.

\begin{lem}\label{lem:202511132}
Let $X$ be a quasi-projective normal variety.
Let $\mathcal{E}$ be an algebraic locally free sheaf over $X$. 
Let $P(T)=\sum_i\sigma_iT^i$ be the polynomial given in \eqref{eq:char}.
Suppose $\xi^*P(T)$ completely factorizes into linear forms over $K$, where $\xi:\mathrm{Spec}(K)\to X$ is the schematic generic point of $X$.
Then $P(T)$ splits over $X$.
\end{lem}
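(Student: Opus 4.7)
The plan is to reduce the statement to a purely local valuation-theoretic question at each codimension-one point of $X$, and then conclude with \cref{lem:20251113}. First, from the hypothesis that $\xi^*P(T)$ completely factorizes into linear forms over $K$, I obtain rational sections $\eta_1,\dotsc,\eta_n$ of $\mathcal{E}$ such that $\xi^*P(T)=\prod_{j=1}^n(T+\eta_j)$ holds as an identity of rational sections. Since $X$ is normal and $\mathcal{E}$ is locally free, to show that each $\eta_j$ extends to a regular (algebraic) section in $\Gamma(X,\mathcal{E})$ it is enough to verify that it is regular at every codimension-one point of $X$; the Hartogs-type extension across codimension $\geq 2$ is then automatic by normality together with local freeness of $\mathcal{E}$.

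Fix such a point $x$, and set $\mathcal{O}:=\mathcal{O}_{X,x}$, a discrete valuation ring with fraction field $K$. After replacing $X$ by a sufficiently small Zariski neighborhood of $x$, the sheaf $\mathcal{E}$ admits a trivializing basis $e_1,\dotsc,e_l$, and the localization of $B_{X,\mathcal{E}}$ at $x$ becomes the polynomial ring $\mathcal{O}[e_1,\dotsc,e_l]$. In this trivialization the coefficients $\sigma_i$ of $P(T)$ lie in $\mathcal{O}[e_1,\dotsc,e_l]$, so $P(T)\in\mathcal{O}[T,e_1,\dotsc,e_l]$, while the generic factorization writes $\eta_j=c_{j1}e_1+\cdots+c_{jl}e_l$ with $c_{ji}\in K$. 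The key step is to invoke \cref{lem:20251113} for the non-archimedean valuation field $(K,\mathcal{O})$, which guarantees that all $c_{ji}\in\mathcal{O}$. Hence each $\eta_j$ is regular at $x$.

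Since $x$ was an arbitrary codimension-one point of the normal variety $X$, each $\eta_j$ extends uniquely to a global section in $\Gamma(X,\mathcal{E})$, and the identity $P(T)=\prod_{j=1}^n(T+\eta_j)$, which holds at the generic point, then automatically holds in $B_{X,\mathcal{E}}[T]$. Passing through the analytification map $\Gamma(X,\mathcal{E})\to\Gamma(X,\mathcal{E}^{\mathrm{an}})$ produces the required splitting of $P(T)$ over $X$ in the sense of the definition preceding \cref{lem:spectral}. I do not anticipate any substantive obstacle: the entire weight of the argument is carried by \cref{lem:20251113}, which is precisely the Gauss-norm input needed to propagate the integrality of the coefficients $\sigma_i$ to the integrality of the factors $\eta_j$ at every height-one prime; everything else is the standard reduction that a rational section of a locally free sheaf on a normal variety, regular in codimension one, is regular globally.
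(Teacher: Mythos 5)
Your argument is correct and is essentially the paper's own proof: obtain the $\eta_j$ as rational sections from the generic factorization, verify regularity at the generic point of each divisor by invoking \cref{lem:20251113} for the discrete valuation ring $\mathcal{O}_{X,\zeta}$ with a trivializing basis of $\mathcal{E}_\zeta$, and then extend across codimension $\geq 2$ using normality and local freeness. The only small point is that the final analytification step is superfluous and slightly weakens the statement: the paper concludes with the factorization $P(T)=\prod_j(T+\eta_j)$ already in $B_{X,\mathcal{E}}[T]$ with \emph{algebraic} sections $\eta_j\in\Gamma(X,\mathcal{E})$, and it is this stronger algebraic conclusion that the proof of \cref{lem:spectral} subsequently requires.
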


\begin{proof}
Since $\xi^*P(T)$ completely factorizes into linear forms over $K$, there exist $\eta_1,\ldots,\eta_n\in E_K$ such that
$$\xi^*P(T)=(T+\eta_1)\cdots(T+\eta_n)\in B_{K,\xi^*\mathcal{E}}[T].$$
Each $\eta_i$ is a rational section of $\mathcal{E}$. 
Let $U\subset X$ be the maximal Zariski open set over which all $\eta_i$ are holomorphic sections. 
We aim to show that $X\setminus U$ contains no divisor.
To show this, let $D \subset X$ be an arbitrary divisor. 
Let $\zeta$ be the generic point of $D$, and let $R=\mathcal{O}_{X,\zeta}$ be the associated local ring. 
Since $X$ is normal, $R$ is a discrete valuation ring of the rational function field $K$.
Let $\mathcal{E}_{\zeta}$ be the stalk at $\zeta$, which is a free $R$-module.
Hence we may write as $\mathcal{E}_{\zeta}=Rx_1+\cdots+Rx_l$ so that
\begin{equation}\label{eqn:20251117}
\bigoplus_{i\geq 0} \Sym^i \mathcal{E}_{\zeta}=R[x_1,\ldots,x_l].
\end{equation}
This yields the identification $B_{K,\xi^*\mathcal{E}}=K[x_1,\ldots,x_l]$.
Since all $\xi^*\sigma_i \in K[x_1,\ldots,x_l]$ are contained in $R[x_1,\ldots,x_l]$, \cref{lem:20251113} implies that every $\eta_i$ is also contained in $R[x_1,\ldots,x_l]$. 
This means that $\eta_i$ is regular at $\zeta$, i.e., $\zeta \in U$.
Since this holds for every divisor $D$, we conclude that $X \setminus U$ contains no divisor. 
Consequently, the codimension of $X \setminus U$ in $X$ is greater than one.

Now for each Zariski open $\mathrm{Spec}(A)\subset X$ on which $\mathcal{E}$ is trivial, we note that $\eta_i$ is a section of the locally free sheaf $\mathcal{E}|_{\mathrm{Spec}(A)}$ over $\mathrm{Spec}(A)\setminus U$.
Since $A$ is normal, such section extends over $\mathrm{Spec}(A)$ (cf. \cite[Theorem 6.45]{GW10}).
This shows that  $\eta_i$ is a section of $\mathcal{E}$ over $X$.
As the natural ring homomorphism $B_{X,\mathcal{E}}[T]\to B_{K,\xi^*\mathcal{E}}[T]$ is injective, we deduce that $P(T)=\prod_{i=1}^n (T+\eta_i)$ in $B_{X,\mathcal{E}}[T]$.
\end{proof}

\begin{proof}[Proof of \cref{lem:spectral}]
We first construct $\pi:\Sigma\to X$.
By shrinking $U$ if necessary, we may assume that $U$ is contained in some affine open subset over which $\mathcal{E}$ is a free sheaf.
Let $K$ be the rational function field of $X$ and let $L$ be the field of all meromorphic functions on $U$.
Then we have $K\subset L$.
Let $\xi:\mathrm{Spec}(K)\to X$ be the schematic generic point of $X$.
Since $P(T)$ splits over $U$, the homogeneous polynomial $\xi^*P(T)\in B_{K,\xi^*\mathcal{E}}[T]$ completely factors into linear forms over the field extension $L$. Thus, by \cref{lem:202511131}, $\xi^*P(T)$ completely factors into linear forms over $\overline{K}$. Let $F\subset \overline{K}$ be the field generated over $K$ by the coefficients of the linear forms appearing in the factorization.
Then $F$ is a finite Galois extension of $K$, over which $\xi^*P(T)$ completely factors into linear forms as in \eqref{eqn:2025111411}.
These linear forms $g_1,\ldots,g_n$, using the notation in \eqref{eqn:2025111411}, are invariant under the Galois action of $\mathrm{Gal}(F/K)$, and the induced action of $\mathrm{Gal}(F/K)$ on the set $\{g_1,\ldots,g_n\}$ is faithful.
Let $\pi:\Sigma\to X$ be the normalization of $X$ in $F$.
Then $\Sigma$ is normal and $\pi:\Sigma\to X$ is finite Galois with $\mathrm{Aut}(\Sigma/X)=\mathrm{Gal}(F/K)$.
We consider $\pi^*P(T)\in B_{\Sigma,\pi^*\mathcal{E}}[T]$.
Let $\tau:\mathrm{Spec}(F)\to \Sigma$ be the schematic generic point of $\Sigma$.
Since $\tau^*\pi^*P(T)$ completely factors into linear forms over $F$, \cref{lem:202511132} yields sections $\eta_i\in\Gamma(\Sigma,\pi^*\mathcal{E})$ such that $\pi^*P(T)=\prod_{i=1}^n (T+\eta_i)$.
This is the construction of $\Sigma$ and $\eta_1,\ldots,\eta_n$.
By the construction, the set $\{\eta_1,\ldots,\eta_n\}$ is invariant under the action $\mathrm{Aut}(\Sigma/X)$, and the induced action of $\mathrm{Aut}(\Sigma/X)$ on this set is faithful.

Next suppose $\varphi:Y\to X$ is a dominant morphism from a normal variety such that $\varphi^*\mathcal{E}$ splits over $Y$.
Then its rational function field $F'$ is a field extension of $K$, and $\xi^*P(T)$ completely factors into linear forms over $F'$.
By the definition of $F$, we have $F\subset F'\cap \overline{K}$ (cf. \cref{lem:202511131}).
In particular, $F\subset F'$.
Hence $Y\to X$ factors $\Sigma\to X$.

Let $D\subset X$ be a prime divisor not contained in $\pi(R)$.
We take the schematic generic point $\zeta$ of $D$, and show that $\pi:\Sigma\to X$ is unramified over $\zeta$.
Let $D'$ be a prime divisor contained in $\pi^*D$, and let $\zeta'$ be the schematic generic point of $D'$.
Let $\mathcal{O}_{X,\zeta}\subset K$ be the local ring associated to $\zeta$, and let $\mathcal{O}_{\Sigma,\zeta'}\subset F$ be that of $D'$.
Then these are discrete valuation rings.
We normalize the valuation on $F$ so that it extends the valuation on $K$.
Let $\hat{K}\subset \hat{F}$ be the associated completions.
By $D\not\subset \pi(R)$, we have $\zeta'\in \Sigma\setminus R$.
Hence distinct $\eta_i$ and $\eta_j$ have distinct reductions under $\pi^*\mathcal{E}_{\zeta'}\to \pi^*\mathcal{E}_{\zeta'}\otimes k$, where $k$ is the residue field of $\mathcal{O}_{\Sigma,\zeta'}$.
By considering the identification as in \eqref{eqn:20251117}, we apply \cref{lem:202511141} to $\xi^*P(T)$ and deduce that $\hat{F}\subset \hat{K}^{ur}$.
This shows that the local homomorphism $\pi^*:\mathcal{O}_{X,\zeta}\to \mathcal{O}_{\Sigma,\zeta'}$ is unramified.
Thus $\pi:\Sigma\to X$ is unramified over the generic points of all prime divisors on $X$ not contained in $\pi(R)$.
Therefore by the purity of the branch locus \cite[Tag 0BMB]{stacks-project}, $\pi:\Sigma\to X$ is \'etale over $X\setminus \pi(R)$.
Since the set $\{\eta_1,\ldots,\eta_n\}$ is invariant under the action $\mathrm{Aut}(\Sigma/X)$, we have $\pi^{-1}(\pi(R))=R$.
Hence $\pi:\Sigma\to X$ is \'etale outside $R$.
\end{proof}

\begin{rem} 
We remark that the covering constructed in \cref{lem:spectral} satisfies the following functorial property generalizing \cref{item:20251211}, although it will not be used in this paper.
Let $X$, $\mathcal{E}$ and $P(T)$ be the same as in \cref{lem:spectral}, in particular $P(T)$ splits over some non-empty open subset $U\subset X$.
Then we have the covering $\pi:\Sigma\to X$ and the Zariski closed subset $R\subset \Sigma$ as in \cref{lem:spectral}.
Let $\varphi:Y\to X$ be a morphism from a smooth quasi-projective variety $Y$ such that $\varphi(Y)\not\subset \pi(R)$.
Then $\varphi^*P(T)$ splits over every simply connected domain of $Y\setminus \pi(R)$.
Therefore applying  \cref{lem:spectral}, we may construct the covering $\pi':\Sigma'\to Y$.

\begin{claim}
The map $\varphi\circ\pi':\Sigma'\to X$ factors as $\Sigma'\to \Sigma\to X$.
\end{claim}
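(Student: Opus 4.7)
I first reduce to the case where $\varphi$ is dominant; if it is not, I replace $X$ by the closure $Z := \overline{\varphi(Y)}$ and apply the same argument to a suitable Galois component of the pullback $\Sigma\times_X Z\to Z$. Now assume $\varphi$ dominant. By \cref{item:ramified}, $\pi:\Sigma\to X$ is étale outside $\pi(R)$, so the projection $p_1: Y\times_X\Sigma \to Y$ is finite and étale over the non-empty open subset $U_Y := Y\setminus \varphi^{-1}(\pi(R))$. Consequently, any irreducible component of $Y\times_X\Sigma$ meeting $p_1^{-1}(U_Y)$ surjects onto $Y$; I pick such a component $W$ and consider its normalization $W^{\mathrm{norm}}\to Y$, a dominant finite morphism from a normal variety.

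The second projection $p_2:Y\times_X\Sigma\to\Sigma$ yields
\[
p_1^*\varphi^*P(T) = p_2^*\pi^*P(T) = \prod_{i=1}^n\bigl(T+p_2^*\eta_i\bigr),
\]
so $\varphi^*P(T)$ splits on $W^{\mathrm{norm}}$. Applying property \cref{item:20251211} of \cref{lem:spectral} (with $Y$ playing the role of the base and $\Sigma'$ playing the role of the spectral cover) to the dominant morphism $W^{\mathrm{norm}}\to Y$ furnishes a factorization $W^{\mathrm{norm}}\to\Sigma'\to Y$, and therefore an embedding of function fields $K_{\Sigma'}\hookrightarrow K_W$. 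Meanwhile, $W\to\Sigma$ gives an embedding $K_\Sigma\hookrightarrow K_W$ over $K_X\hookrightarrow K_Y\hookrightarrow K_W$.

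Inside $K_W[T]$ the polynomial $\varphi^*P(T)$ now admits two complete factorizations into linear forms: $\prod_i(T+\eta_i)$ coming from $K_\Sigma\subset K_W$, and $\prod_j(T+\eta'_j)$ coming from $K_{\Sigma'}\subset K_W$. Unique factorization in the UFD $K_W[T]$ forces $\{\eta_i\}=\{\eta'_j\}$ as multisets in $K_W$. Hence each $\eta_i$ lies in $K_{\Sigma'}$, which yields the tower
\[
K_X\ \subset\ K_\Sigma \,=\, K_X(\eta_1,\dots,\eta_n)\ \subset\ K_Y(\eta'_1,\dots,\eta'_n) \,=\, K_{\Sigma'},
\]
compatibly with the existing embedding $K_X\hookrightarrow K_Y\hookrightarrow K_{\Sigma'}$.

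Finally I promote the inclusion $K_\Sigma\hookrightarrow K_{\Sigma'}$ to an actual morphism $\Sigma'\to\Sigma$ over $X$. Consider the fiber product $\Sigma'\times_X\Sigma\to\Sigma'$, which is finite. Its generic fiber is $K_{\Sigma'}\otimes_{K_X}K_\Sigma$, and the embedding $K_\Sigma\hookrightarrow K_{\Sigma'}$ just constructed realizes $K_{\Sigma'}$ as a quotient of this algebra. Accordingly, there exists an irreducible component $V\subset \Sigma'\times_X\Sigma$ with function field $K_{\Sigma'}$, and the projection $V\to\Sigma'$ is finite and birational. By the normality of $\Sigma'$, its normalization $V^{\mathrm{norm}}\to\Sigma'$ is an isomorphism, so $\Sigma'\xrightarrow{\,\sim\,}V^{\mathrm{norm}}\to V\to\Sigma$ is the desired morphism making the diagram commute over $X$.

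I expect the main technical obstacle to lie in the non-dominant case: when $\varphi(Y)\subsetneq X$, the function field $K_X$ does not embed in $K_Y$, and the entire spectral comparison must be transferred to $Z=\overline{\varphi(Y)}$ through a carefully chosen Galois component of $\Sigma\times_X Z\to Z$, which requires some extra bookkeeping (but no new ideas beyond those used in the dominant case).
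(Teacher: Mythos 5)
Your argument in the \emph{dominant} case is correct, and it is genuinely different from the paper's. The paper handles a locally closed immersion $\varphi:Y\hookrightarrow X$ avoiding $\pi(R)$ first: it takes a component $Y'$ of $Y\times_X\Sigma$, which is automatically finite étale \emph{Galois} over $Y$, notes by \cref{item:20251211} that $Y'\to Y$ factors through $\Sigma'$, and then shows $\mathrm{Aut}(Y'/\Sigma')$ is trivial by playing the trivial action of $\mathrm{Aut}(Y'/\Sigma')$ on the pulled-back spectral forms against the faithfulness of the $\mathrm{Aut}(\Sigma/X)$-action from \cref{item:extension}. You instead apply \cref{item:20251211} to the normalization of a component $W$ of $Y\times_X\Sigma$, compare the two factorizations of $\varphi^*P(T)$ inside the polynomial ring over $K_W$ by unique factorization, deduce $K_\Sigma\subset K_{\Sigma'}$ as subfields of $K_W$ using $K_\Sigma=K_X(\text{coefficients of }\eta_1,\dots,\eta_n)$, and promote this field inclusion to a morphism via the normality of $\Sigma'$. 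Both are sound; your version replaces the explicit use of \cref{item:extension} by the construction-level description of $K_\Sigma$ as a subfield of $\overline{K_X}$. (Minor point: unique factorization should be invoked in $K_W[T,x_1,\dots,x_l]$, not $K_W[T]$, since the $\eta_i$ are sections of a bundle.)

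The non-dominant reduction, however, contains a real gap, and your closing remark that it is ``some extra bookkeeping (but no new ideas beyond those used in the dominant case)'' underestimates it. Your dominant argument rests on the identity $K_\Sigma=K_X(\text{coeffs of }\eta_i)$ and on all relevant fields sitting over $K_X$. After replacing $X$ by $Z=\overline{\varphi(Y)}$, two new issues appear. First, $Z$ may be singular, so \cref{lem:spectral} does not give a spectral cover of $Z$; one must pass to a smooth dense open $Z^\circ$, which forces shrinking $Y$ and $\Sigma'$, and the morphism thus produced on a dense open of $\Sigma'$ must then be \emph{extended} over all of $\Sigma'$ — the paper does this by taking the Zariski closure of the graph inside $Y\times_X\Sigma$ and invoking finiteness of its projection to $Y$ together with normality of $\Sigma'$, which is not present in your dominant argument. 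Second, and more essentially, to run your UFD comparison over $Z^\circ$ you would need the analogue of $K_\Sigma=K_X(\text{coeffs})$ for the chosen component $\Sigma_Z$ of $Z^\circ\times_X\Sigma$, i.e.\ $K_{\Sigma_Z}=K_{Z^\circ}(\text{coeffs of }\eta_i|_{\Sigma_Z})$. This is \emph{not} a formal consequence of the construction of $\Sigma$ over $X$ (the integral closure $\pi_*\cO_\Sigma$ need not be generated over $\cO_X$ by the coefficients, only generically), and establishing it is exactly the content of the paper's immersion case, which requires the faithfulness of the Galois action from \cref{item:extension}. So the immersion case is doing genuine work that your dominant argument does not absorb, and it must be proved separately.
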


\begin{proof}
We first assume that $\varphi:Y\to X$ is a locally closed immersion and $\varphi(Y)\cap \pi(R)=\emptyset$.
Let $Y'$ be an irreducible component of $Y\times_{X}\Sigma$.
Then by \cref{lem:spectral}, the induced map $p:Y'\to Y$ is finite \'etale and Galois.
Since there exists a natural locally closed immersion $\varphi':Y'\to \Sigma$, the pull-back $\varphi^*P(T)$ splits over $Y'$ as
$$p^*\varphi^*P(T)=(T+(\varphi')^*\eta_1)\cdots(T+(\varphi')^*\eta_n).$$
Hence $p:Y'\to Y$ factors as $Y'\to \Sigma'\to Y$ (cf. \cref{item:20251211}).
Since $\varphi^*P(T)$ splits over $\Sigma'$, the action of $\mathrm{Aut}(Y'/\Sigma') \subset \mathrm{Aut}(Y'/Y)$ on the set $\{(\varphi')^*\eta_1,\ldots,(\varphi')^*\eta_n\}$ is trivial. On the other hand, the condition $\varphi'(Y') \not\subset R$ ensures the injectivity of the natural map $\{\eta_1,\ldots,\eta_n\} \to \{(\varphi')^*\eta_1,\ldots,(\varphi')^*\eta_n\}$, which implies that $\mathrm{Aut}(Y'/Y) \subset \mathrm{Aut}(\Sigma/X)$ acts faithfully on the set  $\{(\varphi')^*\eta_1,\ldots,(\varphi')^*\eta_n\}$ (cf. \cref{item:extension}).
Consequently, $\mathrm{Aut}(Y'/\Sigma')$ is trivial, which implies $Y' = \Sigma'$ by Galois theory. This establishes our claim under the assumption that $\varphi:Y \to X$ is a locally closed immersion and $\varphi(Y) \cap \pi(R) = \emptyset$.

Next we consider the general case.
We may take a non-empty Zariski open subset $W\subset Y$ such that $\varphi|_W:W\to X$ factors as $W\to Z\to X$, where $W\to Z$ is dominant and $Z\to X$ is a locally closed immersion such that $Z\cap \pi(R)=\emptyset$.
By the argument above, we obtain a finite \'etale Galois covering $Z' \to Z$. 
By \cref{item:20251211}, the morphism $W \to Z$ induces a morphism $(\pi')^{-1}(W) \to Z'$; by composing this with $Z' \to \Sigma$, we obtain a morphism $(\pi')^{-1}(W) \to \Sigma$.
Let $T\subset Y\times_{X}\Sigma$ be the Zariski closure of the natural map $(\pi')^{-1}(W)\to  Y\times_{X}\Sigma$.
Then we get a finite surjective map $T\to Y$.
Since $\Sigma'$ is normal, the map $(\pi')^{-1}(W)\to T$ extends to $\Sigma'\to T$.
Consequently, we obtain the desired map $\Sigma'\to \Sigma$ by composing $\Sigma' \to T$ with the projection map $T \to \Sigma$.
\end{proof}

On the other hand, functoriality fails in general if we use the map $\varphi^*\mathcal{E} \to \mathcal{E}'$ to construct a covering $\Sigma'' \to Y$ instead of $\Sigma'$; this is illustrated by the following example.

\begin{example}
Set $X=\mathrm{Spec}\, \mathbb C[x,y]$, $Y=\mathrm{Spec}\, \mathbb C[y]$ and $\mathcal{E}=\Omega_X$.
Consider a closed immersion $f:Y\to X$ defined by $x=0$. 
Take a polynomial $P(T)=T^2+2dyT+(dy^2-ydx^2)$ so that $P(T)=(T+dy-\sqrt{y}dx)(T+dy+\sqrt{y}dx)$.
The associated covering is given by $\Sigma = \operatorname{Spec} \mathbb{C}[x, \sqrt{y}]$.
Then $\pi(R)=(y=0)$, hence $Y\not\subset\pi(R)$.
We denote by $[f^*P(T)]$ the polynomial obtained from $f^*P(T)$ by applying the morphism $f^*\Omega_{X} \to \Omega_{Y}$ to its coefficients.
Then $[f^*P(T)]=T^2+2dyT+dy^2$.
The associated covering $\Sigma''$ is nothing but $Y = \operatorname{Spec} \mathbb{C}[y]$.
Hence  we do not have the functorial lifting $f':\Sigma''\to\Sigma$.
\end{example}
\end{rem}

\medskip

\subsection{Some finiteness criterion for subgroups of almost simple algebraic groups}

Let \(K\) be a non-archimedian local field, and let \(G\) be a reductive group defined over \(K\). One can attach to \(G\) 
 its {\em Bruhat-Tits building} \(\Delta(G)\); this is a simplicial complex obtained by glueing affine spaces isometric to \(\mathbb{R}^{N}\) (called the {\em appartments}), where \(N = \mathrm{rk}_{K}(G)\). It is a ${\rm CAT}(0)$ space.  
We refer the readers to \cite{AB08,Guy23} for more details. 

There is a natural continuous action of \(G(K)\) on \(\Delta(G)\) which acts transitively on the appartments, and which is such that the stabilizer of any point in \(\Delta(G)\) is a bounded subgroup of \(G(K)\)  by \cref{lem:AB} below.

We begin with the following definition. 
\begin{dfn}[Bounded subgroup]
	Let $G$ be a  reductive algebraic group over the non-archimedean local field $K$. Fix an embedding $G\to {\rm GL}_N$. A subgroup $H$ of $G(K)$ is \emph{bounded} if  there is an upper bound on the absolute values of the matrix entries in ${\rm GL}_N(K)$ of the elements of $H$, otherwise it is called \emph{unbounded}. 
\end{dfn}
\begin{lem}[ {{\cite[Corollary 7.2]{Rou09}}}]\label{lem:AB}
	Let $G$ be a  reductive algebraic group over a non-archimedean local field $K$.   Let $H$ be a  subgroup  of $G(K)$. Then the following properties are equivalent. 
	\begin{itemize}
		\item $H$ is bounded.
		\item $H$ is contained in a compact subgroup of $G(K)$.
		\item $H$ fixes a point in $\Delta(G)$.
	\end{itemize}
\end{lem}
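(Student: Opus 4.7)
The plan is to prove the three implications in a cycle, using standard facts about non-archimedean local fields and the CAT(0) geometry of the Bruhat-Tits building.

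First, I would establish the equivalence between boundedness of $H$ and the existence of a compact subgroup containing $H$. Fixing an embedding $G\hookrightarrow {\rm GL}_N$, the condition that the matrix entries of elements of $H$ are uniformly bounded is equivalent, after scaling by a power of a uniformizer $\pi\in K$, to saying that $H$ is contained in the compact set $K_n:=\pi^{-n}{\rm Mat}_N(\mathcal O_K)\cap {\rm GL}_N(K)$ for some $n\geq 0$. The closure $\overline H$ in ${\rm GL}_N(K)$ (hence in $G(K)$, since $G$ is closed in ${\rm GL}_N$) is then a closed subset of a compact set, and is a subgroup since taking closure of a subgroup preserves the group structure. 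Conversely, any compact subgroup of ${\rm GL}_N(K)$ is bounded, since the coordinate functions are continuous and $|\det(\cdot)^{-1}|$ is continuous on ${\rm GL}_N(K)$.

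Next, I would show that fixing a point in $\Delta(G)$ implies $H$ is contained in a compact subgroup. This is a structural result on Bruhat-Tits buildings: for any point $x\in\Delta(G)$, the stabilizer ${\rm Stab}_{G(K)}(x)$ is a compact (in fact compact open) subgroup of $G(K)$. This follows from the explicit description of stabilizers in terms of parahoric-type subgroups; for a special vertex $x_0$, ${\rm Stab}_{G(K)}(x_0)$ is (up to finite index) a hyperspecial parahoric which is compact open, and general stabilizers are contained in conjugates of such. Hence if $H$ fixes $x$, then $H\subset {\rm Stab}_{G(K)}(x)$ is contained in a compact subgroup.

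Finally, for the implication that $H$ contained in a compact subgroup $C$ implies $H$ fixes a point of $\Delta(G)$, I would invoke the Bruhat-Tits (or Cartan) fixed point theorem for proper CAT(0) spaces: any group acting on a complete CAT(0) space with a bounded orbit admits a fixed point, obtained as the circumcenter of the orbit closure. Since $\Delta(G)$ is a complete CAT(0) space (indeed a Euclidean building) on which $G(K)$ acts by isometries continuously, any orbit $C\cdot x$ of the compact group $C$ is compact (being the continuous image of $C\times\{x\}$), hence bounded; its unique circumcenter is then fixed by $C$, and in particular by $H$.

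The main subtlety is the non-trivial direction that stabilizers of points in $\Delta(G)$ are compact: it rests on the Bruhat-Tits theory, in particular on the fact that the action of $G(K)$ on $\Delta(G)$ is proper. Rather than reproducing this classical material, I would simply cite \cite[Corollary 7.2]{Rou09} (or the original construction in \cite{AB08}), since both directions are standard consequences of the Bruhat-Tits framework recalled immediately before the lemma.
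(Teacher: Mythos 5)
The paper does not prove this lemma; it is stated as a citation to Rousseau's survey. Your three-way cycle is the standard route, and steps (2) and (3) — stabilizers of points in $\Delta(G)$ are compact open parahoric-type subgroups, and the Bruhat–Tits fixed-point theorem applied to the orbit of a compact group — are correct and correctly reduced to the classical literature.

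However, your first equivalence (bounded $\Leftrightarrow$ contained in a compact subgroup) contains a real gap: the set $K_n = \pi^{-n}\mathrm{Mat}_N(\mathcal{O}_K)\cap \mathrm{GL}_N(K)$ is \emph{not} compact. It is an open (hence non-closed) subset of the compact ball $\pi^{-n}\mathrm{Mat}_N(\mathcal{O}_K)$, because $\mathrm{GL}_N(K)$ is open in $\mathrm{Mat}_N(K)$ and a bounded sequence of invertible matrices can degenerate to a singular one. So "$\overline{H}$ is a closed subset of a compact set" does not follow, and the argument as written breaks down. The missing ingredient is precisely the group structure of $H$: since $H^{-1}=H$ also has bounded entries, $|\det(h^{-1})|=|\det h|^{-1}$ is bounded above, i.e.\ $|\det h|$ is bounded below on $H$. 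Then the closure of $H$ in the compact set $\pi^{-n}\mathrm{Mat}_N(\mathcal{O}_K)$ lies in the closed locus $\{|\det|\geq c\}\subset \mathrm{GL}_N(K)$, hence is a compact subgroup of $\mathrm{GL}_N(K)$, and then of $G(K)$ since $G$ is closed in $\mathrm{GL}_N$. (You invoke continuity of $|\det(\cdot)^{-1}|$ only in the converse direction, where it is not needed; it is in the forward direction that it must be used.) Once this is repaired, the proposal is a faithful reconstruction of the standard argument that the cited [Rou09, Corollary~7.2] encapsulates.
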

 	A representation $\varrho:\pi_1(X)\to G(K)$ is \emph{(un)bounded} if its image $\varrho(\pi_1(X))$ is  a (un)bounded subgroup of $G(K)$.

The following finiteness criterion will be used to prove \cref{genericallyfinite}, which is the cornerstone of \cref{main6}.
\begin{lem}\label{lem:BT}
	Let $G$ be an almost simple algebraic group over the non-archimedean local field $K$.  Let $\Gamma\subset G(K)$ be a finitely generated subgroup so that
	\begin{itemize}
		\item  it is a Zariski dense subgroup in $G$,
		\item it is not contained in any bounded subgroup of $G(K)$. 
	\end{itemize} 
	Let $\Upsilon$ be a normal subgroup of $\Gamma$ which is \emph{bounded}. Then $\Upsilon$ must be finite. 
\end{lem}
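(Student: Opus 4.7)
The plan is to consider the Zariski closure $H$ of $\Upsilon$ in $G$. Since $\Upsilon$ is normal in $\Gamma$ and $\Gamma$ is Zariski dense in $G$, the subgroup $H$ is normalized by all of $G$, so $H$ is a normal algebraic $K$-subgroup of $G$. Because $G$ is almost simple (connected), $H$ is either finite or equals $G$; in the first case $\Upsilon \subseteq H$ is finite and we are done. The remainder of the argument rules out the second case.

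So suppose $\Upsilon$ is Zariski dense in $G$ while still being bounded. Let $F := \mathrm{Fix}(\Upsilon) \subseteq \Delta(G)$ denote its fixed-point set in the Bruhat-Tits building; by \cref{lem:AB}, $F$ is non-empty, and it is closed and convex in the $\mathrm{CAT}(0)$ space $\Delta(G)$. A direct computation using normality of $\Upsilon$ in $\Gamma$ shows that $F$ is $\Gamma$-invariant. The first key step is to check that $F$ must be unbounded: otherwise, $F$ would admit a unique circumcenter in $\Delta(G)$, which, being canonically associated to $F$, would be fixed by $\Gamma$; \cref{lem:AB} would then force $\Gamma$ to be bounded, contradicting the hypothesis.

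The second key step exploits this unboundedness. Since $K$ is a non-archimedean local field, $\Delta(G)$ is a proper $\mathrm{CAT}(0)$ space, so any unbounded closed convex subset contains a geodesic ray (by Arzelà–Ascoli extraction from segments $[x_0, y_n]$ with $d(x_0, y_n) \to \infty$ inside $F$). Pick such a ray in $F$ and let $\xi \in \partial_\infty \Delta(G)$ be its endpoint; then $\Upsilon$ fixes $\xi$. I would now invoke the Bruhat-Tits identification of the $G(K)$-stabilizer of a point at infinity with (a subgroup of) the $K$-points of a proper parabolic subgroup $P_\xi \subsetneq G$; this yields $\Upsilon \subseteq P_\xi(K)$, whose Zariski closure lies in $P_\xi$, contradicting $\overline{\Upsilon}^{\mathrm{Zar}} = G$.

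The main obstacle is this final identification: verifying that $\mathrm{Stab}_{G(K)}(\xi)$ is contained in a proper parabolic of $G$ requires invoking standard structural results from Bruhat-Tits theory, and implicitly uses that $G$ must be isotropic over $K$ (a fact automatic here, since the existence of the unbounded subgroup $\Gamma \subseteq G(K)$ prevents $G(K)$ from being compact). The $\mathrm{CAT}(0)$ ingredients—existence of the circumcenter of a bounded set and extraction of a geodesic ray in an unbounded closed convex subset of a proper $\mathrm{CAT}(0)$ space—are classical and should present no real difficulty.
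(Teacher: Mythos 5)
Your proposal is correct and follows essentially the same route as the paper's proof: fix-point set $F$ of $\Upsilon$ in the building is non-empty, convex, $\Gamma$-invariant; it must be unbounded (else the Bruhat--Tits/circumcenter argument forces $\Gamma$ bounded); an unbounded closed convex set yields a fixed point at infinity, placing $\Upsilon$ in a proper parabolic; then normality of the Zariski closure of $\Upsilon$ (from normality of $\Upsilon$ in the Zariski-dense $\Gamma$) plus almost simplicity forces finiteness. The only cosmetic differences are that you front-load the dichotomy $\overline{\Upsilon}^{\mathrm{Zar}} \in \{\text{finite}, G\}$ and extract the boundary point via an Arzelà--Ascoli ray argument, whereas the paper runs a compactness/cone-topology argument on $\overline{\Delta(G)}$; both are standard for proper $\mathrm{CAT}(0)$ spaces and give the same conclusion.
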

\begin{proof} 
To prove the lemma, we may assume that $G$ is connected.  	As $G(K)$ acts on $\Delta(G)$ transitively, we denote by  $R\subset \Delta(G)$ the set of fixed points of $\Upsilon$. Since $\Upsilon$ is bounded, $R$ is not empty. $R$ is moreover closed and convex. 	 
	We will prove that $R$ is moreover invariant under $\Gamma$, i.e. $\gamma R\subset R$ for $\gamma \in \Gamma$. 
	
	For every $u\in \Upsilon$ and $\gamma\in \Gamma$, one has  $\gamma^{-1}u\gamma\in \Upsilon$ since $\Upsilon\triangleleft \Gamma$.  Then for every $x\in R$, one has $u(\gamma x)=\gamma ((\gamma^{-1}u\gamma)x)=\gamma x$.  Hence $R$ is  invariant under $\Gamma$.
	
	If $R$ is bounded, by Bruhat-Tits' fixed point theorem (see \cite[Theorem 11.23]{AB08}), $\Gamma$ fixes a point in $R$. Then $\Gamma$ is a bounded subgroup of $G(K)$ by \cref{lem:AB}, which contradicts with our assumption. Hence $R$ is unbounded. 
	
Consider the   compactification $\overline{\Delta(G)}$ of the building $\Delta(G)$ by adding points at infinity.  A point at infinity in $\overline{\Delta(G)}$ is the equivalent class of geodesic rays in $\Delta(G)$ (see \cite[\S 11.8.1]{AB08}). Write $\partial \Delta(G):=\overline{\Delta(G)}-\Delta(G)$.  Note that the action of $G(K)$ on $\Delta(G)$ induces a natural action on $\overline{\Delta(G)}$.   Let $\widetilde{R}= \overline{\Delta(G)}^\Upsilon$ be the set of fixed points  of $\Upsilon$ in the compactification $\overline{\Delta(G)}$.  Then $\widetilde R$ is closed convex. If $\widetilde{R} \cap \partial\Delta(G)$ is empty, then $\partial\Delta(G)$ is covered by some cones (in finite number by compactness of $\partial\Delta(G)$) not intersecting $\widetilde R$. 
	These cones define the topology on $\overline{\Delta(G)}$ (the cone topology). Let O be an origin in $\Delta(G)$, such a cone is the set $C(y,\epsilon)$ of all $\xi$ in $\overline{\Delta(G)}$ such that the line segment (or geodesic ray) $[O,\xi]$ contains a point $x$ at distance less than $\epsilon$ from a fixed point $y \in \Delta(G)$. To get a basis of the topology, one has also to consider open balls in $\Delta$ as bounded cones.
	So if $\widetilde{R} \cap \partial\Delta(G)$ is empty, $\partial\Delta(G)$ is in the finite union of some cones $C(y,1/n)$ with $d(O,y)=n$   for some (great) $n$, and these cones do not intersect $\widetilde R$. Therefore $R=\widetilde{R}$ is in the ball $B(O,n)$. This contradicts with the unboundness of $R$. Hence $\widetilde{R}\cap \partial\Delta(G)\neq\varnothing$.  In other words, $\Upsilon$ fixes a point at infinity.   
	$\Upsilon$ is thus contained in $P(K)$ where $P\subset G$ is a proper parabolic subgroup of $G$. 
Write $H\subset G$ to be the Zariski closure of $\Upsilon$ in $G$. Then $H\subsetneq G$.   Since   $\Upsilon\triangleleft \Gamma$ and $\Gamma$ is Zariski dense in $G$, it follows that $H$ is a normal subgroup of $G$. 
 	Since 
	$G$ is assumed to be almost simple, we conclude that $H$, hence $\Upsilon$ is finite. This proves the lemma.   
\end{proof} 
We mention that \cref{lem:BT} was also obtained in \cite{Bru22} by similar arguments. 

\subsection{Harmonic mappings to Euclidean buildings} \label{sec:harmmapp}
The construction of 
\(s_\varrho : X \to S_\varrho\) 
in \cref{main3} is based on an existence theorem for 
\(\varrho\)-equivariant pluriharmonic maps into 
Bruhat-Tits buildings.   
\begin{thm}[\cite{BDDM,DM24}] \label{thm:BDDMex} 
	Let $X$ be a smooth quasi-projective variety and let $G$ be a reductive group defined over a non-archidemean local field $K$. Let $\Delta(G)$ be  the  Bruhat-Tits building of $G$. Denote by  $\pi_X:\widetilde{X}\to X$ the universal covering map.   If $\varrho:\pi_1(X)\to G(K)$ is a Zariski dense representation, then the following statements hold:
	\begin{thmenum}
		\item  \label{item:existence}  There exists a $\varrho$-equivariant pluriharmonic map $u:\widetilde{X}\to \Delta(G)$ with  logarithmic energy growth. 
		\item \label{item:harmonic}$ {u}$ is harmonic with respect to any K\"ahler metric on $\widetilde{X}$.  	
		\item  \label{item:pullback}Let $f: Y \to X$ be a morphism from a smooth quasi-projective variety $Y$. Denote by $\tilde{f}: \widetilde{Y} \to \widetilde{X}$   the lift of $f$ between the universal covers of $Y$ and $X$.  Then the $f^*\varrho$-equivariant map $ {u }\circ \tilde{f}: \widetilde{Y} \to \Delta(G)$ is   pluriharmonic and has logarithmic energy growth.  
	\end{thmenum} 
\end{thm}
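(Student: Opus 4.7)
The plan is to follow the strategy pioneered by Gromov--Schoen in the compact Kähler setting and extended to the quasi-projective case in \cite{BDDM, DM24}. First, I would choose a smooth log-compactification $(\overline{X}, D)$ of $X$ and endow $X$ with a Poincaré-type Kähler metric $\omega_P$ on $\overline{X}$, behaving transversally to $D$ like $\sum \frac{\sqrt{-1}\, dz_i \wedge d\bar{z}_i}{|z_i|^2 (\log|z_i|)^2}$ near a normal-crossing point of $D$. Pulling back to $\widetilde{X}$, one exhausts $\widetilde{X}$ by preimages of a sequence of relatively compact subsets of $X$ and applies Korevaar--Schoen's existence theory for equivariant harmonic maps into NPC metric spaces to produce, on each truncation, a $\varrho$-equivariant harmonic map with Dirichlet data valued in $\Delta(G)$.

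The crucial analytic point for \cref{item:existence} is to establish a uniform logarithmic energy estimate of the form $E(u_k; X^\varepsilon) \leq C \log(1/\varepsilon)$, where $X^\varepsilon$ is the complement of an $\varepsilon$-tubular neighborhood of $D$. This is obtained by constructing local model pluriharmonic maps into flats of $\Delta(G)$ compatible with the local monodromy of $\varrho$ around the components of $D$, and then using monotonicity and domination arguments. Passing to a limit, one extracts a $\varrho$-equivariant harmonic map $u : \widetilde{X} \to \Delta(G)$ with the prescribed energy growth. Pluriharmonicity of $u$ then follows from a Siu--Sampson type Bochner identity for harmonic maps from a Kähler manifold into NPC spaces, combined with the flat structure of apartments in $\Delta(G)$; this is the analogue of the rigidity step in Gromov--Schoen's factorization theorem.

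Statement \cref{item:harmonic} is then a direct consequence of pluriharmonicity: since pluriharmonicity is equivalent to the vanishing of the $(1,1)$-part of the Hessian, it implies harmonicity with respect to every Kähler metric on the source. For \cref{item:pullback}, let $\tilde{f} : \widetilde{Y} \to \widetilde{X}$ lift $f$. The composition $u \circ \tilde{f}$ is automatically $f^{\ast}\varrho$-equivariant, and pluriharmonicity is preserved under holomorphic pullback (composition of a pluriharmonic map with a holomorphic map remains pluriharmonic, since the $(1,1)$-Hessian pulls back). The logarithmic energy growth is inherited because the Poincaré-type metric on $Y$ compares with $f^{\ast}\omega_P$ modulo the log-pole contribution along $f^{-1}(D)$, so the pointwise energy density estimate transfers up to an $O(1)$ multiplicative factor upon integrating.

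The main obstacle is the logarithmic energy estimate in \cref{item:existence}: all the analytic difficulty is concentrated at the boundary divisor, where one must construct model harmonic maps into apartments which absorb the local monodromy of $\varrho$ and prove a delicate Korevaar--Schoen type compactness statement guaranteeing that the exhausting sequence $u_k$ converges to a genuine equivariant limit without loss of energy. Once this estimate is in hand, \cref{item:harmonic} and \cref{item:pullback} follow from standard properties of pluriharmonic maps into NPC spaces, and the theorem is complete.
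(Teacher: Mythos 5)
This theorem is not proved in the paper; it is cited directly from \cite{BDDM,DM24}, as the theorem header and the remark following it make explicit. So there is no ``paper's own proof'' to compare against, only the references. With that caveat, your outline correctly captures the overall Gromov--Schoen / Korevaar--Schoen framework used in \cite{BDDM} for the \emph{semisimple} case: exhaustion of $X$ by $X^\varepsilon$, a priori logarithmic energy estimates from local model maps absorbing the boundary monodromy, compactness, and then a Siu--Sampson--type Bochner argument for pluriharmonicity. Your deductions of \cref{item:harmonic} from pluriharmonicity and of the pluriharmonicity claim in \cref{item:pullback} are correct and standard.

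There are two genuine gaps. First, your treatment is uniform in $G$ and does not address the split used in \cite{DM24} to pass from semisimple to general reductive $G$. For a reductive group the (enlarged) building decomposes as $\Delta(G)\cong\Delta(\mathcal{D}G)\times V$, with $G(K)$ acting by translations on the Euclidean factor $V\cong\mathbb{R}^N$; the map $u$ decomposes as $(u_1,u_2)$, where $u_1$ is the \cite{BDDM} map for the projection of $\varrho$ to $\mathcal{D}G$, and $u_2$ is built \emph{separately} by integrating global logarithmic $1$-forms on $X$ with purely imaginary residues along $D$. Your variational construction does not produce $u_2$; the translation action has no bounded orbits, so Korevaar--Schoen existence does not apply directly to the $V$-factor and a different, essentially Hodge-theoretic, argument is needed there. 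Second, for the logarithmic energy growth in \cref{item:pullback} you invoke a two-sided $O(1)$ comparison between the Poincar\'e metric on $Y$ and $f^{*}\omega_P$; this fails for general morphisms $f$ (e.g.\ when $f$ contracts divisors, is far from dominant, or is constant, $f^{*}\omega_P$ degenerates on $Y$). The inequality that actually holds and suffices is one-sided, and the clean mechanism — used implicitly in the paper elsewhere — is that the pluriharmonic map's complex differentials encode logarithmic symmetric forms on $(\overline{X},D)$, whose pullbacks under a log morphism $(\overline{Y},D_Y)\to(\overline{X},D)$ remain logarithmic; the log-energy growth on $Y$ then follows from the extension of these forms, not from a metric comparison.
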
 
\begin{rem}
For the definitions appearing in the theorem, 
we refer the reader to \cite{BDDM} or to \cite[Section~2]{DM24} for more details. 
The above theorem was established in \cite[Theorem~A]{BDDM} by 
Brotbek, Daskalopoulos, Mese, and the second author in the case where $G$ is semisimple. 
Subsequently, building on \cite{BDDM}, 
the second author and Mese extended the result to the general reductive case 
in \cite[Theorem~A]{DM24}.

In the general case, the (enlarged) Bruhat--Tits building $\Delta(G)$ 
is the product of the Bruhat--Tits building $\Delta(\mathcal{D}G)$ of the derived group $\mathcal{D}G$  (that is semisimple)
with a real Euclidean space $V := \mathbb{R}^N$ on which $G(K)$ acts by translations 
(cf.~\cite{KP23}). 
The stabilizer of any point in $\Delta(G)$ is an open and bounded subgroup of $G(K)$. 
The pluriharmonic map $u$ of logarithmic energy constructed in~\cite{DM24} then decomposes as
\[
(u_1, u_2) \colon \widetilde{X} \longrightarrow \Delta(\mathcal{D}G) \times V,
\]
where $u_1$ is the $\varrho_1$-equivariant pluriharmonic map of logarithmic energy constructed in~\cite[Theorem~A]{BDDM}, where $\varrho_1$ is the composite of $\varrho$ with the natural projection $G \to \mathcal{D}(G)$.

In~\cite{DM24}, the component $u_2$ is defined by integrating suitable global logarithmic $1$-forms $\eta_1, \ldots, \eta_\ell$ on $X$ with purely imaginary residues along the boundary.
\end{rem}

\medspace

With this notation, recall that   \(x \in \widetilde{X}\) is called a {\em regular point}  of $u$  if it admits an open neighborhood \(U\) such that \(u(U) \subset A\) for some appartment \(A\). We say that a point \(x \in X\) is regular if some (equivalently, any) of its preimages in \(\widetilde{X}\) is regular. The regular points of $u$ in \(X\) form a non-empty open subset \(\cR(u)\); denote by \(\mathcal{S}(u)\) its complementary. By the deep theorem of Gromov-Schoen, $\cS(u)$ has small Hausdorff dimension. 
\begin{thm}[{\cite[Theorem 6.4]{GS92}}]\label{thm:GS}
	$\cS(u)$ is a closed subset of $X$ with  \emph{Hausdorff  dimension}    at most $2\dim_{\bC}X-2$. \qed
\end{thm}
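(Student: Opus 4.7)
The plan is to follow the strategy introduced by Gromov–Schoen for harmonic maps into CAT(0) spaces, adapted to the Euclidean building $\Delta(G)$. First, closedness of $\cS(u)$ is essentially definitional: if $x\in \cR(u)$ has a neighborhood $U$ with $u(U)\subset A$ for some apartment $A$, then every point of $U$ is automatically regular, so $\cR(u)$ is open and $\cS(u)$ is closed. The substance of the theorem is therefore the Hausdorff dimension estimate, and for this I would set up the order function. For each $x\in \widetilde{X}$, define the normalized energy and boundary integrals
\[
E_x(r) = \int_{B_r(x)} |\nabla u|^2, \qquad I_x(r) = \int_{\partial B_r(x)} d^2(u,u(x)),
\]
and consider the frequency-type quotient $r\mapsto r E_x(r)/I_x(r)$. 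The monotonicity formula for harmonic maps into CAT(0) targets (valid here thanks to \cref{item:harmonic}, which gives harmonicity with respect to any Kähler metric, in particular any smooth Riemannian metric around $x$) ensures that this quotient is non-decreasing, so that the limit $\alpha(x):=\mathrm{ord}_u(x)\ge 1$ exists and is upper semicontinuous in $x$.

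Next I would perform the blow-up/tangent map analysis. Rescaling $u$ at $x$ by the order $\alpha(x)$ produces, after passing to a subsequence, a tangent map $u_\star \colon T_x \widetilde{X} \to Y_{u(x)}$ that is $\alpha(x)$-homogeneous, where $Y_{u(x)}$ is the tangent cone of $\Delta(G)$ at $u(x)$; because $\Delta(G)$ is an Euclidean building, $Y_{u(x)}$ is again such a building, in particular CAT(0) with a rich family of flats. The key geometric characterization is that $x\in\cR(u)$ if and only if $\alpha(x)=1$ and the image of some (equivalently, any) tangent map lies inside a single apartment of $Y_{u(x)}$. Consequently
\[
\cS(u) \;=\; \{\alpha>1\} \;\cup\; \{\alpha=1,\ u_\star(T_x\widetilde{X})\text{ not contained in a flat}\},
\]
and both pieces must be controlled in dimension.

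To conclude, I would run the standard Federer–Almgren-type dimension reduction. Stratify $\cS(u)=\bigsqcup_\alpha \cS_\alpha(u)$ by order; upper semicontinuity of $\alpha$ and the homogeneity of tangent maps, combined with Almgren's abstract stratification principle applied to the collection of tangent maps, yield $\dim_{\mathcal H} \{\alpha>1\} \le \dim_\bR \widetilde{X} - 2 = 2\dim_\bC X-2$. For the residual locus where $\alpha=1$ but the tangent map fails to map into a single apartment, one shows that such a tangent map splits off a Euclidean factor of codimension at least $2$ in $T_x\widetilde{X}$ (since it factors through a nontrivial union of half-flats meeting along a wall, and each wall has real codimension $2$ in the complex setting thanks to pluriharmonicity), and then iterates the same stratification argument on the complementary factor. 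The principal technical obstacle is carrying out this dimension reduction rigorously for targets that are only singular CAT(0) spaces rather than smooth manifolds — specifically, establishing the existence and sufficient (measurable) uniqueness of tangent maps, the semicontinuity of the order along blow-up sequences, and the fact that the non-flat stratum in a Euclidean building inherits a genuine codimension-$2$ estimate. These are precisely the contents of Sections 5–6 of \cite{GS92}, and invoking them gives the claimed bound $\dim_{\mathcal H}\cS(u)\le 2\dim_\bC X - 2$.
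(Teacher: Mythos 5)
The paper's own proof is a citation: it invokes \cite[Theorem~6.4]{GS92} and marks the statement as established, so the only question is whether your sketch correctly recounts Gromov and Schoen's argument. The skeleton you give is right and matches Sections 5--6 of \cite{GS92}: closedness of $\cS(u)$ is definitional because $\cR(u)$ is open by construction; the frequency-type monotonicity produces the order function; blow-up gives homogeneous tangent maps into tangent cones of $\Delta(G)$, which are again F-connected; and Federer--Almgren dimension reduction yields the Hausdorff bound.

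The one substantive error is the parenthetical justifying the codimension-$2$ bound, namely the claim that ``each wall has real codimension $2$ in the complex setting thanks to pluriharmonicity.'' First, walls of a Euclidean building are codimension-$1$ hyperplanes inside each apartment, not codimension $2$. Second, and more to the point, \cite[Theorem~6.4]{GS92} is a theorem about \emph{harmonic} maps from an arbitrary Riemannian domain into an F-connected complex; neither the complex structure on $\widetilde{X}$ nor the pluriharmonicity of $u$ enters the proof of the singular-set estimate. (Pluriharmonicity is used elsewhere in the present paper --- to produce the holomorphic spectral one-forms of \cref{def:spectral} --- but plays no role here.) The codimension-$2$ conclusion is the \emph{output} of the dimension-reduction scheme: the base case is the two-real-dimensional result \cite[Theorem~5.1]{GS92} that a harmonic map from a surface into an F-connected complex has a discrete singular set, and the reduction propagates this to all real dimensions $n$, giving Hausdorff dimension at most $n-2$. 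If you anchor the induction on an appeal to pluriharmonicity rather than on the two-dimensional base case, the bootstrap does not close. With that correction, your sketch is a fair condensation of the cited argument.
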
 
Let us also mention that, more recently, the second author and Mese proved in \cite{DM24} that $\cS(u)$ is contained in a proper Zariski closed subset of $X$, although we will not use this result in the present paper.

We consider a pluriharmonic map $u:M\to\Delta(G)$ from a K\"ahler manifold $(M,\omega)$.
We recall that the energy density $|\nabla u|^2$ is defined in \cite{KS}  
(see also \cite[Section~2]{BDDM}), and is an $L^1_{\rm loc}$-function on $M$.  
Since the target is a  Euclidean building, the energy density admits a more explicit expression than in the general case of harmonic maps into NPC spaces.
Indeed, for any regular point $x\in M$, by definition there exists an open neighborhood $\Omega$ of $x$ and an apartment $A$ such that $u(\Omega)\subset A$.  
Fix an isometry $i:A\to\bR^N$, and write
\[
i\circ u|_{\Omega}=(u_1,\ldots,u_N).
\]
Then the energy density of $u|_{\Omega}$ coincides with the usual energy density of harmonic maps into Riemannian manifolds. In particular, for any $x\in \Omega$, we have
\begin{equation}\label{eq:density}
	|\nabla u(x)|^2=\sum_{i=1}^N |du_i(x)|_\omega^2
	= 2\sum_{i=1}^N |\partial u_i(x)|_\omega^2,
\end{equation}
where the second equality follows from the fact that $u$ is pluriharmonic, so $\db\d u_i\equiv 0$ for each $i$.
In particular, the energy density $|\nabla u|^2$ is continuous on $\Omega$, hence on $\cR(u)$.
We remark that it is shown in \cite[Lemma 4.27]{DMW24} that the energy density  extends to a continuous function on $M$, though we will not use this fact here.

\begin{lem}\label{lem:density}
For a pluriharmonic map $u:M\to\Delta(G)$ from a K\"ahler manifold $(M,\omega)$, the following two assertions are equivalent.
\begin{enumerate}[label*=(\arabic*)]
\item
$|\nabla u|^2=0$ almost everywhere on $M$.
\item
$u$ is a constant map.
\end{enumerate}
\end{lem}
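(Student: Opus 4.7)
The implication $(2)\Rightarrow (1)$ is immediate. For the non-trivial direction, my plan is the following.

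First, I would upgrade the almost-everywhere vanishing to pointwise vanishing on the regular locus. By the explicit expression \eqref{eq:density}, the energy density $|\nabla u|^2$ is continuous on the open set $\cR(u)$. Combined with the hypothesis that $|\nabla u|^2=0$ almost everywhere on $M$, this forces $|\nabla u|^2\equiv 0$ on all of $\cR(u)$.

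Next, for any $x\in\cR(u)$, choose a neighborhood $\Omega$ and an isometry $i\colon A\to \bR^N$ onto an apartment as in the discussion preceding the lemma, so that $i\circ u|_{\Omega}=(u_1,\ldots,u_N)$. By formula \eqref{eq:density} the vanishing of $|\nabla u|^2$ on $\Omega$ gives $du_i=0$ for each $i$, so $u$ is constant on $\Omega$. Hence $u$ is locally constant on $\cR(u)$.

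To pass from locally constant on $\cR(u)$ to globally constant on $M$, I would invoke \cref{thm:GS}: the singular locus $\cS(u)$ has Hausdorff dimension at most $2\dim_{\bC}M-2$. Since this is strictly less than $\dim_{\bR}M-1$, the closed subset $\cS(u)$ cannot disconnect $M$, so $\cR(u)$ is connected (assuming $M$ is, which is implicit in the statement). Therefore the locally constant map $u|_{\cR(u)}$ is constant with some value $p\in\Delta(G)$. Finally, pluriharmonic maps into NPC spaces are continuous (by Korevaar--Schoen's local Lipschitz regularity), so $u\equiv p$ extends from the dense open subset $\cR(u)$ to the whole of $M$, establishing $(1)\Rightarrow(2)$.

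The main conceptual point — and the only place where the building structure enters — is Step~2, where the apartment chart reduces the statement to the elementary Euclidean case via \eqref{eq:density}. The non-disconnection argument in Step~3 is standard given the Gromov--Schoen dimension bound, but it is the step one must be careful not to take for granted, since $\cR(u)$ is only known to be open rather than, say, the complement of an analytic subset.
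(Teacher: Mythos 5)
Your argument is correct and follows essentially the same route as the paper: continuity of $|\nabla u|^2$ on $\cR(u)$ upgrades a.e.\ vanishing to everywhere vanishing there, the apartment charts reduce to the Euclidean case so $u$ is locally constant on $\cR(u)$, the Gromov--Schoen Hausdorff dimension bound gives connectedness of $\cR(u)$, and local Lipschitz continuity extends constancy to all of $M$. The only difference is presentational: you spell out the non-disconnection argument that the paper leaves implicit, which is a reasonable thing to make explicit.
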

\begin{proof}
Let $V\subset M$ be the set of all regular points of $u$.
Then by \cite[Theorem 6.4]{GS92}, $M\setminus V$ is a closed subset of $M$ with Hausdorff  dimension  at most $2\dim_{\bC}M-2$. 
Thus $V$ is connected.
By \eqref{eq:density}, $|\nabla u|^2$ is continuous on $V$.
Thus, the condition that $|\nabla u|^2$ vanishes almost everywhere is equivalent to $|\nabla u|^2$ vanishing everywhere on $V$.
This means that $u$ is locally constant on $V$, hence constant on $V$ as $V$ is connected. Because $u$ is locally Lipschitz, the latter condition is equivalent to $u$ being constant everywhere on $M$. 
\end{proof}

We prove a result on the continuity of energy of harmonic mappings with respect to boundary values.  We first make a convention.

Let $P,Q\in \Delta(G)$ and let $\gamma:[0,1]\to \Delta(G)$ be the  unique
 geodesic from $P$ to $Q$. Let  $$(1-s)P+s Q:=\gamma(s)$$ denote the
point which is a fraction $s$ of the way from $P$ to $Q$ along $\gamma$. 
\begin{lem}\label{lem:energycontinuity}
Let $\mathbb{B}_r\subset \bR^n$ be the Euclidean ball of radius $r$ centered at the origin, endowed with the Euclidean metric, and write $\mathbb{B}:=\mathbb{B}_1$.   
Let $\Sigma=\partial\mathbb{B}$, equipped with the standard round metric.  Assume that $n\ge 2$.  
Consider a family of harmonic maps
\[
u_t : \mathbb{B}_{1+\delta} \longrightarrow \Delta(G),
\]
parametrized by $t\in(-\ep_0,\ep_0)$, where $\delta>0$ is a fixed constant.   Assume that there exists a constant $C>0$ such that, for all $t\in(-\ep_0,\ep_0)$, 
the restriction $u_t|_{\mathbb{B}}$ is $C$-Lipschitz, that is,
\begin{align}\label{eq:lipschitz}
	 d\bigl(u_t(x),u_t(y)\bigr) \le C\, d_{\mathbb{B}}(x,y)
	 \quad\text{for all }x,y\in\mathbb{B}.
\end{align} 
Denote by $E(t):=E^{u_t}[\mathbb{B}]$ the energy of $u_t$ on $\mathbb{B}$, and by $E_\Sigma(t)$ the energy of the restriction $u_t|_{\Sigma}$. 
 Assume that 
  that the function 
\[
t \mapsto \int_{\Sigma} d^{\,2}\!\bigl(u_t, u_0\bigr)\, d\sigma
\]
is continuous at $t=0$, 
where $d\sigma$  is the round measure.
Then the function 
\[
t \mapsto E(t)
\]
is continuous at $t=0$.  
Here $d(-,-)$ denotes the distance function on $\Delta(G)$. 
\end{lem}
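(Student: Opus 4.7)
\medspace

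The strategy is to exploit the characterization of harmonic maps into the ${\rm NPC}$ space $\Delta(G)$, in the sense of Korevaar--Schoen, as unique energy minimizers with prescribed Sobolev trace on the boundary. Thus for each $t\in(-\ep_0,\ep_0)$, the restriction $u_t|_{\mathbb{B}}$ minimizes energy among finite-energy maps $\mathbb{B}\to\Delta(G)$ sharing its boundary trace on $\Sigma$. Given $\eta>0$ small, I will construct an explicit competitor $v_t\colon\mathbb{B}\to\Delta(G)$ which agrees with $u_0$ on the smaller ball $\mathbb{B}_{1-\eta}$ and with $u_t$ on $\Sigma$; by minimality of $u_t$ this will give $E(t)\le E^{v_t}[\mathbb{B}]$, and the reverse inequality will follow by exchanging the roles of $u_t$ and $u_0$.

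Concretely, pick a radial cut-off $\phi\in C^\infty(\mathbb{B})$ with $\phi\equiv 0$ on $\mathbb{B}_{1-\eta}$, $\phi\equiv 1$ on $\Sigma$ and $|\nabla\phi|\le 2/\eta$, and define
\[
v_t(x)\;:=\;(1-\phi(x))\,u_0(x)+\phi(x)\,u_t(x),
\]
the pointwise geodesic interpolation in $\Delta(G)$, which is well-defined since $\Delta(G)$ is ${\rm NPC}$. The Korevaar--Schoen inequality for geodesic averages yields the pointwise energy bound
\[
|\nabla v_t|^2\;\le\;C_1\bigl((1-\phi)^2|\nabla u_0|^2+\phi^2|\nabla u_t|^2+|\nabla\phi|^2\,d^2(u_0,u_t)\bigr),
\]
while the uniform Lipschitz hypothesis \eqref{eq:lipschitz} gives $|\nabla u_0|^2,|\nabla u_t|^2\le C^2$ almost everywhere on $\mathbb{B}$. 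Combining these estimates yields
\[
E^{v_t}[\mathbb{B}]\;\le\;E^{u_0}[\mathbb{B}_{1-\eta}]+C_2\,\eta+\frac{C_3}{\eta^2}\int_{A_\eta}d^2(u_0,u_t)\,dx,
\]
where $A_\eta:=\mathbb{B}\setminus\mathbb{B}_{1-\eta}$.

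To bound the remaining integral, write $x=r\omega$ with $r\in[1-\eta,1]$ and $\omega\in\Sigma$. The Lipschitz hypothesis applied to the radial segment $[r\omega,\omega]$ gives $d(u_s(r\omega),u_s(\omega))\le C\eta$ for $s\in\{0,t\}$, so
\[
\int_{A_\eta}d^2(u_0,u_t)\,dx\;\le\;C_4\,\eta\int_\Sigma d^2(u_t,u_0)\,d\sigma+C_5\,\eta^3.
\]
Setting $\delta(t):=\int_\Sigma d^2(u_t,u_0)\,d\sigma$ (which tends to $0$ by hypothesis) and combining with $E^{u_0}[\mathbb{B}_{1-\eta}]\le E(0)$, the minimality of $u_t$ yields
\[
E(t)\;\le\;E(0)+C_6\,\eta+\frac{C_7}{\eta}\,\delta(t).
\]
Optimizing with $\eta=\sqrt{\delta(t)}$ when $\delta(t)>0$ (and letting $\eta\to 0$ otherwise) gives $\limsup_{t\to 0}E(t)\le E(0)$; the symmetric argument with $u_0$ and $u_t$ interchanged, using the energy minimality of $u_0$ relative to its own boundary trace, yields $\liminf_{t\to 0}E(t)\ge E(0)$, hence the desired continuity at $t=0$.

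The main technical points to verify are the Korevaar--Schoen interpolation inequality for geodesic averages into an NPC space and the identification of the Sobolev trace of $v_t$ on $\Sigma$ with $u_t|_\Sigma$ (ensured by $\phi\equiv 1$ on $\Sigma$). Both are standard when the target is a Bruhat--Tits building, as developed in \cite{GS92,KS}; in particular the uniqueness of the solution of the Dirichlet problem for NPC targets is what justifies using energy minimality for the given harmonic maps $u_t$.
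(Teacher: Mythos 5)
Your proposal is correct and reaches the same conclusion by an argument that is structurally parallel to the paper's, but differs in how the competitor map is constructed on the annulus near $\partial\mathbb{B}$. The paper rescales $u_0$ onto the smaller ball via the map $\Phi(x)=x/(1-\rho)$ (which is where the hypothesis $n\ge 2$ enters, through the factor $(1-\rho)^{n-2}\le 1$), and fills the annulus by a \emph{radial cone} construction $W_t(\omega,s)=(1-s/\rho)u_t(\omega)+(s/\rho)u_0(\omega)$ in which, along each radius, one traverses the geodesic in $\Delta(G)$ joining the two boundary values at that angle. The energy of this cone is then controlled by the ready-made interpolation estimate of \cite[Lemma 3.12]{KS2}, which produces the sharp bound $\tfrac{\rho}{2}(E_\Sigma(0)+E_\Sigma(t))+\tfrac1\rho\int_\Sigma d^2(u_t,u_0)\,d\sigma$. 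You instead keep $u_0$ unrescaled on the inner ball and fill the annulus with a \emph{pointwise} geodesic average $v_t(x)=(1-\phi(x))u_0(x)+\phi(x)u_t(x)$ controlled by a cutoff $\phi$; the energy of $v_t$ is then bounded via a chain-rule-type estimate for $F(P,Q,s)=(1-s)P+sQ$, exploiting that $F$ is non-expansive in $(P,Q)$ (for the $\ell^1$ metric) and $d(P,Q)$-Lipschitz in $s$. Your version avoids the rescaling step and the cylinder coordinates, and the final $\eta$-optimization is cleaner than the paper's fixed-$\rho$ bookkeeping; what it costs is that the pointwise interpolation energy estimate you invoke (where the interpolation parameter $\phi$ itself varies over $x$) is less standard than \cite[Lemma 3.12]{KS2} for the cone, and you rightly flag it as the point requiring verification in the Korevaar--Schoen framework. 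For a fully self-contained version you should write out that the composition of the Lipschitz maps $(u_0,u_t,\phi)$ with $F$ has energy density bounded a.e.\ by $3\bigl((1-\phi)^2|\nabla u_0|^2+\phi^2|\nabla u_t|^2+|\nabla\phi|^2 d^2(u_0,u_t)\bigr)$, which follows from the directional estimate $|\partial_\xi v_t|\le(1-\phi)|\partial_\xi u_0|+\phi|\partial_\xi u_t|+|\partial_\xi\phi|\,d(u_0,u_t)$ and Cauchy--Schwarz; once that is in place the rest of your argument goes through.
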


\begin{proof}
 By the uniform Lipschitz continuity of \(u_t\) in \eqref{eq:lipschitz}, together with \cite[Remark~1.5]{BDDM}, 
the energy density of \(u_t\)  and $u_t|_{\Sigma}$ satisfies
\begin{align*}
	 |\nabla u_t(x)|^2 \le C   \qquad \text{for all } x\in \mathbb{B} \text{ and } t\in (-\varepsilon_0,\varepsilon_0)\\
	  |\nabla (u_t|_{\Sigma})(x)|^2 \le C  \qquad \text{for all } x\in \Sigma \text{ and } t\in (-\varepsilon_0,\varepsilon_0).
\end{align*} 
Consequently, there exists a constant \(K>0\) such that 
\begin{align}\label{eq:Kbound}
E(t) \leq K,\quad E_\Sigma(t)\leq K, \qquad \text{for all } t\in (-\varepsilon_0,\varepsilon_0).
\end{align} 
For any  $\varrho\in (0,1)$,  we
define diffeomorphisms
\begin{align*}
	\Phi&:\mathbb B_{1-\rho} \rightarrow \mathbb B, \ \ x \mapsto \frac{x}{1-\rho}\\
	\Psi_1&:\mathbb B \backslash  \mathbb B_{1-\rho} \rightarrow \Sigma \times [0,\rho], \ \ x \mapsto (\frac{x}{|x|}, 1-|x| )\\ 
	\Psi_2&:\mathbb B \backslash  \mathbb B_{1-\rho} \rightarrow \Sigma \times [0,\rho], \ \ x \mapsto (\frac{x}{|x|},  |x|-1+\varrho )
\end{align*}

 For any Lipschitz  map $f:\bB \rightarrow \Delta(G)$, note  that the energy of $f\circ \Phi$ on $\bB$ is 
\begin{align}\label{eq:dec}
	E^{f  \circ \Phi}[ \bB_{1-\varrho}] =
\int_{\bB_{1-\varrho}}	|\nabla(f\circ \Phi)|^2 d\mu 
	=\int_{\bB_{1-\varrho}} (1-\rho)^{n-2} \Phi^*(|\nabla f|^2 d\mu )\leq E^{f}[ \bB].
 \end{align}
	Here $d\mu$ is the Euclidean volume form. 

Let 
$
f : \Sigma \times [0,\varrho] \longrightarrow \Delta(G)
$
be a Lipschitz map. We endow \(\Sigma \times [0,\varrho]\) with the product metric of the round metric on \(\Sigma\) and the standard Euclidean metric on the interval.
  We have 
\begin{align}\label{eq:dec2}
E^{f\circ \Psi_i}[\bB\backslash \bB_{1-\varrho}]&:=\int_{\bB\backslash \bB_{1-\varrho}}|\nabla (f\circ \Psi_i)|^2d\mu \\\nonumber
 &\leq \begin{cases}
 	 \int_{\Sigma\times[0,\varrho]}|\nabla f|^2 d\sigma dr=:E^f[{\Sigma\times[0,\varrho]}],\quad \mbox{if } n\geq 3\\
 		(1-\varrho)^{-1} \int_{\Sigma\times[0,\varrho]}|\nabla f|^2 d\sigma dr= 	(1-\varrho)^{-1} E^f[{\Sigma\times[0,\varrho]}],\quad \mbox{if } n=2 
 \end{cases} 
 \end{align} 
 Therefore, by \eqref{eq:Kbound} and \eqref{eq:dec2},  for any $\ep\in (0,1)$,  we  can   fix some sufficiently small  $\varrho\in (0,1)$ such that  \begin{align}\label{eq:apro}
 	K\rho <\frac{\ep}{4}, \quad
 	E^{f\circ \Psi_i}[\bB\backslash \bB_{1-\varrho}] \leq 2 E^f[{\Sigma\times[0,\varrho]}].
 \end{align}

 Since  the function 
\[
t \longmapsto \int_{\Sigma} d^{\,2}\!\bigl(u_t, u_0\bigr)\, d\sigma
\]
is continuous at $t=0$, it follows that    
\begin{align}\label{eq:conti}
 \frac{1}{\rho} \int_\Sigma d^2(u_t,u_0)d\sigma<\frac{\ep}{4}
\end{align}  for $|t|$ sufficiently small.
	
	Define $W_t:\Sigma \times [0,\rho] \rightarrow \Delta(G)$ to be 
	\[
	W_t(x,s)=(1-\frac{s}{\rho}) u_t(x)+\frac{s}{\rho} u_0(x). 
	\] 
	By \cite[Lemma 3.12]{KS2}, for $|t|$ sufficiently small, the energy of $W_t$    is bounded by 
\begin{align}\label{eq:dec3}
	E^{W_t}[{\Sigma\times[0,\varrho]}]\leq \frac{\rho}{2} (E_\Sigma(0)+E_{\Sigma}(t))+ \frac{1}{\rho} \int_\Sigma d^2(u_t,u_0) \, d\sigma < \frac{\ep}{2}. 
\end{align}
	Here we apply   \eqref{eq:apro} and \eqref{eq:conti}.  
	Define $w_t:\mathbb B \rightarrow \Delta(G)$ by
	\[
	w_t(x)=
	\left\{
	\begin{array}{ll}
		u_0(\frac{x}{1-\rho})& \mbox{for } x \in \mathbb B_{1-\rho}\\
		W_t(\frac{x}{|x|},1- |x| )& \mbox{for }x \in \mathbb B \backslash \mathbb B_{1-\rho}.
	\end{array}
	\right.
	\]
	Since $u_t$ is energy minimizing and $w_t|_{\d \bB}=u_t|_{\d \bB}$, for $|t|$ sufficiently small, we have 
	\begin{equation*}  
		E(t) \leq E^{w_t}[\bB] =E^{u_0 \circ \Phi}[ \bB_{1-\varrho}] +E^{W_t \circ \Psi_1}[\bB\backslash\bB_{1-\varrho}] \leq  E(0) +2E^{W_t}[{\Sigma\times[0,\varrho]}]  < E(0) +\ep.
	\end{equation*} Here we apply \eqref{eq:dec}  and \eqref{eq:dec3}.

Similarly, we define another function
 $v_t:\mathbb B \rightarrow \Delta(G)$ by
\[
v_t(x)=
\left\{
\begin{array}{ll}
	u_t(\frac{x}{1-\rho})& \mbox{for } x \in \mathbb B_{1-\rho}\\
	W_t(\frac{x}{|x|},|x|-1+\varrho )& \mbox{for }x \in \mathbb B \backslash \mathbb B_{1-\rho}.
\end{array}
\right.
\]
	Since $u_0$ is energy minimizing and $v_t|_{\d \bB}=u_0|_{\d \bB}$, for $|t|$ sufficiently small, we have 
\begin{equation*}  
	E(0) \leq E^{v_t}[\bB] =E^{u_t \circ \Phi}[ \bB_{1-\varrho}]  +E^{W_t \circ \Psi_2}[\bB\backslash\bB_{1-\varrho}] \leq  E(t) +2E^{W_t}[{\Sigma\times[0,\varrho]}]   < E(t) +\ep.
\end{equation*}
	Here we apply\eqref{eq:dec} and \eqref{eq:dec3} once again.    The lemma is thus proved. 
\end{proof}

\begin{cor}\label{cor:20251127}
Let $v_n:\mathbb D\to\Delta(G)$ be a sequence of  $C$-Lipschitz harmonic maps which converges uniformly on compact subsets on $\mathbb D$ to a $C$-Lipschitz harmonic map $v:\mathbb D\to\Delta(G)$.
Then $v$ is constant if and only if for every $\delta\in (0,1)$, $E^{v_n}(\mathbb D_{\delta})\to 0$ as $n\to\infty$.
\end{cor}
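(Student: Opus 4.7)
The plan is to reduce the statement to Lemma~\ref{lem:energycontinuity} by a conformal rescaling argument, and then combine the resulting energy-continuity with Lemma~\ref{lem:density}.

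First I would observe that although Lemma~\ref{lem:energycontinuity} is stated for a one-parameter family $u_t$, its proof uses only the qualitative $L^{2}$-convergence of boundary traces together with the uniform Lipschitz bound, so it applies verbatim to any sequence of uniformly Lipschitz harmonic maps whose boundary traces converge in $L^{2}$. I will apply this sequential version on $\mathbb{B}\subset\mathbb{R}^{2}$, so $n=2$, which satisfies the hypothesis $n\geq 2$.

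For any $\delta\in(0,1)$, I would fix an intermediate radius $\delta'\in(\delta,1)$ and consider the rescaling $\phi:\mathbb{B}_{\delta'/\delta}\to\mathbb{D}_{\delta'}$, $x\mapsto \delta x$. Setting $\tilde v_n:=v_n\circ\phi$ and $\tilde v:=v\circ\phi$ on $\mathbb{B}_{1+\varepsilon_0}$ with $\varepsilon_0=\delta'/\delta-1>0$, these maps remain (pluri)harmonic since harmonicity is conformally invariant in real dimension two; they are uniformly $(\delta C)$-Lipschitz on $\mathbb{B}$; and the uniform convergence of $v_n\to v$ on the compact set $\overline{\mathbb{D}_{\delta'}}$ transfers to uniform convergence $\tilde v_n\to \tilde v$ on $\overline{\mathbb{B}}$. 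In particular $\int_\Sigma d^{2}(\tilde v_n,\tilde v)\,d\sigma \to 0$, so Lemma~\ref{lem:energycontinuity} gives $E^{\tilde v_n}(\mathbb{B})\to E^{\tilde v}(\mathbb{B})$. Because the Dirichlet energy is conformally invariant in real dimension $2$, this is the same as
\[
E^{v_n}(\mathbb{D}_\delta)\;\longrightarrow\;E^{v}(\mathbb{D}_\delta)\qquad\text{for every }\delta\in(0,1).
\]

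From this equivalence both implications of the corollary are immediate. If $v$ is constant then $E^{v}(\mathbb{D}_\delta)=0$, so $E^{v_n}(\mathbb{D}_\delta)\to 0$. Conversely, if $E^{v_n}(\mathbb{D}_\delta)\to 0$ for every $\delta\in(0,1)$, then $E^{v}(\mathbb{D}_\delta)=0$ for every such $\delta$; letting $\delta\uparrow 1$ yields $E^{v}(\mathbb{D})=0$, so $|\nabla v|^{2}=0$ almost everywhere on $\mathbb{D}$, and Lemma~\ref{lem:density} forces $v$ to be constant. The only mildly delicate point I foresee is the verification that the proof of Lemma~\ref{lem:energycontinuity} is truly robust enough for a sequence in place of a continuous parameter; inspecting the comparison maps $w_t$ and $v_t$ used there shows that the argument depends only on the boundary-energy bound $E_\Sigma(t)\leq K$ and on $\int_\Sigma d^{2}(u_t,u_0)\,d\sigma\to 0$, both of which survive unchanged in the sequential setting, so the obstacle is indeed only notational.
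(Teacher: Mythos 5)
Your proof is correct and takes essentially the same route as the paper's: reduce to Lemma~\ref{lem:energycontinuity} via a sequential reading of the one-parameter statement (the paper realizes the sequence as $u_t = v_n$ for $\tfrac{1}{n+1}\le |t| < \tfrac{1}{n}$, while you argue the proof works verbatim for sequences), conclude $E^{v_n}(\mathbb D_\delta)\to E^v(\mathbb D_\delta)$, and finish with Lemma~\ref{lem:density}. The only difference is cosmetic: you make explicit the conformal rescaling $\mathbb D_\delta \leftrightarrow \mathbb B$ that the paper leaves implicit when it applies a lemma phrased on the unit ball $\mathbb B_{1+\delta}$ to the sub-disk $\mathbb D_\delta \subset \mathbb D$, and you invoke conformal invariance of the Dirichlet energy in real dimension two (equivalently, that the rescaling factor $\delta^{n-2}$ is $1$ for $n=2$) to identify the energies before and after rescaling.
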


\begin{proof}
Since $v_n$ converges to $v$ uniformly on $\partial\mathbb D_{\delta}$, we have 
$$\int_{\partial\mathbb D_{\delta}} d^{\,2}\!\bigl(v_n, v\bigr)\, d\sigma\to0$$
as $n\to\infty$.
Hence, by applying \cref{lem:energycontinuity}, where $u_t=v_n$ for $\frac{1}{n+1}\leq |t|<\frac{1}{n}$ and $u_0=v$, we obtain $E^{v_n}(\mathbb D_{\delta})\to E^{v}(\mathbb D_{\delta})$. 
Thus, it suffices to show that $v$ is constant if and only if $E^{v}(\mathbb D_{\delta})=0$ for all $\delta\in (0,1)$.
The latter condition is equivalent to the almost everywhere vanishing of the energy density $|\nabla v|^2=0$ on $\mathbb D$.
Thus our corollary follows from \cref{lem:density}.
	\end{proof}

\subsection{Proof of \Cref{main3}} \label{sec:KZthm}
\begin{thm}[=\Cref{main3}] \label{thm:KZreduction}
	Let $X$ be a complex quasi-projective normal variety,  and let $G$ be a reductive algebraic group defined over a non-archimedean local field $K$. Let $\varrho:\pi_1(X)\to G(K)$ be a Zariski-dense representation. Then there exist  a quasi-projective normal variety $S_\varrho$ and a dominant morphism $s_\varrho:X\to S_\varrho$ with connected general fibers, such that for any connected Zariski closed  subset $T$ of $X$, the following properties are equivalent:
	\begin{enumerate}[label*={\rm (\alph*)}]
		\item  the image $\rho({\rm Im}[\pi_1(T)\to \pi_1(X)])$ is a bounded subgroup of $G(K)$.
		\item  For every irreducible component $T_o$ of $T$, the image $\rho({\rm Im}[\pi_1(T_o^{\rm norm})\to \pi_1(X)])$ is a bounded subgroup of $G(K)$.
		\item The image $s_\varrho(T)$ is a point.
	\end{enumerate} 
\end{thm}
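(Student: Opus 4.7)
\emph{Plan.} The strategy is to mimic the Katzarkov--Eyssidieux--Zuo construction in the projective setting, using the pluriharmonic map of \cref{thm:BDDMex} and the spectral cover of \cref{lem:spectral} to build $s_\varrho$, together with an analytic ``constancy-of-the-harmonic-map'' claim to encode the boundedness of subgroups.

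\emph{Construction of $s_\varrho$.} By \cref{thm:BDDMex}, fix a $\varrho$-equivariant pluriharmonic map $u: \widetilde{X} \to \Delta(G)$ of logarithmic energy growth. On the regular locus $\cR(u)\subset X$, around any point there is an apartment $A \subset \Delta(G)$ and a flat isometry $A \to \bR^N$ under which $u$ becomes $(u_1,\dotsc,u_N)$ for local harmonic functions; by pluriharmonicity the holomorphic one-forms $\partial u_1,\dotsc,\partial u_N$ are canonical up to the affine Weyl action of $G$, so their Weyl-invariant symmetric polynomials descend to global holomorphic sections on $\cR(u)$. Using the log-energy growth of $u$ together with the Hausdorff-dimension bound on $\cS(u)$ from \cref{thm:GS}, these sections extend across the boundary of a log-smooth compactification $(\overline{X},D)$ to yield a homogeneous polynomial $P(T)$ as in \eqref{eq:char} with coefficients in $\bigoplus_i H^0(\overline{X}, \Sym^i \Omega^1_{\overline{X}}(\log D))$, splitting over $\cR(u)$. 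Applying \cref{lem:spectral} produces a finite Galois cover $\pi:\Sigma \to \overline{X}$ on which the spectral one-forms $\eta_1,\dotsc,\eta_N$ become global logarithmic sections; integrating them gives a morphism from $\Sigma\cap\pi^{-1}(X)$ to a semi-abelian variety, whose quasi-Stein factorisation (\cref{lem:Stein}) and subsequent $\mathrm{Aut}(\Sigma/\overline{X})$-descent yield $s_\varrho: X \to S_\varrho$.

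\emph{Key analytic claim and the equivalences.} The cornerstone input is: \emph{if $Y$ is a smooth quasi-projective variety and $\tau: \pi_1(Y)\to G(K)$ is bounded, then any $\tau$-equivariant pluriharmonic map $\widetilde{Y} \to \Delta(G)$ of log-energy growth is constant.} Indeed, by \cref{lem:AB} such $\tau$ fixes a point in $\Delta(G)$, so the equivariant map descends to a bounded pluriharmonic map $Y \to \Delta(G)$; the log-energy bound combined with \cref{lem:energycontinuity,cor:20251127} give an extension across the boundary divisor with finite energy, after which \cref{lem:density} forces it to be constant. The converse input, proved via the apartment-local formula \eqref{eq:density} for the energy density, is that vanishing of all $\eta_i$ on (the universal cover of) a subvariety is equivalent to constancy of $u$ there. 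Given these, $(a)\Rightarrow(b)$ is immediate by the surjectivity in \cref{lem:fun}. For $(b)\Rightarrow(a)$, apply the claim on each log-resolution $\widehat{T}_o \to T_o^{\mathrm{norm}}$ via \cref{item:pullback} to get a constant value $P_o \in \Delta(G)$ on each component; since $T$ is connected its components form a connected graph (via pairwise intersections in $X$), and continuity of $u$ at (lifts of) intersection points forces all $P_o$ to coincide with a common $P$. Then any loop $\gamma\in \pi_1(T,x_0)$ decomposes as a concatenation of arcs in individual components, whose lifts to $\widetilde{X}$ traverse loci where $u\equiv P$, so $u(\gamma\cdot \tilde x_0)=P$, i.e.\ $\varrho(\gamma)$ fixes $P$; hence $\varrho(\pi_1(T)\to\pi_1(X))$ is bounded. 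The equivalence $(a)\Leftrightarrow(c)$ follows from the same analytic claim combined with the construction: $s_\varrho(T)$ is a point iff the $\eta_i$ vanish on (a component of) $\pi^{-1}(T)$, iff $u$ is constant on each lifted irreducible component, iff $\varrho(\pi_1(T)\to\pi_1(X))$ is bounded.

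\emph{Main obstacle.} The hard part is the analytic claim together with the gluing argument across the possibly singular loci of $T$: in the quasi-projective setting one must simultaneously control the behaviour of $u$ near the boundary divisor of $\overline{X}$ via the log-energy estimates of \cref{lem:energycontinuity}, handle the potential non-normality of the intersections of irreducible components of $T$, and carefully propagate the constant value of $u$ from one component to another using continuity on all of $\widetilde{X}$ and $\varrho$-equivariance. The spectral cover of \cref{lem:spectral} is the essential algebraic device that converts the purely analytic constancy of $u$ into the existence of the dominant morphism $s_\varrho$, and is crucially needed because the eigen-forms $\partial u_i$ are only locally well-defined on $X$ up to the Weyl group action.
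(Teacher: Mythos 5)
Your overall plan — spectral cover, spectral one-forms from the pluriharmonic map, partial quasi-Albanese, descent, quasi-Stein — matches the paper's Steps~1--4. But there is a genuine gap in the \emph{key analytic claim}, and its proof sketch would fail.

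You state: \emph{if $\tau:\pi_1(Y)\to G(K)$ is bounded, then any $\tau$-equivariant pluriharmonic map of log-energy growth is constant}, and you try to deduce this from \cref{lem:AB} plus \cref{lem:energycontinuity,cor:20251127,lem:density}. None of these can force constancy. \cref{lem:density} merely says constancy is equivalent to a.e.\ vanishing of $|\nabla u|^2$ — it does not give you that vanishing. \cref{lem:energycontinuity} and \cref{cor:20251127} are continuity statements about energy under limits and are used in the paper only for the equivalence ``vanishing of pulled-back spectral one-forms $\Leftrightarrow$ constancy of the composite map'' (the (i)~$\Leftrightarrow$~(ii) in the paper's Claim~\ref{claim:202511272}), not for ``bounded $\Leftrightarrow$ constant.'' Also, ``descends to a bounded pluriharmonic map $Y\to\Delta(G)$'' is not correct: a $\tau$-equivariant map with bounded but non-trivial monodromy does not literally descend to $Y$. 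What the paper actually does for the implication bounded~$\Rightarrow$~constant is different and is the crux: given boundedness, the \emph{constant} map $u'\equiv P$ (where $P$ is a fixed point of the image) is a $\tau$-equivariant pluriharmonic map of log-energy growth, and then one invokes the \emph{uniqueness of the energy density} for such maps (\cite[Theorem~B]{DM24}) to conclude that the specific map $u_f=u\circ\widetilde{p\circ f}$ (built from the fixed $u$) has zero energy density a.e.\, and only then does \cref{lem:density} apply. Your proposal omits this uniqueness theorem entirely, and without it the argument does not close; the claim as you state it for arbitrary equivariant pluriharmonic maps is also stronger than what is needed or proved.

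Two smaller issues. First, you never address the case of singular $X$; the paper devotes Step~5 to reducing to the smooth case via a resolution $\mu:Y\to X$, showing the fibers of $\mu$ are contracted by $s_{\mu^*\varrho}$ (using connectedness of fibers over the normal base and \cite[Lemma~3.47]{DY23}) so that $s_\varrho$ factors through $X$. Second, your ``connected graph of components'' and ``continuity at lifts of intersection points'' argument for $(b)\Rightarrow(a)$ is hand-wavy: one must be careful about which lifts of $T$ inside $\widetilde{X}$ one is gluing along, and the group $\varrho(\mathrm{Im}[\pi_1(T)\to\pi_1(X)])$ has to emerge from the argument. The paper sidesteps this by working directly with the universal cover $\widetilde{T}$ of $T$: it shows the induced map $u_T:\widetilde{T}\to\Delta(G)$ is constant (component by component of $\widetilde{T}^{\mathrm{reg}}$, using surjectivity of $\widetilde{Z}\to\widetilde{W}\to\mathcal{W}$ for a desingularization $Z$ of a component of $p^{-1}(W)$ dominating a component $W$ of $T^{\mathrm{reg}}$), which immediately yields a point of $\Delta(G)$ fixed by $\varrho(\mathrm{Im}[\pi_1(T)\to\pi_1(X)])$.
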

We divide its proof into 5 steps. Throughout Steps 1-4, we assume that $X$ is smooth, and in Step 5, we address the case when $X$ is singular.    Steps 4-5 are independent of the other part of the paper and the readers may skip them.    It is worth highlighting that \cref{thm:KZreduction} will play a crucial role as a cornerstone in the   paper  \cite{DY23} by the second and third authors on the Shafarevich conjecture for reductive representations over quasi-projective varieties (see also \cite{Bru23} for a similar result).

\begin{proof}[Proof of \cref{thm:KZreduction}]
In this proof, by a \emph{multiset} we mean a pair $(S, m)$, where $S$ is the underlying set and 
$m : S \to \mathbb{Z}_{> 0}$ is a function assigning to each element $s \in S$ its multiplicity $m(s)$, 
that is, the number of times $s$ occurs in the multiset.  
We often write a multiset in the form $\{s_1, s_2, \ldots, s_k\}_{m}$, or simply $\{s_1, s_2, \ldots, s_k\}$, where each element $s_i$ appears $m(s_i)$ times in this notation.
The union of two multisets  $(S,m)$ and $(S',m')$ is defined as the multiset $(S\cup S', m+m')$.
We say that a group $G$ acts on a multiset $(S,m)$ if $G$ acts on $S$ and $m$ is invariant under this action.

\medspace

	\noindent
	{\em Step 1. Constructing spectral covering and spectral one forms}. 
Let $\overline{X}$ be a smooth projective compactification of $X$ such that $D:=\overline{X}\backslash X$ is a simple normal crossing divisor.

	For a reductive algebraic group $G$ over a non-archimedean local field $K$,  
it induces   a real Euclidean space $V$ endowed with a Euclidean metric and  an affine Weyl group $W$ acting on $V$ isometrically.  Such group $W$ is a semidirect product   $T\rtimes W^v$,  where $W^v$ is the \emph{vectorial Weyl group}, which is a finite group generated by reflections on $V$, and $T$ is a translation group of $V$ (see \cite{Guy23} for more details.).

	For any apartment $A$ in $ \Delta(G)$,  there exists an isomorphism $i_A:A\to V$, which is  called a chart. For two charts $i_{A_1}:A_1\to V$ and $i_{A_2}:A_2\to V$, if $A_1\cap A_2\neq\varnothing$,   it satisfies the following properties:
	\begin{enumerate}[label=(\alph*)]
		\item $Y:=i_{A_2}(i_{A_1}^{-1}(V))$ is convex.
		\item \label{item:weyl}There is an element $w\in W$ such that $w\circ i_{A_1}|_{A_1\cap A_2}=i_{A_2}|_{A_1\cap A_2}$.
	\end{enumerate} 
	Let us fix  orthonormal coordinates $(x_1,\ldots,x_N)$ 
	for $V$.  Since $W^v\subset {\rm GL}(V)$ acts on $V$ isometrically,  for any $w\in W^v$, $(w^*x_1,\ldots,w^*x_N)$ are orthonormal coordinates  for $V$. We define a multiset $\Phi$ by 
	\begin{align}\label{eq:Phi}
		\Phi:=\bigcup_{w\in W^v}\{w^*x_i \}_{i\in  \{1,\ldots,N\}},
	\end{align}
	where the union is taken as multiset.
	Since $W^v$ is a finite group, this is a finite union, and consequently  the underlying set of $\Phi$ is a finite subset of $V^*$.
	Note that  $W^v$ acts on $\Phi$.  We   write 
	$ \Phi=\{\beta_1,\ldots,\beta_m\}$ as a multiset. 
	
By \cref{thm:BDDMex}, there exists a $\varrho$-equivariant pluriharmonic map $ {u}:\widetilde{X}\to \Delta(G)$ with  logarithmic energy growth. 	For any regular point $x\in \cR(u)$ of $u$,   one can choose a   simply-connected open neighborhood  $U_x$ of $x$   such that
	\begin{enumerate}[label=(\arabic*)]
		\item \label{item:connect} the inverse image $\pi_X^{-1}(U_x)=\coprod_{\alpha\in I}U_\alpha$ is a union of disjoint open sets in $\widetilde{X}$,  each of which is mapped isomorphically onto 
		$U$ by $\pi_X:\widetilde{X}\to X$.
		\item For some $\alpha\in I$,  there is an apartment $A_\alpha$ of $\Delta(G)$ such that $u({U}_\alpha)\subset A_\alpha$.
	\end{enumerate}

 Let $\Phi = \{\beta_1, \ldots, \beta_m\}$ be the multiset defined in \eqref{eq:Phi}.  
 For each $j \in \{1, \ldots, m\}$, we define a real function
 \begin{align}\label{eq:u}
 	u_{\alpha, j} = \beta_j\circ i_{A_\alpha} \circ  {u} \circ (\pi_X|_{U_\alpha})^{-1} : U_x \to \bR.
 \end{align} 
 By the pluriharmonicity of $u$, we have
 \[
 \db\d u_{\alpha, j} = 0
 \]
 for each $j\in \{1, \ldots, m\}$.  
 Hence, $\partial u_{\alpha, j}$ is a holomorphic $1$-form on $U_x$.   
 \begin{claim} 
 The multiset of holomorphic $1$-forms
 \begin{align*}
 	\eta_{U_x} := \{\partial u_{\alpha,1}, \ldots, \partial u_{\alpha,m}\}
 \end{align*} 
 over $U_x$ does not depend on the choice of $U_\alpha$ or of the apartment $A_\alpha$ containing $u(U_\alpha)$.  Consequently,  for any $x,y\in \cR(u)$, if $U_x\cap U_y\neq \varnothing$, then   we have
 \[
 \eta_{U_x} = \eta_{U_y} \quad \text{on } U_x \cap U_y
 \]
 as multisets.
 \end{claim}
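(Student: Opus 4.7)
The plan is to reduce both statements to the following algebraic fact about the multiset $\Phi$. For any element $w = t\circ w^v$ of the affine Weyl group $W = T\rtimes W^v$, with $t\in T$ a translation and $w^v\in W^v$, and any linear form $\beta\in V^*$, one has $\beta\circ w = (\beta\circ w^v) + c_{\beta,w}$ for some constant $c_{\beta,w}\in\mathbb R$. Since the definition \eqref{eq:Phi} exhibits $\Phi$ as an orbit of $W^v$ acting by $\beta\mapsto \beta\circ w^v$, the right action of any $w^v\in W^v$ is a permutation of $\Phi$ as a multiset. Combining these two remarks, we obtain an equality of multisets of affine forms $\{\beta_j\circ w\}_{j=1}^m = \{\beta_j + c_j\}_{j=1}^m$ for constants $c_j$ depending on $w$. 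Applying $\partial$, which annihilates constants, yields for any smooth map $f$ from a complex manifold into $V$ an equality of multisets of $(1,0)$-forms
\[
	\{\partial(\beta_j\circ w\circ f)\}_{j=1}^m \;=\; \{\partial(\beta_j\circ f)\}_{j=1}^m.
\]

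Granting this, the independence of $\eta_{U_x}$ from the chosen lift $U_\alpha$ goes as follows. Let $U_{\alpha'}=\gamma\cdot U_\alpha$ be another lift of $U_x$, for some deck transformation $\gamma\in\pi_1(X)$. By $\varrho$-equivariance of $u$, we may take $A_{\alpha'}=\varrho(\gamma)\cdot A_\alpha$. The composition $i_{A_{\alpha'}}\circ\varrho(\gamma)$ then furnishes a second chart for the apartment $A_\alpha$; since any two charts of a single apartment of a Bruhat--Tits building differ by composition with an element of $W$, we obtain $i_{A_{\alpha'}}\circ\varrho(\gamma)=w\circ i_{A_\alpha}$ for some $w\in W$. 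Substituting into \eqref{eq:u} and invoking the key observation above, we deduce $\{\partial u_{\alpha',j}\}_j = \{\partial u_{\alpha,j}\}_j$ as multisets on $U_x$. The independence from the choice of apartment $A_\alpha\supseteq u(U_\alpha)$ is proved identically, this time invoking property \ref{item:weyl} directly for the chart transition between two apartments that both contain $u(U_\alpha)$.

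Finally, for the equality on an overlap, fix any connected component $C$ of $U_x\cap U_y$, pick $p\in C$ and a lift $\tilde p\in\widetilde X$, and let $U_\alpha$ (resp.\ $U_\beta$) be the unique component of $\pi_X^{-1}(U_x)$ (resp.\ $\pi_X^{-1}(U_y)$) containing $\tilde p$. Let $\tilde C\subseteq U_\alpha\cap U_\beta$ be the component of $\pi_X^{-1}(C)$ through $\tilde p$. On $\tilde C$ the apartments $A_\alpha$ and $A_\beta$ both contain $u(\tilde C)$, so by property \ref{item:weyl} their chart transition is implemented by an element of $W$ on $A_\alpha\cap A_\beta$. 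The key observation then yields the equality of multisets $\{\partial u_{\alpha,j}\} = \{\partial u_{\beta,j}\}$ on $\tilde C$, hence on $C$ via $\pi_X$, so that $\eta_{U_x}|_C=\eta_{U_y}|_C$. Applying the first part to absorb dependence on the chosen lifts, the equality holds on each connected component of $U_x\cap U_y$, hence throughout. The main technical point I expect is bookkeeping: verifying that in each of the three scenarios (deck transformations, changing apartment containing $u(U_\alpha)$, and overlaps of different regions) the relevant chart transition is realized by an element of $W$ rather than a larger group of affine isometries of $V$. This is built into the defining structure of the Bruhat--Tits building and is precisely what makes the key algebraic observation above applicable in each case.
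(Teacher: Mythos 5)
Your proof is correct and takes essentially the same route as the paper's: reduce to showing that the chart transition between the two choices is implemented by an element of the affine Weyl group $W$ (via the $G(K)$-equivariance axioms of the Bruhat--Tits atlas, which is what the paper's citation of [KP23, Cor.\ 4.2.25, Axiom 4.1.4] encodes), and then note that $W^v$ permutes the multiset $\Phi$ while the translation part of $W$ is annihilated by $\partial$. You spell out the decomposition $w = t\circ w^v$ and the fact that $\partial$ kills the constant $\beta_j(t)$ more explicitly than the paper, which states this only as ``Since $\Phi$ is invariant under the action by $W^v$,'' but the mathematical content is identical. Your separation into three scenarios (deck transformation, change of apartment, overlap $U_x\cap U_y$) is simply a more granular bookkeeping of what the paper does in one sweep by allowing an arbitrary pair $(\alpha,A_\alpha)$, $(\beta,A_\beta)$.
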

\begin{proof} For any $\alpha \in I$, we denote by
	$$
u_\alpha:=i_{A_\alpha}\circ {u} \circ (\pi_X|_{U_\alpha})^{-1} : U_x \to V.
	$$ Pick any $\alpha,\beta\in I$. Then there exists  $\gamma\in \pi_1(X)$ such that $\gamma$ maps $U_\alpha$ to $U_\beta$ isomorphically.  Since $ {u}$ is $\varrho$-equivariant,  one has $ \varrho(\gamma) {u}\circ (\pi_X|_{U_\alpha})^{-1}= {u}\circ (\pi_X|_{U_\beta})^{-1}$,  and thus
	\begin{align*}
		u_\beta= i_{A_\beta}\varrho(\gamma) i_{A_\alpha}^{-1}u_\alpha. 
	\end{align*} 
It implies that     $$u_\beta(U_x)\subset V\cap i_{A_\beta}\varrho(\gamma) i_{A_\alpha}^{-1}(V)\neq\varnothing.$$  By \cite[Corollary 4.2.25]{KP23} and \cite[Axiom 4.1.4 (A 1)]{KP23}, there exists $w\in  W$  such that $wx=i_{A_\beta}\varrho(\gamma) i_{A_\alpha}^{-1}(x)$ for any $x\in V\cap i_{A_\beta}\varrho(\gamma) i_{A_\alpha}^{-1}(V)$.   This implies that    $u_\beta=wu_\alpha$.  Since $\Phi$ is invariant under the  action by $W^v$, by \eqref{eq:u},    we conclude that 
	\begin{align*}
		\{\partial u_{\alpha,1},\ldots,\partial u_{\alpha,m}\} = \{\partial u_{\beta,1},\ldots,\partial u_{\beta,m}\}.
	\end{align*} 
This shows that the construction of multi-valued 1-form is canonical.The claim is proved. 
\end{proof}
 
 Consequently, if we define $\sigma_{i,\eta_{U_x}}\in  \Gamma(U_x, \Sym^i \Omega_{U_x})$ by
 $$
T^m + \sigma_1 T^{m-1} + \cdots + \sigma_m=(T-\partial u_{\alpha,1})\cdots(T-\partial u_{\alpha,m}),
 $$
 where $T$ is a formal variable,
 then we have $\sigma_{i,\eta_{U_x}}=\sigma_{i,\eta_{U_y}}$ on $U_x\cap U_y$.
 Hence these local sections glue to define $\sigma_i \in \Gamma(\cR(u), \Sym^i \Omega_{\cR(u)})$.  
 We refer the reader to \cite[Lemma~4.3]{BDDM} for further details.
As $u$ is locally Lipschitz and has logarithmic energy growth,   by the same arguments as in \cite[Proposition~4.4]{BDDM}, each $\sigma_i$ extends to a logarithmic symmetric form in $H^0(\overline{X}, \Sym^i\Omega_{\overline{X}}(\log D))$. Indeed, the locally Lipschitz property of $u$, together with the fact that $\cS(u)$ has Hausdorff codimension at least two, implies that each $\sigma_i$ extends across $\cS(u)$, and the logarithmic energy growth of $u$ shows that $\sigma_i$ has at most logarithmic poles along $D$. 
 Hence, the assumptions of \cref{lem:spectral} are satisfied with $\mathcal{E} = \Omega_{\overline{X}}(\log D)$ and $P(T)=T^m + \sigma_1 T^{m-1} + \cdots + \sigma_m$.
Let 
 \[
 \pi : \overline{\xsp} \to \overline{X}
 \]
 be the covering $\Sigma\to \overline{X}$ defined in \cref{lem:spectral}. We then have 
 \[
 \eta_1, \ldots, \eta_m \in H^0(\overline{\xsp}, \pi^*\Omega_{\overline{X}}(\log D)) 
 \]
 such that $\pi^*P(T)=(T-\eta_1)\cdots(T-\eta_m)$.
	 By \cref{item:extension,item:ramified}, we have following two assertions.
	\begin{claim}\label{claim:extension}
		Any $g\in  \widetilde{G}:={\rm Aut}(\overline{\xsp}/\overline{X})$ acts on the (underlying) set $\{\eta_1,\ldots,\eta_m\}$ by a permutation.     
	\end{claim}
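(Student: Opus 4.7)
The plan is to deduce this claim directly from \cref{item:extension} of \cref{lem:spectral}, which was invoked in the construction of $\pi:\overline{\xsp}\to\overline{X}$. Recall that $\widetilde{G}$ acts on sections of $\pi^*\Omega_{\overline{X}}(\log D)$ via pullback $g\mapsto g^*$, since $\pi\circ g=\pi$ for every $g\in\widetilde{G}$. By \cref{item:extension}, the (underlying) set $\{\eta_1,\ldots,\eta_m\}\subset H^0(\overline{\xsp},\pi^*\Omega_{\overline{X}}(\log D))$ is invariant under this pullback action, so for each $g\in\widetilde{G}$ and each index $i$ there exists some index $j$ with $g^*\eta_i=\eta_j$.

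To conclude that $g$ acts as a genuine permutation of the underlying set, I would just observe that the self-map of the finite set $\{\eta_1,\ldots,\eta_m\}$ induced by $g$ is a bijection: indeed, $g^{-1}\in\widetilde{G}$ also preserves the set and provides a two-sided inverse, since $(g^{-1})^*\circ g^*=\mathrm{id}$ on $H^0(\overline{\xsp},\pi^*\Omega_{\overline{X}}(\log D))$. Hence $g\mapsto (\eta_i\mapsto g^*\eta_i)$ defines a group homomorphism $\widetilde{G}\to\mathfrak{S}(\{\eta_1,\ldots,\eta_m\})$, which is the desired permutation action.

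Since the statement is just a direct translation of \cref{item:extension} into the current notation (with $\Sigma=\overline{\xsp}$, $X=\overline{X}$, $\mathcal{E}=\Omega_{\overline{X}}(\log D)$, and $\{\eta_1,\ldots,\eta_n\}$ matching the $\eta_i$ constructed here), I do not anticipate any obstacle; the proof reduces to a single invocation of that earlier result together with the elementary fact that a group acting on a finite set acts by permutations.
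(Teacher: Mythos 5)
Your proposal is correct and is exactly the argument the paper uses: after constructing $\pi:\overline{\xsp}\to\overline{X}$ via \cref{lem:spectral} applied to $\mathcal{E}=\Omega_{\overline{X}}(\log D)$ and $P(T)=T^m+\sigma_1T^{m-1}+\cdots+\sigma_m$, the paper simply cites \cref{item:extension} (and \cref{item:ramified}) as the source of \cref{claim:extension} (and \cref{claim:ramified}). Your additional remark that invertibility of $g$ forces the induced self-map of the finite set to be a bijection is a harmless and correct elaboration of what \cref{item:extension} already guarantees.
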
   
		\begin{claim}\label{claim:ramified}
		The Galois morphism $\pi:\overline{\xsp}\to \overline{X}$ is \'etale outside $$R:=\{z\in \overline{\xsp} \mid \exists \eta_i\neq\eta_j \ \mbox{with}\ (\eta_i-\eta_j)(z)=0 \},$$
		which satisfies $\pi^{-1}(\pi(R))=R$.  
	\end{claim}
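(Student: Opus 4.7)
The plan is to observe that this claim is a direct specialization of \cref{item:ramified} in the already-established \cref{lem:spectral}; the only work is to verify that the present setup matches the hypotheses of that general lemma. Concretely, we apply \cref{lem:spectral} with $X$ replaced by $\overline{X}$, the locally free sheaf $\mathcal{E}$ replaced by $\Omega_{\overline{X}}(\log D)$, and $P(T)=T^m+\sigma_1 T^{m-1}+\cdots+\sigma_m$. The splitting hypothesis holds over the non-empty open subset $\cR(u)\subset \overline{X}$: on each simply-connected neighborhood $U_x\subset \cR(u)$ we have
\[
P(T)|_{U_x}=\prod_{j=1}^{m}\bigl(T-\partial u_{\alpha,j}\bigr),
\]
as was constructed above from the harmonic map $u$. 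The lemma then produces the finite Galois cover $\pi:\overline{\xsp}\to\overline{X}$ together with global sections $\eta_1,\ldots,\eta_m\in H^0(\overline{\xsp},\pi^{*}\Omega_{\overline{X}}(\log D))$ satisfying $\pi^{*}P(T)=\prod_i(T-\eta_i)$. The sign difference with the formulation of \cref{lem:spectral} (which writes $T+\eta_i$) amounts to replacing each $\eta_i$ by $-\eta_i$ and does not affect the locus where two distinct sections coincide.

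With this identification, the first assertion of the claim, that $\pi$ is étale outside $R$, is precisely the first half of \cref{item:ramified}. The second assertion $\pi^{-1}(\pi(R))=R$ is the second half; alternatively, it follows immediately from \cref{claim:extension} (equivalently \cref{item:extension} of \cref{lem:spectral}): the Galois group $\widetilde{G}=\mathrm{Aut}(\overline{\xsp}/\overline{X})$ acts on the multiset $\{\eta_1,\ldots,\eta_m\}$ by permutations, so $R$ is $\widetilde{G}$-invariant and hence saturated under $\pi$. There is no substantive obstacle here: the genuine mathematical content—reducing the ramification question to the unramified-extension criterion of \cref{lem:202511141} and concluding via purity of the branch locus—was carried out in the proof of \cref{lem:spectral}, and the present claim is merely a restatement of its conclusion in the geometric situation at hand.
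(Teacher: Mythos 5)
Your proposal is correct and takes essentially the same route as the paper: the text immediately preceding the claim sets up exactly the specialization of \cref{lem:spectral} you describe ($\mathcal{E}=\Omega_{\overline{X}}(\log D)$, $P(T)=T^m+\sigma_1 T^{m-1}+\cdots+\sigma_m$, splitting on $\cR(u)\subset X\subset\overline{X}$) and then reads off \cref{claim:ramified} directly from \cref{item:ramified}. Your remarks on the sign convention and on deducing $\pi^{-1}(\pi(R))=R$ from the $\widetilde{G}$-invariance of the multiset $\{\eta_1,\ldots,\eta_m\}$ are both accurate.
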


\begin{dfn}[Spectral covering and one forms]\label{def:spectral}
	 The   Galois morphism $\pi:\xsp\to X$  is called the \emph{spectral covering} of $X$ associated to $\varrho$.
	 The multiset  $\{\eta_1, \ldots, \eta_m\}$ of sections in $H^0(\overline{\xsp}, \pi^*\Omega_{\overline{X}}(\log D))$ is called the \emph{spectral one-forms} induced by $\varrho$.
\end{dfn}

	\medspace

\noindent 	{\em Step 2. Pluriharmonic map and  spectral one forms.}  
	Let $Y\to \xsp$ be a resolution of singularities. 
	For each $i=1,\ldots,m$, we define $\tau_i\in H^0(Y, \Omega_{Y})$ by the pull-back of $\eta_i \in H^0(\overline{\xsp}, \pi^*\Omega_{\overline{X}}(\log D))$ to $Y$. 
	Furthermore,  we set $p:Y\to X$ to be the composition $Y\to \xsp \to X$.

	\begin{claim}\label{claim:202511272}
	Let $f: Z \to Y$ be a morphism from a smooth quasi-projective variety $Z$.
	The following three assertions are equivalent:
	\begin{enumerate}[label*=(\arabic*)]
	\item \label{item:20251201}
	For all $\tau_1, \ldots, \tau_m\in H^0(Y, \Omega_{Y})$, we have $f^*\tau_1=\cdots=f^*\tau_m=0$ in $H^0(Z,\Omega_{Z})$.
	\item \label{item:202512011}
	The pluriharmonic map $u\circ \widetilde{p\circ f}:\widetilde{Z}\to\Delta(G)$ is constant, where $\widetilde{p\circ f}: \widetilde{Z} \to \widetilde{X}$ is a lift of $p\circ f:Z\to X$ between the universal covers of $Z$ and $X$.  
	\item \label{item:202512012}
	The image $p^*\rho({\rm Im}[\pi_1(Z)\to \pi_1(Y)])$ is a bounded subgroup of $G(K)$.
	\end{enumerate}
	\end{claim}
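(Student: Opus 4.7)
The plan is to prove this claim by deriving a pointwise identity linking the pulled-back spectral forms $f^*\tau_j$ to the energy density of the composite pluriharmonic map $u' := u\circ \widetilde{p\circ f}:\widetilde Z\to \Delta(G)$, which by \cref{item:pullback} is itself pluriharmonic with logarithmic energy growth. Using the construction of the $\tau_j$ in Step~1 through local apartment charts, the Weyl-symmetrization identity $\sum_{j=1}^m \beta_j^2 = |W^v|\sum_{i=1}^N x_i^2$ (as quadratic forms on $V$, which holds because $\Phi$ is the Weyl orbit of the orthonormal basis $\{x_i\}$), and the pluriharmonicity relation $|dv_i|^2 = 2|\partial v_i|^2$ for each real pluriharmonic component $v_i = x_i\circ u_\alpha$, I would establish, on the preimage in $\widetilde Z$ of the regular locus $\cR(u)\subset X$, the formula
\[
\sum_{j=1}^m |f^*\tau_j|^2_{\omega_Z}
\;=\; \tfrac{|W^v|}{2}\,|\nabla u'|^2_{\omega_Z},
\]
where $\omega_Z$ denotes any chosen K\"ahler metric on $Z$.

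With this identity in hand, the equivalence $(1)\Leftrightarrow(2)$ becomes straightforward. For $(1)\Rightarrow(2)$, if every $f^*\tau_j$ vanishes on $Z$, then $|\nabla u'|^2 \equiv 0$ on the open set $\widetilde Z_0 := (\widetilde{p\circ f})^{-1}(\cR(u))$; since this set lies in the regular locus $\cR(u')$ of $u'$, which is connected because its complement has Hausdorff codimension at least two by \cref{thm:GS}, unique continuation for pluriharmonic maps within apartment coordinates would propagate the local constancy to all of $\cR(u')$, and hence to $\widetilde Z$ by continuity. For $(2)\Rightarrow(1)$, constancy of $u'$ would give $du' \equiv 0$, forcing $f^*\tau_j$ to vanish on $\widetilde Z_0$; since $f^*\tau_j \in H^0(Z,\Omega_Z)$ is holomorphic, analytic continuation extends this to all of $Z$. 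The borderline case $\widetilde Z_0 = \emptyset$, i.e.\ $(p\circ f)(Z)\subset \cS(u)$, is handled by restricting $X$ to the Zariski closure of $(p\circ f)(Z)$ and invoking the same argument on the restricted pluriharmonic map.

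The equivalence $(2)\Leftrightarrow(3)$ is then handled directly. The implication $(2)\Rightarrow(3)$ is immediate: $u'\equiv P$ places $(p\circ f)^*\varrho(\pi_1(Z))$ inside $\mathrm{Stab}(P)$, which is bounded by \cref{lem:AB}. For $(3)\Rightarrow(2)$, I would use that a bounded image fixes some point $P\in \Delta(G)$ by \cref{lem:AB}, so that the constant map $\widetilde Z \to \{P\}$ is another $(p\circ f)^*\varrho$-equivariant pluriharmonic map with (trivially) logarithmic energy growth; uniqueness of such maps from the construction underlying \cref{thm:BDDMex}---or, equivalently, a convex-interpolation argument using that $d^2(u',P)$ is plurisubharmonic on $\widetilde Z$, descends to $Z$, and is controlled near the boundary by the log energy growth---would force $u' = P$. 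I expect this direction $(3)\Rightarrow(2)$ to be the main obstacle, since it hinges on a uniqueness statement for equivariant pluriharmonic maps with prescribed logarithmic energy growth, which is classical in the compact setting but in the quasi-projective case requires the precise control at infinity afforded by \cref{thm:BDDMex}.
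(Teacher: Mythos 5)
Your pointwise identity $\sum_{j}|f^{*}\tau_{j}|^{2}_{\omega_{Z}}=\tfrac{|W^{v}|}{2}\,|\nabla u'|^{2}_{\omega_{Z}}$ on $(\widetilde{p\circ f})^{-1}(\cR(u))$ is correct (with the right constant), and the $(2)\Leftrightarrow(3)$ part of your argument is essentially the paper's: constancy implies boundedness via \cref{lem:AB}, and boundedness implies constancy by applying the unicity result \cite[Theorem~B]{DM24} to compare $u'$ with the constant equivariant map, then invoking \cref{lem:density}. You correctly identify the unicity statement as the technical crux there.

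The gap is in your treatment of $(1)\Leftrightarrow(2)$ when $\widetilde Z_{0}:=(\widetilde{p\circ f})^{-1}(\cR(u))=\emptyset$, i.e.\ when $(p\circ f)(Z)\subset\cS(u)$. This is not a borderline triviality but the hard case, and your proposed fix---restrict to the Zariski closure $V$ of $(p\circ f)(Z)$ and "invoke the same argument on the restricted pluriharmonic map"---does not close it. The spectral forms $\tau_{j}$ are defined by extending the complex differentials of local apartment coordinates of $u$ \emph{from $\cR(u)$}. Their holomorphic extensions across $\cS(u)$ carry no a priori relationship to the energy density $|\nabla(u\circ\tilde v)|^{2}$ of the restricted pluriharmonic map $u\circ\tilde v$ on \emph{its} regular locus $\cR(u\circ\tilde v)\subset\widetilde V$, which is an entirely different set and need not meet $\cR(u)$. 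So the pointwise identity you want to reuse is precisely what fails to be available after restriction, and recursing the restriction argument does not terminate in a situation where the identity is known.

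The paper avoids this by a genuinely different route: it reduces condition $(1)$ and condition $(2)$ to assertions about all disks $g:\bD\to Y$ factoring through $f$, extends each such disk to a local biholomorphism $G:\bD\times\bD^{k}\hookrightarrow Y$, and uses Fubini to find a full-measure set $V\subset\bD^{k}$ of parameters $w$ for which the slice $\iota_{w}$ meets $\cS(u)$ only in measure zero. On these slices the energy of $u_{G}\circ\iota_{w}$ is computed by the $\tau_{i}$; the right-hand side is manifestly continuous in $w$, while the left-hand side is shown to converge to $E^{u_{G}\circ\iota_{0}}$ via the harmonic-map energy continuity statement (\cref{lem:energycontinuity} / \cref{cor:20251127}). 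Passing to $w=0$ then recovers the identity \emph{in integrated form} even when the central slice lies inside $\cS(u)$. This slicing-plus-energy-continuity device is the essential technical input your proposal lacks; without it, the case $(p\circ f)(Z)\subset\cS(u)$ remains unhandled.
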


	\begin{proof}
	In this proof, we set $u_f=u\circ \widetilde{p\circ f}$ for short.
	We first show the equivalence of \ref{item:202512011} and \ref{item:202512012}.
	Note that the $(p\circ f)^*\varrho$-equivariant map $u_f: \widetilde{Z} \to \Delta(G)$ is pluriharmonic and has logarithmic energy growth (cf. \cref{thm:BDDMex}).
	Suppose that $u_f$ is constant.
	Let $\{P\}$ be the image of $u_f$.  
	Then $p^*\varrho({\rm Im}[\pi_1(Z)\to \pi_1(Y)])$ fixes $P$ as $u_f$ is $(p\circ f)^*\varrho$-equivariant.  
	By \cref{lem:AB}, the stabilizer of $P$ is a bounded subgroup of $G(K)$, hence $(p\circ f)^*\varrho(\pi_1(Z))$ is bounded.
	Next suppose $p^*\rho({\rm Im}[\pi_1(Z)\to \pi_1(Y)])$ is bounded.
	Let $P$ be the fixed point of $p^*\varrho(({\rm Im}[\pi_1(Z)\to \pi_1(Y)]))$, and define the constant map  
	$u':\widetilde{Z}\to\{P\}$.  
	Then $u'$ is $(p\circ f)^*\varrho$-equivariant and harmonic.  
	By the unicity result in \cite[Theorem B]{DM24}, we have
	\[
	 |\nabla_{u_f}|^2=|\nabla_{u'}|^2 =0
	\]
	almost everywhere on $\tilde{Z}$. 
	Thus by \cref{lem:density},  $u_f$ is constant.
	We have proved the equivalence of \ref{item:202512011} and \ref{item:202512012}.

	Next we prove the equivalence of \ref{item:20251201} and \ref{item:202512011}.
	For this purpose, we introduce the following two assertions for a holomorphic map $g:\mathbb D\to Y$:
	\begin{enumerate}[label*=(\roman*)]
	\item\label{item:202512015}
	 For all $\tau_1, \ldots, \tau_m\in H^0(Y, \Omega_{Y})$, we have $g^*\tau_1=\cdots=g^*\tau_m=0$ in $H^0(\mathbb D,\Omega_{\mathbb D})$.
	\item \label{item:202512016}
	$u_g=u\circ \widetilde{p\circ g}:\mathbb D\to\Delta(G)$ is constant, where $\widetilde{p\circ g}: \mathbb D\to \widetilde{X}$ is a lift of $p\circ g:\mathbb D\to X$ to the universal cover of $X$.  
	\end{enumerate}
Condition \ref{item:20251201} is equivalent to the requirement that \ref{item:202512015} holds for all $g:\mathbb D\to Y$ that factor through $Z\to Y$.
Similarly \ref{item:202512011} is equivalent to the requirement that \ref{item:202512016} holds for all $g:\mathbb D\to Y$ that factor through $Z\to Y$.
Thus, it suffices to prove the equivalence of \ref{item:202512015} and \ref{item:202512016} for each holomorphic map $g:\mathbb D\to Y$. 
	Furthermore, to establish the equivalence, it suffices to show that it holds locally on $\mathbb D$ where the map $g:\mathbb D\to Y$ is an immersion.
	Therefore it is enough to show the equivalence under the assumption that we can extend $g:\mathbb D\to Y$ to a map $G:\mathbb D\times \mathbb D^k\hookrightarrow Y$ such that the restriction $G|_{\mathbb D\times \{0\}}$ equals $g$, and $G$ is a biholomorphic map onto an open subset of $Y$.
	We denote by $u_G:\mathbb D\times \mathbb D^k\to \Delta(G)$ the composite of $\widetilde{p\circ G}:\mathbb D\times \mathbb D^k\to\tilde{X}$ and $u:\tilde{X}\to \Delta(G)$.
	Then $u_G:\mathbb D\times \mathbb D^k\to \Delta(G)$ is locally Lipschitz and pluriharmonic.

	Set  $S:=(p\circ G)^{-1}(\cS(u))$.
Let $V$ be the set of all $w\in \mathbb D^k$ such that $(\mathbb D\times\{w\})\cap S$ has measure zero in $\mathbb D$.
Since $\cS(u)$ has Hausdorff codimension at least two (indeed, it is contained in a proper Zariski closed subset of $X$ by \cite{DM24}), Fubini's theorem implies that $\mathbb D^k\setminus V$ has measure zero.
In particular, $V$ is dense in $\mathbb D^k$.
 Denote by $\iota_w:\mathbb D\hookrightarrow \mathbb D\times\mathbb D^k$ the embedding
\[
\mathbb D  \longrightarrow \mathbb D \times\{w\} \longrightarrow \mathbb D\times\mathbb D^k.
\]
 We fix a K\"ahler  metric $\omega:=\sn dz\wedge d\bar{z}$ on $\mathbb D$. 
Since $u_G$ is pluriharmonic, it follows that for \emph{each} $w\in \mathbb D^k$, the map
\[
u_{G}\circ \iota_w : \mathbb D \longrightarrow \Delta(G)
\]
is harmonic.  
By construction, for each $w\in V$, the holomorphic $1$-forms
\[
\iota_w^*G^*\tau_1,\ldots,  \iota_w^*G^*\tau_m\in \Gamma(\bD, \Omega^1_{\bD})
\]
correspond to the complex differentials of $u_{G}\circ \iota_w$ on the non-empty open dense set $\mathbb D\setminus \iota_w^{-1}(S)$.
Thus by \eqref{eq:density} and \eqref{eq:Phi}, 
the energy density of $u_{G}\circ \iota_w$ for $w\in V$ satisfies (cf. \cite[eq. (3.7)]{DM24})
\begin{align*}
	|\nabla_{u_{G}\circ \iota_w}|^2  
	= \frac{2}{|W^v|} \sum_{i=1}^{m} \left|\iota_w^*G^*\tau_i\right|^2_{\omega} \quad \mbox{ on } \mathbb D\setminus  \iota_w^{-1}(S).
\end{align*}
Hence,  for every $\delta\in (0,1)$, we have 
\begin{align*}
	E^{u_{G}\circ \iota_w}[\bD_{\delta}]
	=
	\frac{1}{|W^v|} \int_{\bD_{\delta}}
	\iota_w^*G^*
	\left(\sn \sum_{i=1}^{m}\tau_i\wedge \bar{\tau}_i\right)
	 \quad \mbox{ for each } w\in V.
\end{align*} 
 Since $u_G$ is locally Lipschitz continuous,  we have $u_{G}\circ \iota_w\to u_{G}\circ \iota_0$ uniformly on compact subsets on $\mathbb D$ as $w\to 0$.
Hence by \cref{cor:20251127}, $u_{G}\circ \iota_0$ is constant if and only if $E^{u_{G}\circ \iota_w}[\bD_{\delta}]\to 0$ as $w\to 0$ along $w\in V$.

Note that, 
\begin{align*} 
	w\mapsto 
	 \int_{\bD_{\delta}}
	 \iota_w^*G^*
	 \left(\sn \sum_{i=1}^{m}\tau_i\wedge \bar{\tau}_i\right)
\end{align*} 
is a continuous function on the whole $w\in\bD^k$.  
Therefore $E^{u_{G}\circ \iota_w}[\bD_{\delta}]\to 0$ as $w\to 0$ along $w\in V$ is equivalent to 
 $$
  \int_{\bD_{\delta}}
 \iota_0^*G^*
 \left(\sn \sum_{i=1}^{m}\tau_i\wedge \bar{\tau}_i\right)=0.
 $$
 Noting that $u_g=u_{G}\circ \iota_0$, this implies that 
 \[
\text{$u_g$ is constant}
\quad \Longleftrightarrow \quad
\iota_0^*G^*\tau_i = 0 \text{ for all } i \quad\Longleftrightarrow\quad g^*\tau_i = 0 \text{ for all } i.
\]
The equivalence of \ref{item:202512015} and \ref{item:202512016} is proved.
\end{proof}

	\medspace

\noindent 	{\em Step 3. Partial quasi-Albanese morphism via spectral one forms.}  
Let $\mu: \overline{\xsp}'\to \overline{\xsp}$   be a	$\widetilde{G}$-equivariant resolution of singularities such that $E:=({\pi}\circ \mu)^{-1}(D)$  is a simple normal crossing divisor.
	Write $Y:=({\pi}\circ \mu)^{-1}(X)$, which is a smooth quasi-projective variety. 
	We consider the pull-back of the spectral one forms $\{\mu^*\eta_1,\ldots,\mu^*\eta_m\}$ as one forms in $H^0(\overline{\xsp}', \Omega_{\overline{\xsp}'}(\log E))$.
	Define $A\subset \cA_Y$ to be   the largest semi-abelian variety contained in the quasi-Albanese variety $\cA_Y$ of $Y$ on which each $\eta_i$ vanishes.  
	We denote by $\cA:=\cA_Y/A$ the quotient, which is also a semi-abelian variety. 
	By \cref{claim:extension}, the natural $\widetilde{G}$-action on $\cA_Y$ induced by the $\widetilde{G}$-action on $Y$ gives rise to a  $\widetilde{G}$-action on $\cA$.

	\begin{claim}\label{claim:factor}
		The composition $Y\to \cA_Y\to \cA$ factors through $\alpha:X^{\! \rm sp}\to \cA$.   Moreover, $\alpha$ is $\widetilde{G}$-equivariant. 
	\end{claim}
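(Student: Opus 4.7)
The plan is to apply \cref{lem:critpointalb} to the partial quasi-Albanese morphism $r: Y \to \cA_Y/A = \cA$ and then to descend $r$ along the proper birational morphism $\mu|_Y: Y \to \xsp$.

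First I would show that $r$ contracts every fiber of $\mu|_Y$. Let $F$ be a closed fiber of $\mu|_Y$ over some point $z\in\xsp$, and denote by $\iota_F : F \hookrightarrow Y$ the inclusion. Since $\mu\circ \iota_F$ factors through the one-point set $\{z\}$, for each $i$ we have
\[
\iota_F^* \tau_i \;=\; \iota_F^*\mu^*\eta_i \;=\; (\mu\circ \iota_F)^*\eta_i \;=\; 0.
\]
By \cref{lem:critpointalb}, applied to $\iota_F: F \to Y$ with $S = \{\tau_1,\dotsc,\tau_m\} \subset T_1(Y)$, the image $r(F)$ is a single point. Since $\overline{\xsp}$ is normal (being the normalization of $\overline{X}$ in a finite Galois extension of the function field, as constructed in Step~1), $\xsp$ is a normal quasi-projective variety, and $\mu|_Y : Y\to\xsp$ is proper and birational. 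Zariski's main theorem then gives $(\mu|_Y)_*\O_Y = \O_{\xsp}$, so the fibers of $\mu|_Y$ are connected and any morphism from $Y$ to a separated scheme that is set-theoretically constant on these fibers factors uniquely through $\xsp$. Applying this to $r$ produces the desired morphism $\alpha:\xsp \to \cA$.

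For the equivariance statement, I would exploit \cref{claim:extension}: every $g\in\widetilde{G}$ permutes the multiset $\{\eta_1,\dotsc,\eta_m\}$, and hence also permutes $\{\tau_1,\dotsc,\tau_m\}$ via pullback. The induced action of $g$ on $\cA_Y$ need not fix the identity, but it decomposes uniquely as a translation $t_g$ composed with a group automorphism $h_g : \cA_Y \to \cA_Y$. Because translation-invariant $1$-forms are preserved by $t_g^*$, the permutation action of $g^*$ on the $\tau_i$'s is realized by $h_g^*$, so $h_g$ stabilizes the largest semi-abelian subvariety $A\subset \cA_Y$ on which all $\tau_i$ vanish. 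Hence the $\widetilde{G}$-action descends to an affine action on $\cA = \cA_Y/A$, the composition $Y\to \cA_Y\to \cA$ is $\widetilde{G}$-equivariant by functoriality of the quasi-Albanese, and therefore so is its factorization $\alpha$.

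The main technical point is the descent in the first paragraph, i.e.\ upgrading a set-theoretic factorization of $r$ to a genuine morphism $\alpha$ of algebraic varieties. This relies on the normality of $\xsp$ and the properness and birationality of $\mu|_Y$, both of which are built into the construction of the spectral covering in \cref{def:spectral}.
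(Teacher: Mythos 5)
Your proposal is correct and follows essentially the same route as the paper: use \cref{lem:critpointalb} on the (connected, since $\xsp$ is normal and $\mu|_Y$ is proper birational) fibers of $Y\to\xsp$ to see they are contracted by $Y\to\cA$, then descend the morphism. You supply more detail than the paper on the descent step (Zariski's connectedness theorem / rigidity) and, in particular, on the $\widetilde{G}$-equivariance, which the paper dismisses as "easy to show"; your decomposition of the induced automorphism of $\cA_Y$ into a translation part $t_g$ and a group automorphism $h_g$, and the observation that only $h_g$ acts on translation-invariant $1$-forms so that $h_g$ preserves $A$, is a clean way to make that sentence rigorous.
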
 
	\begin{proof}[Proof of \cref{claim:factor}]
		Since $\xsp$ is normal,  each  fiber $F$ of $Y\to \xsp$ is connected. 
		We have $\eta|_{F}=0$ for any $\eta\in \{\mu^*\eta_1,\ldots,\mu^*\eta_m\}$.  It follows from \cref{lem:critpointalb} that $F$ is mapped to one point under the morphism $Y\to \cA$.  Hence it factors through $\xsp\to \cA$.   The second one is easy to show since all our previous constructions are \(\widetilde{G}\)-equivariant.
	\end{proof}

	\begin{dfn}[Partial quasi-Albanese morphism]\label{def:partial}
		The above morphism $\alpha: \xsp\to \cA$ is called the partial quasi-Albanese morphism  induced by the spectral one forms $\{\eta_1,\ldots,\eta_m\}$.  
	\end{dfn}

	We denote by $p:Y\to X$ the composite of $\mu|_Y:Y\to \xsp$ and $\pi:\xsp\to X$.

	\begin{claim}\label{claim:trivial}
Let $f : Z \to Y$ be a morphism from a smooth quasi-projective variety $Z$. 
	Then the following  assertions are equivalent:
	\begin{enumerate}[label*=(\arabic*)]
	\item \label{item:20251202}
	the image $p^*\rho({\rm Im}[\pi_1(Z)\to \pi_1(Y)])$ is a bounded subgroup of $G(K)$.
	\item \label{item:202512021}
	The composite $\alpha\circ \mu\circ f:Z\to \cA$ is constant.
	\item \label{item:202512022}
	The pluriharmonic map $u\circ \widetilde{p\circ f}:\widetilde{Z}\to\Delta(G)$ is constant, where $\widetilde{p\circ f}: \widetilde{Z} \to \widetilde{X}$ is a lift of $p\circ f:Z\to X$ between the universal covers of $Z$ and $X$.  
	\end{enumerate}
	\end{claim}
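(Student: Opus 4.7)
The plan is to deduce Claim \ref{claim:trivial} directly from Claim \ref{claim:202511272} combined with Lemma \ref{lem:critpointalb}, without any new analytic work.

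The first observation is that assertions \ref{item:20251202} and \ref{item:202512022} of Claim \ref{claim:trivial} are literally the same statements as assertions \ref{item:202512012} and \ref{item:202512011} of Claim \ref{claim:202511272}, respectively. Their equivalence is therefore immediate from the previous claim, and nothing needs to be done for that part. What remains is to tie the geometric contraction property \ref{item:202512021} (constancy of $\alpha\circ \mu\circ f$) to the remaining differential condition of Claim \ref{claim:202511272}, namely \ref{item:20251201}: the simultaneous vanishing of $f^*\tau_i$ for $i=1,\ldots,m$.

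For this middle step I would invoke Lemma \ref{lem:critpointalb}. By construction $\tau_i = (\mu|_Y)^*\eta_i \in H^0(Y,\Omega_Y) \subset T_1(Y)$, and the quotient $\cA = \cA_Y/A$ uses the largest semi-abelian subvariety $A \subset \cA_Y$ on which each $\tau_i$ vanishes under the canonical identification $T_1(\cA_Y) \cong T_1(Y)$. Thus the partial quasi-Albanese morphism $Y \to \cA_Y/A = \cA$ associated with $S := \{\tau_1,\ldots,\tau_m\}$ coincides with $\alpha\circ \mu|_Y$, using Claim \ref{claim:factor} to see that it factors through $\xsp$. Applying Lemma \ref{lem:critpointalb} to the morphism $f: Z \to Y$ (which is already between smooth quasi-projective varieties, so no auxiliary resolution is required) and this set $S$ then yields
\[
\alpha\circ\mu\circ f \text{ is constant} \ \Longleftrightarrow \ f^*\tau_i = 0 \text{ for every } i = 1, \ldots, m,
\]
closing the chain of equivalences between \ref{item:20251202}, \ref{item:202512021}, and \ref{item:202512022}.

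I do not expect any substantial obstacle. All the analytic heavy lifting—existence and uniqueness of the pluriharmonic map $u$, the pointwise formula expressing its energy density in terms of the $\tau_i$, and the passage between constancy of $u$ and boundedness of the image of $\varrho$—has already been established in Claim \ref{claim:202511272}, while Lemma \ref{lem:critpointalb} is a self-contained statement about morphisms into semi-abelian varieties. The only minor point to double-check is that the variety $Y$ appearing in Step 3 fits into the framework of Claim \ref{claim:202511272}; this is immediate since $Y = (\pi\circ\mu)^{-1}(X)$ is in particular a smooth resolution of $\xsp$.
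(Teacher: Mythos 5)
Your proposal is correct and takes essentially the same route as the paper: the paper's proof is a three-line remark that the claim ``directly follows from Claim~\ref{claim:202511272} and Lemma~\ref{lem:critpointalb}'', observing (as you do) that it suffices to equate condition \ref{item:20251201} of Claim~\ref{claim:202511272} with condition \ref{item:202512021} of Claim~\ref{claim:trivial} via Lemma~\ref{lem:critpointalb}. Your write-up is a bit more explicit in spelling out that the partial quasi-Albanese morphism $Y\to\cA_Y/A$ induced by $S=\{\tau_1,\dots,\tau_m\}$ is exactly $\alpha\circ\mu|_Y$ (using Claim~\ref{claim:factor}), but this is precisely what the paper's terse reference is relying on.
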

	\begin{proof}
	This directly follows from \cref{claim:202511272} and Lemma~\ref{lem:critpointalb}.
	Indeed, according to \cref{claim:202511272}, it is enough to prove the equivalence of condition \ref{item:20251201} in \cref{claim:202511272} and condition \ref{item:202512021} in \cref{claim:trivial}.
	This equivalence follows from Lemma~\ref{lem:critpointalb}.
	\end{proof}

	\medspace

\noindent {\em Step 4. Construction of the reduction map $s_\rho:X\to S_\rho$.}   
The map $\alpha:X^{\! \rm sp}\to \cA$ is $\widetilde{G}$-equivariant with respect to the $\widetilde{G}$-actions on $X^{\! \rm sp}$ and $\cA$ (cf. \cref{claim:factor}).
	As $X=X^{\! \rm sp}/\widetilde{G}$, this gives rise to a morphism $\beta:X\to \cA/\widetilde{G}$ satisfying the following commutative diagram 
\begin{equation*}
	 		\begin{tikzcd}
	 Y\arrow[r,"\mu"]\arrow[dr,"p"'] &	\xsp\arrow[rr, "\alpha"] \arrow[d, "\pi"] & &  \cA\arrow[d]\\
&	 	X\arrow[rr, "\beta"] \arrow[dr, "s_\varrho"'] && \cA/\widetilde{G}\\
&	 	&S_\varrho\arrow[ur]&
	 \end{tikzcd} 
	\end{equation*}
	where  $s_\varrho:X\rightarrow S_\varrho$ is the quasi-Stein factorisation of $\beta$.

The following claim establishes our theorem (\cref{thm:KZreduction}) under the assumption that $X$ is smooth.
 \begin{claim}\label{claim:bounded subgroup}
	Let $T$ be any connected Zariski closed subset of $X$. Then the following properties are equivalent.
	\begin{enumerate}[label*=(\arabic*)]
	\item The image $s_\varrho(T)$ is a point in $S_\varrho$.			
	\item  The image $\rho({\rm Im}[\pi_1(T)\to \pi_1(X)])$ is a bounded subgroup of $G(K)$.
		\item  For every irreducible component $T_o$ of $T$, the image $\rho({\rm Im}[\pi_1(T_o^{\rm norm})\to \pi_1(X)])$ is a bounded subgroup of $G(K)$. 
		\end{enumerate} 	
\end{claim}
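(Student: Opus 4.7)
The plan is to reduce the claim to the case where $T$ is irreducible by exploiting the connectedness of $T$, to handle the algebraic implications via \cref{claim:trivial} and \cref{lem:finiteindex}, and to reserve the pluriharmonic map $u$ itself for the most delicate direction. The implication (2)$\Rightarrow$(3) is immediate: for each irreducible component $T_o$, the composition $T_o^{\mathrm{norm}}\to T_o\hookrightarrow T\hookrightarrow X$ induces $\pi_1(T_o^{\mathrm{norm}})\to\pi_1(T)\to \pi_1(X)$, so $\varrho\bigl(\mathrm{Im}[\pi_1(T_o^{\mathrm{norm}})\to\pi_1(X)]\bigr)$ is, up to conjugation, a subgroup of the bounded group $\varrho\bigl(\mathrm{Im}[\pi_1(T)\to\pi_1(X)]\bigr)$.

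For irreducible $T$, I would first establish (1)$\Leftrightarrow$(3). Take a desingularization $Z\to T^{\mathrm{norm}}$, let $\widehat{Z}$ be a resolution of an irreducible component of $Z\times_X Y$ dominating $Z$, and consider the induced morphism $f:\widehat{Z}\to Y$. Since $\widehat{Z}\to T^{\mathrm{norm}}$ is dominant and finite-to-one, \cref{lem:finiteindex} ensures that $\mathrm{Im}[\pi_1(\widehat Z)\to\pi_1(X)]$ has finite index in $\mathrm{Im}[\pi_1(T^{\mathrm{norm}})\to\pi_1(X)]$, so boundedness transfers between the two. Now \cref{claim:trivial} translates this boundedness into the constancy of $\alpha\circ\mu\circ f$; since the Galois group $\widetilde G$ acts transitively on the irreducible components of $\pi^{-1}(T)$, their $\alpha$-images lie in a single $\widetilde G$-orbit in $\mathcal A$, collapsing to one point in $\mathcal A/\widetilde G$, and the quasi-Stein factorisation of $\beta$ then yields $s_\varrho(T)=$ point. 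The reverse direction is symmetric, and (2)$\Leftrightarrow$(3) in the irreducible setting follows from \cref{lem:finiteindex} applied to $T^{\mathrm{norm}}\to T$.

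For general connected $T$, the implication (3)$\Rightarrow$(1) reduces to the irreducible case componentwise: each $s_\varrho(T_o)$ is a single point, and since the irreducible components of $T$ form a connected configuration through their pairwise intersections, continuity of $s_\varrho$ forces all these points to coincide. The delicate implication is (1)$\Rightarrow$(2), for which I would use the pluriharmonic map $u:\widetilde X\to \Delta(G)$. Let $\widetilde T$ be a connected component of $\pi_X^{-1}(T)$; by covering space theory its stabiliser in $\pi_1(X)$ is conjugate to $\mathrm{Im}[\pi_1(T)\to\pi_1(X)]$, so by \cref{lem:AB} it suffices to show that $u|_{\widetilde T}$ is a constant point. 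The already-established irreducible case (1)$\Rightarrow$(3), combined with \cref{claim:trivial} applied to resolutions of each component $T_o$ and the uniqueness of pluriharmonic maps from \cite[Theorem~B]{DM24}, ensures that $u$ is constant on each connected component of $\pi_X^{-1}(T_o)\cap\widetilde T$; continuity of $u$ then chains these constants across the non-empty intersections $T_o\cap T_{o'}$, producing a single point fixed by $\varrho$ of the image of $\pi_1(T)$.

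The main obstacle is precisely this final gluing step: boundedness of subgroups in $G(K)$ is not preserved under passing to the subgroup generated by several bounded subgroups, so a purely group-theoretic combination of the bounded images of the individual $\pi_1(T_o^{\mathrm{norm}})$'s would be insufficient to establish (2). It is the continuity and uniqueness of the pluriharmonic map $u$, rather than any algebraic manipulation in $G(K)$, that realise this global boundedness geometrically.
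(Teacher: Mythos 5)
Your implications (2)$\Rightarrow$(3), the irreducible case of (1)$\Leftrightarrow$(3) via \cref{claim:trivial}, and (3)$\Rightarrow$(1) in general are sound and match the paper. However, your treatment of (1)$\Rightarrow$(2) has a genuine gap, which also invalidates the claimed shortcut ``(2)$\Leftrightarrow$(3) in the irreducible setting follows from \cref{lem:finiteindex} applied to $T^{\rm norm}\to T$''. That lemma requires the target of the dominant morphism to be \emph{normal}, but $T$ need not be, and the implication in fact fails at the level of pure group theory for non-normal $T$: if $T$ is a nodal cubic then $\pi_1(T^{\rm norm})=\pi_1(\mathbb P^1)=1$ while $\pi_1(T)\cong\mathbb Z$, so $\mathrm{Im}[\pi_1(T^{\rm norm})\to\pi_1(T)]$ has infinite index, and boundedness of $\varrho$ on the former gives no information about the latter. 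This is exactly why the authors flag the equivalence of (b) and (c) (our (2) and (3)) as new even in the projective setting.

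This error propagates into your final step. You assert that the irreducible case, together with \cref{claim:trivial} and uniqueness of pluriharmonic maps, ensures $u$ is constant on each connected component $\widehat C$ of $\pi_X^{-1}(T_o)\cap\widetilde T$. But $\widehat C$ is a covering space of $T_o$ whose deck group reflects $\mathrm{Im}[\pi_1(T_o)\to\pi_1(X)]$, so constancy of $u$ on $\widehat C$ is equivalent to constancy of $u_{T_o}$ on the universal cover $\widetilde{T_o}$ --- which is precisely statement (2) for the (possibly non-normal) $T_o$, the very thing not yet established. \cref{claim:trivial} only yields constancy on $\widetilde Z$ for a \emph{smooth} $Z$ dominating $T_o^{\rm norm}$, and when $T_o$ is singular there is no surjection $\widetilde Z\to\widetilde{T_o}$ through which to push this constancy. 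Chaining ``across the non-empty intersections $T_o\cap T_{o'}$'' therefore misses the chaining that is actually required, namely across the singular locus \emph{inside} a single $T_o$. The paper's proof avoids this trap by never passing through the irreducible decomposition: it takes the universal cover $\widetilde T$ of $T$, decomposes $\widetilde T^{\rm reg}$ into connected components $\mathcal W$, each a covering of a connected component $W$ of the smooth locus $T^{\rm reg}$; because $W$ is smooth, the proper surjective morphism $Z\to W$ built from $p^{-1}(W)$ induces a surjection $\widetilde Z\to\widetilde W$, so $u$ is constant on each $\mathcal W$, and continuity of $u_T$ on the connected $\widetilde T$ then glues the at most countably many constant values across the singular locus. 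It is the decomposition by smooth pieces, rather than by irreducible components, that makes the gluing valid.
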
 
\begin{proof}
The implication (2) $\Rightarrow$ (3) is obvious from ${\rm Im}[\pi_1(T_o^{\rm norm})\to \pi_1(X)]\subset {\rm Im}[\pi_1(T)\to \pi_1(X)]$.

Next we show (3) $\Rightarrow$ (1).
Let $T_0$ be an irreducible component of $T$.
 Set \( p = \mu \circ \pi : Y \to X \). 
 There exists an irreducible component \( W \) of \( p^{-1}(T_o) \) such that \( p(W) = T_o \). 
 Let \( Z \to W \) be a desingularization and let $f:Z\to Y$ be the composite of $Z\to W$ and $W\hookrightarrow Y$.
 Then the natural map $Z\to T_o$ factors $Z\to T_o^{\rm norm}$.
 Hence our condition (3) implies that $\rho({\rm Im}[\pi_1(Z)\to \pi_1(X)])$ is bounded.
 Therefore $\alpha\circ\mu\circ f:Z\to\cA$ is constant (cf. \cref{claim:trivial}).
 Hence the image $s_\varrho(T_o)$ is a point in $S_\varrho$.
 Since $T$ is connected,  the image $s_\varrho(T)$ is a point in $S_\varrho$.

 We prove (1) $\Rightarrow$ (2).
 We denote by \( \widetilde{T} \) the universal cover of $T$.
 Let $u_T:\widetilde{T}\to\Delta(G)$ be the composite of the induced map $\widetilde{T}\to\widetilde{X}$ and $u:\widetilde{X}\to\Delta(G)$.
 We show that \(u(\widetilde{T})\) is a single point \(P \in \Delta(G)\). 
 Let $ \widetilde{T}^{\rm reg}$ be the regular part of  $\widetilde{T}$.
 It is enough to show that $u$ is constant over each connected component $\mathcal{W}$ of $ \widetilde{T}^{\rm reg}$.
 Note that $\widetilde{T}^{\rm reg}\to T^{\rm reg}$ is a local homeomorphism, where $T^{\rm reg}$ is the regular part of $T$. Let $W$ be the image of $\mathcal{W}$. Then $W$ is a connected component of $T^{\rm reg}$, and the induced map $\mathcal{W}\to W$ is a local homeomorphism.
 Let $Z$ be a desingularization of an irreducible component $C$ of the locally closed subset $p^{-1}(W)\subset Y$ such that $C$ dominates $W$.
 Then the induced map $Z\to W$ is a proper surjective morphism of smooth quasi-projective varieties.
 Therefore the induced map $\widetilde{Z}\to\widetilde{W}$ between universal covers is surjective.
 Thus the composite $\widetilde{Z}\to\widetilde{W}\to\mathcal{W}$ is also surjective.
  The first item (1) shows that the map $Z\to \cA$ is constant.
  By \cref{claim:trivial}, the map $\widetilde{Z}\to \Delta(G)$ is constant, hence $\mathcal{W}\to\Delta(G)$ is constant.
 This shows that \(u_T(\widetilde{T})\) is a single point \(P \in \Delta(G)\). 
 This implies that 
\[
\varrho\bigl({\rm Im}[\pi_1(T) \to \pi_1(X)]\bigr)
\]
fixes the point \(P\), and therefore is bounded.  
The implication is thus proved.
 \end{proof} 
 
 Hence we prove the theorem when $X$ is smooth. 
 
 \medspace
 
 \noindent {\it Step 5. We do not assume that $X$ is smooth.}
 Let $\mu:Y\to X$ be a resolution of singularities. Then since $X$ is normal, $\mu^\varrho:\pi_1(Y)\to G(K)$ is also a Zariski dense representation (cf. \cref{lem:fun}).
 By \cref{claim:bounded subgroup}, the desired reduction map $s_{\mu^*\varrho}:Y\to S_{\mu^*\varrho}$ exists.  
 
 Let $F$ be any fiber of $\mu$, which is connected and compact as $X$ is normal. Obviously, $\mu^*\varrho({\rm Im}[\pi_1(F)\to \pi_1(Y)])=\{e\}$. Therefore, $s_{\mu^*\varrho}(F)$ is a point. Hence there exists a morphism $s_\varrho:X\to S_{\mu^*\varrho}$ such that $s_\varrho\circ\mu=s_{\mu^*\varrho}$. We will prove that it satisfies the required property in the theorem. 
 
   Let $T':=\mu^{-1}(T)$. Since $X$ is normal, it follows that each fiber of $\mu$ is connected. Hence the natural morphism $T'\to T$ has connected fibers. By \cite[Lemma 3.47]{DY23} (cf. \cref{rem:20250910} below), we know that  $\pi_1(T')\to \pi_1(T)$ is surjective.   
   
 \medspace
 
 	\noindent {\em Proof of (3) $\Rightarrow$ (1)}.  Since $s_{\mu^*\varrho}(T')=s_\varrho(T)$ is a point, it follows that
 	$$
 	 \varrho({\rm Im}[\pi_1(T)\to \pi_1(X)]) = \mu^*\varrho({\rm Im}[\pi_1(T')\to \pi_1(Y)]) 	 
 	$$
 	is bounded. 
 	
 	\medspace
 	
 	 	\noindent {\em Proof of (1) $\Rightarrow$ (2)}. This is obvious.
 	 		
 	 		\medspace
 	 		
 	 		 	\noindent {\em Proof of (2) $\Rightarrow$ (3)}.  We take  an irreducible component, denoted as $T_o'$, of $T'$ that dominates $T_o$. Considering
 	 		 	$$\mu^*\varrho({\rm Im}[\pi_1(T_o')\to \pi_1(Y)])\subset  	 \varrho({\rm Im}[\pi_1(T_o)\to \pi_1(X)]),$$ 
 	 		 we can conclude that $\mu^*\varrho({\rm Im}[\pi_1(T_o')\to \pi_1(Y)])$ is   bounded.   By \cref{claim:bounded subgroup}, $s_{\mu^*\varrho}(T_o')$ is a point. This leads  to  the conclusion that $s_\varrho(T_o)$ is a  point as well. Given the connectedness of $T$, we  conclude that $s_\varrho(T)$ as a point.

 We complete the proof of the theorem.   
\end{proof}

We finish this section with some remarks  on the above long proof. 
\begin{rem}
	For the purpose of the proof of \cref{main2}, we  only need to study the properties of the morphism $\alpha:\xsp\to \cA$ together with the precise information on the ramification locus $\pi: {\xsp}\to  {X}$ in \cref{claim:ramified}. The reduction $s_\varrho:X\to S_\varrho$ will be an important tool in studying the linear Shafarevich conjecture for quasi-projective varieties (see \cite{Eys04,EKPR12}, the very recent work by Green-Griffiths-Katzarkov \cite{GGK22}, and \cite{DY23,DY23b}). 
\end{rem}

\begin{rem}\label{rem:20250910}
We recall and prove \cite[Lemma 3.47]{DY23} for completeness.

\begin{lem}\label{lem:surjective}
	Let $f:X\to Y$ be a proper surjective morphism between connected (possible reducible) quasi-projective varieties $X$ and $Y$.    Assume that each fiber of $f$ is connected. Then $f_*: \pi_1(X)\to \pi_1(Y)$ is surjective. 
\end{lem}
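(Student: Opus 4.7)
The plan is to reduce the assertion to the connectedness of a certain pullback, and then dispatch that connectedness by a routine topological argument using properness. First I would pass to the universal covering $p : \widetilde{Y} \to Y$ (which exists since quasi-projective varieties are locally contractible in the analytic topology), and form the pullback square
\[
\begin{tikzcd}
\widetilde{X} \arrow[r,"\tilde{f}"] \arrow[d,"q"'] & \widetilde{Y} \arrow[d,"p"] \\
X \arrow[r,"f"] & Y.
\end{tikzcd}
\]
Since $p$ is a covering, so is $q$. By the standard dictionary between covers of $X$ and $\pi_1(X)$-sets, the connected components of $\widetilde{X}$ are in bijection with the $\pi_1(X)$-orbits on the fiber of $p$ over $f(x_0)$; under the regular identification of this fiber with $\pi_1(Y,f(x_0))$ and the action via $f_*$, these orbits are precisely the right cosets $\mathrm{Im}(f_*)\backslash \pi_1(Y)$. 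Thus $f_*$ is surjective if and only if $\widetilde{X}$ is connected.

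Next I would prove the following topological lemma: if $g : Z \to W$ is a proper surjective continuous map with connected fibers and $W$ is connected and Hausdorff, then $Z$ is connected. Suppose, for contradiction, that $Z = Z_1 \sqcup Z_2$ with both $Z_i$ clopen and nonempty. Since each fiber $g^{-1}(w)$ is connected, it lies in exactly one $Z_i$, so setting $W_i := g(Z_i)$ yields $W = W_1 \sqcup W_2$ as a disjoint union. Properness implies $g$ is a closed map, and since each $Z_i$ is closed (as the complement of the open set $Z_{3-i}$), each $W_i$ is closed in $W$. Both are nonempty by surjectivity, contradicting the connectedness of $W$.

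Finally I would apply this lemma to $\tilde{f} : \widetilde{X} \to \widetilde{Y}$. The map $\tilde{f}$ is proper as a base change of the proper map $f$, is surjective because $f$ is, has connected fibers (the fibers of $\tilde{f}$ are canonically identified with those of $f$), and its target $\widetilde{Y}$ is connected as the universal cover of the connected space $Y$. The lemma therefore gives that $\widetilde{X}$ is connected, and combined with the first paragraph this yields the surjectivity of $f_*$.

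The only subtle point is ensuring that the covering space machinery is available, i.e.\ that $X$ and $Y$ are locally path-connected and semilocally simply connected, but this is automatic for complex quasi-projective varieties in the analytic topology (they are triangulable, hence locally contractible), so no essential obstacle arises. Note that reducibility of $X$ or $Y$ causes no trouble, as the whole argument only uses the topological hypotheses of connectedness and proper surjectivity with connected fibers.
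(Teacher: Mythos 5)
Your proof is correct and follows essentially the same strategy as the paper's: pass to the universal covering $\widetilde{Y}\to Y$, form the pullback $X\times_Y\widetilde{Y}$, and show it is connected by the same closed-map argument using properness and connectedness of fibers. The only difference is the concluding step — the paper deduces surjectivity of $f_*$ from connectedness of the pullback via an explicit path-lifting computation, whereas you invoke the Galois correspondence (components of the pullback cover $\leftrightarrow$ cosets of $\mathrm{Im}(f_*)$ in $\pi_1(Y)$); these are equivalent, and your packaging is slightly more streamlined.
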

\begin{proof}
	Let $\widetilde{Y}\to  Y$ be the universal covering of $Y$. Consider the fiber product $X':=X\times_Y\widetilde{Y}$.  
	\begin{equation*}
		\begin{tikzcd}
			X' \arrow[r, "\pi_1"] \arrow[d, "f'"]& X\arrow[d, "f"]\\
			\widetilde{	Y} \arrow[r, "\pi_2"] & Y
		\end{tikzcd}
	\end{equation*}Since $f$ is proper and each fiber of $f$ is connected, it follows that $f':X'\to \widetilde{Y}$ is proper, surjective and each fiber of $f'$ is  connected.  
	\begin{claim}
		$X'$ is connected.
	\end{claim}
	\begin{proof}
		Assume by contradiction that  	$X'$ is not connected. Then $X'=\sqcup_{\alpha\in I}X_\alpha$, where $X_\alpha$  are  connected components   of $X'$.  Since each fiber of $f'$ is connected, it follows that any fiber of $f'$ is contained in  some $X_\alpha$. This implies that $f'(X_\alpha)\cap f'(X_\beta)=\varnothing$ if $\alpha\neq \beta$. Since $f'$ is surjective, it follows  $\sqcup_{\alpha\in I}f'(X_\alpha)=\widetilde{	Y}$.  Note that $f'(X_\alpha)$ and $f'(\sqcup_{\beta\in I, \beta\neq\alpha}X_\beta)$ are both closed since  $f'$ is proper.  This contradicts with the connectedness of  $\widetilde{	Y}$. Hence  	$X'$ is connected. 
	\end{proof}
	We choose a base point $x\in X$ and $y\in Y$  such that $y=f(x)$.  Let $x'\in X'$ and $y'\in \widetilde{	Y}$ be such that $x=\pi_1(x')$, $y=\pi_2(y')$ and $y'=f'(x')$. Then for any element $\gamma\in \pi_1(Y,y)$, the  lift of $\gamma$ in $\widetilde{Y}$ starting at $y'$ will end at some $y''$ such that $y=\pi_2(y'')$. 
	Since $X'=X\times_Y\widetilde{Y}$, it follows that there exists a unique $x''\in f'^{-1}(y'')$ such that $x=\pi_1(x'')$. 
	Since $X'$ is connected, there exists a continuous path $\sigma:[0,1]\to X'$ such that $x'=\sigma(0)$ and $x''=\sigma(1)$. Consider the continuous path $f'\circ\sigma:[0,1]\to \widetilde{	Y}$. Then $y'=f'\circ\sigma(0)$ and $y''=f'\circ\sigma(1)$. It follows that $\gamma=[\pi_2\circ f'\circ \sigma]=[f\circ \pi_1\circ \sigma]$.  Note that $x=\pi_1\circ \sigma(0)=\pi_1\circ \sigma(1)$.    This proves that $f_*([\pi_1\circ \sigma])=\gamma$. Therefore, $f_*:\pi_1(X,x)\to \pi_1(Y,y)$ is surjective. The lemma is proved. 
\end{proof}

\end{rem}

\section[Hyperbolicity and non-Archimedean local systems]{Hyperbolicity and non-archimedean local systems}
\label{sec:spectral}

This section is devoted to the proof of \cref{main6}.  
In \cref{sub:spectral}, we show that for the spectral cover $\xsp \to X$ associated to $\varrho$ in \cref{main6}, defined in \cref{def:spectral}, the variety $\xsp$ is of log general type, and the partial quasi-Albanese morphism induced by its spectral $1$-forms (cf. \cref{def:spectral}) is generically finite onto its image.  
In \cref{sec:extract}, based on \cref{sub:spectral}, we extract some essential properties of varieties admitting unbounded non-Archimedean representations of $\pi_1$ in a more general framework (see \cref{property:20250914}).  
In \cref{sec:GGL}, we recall some results on hyperbolicity from Part~I of our series \cite{CDY25}.  
Finally, in \cref{subsection:spread1} we spread positivity from suitable ramified coverings, and in \cref{subsection:spread2} we complete the proof of \cref{main6}.

 \subsection{Positivity  of log canonical bundle of  spectral cover} \label{sub:spectral}
 \begin{thm}\label{thm:spectral cover}
	 Let $X$ be a smooth quasi-projective variety.  Assume that there is a Zariski dense representation $\varrho:\pi_1(X)\to G(K)$ where $G$ is an almost simple algebraic group defined over a non-archimedean local field $K$.   When $\varrho$ is big and unbounded, then \begin{thmlist}
	 	\item \label{spectral general type}the spectral cover $\xsp$ of $X$ defined in  \cref{def:spectral} is of log general type.  
	 	\item  \label{genericallyfinite}  Let $\alpha:\xsp\to \cA$  be the partial quasi-Albanese map induced by   the spectral one forms $\{\eta_1,\ldots,\eta_m\}\subset  H^0( {\xsp}, \pi^*\Omega_{ {X}})  $  defined in \cref{def:partial}. Then $\dim \xsp=\dim \alpha(\xsp)$. 
	 	\end{thmlist}
  \end{thm}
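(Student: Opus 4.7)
The plan is to prove (ii) first, then derive (i) from it. Both parts rest on the same group-theoretic principle for an almost simple $G$: a normal subgroup $\Upsilon$ of an unbounded Zariski-dense subgroup $\Gamma\subset G(K)$ is finite whenever $\Upsilon$ is either bounded (this is \cref{lem:BT}) or abelian (because its Zariski closure is then a closed abelian normal subgroup of $G$, hence contained in the finite centre $Z(G)$).

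For (ii), I argue by contradiction. Assume $\dim\alpha(\xsp)<\dim\xsp$. Since $\alpha$ is $\widetilde{G}$-equivariant by construction, the quasi-Stein factorisation $s_{\varrho}:X\to S_{\varrho}$ produced by \cref{thm:KZreduction} has general fibres of positive dimension. Let $F_{X}$ be a smooth, connected, general fibre of $s_{\varrho}$ through a very general point of $X$ (such a choice exists by generic smoothness). By \cref{thm:KZreduction}, the image $\varrho\bigl(\mathrm{Im}[\pi_{1}(F_{X})\to\pi_{1}(X)]\bigr)$ is bounded in $G(K)$, while \cref{lem:normal} shows it is normal in $\varrho(\pi_{1}(X))$. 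The group-theoretic principle above forces this bounded normal subgroup to be finite, contradicting the bigness of $\varrho$ applied to the smooth $F_{X}$.

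For (i), assume (ii) and suppose for contradiction that $\xsp$ is not of log general type. Passing to a smooth birational model of $\xsp$, the log Iitaka fibration provides a general fibre $F$ that is smooth quasi-projective with $\dim F>0$ and $\bar\kappa(F)=0$. Because $\alpha$ is generically finite by (ii), so is the restriction $\alpha|_{F}:F\to\cA$ (a general fibre of the Iitaka fibration meets each finite fibre of $\alpha$ in a finite set), giving $\dim F=\dim\alpha(F)$; \cref{lem:abelian pi} then says that $\pi_{1}(F)$ is abelian. The restriction $\varrho|_{\pi_{1}(\xsp)}$ inherits Zariski density, unboundedness and bigness from $\varrho$ through the finite cover $\pi:\xsp\to X$ (\cref{lem:finiteindex} yields finite index, and bigness transfers by pushing positive-dimensional subvarieties from $\xsp$ to $X$). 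Set $\Gamma_{F}:=\varrho\bigl(\mathrm{Im}[\pi_{1}(F)\to\pi_{1}(\xsp)]\bigr)$. Then $\Gamma_{F}$ is abelian, and by \cref{lem:normal} applied to the Iitaka fibration it is normal in $\varrho(\pi_{1}(\xsp))$. The group-theoretic principle (abelian case) forces $\Gamma_{F}$ to be finite, which contradicts the bigness of $\varrho|_{\pi_{1}(\xsp)}$ once $F$ is taken through a very general point of $\xsp$.

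The main technical obstacle lies in the log/quasi-projective bookkeeping: constructing the log Iitaka fibration on a smooth birational model of the (possibly singular) spectral cover in a way that keeps the morphism $\alpha$ and the fundamental groups under control so that \cref{lem:abelian pi} and \cref{lem:normal} apply to the fibre $F$, and verifying that bigness (as well as Zariski density and unboundedness) really does descend to $\varrho|_{\pi_{1}(\xsp)}$ under the finite, possibly ramified, Galois cover $\pi$.
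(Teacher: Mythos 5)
Your argument follows essentially the same route as the paper's: establish that $\alpha$ is generically finite onto its image using \cref{thm:KZreduction}/\cref{claim:trivial}, \cref{lem:normal}, \cref{lem:BT} and bigness; then use the log Iitaka fibration, generic finiteness of $\alpha$ restricted to a general fibre, \cref{lem:abelian pi}, and the abelian-normal-in-almost-simple argument. The organization (prove (ii) first, then (i)) is a mild reshuffling of the paper's Steps 2--3, and your use of $s_\varrho$ in place of the paper's $q\colon Y\to\cA$ is equivalent.

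There is, however, a genuine gap in your derivation of (i). You write ``the log Iitaka fibration provides a general fibre $F$ with $\dim F>0$ and $\bar\kappa(F)=0$'', which presupposes $\bar\kappa(\xsp)\geq 0$. If $\bar\kappa(\xsp)=-\infty$, no Iitaka fibration exists, and your contradiction argument has nothing to work with. The paper addresses this explicitly in its Step~2: once $\alpha$ (equivalently $q$) is known to be generically finite onto its image, the closure $Z$ of that image is a closed subvariety of the semi-abelian variety $\cA$, so $\bar\kappa(Z)\geq 0$ by \cref{prop:Koddimabb}, and then $\bar\kappa(\xsp)\geq\bar\kappa(Z)\geq 0$ because $\xsp\to Z$ is dominant and generically finite. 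You should insert this step before invoking the Iitaka fibration. A lesser imprecision: the claim that a closed abelian normal subgroup of an almost simple $G$ ``is contained in the finite centre $Z(G)$'' is not the right statement (centrality is neither needed nor automatic when $G$ is disconnected); what one actually uses is that its identity component is a connected normal solvable subgroup of $G$, hence trivial, so the closure is finite.
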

\begin{proof}
	\noindent
	{\em Step 1. We replace \(\xsp\) by a smooth model.} 
	We will use the notations in \cref{sec:KZthm}. Let \(\mu:Y \to \xsp\) be a resolution of singularities, and let \(p : Y \to X\) be the composite map. 
Since the image of \(p_{\ast} : \pi_{1}(Y) \to \pi_{1}(X)\) has finite index by \cref{lem:finiteindex}, the Zariski closure $H$ of   \(p^{\ast}\varrho(\pi_1(Y))\) contains the identity component $G^o$ of $G$, hence is also almost simple.  We have also:
	\begin{claim}\label{claim:20251203}
		The representation \(p^{\ast}\varrho\) is big.
	\end{claim}
\begin{proof}
	For any closed   subvariety $Z\subset Y$ containing a  very general point in $Y$, its image $p(Z)$ is   closed subvariety  passing to a very general point in $X$.  Since $ Z\to p(Z)$ is surjective, ${\rm Im}[\pi_1(Z^{\rm norm})\to \pi_1(p(Z)^{\rm norm})]$  has   finite index in  $\pi_1(p(Z)^{\rm norm})$ by Lemma~\ref{lem:finiteindex}.  
	 Hence  $p^*\varrho({\rm Im}[\pi_1(Z^{\rm norm})\to \pi_1(Y)])$ has finite index in $\varrho({\rm Im}[\pi_1(p(Z)^{\rm norm})\to \pi_1(X)])$.  
	 Since \(p(Z)\) contains a very general point of $X$, this latter group is infinite, which   implies that $p^*\varrho({\rm Im}[\pi_1(Z^{\rm norm})\to \pi_1(Y)])$ is infinite. This proves our claim.
\end{proof}
	 
	 \medskip

	 \noindent
	 {\em Step 2. We show that \(\overline{\kappa}(Y) \geq 0\).} 
	 Let \(q : Y \to \cA\) be the composite map of  $\mu:Y\to \xsp$ and $\alpha:\xsp\to \cA$. 
	 Let \(F\subset Y\) be a connected component of a general fiber of \(q : Y \to q(Y)\).
	 Then $F$ is a smooth quasi-projective variety.
	 \cref{claim:trivial} implies that $p^*\varrho({\rm Im}[\pi_1(F)\to \pi_1(Y)])$  is a bounded subgroup of $H(K)$.  
	 However, $p^*\varrho({\rm Im}[\pi_1(F)\to \pi_1(Y)])$ is a normal subgroup of $p^*\varrho(\pi_1(Y))$ by \cref{lem:normal}. 
	  Since $p^*\varrho:\pi_1(Y)\to H(K)$ is Zariski dense and unbounded,   \cref{lem:BT} yields  the finiteness of $p^*\varrho({\rm Im}[\pi_1(F)\to \pi_1(Y)])$. 
	   However,  by \cref{claim:20251203}, \(p^{\ast}\varrho\) is big. 
	   Then \(F\) must be a point. This implies that  $q:Y\to \cA$, hence  $\alpha:\xsp\to \cA$ is generically finite onto its image.  Let $Z$ to be the Zariski closure of $q(Y)$.  By \cref{prop:Koddimabb}, $\bar{\kappa}(Z)\geq 0$.   Hence $\overline{\kappa}(Y)\geq 0$ for $q:Y\to Z$ is dominant and generically finite. 
	 \medskip

	\noindent
	{\em Step 3. We show that the log--Iitaka fibration of \(Y\) is trivial.}
	We may replace \(Y\) with a birational modification so that the log-Iitaka fibration of \(Y\) is well-defined as a dominant morphism $f:Y\to B$ with connected general fibers. 
	Note that a very general fiber $F$ of $f$ is a connected smooth quasi-projective variety with $\overline{\kappa}(F)=0$. 
	Moreover, since \(q:Y\to \cA\) is generically finite onto its image, so is its restriction $q|_{F}:F\to \cA$. 
	By \cref{lem:abelian pi}, it follows that $\pi_1(F)$ is abelian.  
	Hence $p^*\varrho({\rm Im}[\pi_1(F)\to \pi_1(Y)])$   is an abelian subgroup of $G(K)$.   Note that $\pi_1(F)\triangleleft \pi_1(Y)$ by \cref{lem:normal}. 
	Since $p^*\varrho: \pi_1(Y)\to H(K)$ is Zariski dense and $H$ is   almost simple, it follows that the Zariski closure of $p^*\varrho({\rm Im}[\pi_1(F)\to \pi_1(Y)])$  is either  finite or a finite index subgroup of $H$. 
	The second case cannot happen since  $p^*\varrho({\rm Im}[\pi_1(F)\to \pi_1(Y)])$   is abelian. 
	Therefore, $p^*\varrho({\rm Im}[\pi_1(F)\to \pi_1(Y)])$  must be finite. 
	Now, since \(p^*\varrho\) is big (cf. \cref{claim:20251203}), this implies that $F$ is a point. 
	We conclude that $Y$, hence $X^\sp$  is of log general type. 
\end{proof}

\subsection{The property of varieties with non-archimedean representations of fundamental groups}\label{sec:extract}
For later applications, as well as for its independent interest, we formulate the following property for a smooth quasi-projective variety $X$. 
\begin{property}\label{property:20250914}
	Let $X$ be a smooth quasi-projective variety. Assume that there exist a smooth projective compactification $\overline{X}$ with simple normal crossing boundary divisor $D=\overline{X}\backslash X$, and a finite surjective morphism $\pi:\overline{\Sigma}\to \overline{X}$ from a normal projective variety $\overline{\Sigma}$ together with nonzero sections $\tau_1,\ldots,\tau_l \in H^0(\overline{\Sigma}, \pi^*\Omega_{\overline{X}}(\log D))$ satisfying the following two conditions:
	\begin{thmlist} 
		\item \label{cond:i1}
		setting $\Sigma=\pi^{-1}(X)$, the variety $\Sigma$ is of log-general type and admits a morphism $a:\Sigma\to A$ into a semi-abelian variety $A$ with $\dim \Sigma=\dim a(\Sigma)$;
		\item \label{cond:iii1}
		defining
		\[ 
		R:=\{s\in \overline{\Sigma} \mid \exists\, i \ \text{such that } \tau_i(s)=0 \}, 
		\]
		then $R\subsetneqq \overline{\Sigma}$ is a proper Zariski closed subset, and $\pi:\overline{\Sigma}\to \overline{X}$ is \'etale outside $R$.
	\end{thmlist}
\end{property} 
\begin{lem}\label{lem:property}
	 	Let $X$ be a smooth quasi-projective variety.  Let $G$ be an almost simple algebraic group defined over a non-archimedean local field $K$.  Suppose that $\varrho:\pi_1(X)\to G(K)$ is a big and unbounded Zariski dense representation. Then $X$ satisfies \Cref{property:20250914}.
\end{lem}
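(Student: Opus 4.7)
The plan is to take $\overline{\Sigma}$ to be the spectral cover $\overline{\xsp}$ associated to $\varrho$ (cf.~\cref{def:spectral}), equipped with its natural Galois morphism $\pi\colon \overline{\xsp}\to \overline{X}$, so that $\Sigma=\pi^{-1}(X)=\xsp$. Condition~\cref{cond:i1} is then an immediate translation of \cref{thm:spectral cover}: by \cref{spectral general type}, $\xsp$ is of log general type; by \cref{genericallyfinite}, the partial quasi-Albanese morphism $\alpha\colon \xsp\to \cA$ induced by the spectral one-forms (cf.~\cref{def:partial}) is generically finite onto its image, so $\dim \xsp=\dim \alpha(\xsp)$. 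Setting $A:=\cA$ and $a:=\alpha$ thus settles~\cref{cond:i1}.

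For condition~\cref{cond:iii1}, I would take $\tau_1,\ldots,\tau_l$ to be an enumeration of the nonzero differences in $\{\eta_i-\eta_j : \eta_i\neq \eta_j\}$ among the spectral one-forms $\eta_1,\ldots,\eta_m\in H^0(\overline{\xsp},\pi^*\Omega_{\overline{X}}(\log D))$. With this choice, the set $R=\{s\in \overline{\xsp} : \tau_k(s)=0\text{ for some }k\}$ is exactly the ramification locus described in \cref{claim:ramified}, outside of which $\pi$ is \'etale. Since $\overline{\xsp}$ is irreducible by its construction via \cref{lem:spectral}, a finite union of zero loci of nonzero sections is automatically a proper Zariski closed subset, so the inclusion $R\subsetneqq \overline{\xsp}$ follows as soon as we know that $l\geq 1$.

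The hard part is ruling out the degenerate possibility $\eta_1=\cdots=\eta_m$, which would give $l=0$. I would argue by contradiction. Recall from Step~1 of the proof of \cref{thm:KZreduction} that, on a regular point of $u$, each $\eta_j$ is locally represented as $\partial(\beta_j\circ u_\alpha)$, where $\beta_j$ runs through the multiset $\Phi=\bigcup_{w\in W^v}\{w^*x_1,\ldots,w^*x_N\}\subset V^*$. The assumed equality $\eta_1=\cdots=\eta_m$ translates into $(\beta-\beta')(\partial u_\alpha)=0$ for all $\beta,\beta'\in \Phi$. Since $\varrho$ is unbounded, $G$ must be $K$-isotropic, and since $G$ is almost simple, $W^v$ acts on $V$ as the non-trivial finite reflection group of an irreducible root system, so $(V^*)^{W^v}=0$. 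Averaging the elements of $\Phi$ under $W^v$ yields a $W^v$-fixed vector which must therefore vanish; combined with the fact that $\Phi$ contains the basis $\{x_1,\ldots,x_N\}$, this shows that the differences $\{\beta-\beta' : \beta,\beta'\in \Phi\}$ span all of $V^*$. Hence $\partial u_\alpha\equiv 0$, so $u$ is locally constant on the regular locus of $\widetilde{X}$; by \cref{thm:GS} this locus is dense and connected in $\widetilde{X}$, and the continuity of $u$ then forces $u$ to be globally constant. But then $\varrho$ would fix a point of $\Delta(G)$, contradicting its unboundedness via \cref{lem:AB}.

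The main obstacle is precisely this last step: ensuring that the almost-simple hypothesis on $G$ is used essentially to force the spectral one-forms to be non-coincident. Once this is established, the remainder is essentially bookkeeping assembled from \cref{thm:spectral cover} and \cref{claim:ramified}.
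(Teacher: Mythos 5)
Your proof is correct and follows the same route as the paper's: take $\overline{\Sigma}=\overline{\xsp}$ with the spectral one-forms $\eta_i$, reduce condition~(i) of \Cref{property:20250914} to \cref{thm:spectral cover}, and take the nonzero differences $\eta_i-\eta_j$ as the $\tau_k$'s so that condition~(ii) follows from \cref{claim:ramified}. The only genuine addition is your final paragraph ruling out the degenerate possibility $\eta_1=\cdots=\eta_m$, and your argument there is correct: unboundedness forces $G$ to be $K$-isotropic, so $W^v$ is the Weyl group of an irreducible relative root system and $(V^*)^{W^v}=0$; hence averaging $\Phi$ gives $0$, the differences $\beta-\beta'$ span $V^*$, and equality of all $\eta_i$ would force $u_\alpha$ (hence $u$, by density and connectedness of the regular locus coming from \cref{thm:GS}) to be constant, contradicting unboundedness via \cref{lem:AB}. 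The paper's terse proof omits this check. It is, however, not strictly needed for the lemma: if $l=0$ one still has $R=\emptyset\subsetneqq\overline{\Sigma}$, $\pi$ is everywhere \'etale, and the downstream use of the property degenerates harmlessly (in \cref{prop:log general type} the Campana--P\u{a}un step becomes vacuous and bigness of $K_{\overline{X}}+D$ follows directly from \cref{lem:KodairaDim} together with~[NWY13, Lemma~3]). So your extra paragraph sharpens the picture rather than filling a fatal gap. One small imprecision: you write that $R\subsetneqq\overline{\xsp}$ ``follows as soon as we know $l\geq 1$,'' but $R$ is proper regardless of $l$; the real question your paragraph resolves is only whether there exists at least one nonzero $\tau_k$, i.e.\ whether the collection of sections required by \Cref{property:20250914} can be taken non-empty.
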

 \begin{proof}
 	 Let $\pi:\overline{\xsp}\to \overline{X}$ be the spectral cover associated to $\varrho$ defined in \cref{def:spectral}, and $\{\eta_1,\ldots,\eta_m\}\in H^0(\overline{\xsp}, \pi^*\Omega_{ \overline{X}}(\log D))$ the resulting spectral one forms defined in \cref{def:spectral}.  
 	 By \cref{genericallyfinite} the partial quasi-Albanese morphism $\alpha:\xsp\to \cA$ associated to  $\{\eta_1,\ldots,\eta_m\}$ satisfies that $\dim \xsp=\dim \alpha(\xsp)$.
 	 By \cref{spectral general type}, $\xsp$ is of log-general type.
 	 Set $\tau_{ij}=\eta_i-\eta_j$ for $\eta_i\neq\eta_j$.
 	 The by \cref{claim:ramified}, $\pi$ is \'etale outside $\{\tau_{ij}=0\}$.
 	 Hence $X$ satisfies \Cref{property:20250914}.
 \end{proof}
In the remainder of this section, we will see that if a quasi-projective variety $X$ satisfies \Cref{property:20250914}, then $X$ is of log general type, and any Zariski dense holomorphic map $f:\bD^* \to X$ has no essential singularity at the origin.  
In \cref{claim:202509132}, we establish the existence of a proper Zariski closed subset $E \subsetneqq X$ such that, for any closed subvariety $V \not\subset E$, the variety $V$ also satisfies \Cref{property:20250914}.  
Combined with \cref{lem:property}, this yields the proof of \cref{main6}.

 \subsection{Two results on hyperbolicity}\label{sec:GGL} 
 We recall two results from Part I of our paper series, which are applied for the proof of \cref{main6}.
 These results are proved by application of Nevanlinna theory.

 \begin{thm}[{\cite[Theorem C]{CDY25}}]\label{cor:GGL}
 	Let $X$ be a smooth quasi-projective variety. 
 	Assume that there is a morphism $a:X\to A$ to a semi-Abelian variety $A$ such that $\dim X=\dim a(X)$.
 	Then the following properties are equivalent:  
 	\begin{enumerate}[wide = 0pt,  noitemsep,  font=\normalfont, label=(\alph*)] 
 		\item \label{being general type1} $X$ is of log general type; 
 		\item \label{strong LGT1} 
 		$X$ is strongly of log general type;  
 		\item  \label{pseudo Picard1} $X$ is pseudo Picard hyperbolic;
 		\item \label{pseudo Brody} $X$ is pseudo Brody hyperbolic;
 		\item \label{spab}
 		$\Spab(X)\subsetneqq X$.  \qed
 	\end{enumerate}
 \end{thm}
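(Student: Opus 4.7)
The plan is to prove the equivalences by establishing a cycle, since the trivial implications already give most of the structure: $(b)\Rightarrow(a)$ is immediate (if $X$ were not of log general type, then $X\subset \Spalg(X)$), $(c)\Rightarrow(d)$ follows because any entire curve $\mathbb{C}\to X$ restricts to a holomorphic map $\bD^*\to X$ with essential singularity at the origin unless it is constant, and $(b)\Rightarrow(e)$ is a consequence of \cref{prop:Koddimabb}, since any translate of a positive-dimensional semi-abelian subvariety of $A$ that is contained in $a(X)$ pulls back to a positive-dimensional subvariety of $X$ not of log general type. What remains is therefore to prove $(a)\Rightarrow(c)$, $(a)\Rightarrow(b)$, and that $(d)$ or $(e)$ imply $(a)$.

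For the algebraic part, namely $(a)\Rightarrow(b)$ and the reverse $(e)\Rightarrow(a)$, I would exploit the Ueno--Kawamata structure theorem for closed subvarieties of semi-abelian varieties. Given a positive-dimensional $V\subset X$ that is not of log general type, its image $a(V)$ sits inside the semi-abelian variety $A$ and, by the Kawamata fibration theorem together with \cref{prop:Koddimabb}, dominates a translate of a positive-dimensional semi-abelian subvariety $B\subset A$; since $\dim X=\dim a(X)$, these $B$-orbits pull back to a proper Zariski closed subset of $X$ containing $V$. The same analysis shows that if $\Spab(X)=X$ then $a(X)$ is foliated by translates of positive-dimensional semi-abelian subvarieties, forcing $X$ itself not to be of log general type. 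This simultaneously yields $(a)\Rightarrow(b)$ and $(e)\Rightarrow(a)$, and by the inclusion $\Spab(X)\subset \Sph(X)$ (via the exponential map $\mathbb{C}\to B^{\mathrm{an}}$) also $(d)\Rightarrow(e)$.

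The analytic heart of the argument, and the main obstacle, is the implication $(a)\Rightarrow(c)$, which provides the strongest conclusion. Here I would invoke the Nevanlinna-theoretic big Picard theorem for holomorphic maps $\bD^*\to X$ developed in Part~I of this series, which is a refinement of Noguchi--Winkelmann--Yamanoi's second main theorem for semi-abelian varieties. The strategy is to consider a holomorphic map $f:\bD^*\to X$ with essential singularity at the origin, compose with $a:X\to A$ to obtain a holomorphic map $\bD^*\to A$, and apply the SMT with truncation~$1$ together with the assumption that $X$ is of log general type (so that an ample logarithmic divisor on a smooth log-compactification $(\bar X,D)$ provides the defect term). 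The maximality $\dim X=\dim a(X)$ is essential to transfer the Nevanlinna-theoretic inequality on $A$ back to $X$. The conclusion is that either $f$ has finite order and thus extends across the puncture, or the image $f(\bD^*)$ is contained in a proper Zariski closed exceptional subset $Z\subsetneqq X$; setting $\Spp(X)\subset Z$ yields $(c)$.

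The delicate technical point, and where the main difficulty lies, is controlling the error terms in the SMT uniformly over all $f:\bD^*\to X$ so that the exceptional set $Z$ depends only on $X$ and $a$, not on $f$. This uniformity requires the careful construction of ramified coverings of $\bD^*$ with small ramification and the logarithmic jet differentials estimates, which were the main technical contributions of \cite{CDY25}. Once $(a)\Rightarrow(c)$ is established, the remaining implication $(c)\Rightarrow(d)$ closes the equivalence, and all five statements become equivalent.
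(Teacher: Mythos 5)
First, a framing remark: the paper does not actually prove this statement --- it is quoted verbatim with a trailing \qed as \cite[Theorem~C]{CDY25}, so its proof lives entirely in Part~I of the series and there is no local proof to compare against. That said, your skeleton --- isolating $(a)\Rightarrow(c)$ as the analytic heart, handled by the second main theorem for maps into semi-abelian varieties and the ramified-covering-of-$\bD^*$ machinery of Part~I, while the remaining implications follow from the inclusion chain $\Spab\subset\Sph\subset\Spp$ and the Ueno--Kawamata structure of subvarieties of semi-abelian varieties --- is the right architecture, and it matches the role \cref{thm:20250911} and \cref{prop:Koddimabb} play in the present paper.

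The proposal as written, however, fails to close the cycle, for a concrete logical reason. The sentence offered in support of ``$(e)\Rightarrow(a)$'' actually proves: if $\Spab(X)=X$ then $X$ is not of log general type. That is $\neg(e)\Rightarrow\neg(a)$, i.e.\ $(a)\Rightarrow(e)$ --- which you already get for free from $(a)\Rightarrow(b)\Rightarrow(e)$. What is needed is the opposite direction: $X$ \emph{not} of log general type implies $\Spab(X)=X$. That is a genuinely substantive step; for $\overline{\kappa}(X)=0$ it is exactly \cref{LEM:20230509}, and for $0\leq\overline{\kappa}(X)<\dim X$ it requires more (roughly, running the Iitaka fibration and applying the $\overline{\kappa}=0$ case to its fibers). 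With the implications you actually establish --- $(a)\Leftrightarrow(b)$, $(a)\Rightarrow(c)\Rightarrow(d)\Rightarrow(e)$, and the redundant $(a)\Rightarrow(e)$ --- nothing maps $(c)$, $(d)$, or $(e)$ back to $(a)$, so the five conditions remain unlinked. There is a secondary gap in $(a)\Rightarrow(b)$ as well: for each positive-dimensional $V\subset X$ not of log general type you exhibit some proper closed subset $a^{-1}(x+B)\supset V$, but both the semi-abelian subvariety $B$ and the translate $x$ vary with $V$, and the union over all $V$ of these sets need not be proper in $X$ (translates of a fixed $B$ can be Zariski dense in $A$). Uniformity of the exceptional set is precisely what the Nevanlinna-theoretic estimates of Part~I are designed to deliver; it does not fall out of the fibration theorem alone.
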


 \begin{thm}[{\cite[Theorem B]{CDY25}}]\label{thm:20250911}
 	Let $X$ be a smooth quasi-projective variety. 
 	Assume that  $X$ satisfies \Cref{property:20250914}, then every holomorphic map $f:\mathbb D^*\to X$ with Zariski dense image has a holomorphic extension $\bar{f}:\mathbb D\to\overline{X}$, where $\overline{X}$ is any smooth projective compactification of $X$. \qed
 \end{thm}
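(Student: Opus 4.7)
The plan is to transfer the Big Picard extension problem from $X$ up to the cover $\overline\Sigma$, where the hypotheses of \cref{property:20250914} are much stronger, and then to invoke a Big Picard statement for holomorphic maps from branched covers of $\bD^*$ with uniformly bounded ramification into quasi-projective varieties that are of log general type and of maximal quasi-Albanese dimension. Concretely, since $\pi(R)\subsetneqq\overline X$ is Zariski closed by \cref{cond:iii1} and $f(\bD^*)$ is Zariski dense in $X$, the preimage $Z:=f^{-1}(\pi(R))$ is a proper closed analytic, hence discrete, subset of $\bD^*$. I form the normalization $\tilde C$ of $\bD^*\times_{\overline X}\overline\Sigma$, obtaining a finite surjective holomorphic map $p:\tilde C\to\bD^*$ together with the canonical lift $\tilde f:\tilde C\to\overline\Sigma$; replacing $\tilde C$ by one of its connected components I may assume it is connected. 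Because $\pi$ is étale off $R$, every local ramification index of $p$ is bounded by $d:=\deg\pi$; and since $\pi^{-1}(X)=\Sigma$ the lift $\tilde f$ automatically lands in $\Sigma$.

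The finiteness and surjectivity of $\pi$ force $\tilde f(\tilde C)$ to be Zariski dense in $\Sigma$. Using \cref{cond:i1}, $\Sigma$ is of log general type and admits a morphism $a:\Sigma\to A$ to a semi-abelian variety with $\dim\Sigma=\dim a(\Sigma)$. The theorem therefore reduces to the following Big Picard claim: any holomorphic map $g:\tilde C\to\Sigma$ with Zariski dense image, where $\tilde C$ carries a finite morphism $p:\tilde C\to\bD^*$ with uniformly bounded local ramification indices, admits a holomorphic extension $\overline g:\overline{\tilde C}\to\overline\Sigma$ to the natural finite branched-cover compactification $\overline p:\overline{\tilde C}\to\bD$. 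I would establish this by Nevanlinna theory on $\tilde C$: one combines a logarithmic Second Main Theorem of Noguchi--Winkelmann--Yamanoi type for curves into semi-abelian varieties applied to $a\circ g$ with the log-general-type positivity on $\Sigma$ (which, by the maximal quasi-Albanese dimension hypothesis and \cref{prop:Koddimabb}, yields a usable jet-differential inequality after passing to a log-resolution of $\overline{a(\Sigma)}$ in a compactification of $A$), while the covering ramification enters only through the counting function
\[ N_{\mathrm{ram}}(r,p)\;=\;\sum_{x\in\tilde C,\ |p(x)|<r}(e_x-1)\;\le\;(d-1)\,N_Z(r), \]
which by the uniform bound $e_x\le d$ is absorbed into the small error term. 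This forces the Nevanlinna characteristic $T_{a\circ g}(r)$ to be bounded, so $a\circ g$ extends across the puncture; generic finiteness of $a$ on $\Sigma$ and properness of a Stein compactification then lift the extension to $\overline g:\overline{\tilde C}\to\overline\Sigma$.

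With $\overline{\tilde f}:\overline{\tilde C}\to\overline\Sigma$ in hand, descent is largely formal. The compactification $\overline{\tilde C}$ adds finitely many points above $0\in\bD$, one for each connected component of $p^{-1}(\bD_\varepsilon^*)$ for any $\varepsilon$ small enough that the only possible accumulation point of $Z$ in $\bD_\varepsilon$ is $0$, and $p$ extends to a finite proper map $\overline p:\overline{\tilde C}\to\bD$. The composition $\pi\circ\overline{\tilde f}:\overline{\tilde C}\to\overline X$ is constant on the fibres of $\overline p$ (this is automatic over $\bD^*$ since $\pi\circ\tilde f=f\circ p$ there, and passes to the boundary by continuity), hence descends to a continuous map $\bar f:\bD\to\overline X$ extending $f$; Riemann's removable singularity theorem then upgrades $\bar f$ to a holomorphic extension $\bar f:\bD\to\overline X$.

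The main obstacle is the Big Picard claim in Step~3: a genuine Nevanlinna theorem on $\Sigma$ whose source is a branched cover of $\bD^*$ rather than the disc itself. The key difficulty is that the ramification set $Z$ may accumulate at $0$ precisely when $f$ has an essential singularity, so $\tilde C$ is not a finite cover of any punctured neighborhood of $0$, and one must show nevertheless that $N_{\mathrm{ram}}(r,p)=o(T_{a\circ g}(r))$, preserving the defect inequality. A secondary difficulty is converting the log-general-type property of $\Sigma$ through the non-proper morphism $a$ into a usable defect inequality on $A$; this is where the hypothesis $\dim\Sigma=\dim a(\Sigma)$ is decisive and where the main analytic work of Part~I of the series is concentrated.
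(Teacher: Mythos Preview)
The paper does not prove this statement: it is recorded with a \qed and attributed to \cite[Theorem~B]{CDY25}, Part~I of the series, where the Nevanlinna-theoretic work is carried out. Your outline---lifting $f$ through the finite cover $\pi:\overline\Sigma\to\overline X$ to obtain a map $\tilde f:\tilde C\to\Sigma$ from a branched cover of $\bD^*$ whose ramification indices are uniformly bounded by $\deg\pi$, then applying a Big Picard theorem for such maps into varieties of log general type and maximal quasi-Albanese dimension, and finally descending---matches precisely the strategy the present paper signals in its introduction (``the Nevanlinna theory developed in \cite{CDY25} for holomorphic maps from ramified coverings of $\bD^*$ with small ramifications to quasi-projective varieties of log general type and maximal quasi-Albanese dimension''). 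So there is nothing to compare: your proposal is the intended route, and the substantive analytic content (your ``main obstacle'') is exactly what is delegated to \cite{CDY25}.
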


  \subsection{Spread positivity from suitable ramified covering}\label{subsection:spread1} 
\begin{proposition}\label{prop:log general type}
 	 	Let $X$ be a smooth quasi-projective variety.  If $X$ satisfies \Cref{property:20250914}, then  it   is   of log general type.    
\end{proposition}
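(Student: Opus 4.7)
The plan is to descend the log general type from $\Sigma$ to $X$ through the finite morphism $\pi$ by means of the log Hurwitz formula, with the deep Campana--P\u{a}un theorem \cite{CP19} on birational stability of orbifold cotangent sheaves playing the key role of absorbing the ramification contribution. As a preliminary step I would take a log-resolution $\mu : Y \to \overline{\Sigma}$ so that $Y$ is smooth projective and $E := (\pi \circ \mu)^{-1}(D)_{\mathrm{red}}$ is simple normal crossings on $Y$; set $q := \pi \circ \mu : Y \to \overline{X}$. Since $\mu|_{Y \setminus E} : Y \setminus E \to \Sigma$ is proper birational and $\Sigma$ is of log general type by hypothesis~(i), the log canonical divisor $K_Y + E$ is big.

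Next I would apply the log Hurwitz formula to write
\[
K_Y + E \;=\; q^{*}(K_{\overline{X}} + D) + R^{\log},
\]
where $R^{\log} := \sum_{P :\, q(P) \not\subset D}(e_P - 1)\, P$ is the effective log-ramification divisor, supported on prime divisors $P \subset Y$ not mapping into $D$ along which $q$ has ramification index $e_P \geq 2$. Since $\pi$ is \'etale outside $R$ by hypothesis~(ii), the support of $R^{\log}$ is contained in $\mu^{-1}(R)$. Because $q$ is generically finite and surjective, $X$ is of log general type if and only if $q^{*}(K_{\overline{X}} + D) = K_Y + E - R^{\log}$ is big; thus the task reduces to showing that subtracting the effective divisor $R^{\log}$ from the big divisor $K_Y + E$ preserves bigness.

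The hard part will be precisely this last step. Each section $\tau_i$ pulls back via $\mu^{*}$ and the natural injection $q^{*}\Omega_{\overline{X}}(\log D) \hookrightarrow \Omega_Y(\log E)$ to a log $1$-form on $Y$, and the hypothesis $R \subsetneqq \overline{\Sigma}$ ensures these forms have no common zero outside $\mu^{-1}(R)$; equivalently, the saturated image of $q^{*}\Omega_{\overline{X}}(\log D)$ in $\Omega_Y(\log E)$ is a rank-$n$ coherent subsheaf whose determinant equals $q^{*}(K_{\overline{X}} + D)$. Combined with the partial quasi-Albanese morphism $a : \Sigma \to A$ from condition~(i) (which yields translation-invariant log $1$-forms on $\Sigma$ generically generating $\Omega_Y(\log E)$), these sections supply the structural hypotheses needed to invoke the Campana--P\u{a}un theorem \cite{CP19} on the birational stability of orbifold cotangent sheaves. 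Their theorem then forces $q^{*}(K_{\overline{X}} + D) = K_Y + E - R^{\log}$ to remain big, yielding that $X$ is of log general type. The delicacy — which is the core of even the compact-case proof — is precisely this absorption of the ramification $R^{\log}$ into the positivity of $K_Y + E$; without Campana--P\u{a}un's deep positivity input, one cannot remove the ramification contribution without losing bigness.
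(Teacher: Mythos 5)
Your reduction to the bigness of $q^*(K_{\overline{X}}+D)=K_Y+E-R^{\log}$ is fine, but the final step does not work as stated, and this is where the actual content of the proposition lies. You invoke Campana--P\u{a}un to "absorb" $R^{\log}$, i.e., to pass from $K_Y+E$ big to $K_Y+E-R^{\log}$ big. But \cite[Corollary 8.7]{CP19} goes in the \emph{opposite} logical direction: given a nonzero morphism $\mathcal{O}_Z(F)\to \Sym^m\Omega_Z(\log \Delta)$ with $F$ effective and $K_Z+\Delta+F$ big, it concludes that $K_Z+\Delta$ is big. Applied on $(Y,E)$ with $F=R^{\log}$ it would need $K_Y+E+R^{\log}$ big as input and give $K_Y+E$ big as output — not what you want. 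There is no version of the theorem which says that subtracting an effective divisor from a big log-canonical divisor preserves bigness; that is a false general principle. (A smaller slip: the \emph{saturation} of the rank-$n$ subsheaf $q^*\Omega_{\overline{X}}(\log D)\hookrightarrow\Omega_Y(\log E)$ is all of $\Omega_Y(\log E)$, so its determinant is $K_Y+E$, not $q^*(K_{\overline{X}}+D)$.)

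The paper's proof avoids this by working on $\overline{X}$ rather than on $Y$, and it is built on two steps that are absent from your sketch. First, it replaces $\pi$ by a Galois closure $p:\overline{Y}\to\overline{X}$ with group $G$ and the same branch locus, and forms the $G$-invariant product $\sigma:=\prod_{g\in G}\prod_i g^*\nu^*\tau_i\in H^0\bigl(\overline{Y},\Sym^N p^*\Omega_{\overline{X}}(\log D)\bigr)$; by invariance $\sigma$ descends to a nonzero section $\sigma^G\in H^0\bigl(\overline{X},\Sym^N\Omega_{\overline{X}}(\log D)\bigr)$ that vanishes along $\pi(R)$, hence a nonzero morphism $\mathcal{O}_{\overline{X}}(E)\to\Sym^N\Omega_{\overline{X}}(\log D)$, where $E$ is the divisorial part of $\pi(R)$. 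Second, using \Cref{cond:iii1} that $\pi$ is \'etale outside $R$, it deduces from \cref{lem:KodairaDim} (or Iitaka) that $X\setminus\pi(R)$ is of log general type; since $\pi(R)\setminus E$ has codimension $\geq 2$ in $\overline{X}$, this gives that $K_{\overline{X}}+E+D$ is big. Only with both ingredients in hand — the embedding $\mathcal{O}_{\overline{X}}(E)\hookrightarrow\Sym^N\Omega_{\overline{X}}(\log D)$ and the bigness of $K_{\overline{X}}+E+D$ — does \cite[Corollary 8.7]{CP19} apply, yielding bigness of $K_{\overline{X}}+D$. Your proposal neither descends a symmetric differential to $\overline{X}$ nor establishes the intermediate bigness of $K_{\overline{X}}+E+D$, so the Campana--P\u{a}un input is not there.
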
 
 \begin{proof} 
 Let us use the following objects described in \Cref{property:20250914}: $\pi:\overline{\Sigma}\to\overline{X}$, $\tau_1,\ldots,\tau_l \in H^0(\overline{\Sigma}, \pi^*\Omega_{\overline{X}}(\log D))$, and $R\subset \overline{\Sigma}$.
Note that any finite and surjective morphism between
 normal varieties can be enlarged to become Galois without changing the branch loci (see e.g. \cite[Theorem 3.7]{GKP16}).  
 Namely, there exists a finite and surjective morphism from a projective normal variety $\nu:\overline{Y}\to \overline{\Sigma}$ such that the composition $p:=\pi\circ\nu$ is a Galois cover with Galois group $G$, and the branch loci of $p$ and $\pi$ are the same, which we denote by $B\subset \overline{X}$. 
 Then $p:\overline{Y}\to\overline{X}$ is \'etale outside $p^{-1}(B)$.
By \Cref{cond:iii1}, we have $B\subset \pi(R)$.
 Hence $p$ is \'etale outside $R':=p^{-1}(\pi(R))$.

 We define a section
 	 	$$
 	 	\sigma:=\prod_{g\in G}\prod_{i\in\{1,\ldots,l\}}g^*\nu^*\tau_i\in H^0(\overline{Y}, \Sym^{N}p^*\Omega_{\overline{X} }(\log D)), 
 	 	$$
 	 	which is non-zero and vanishes at $\nu^{-1}(R)$ by the definition of $R$ (cf. \Cref{cond:iii1}). 
 	 	Since $\sigma$ is invariant under $G$-action, it follows that $\sigma$ descends to a section
 	 	$$
 	 	\sigma^{G}\in H^0(\overline{X}, \Sym^{N}\Omega_{\overline{X}}(\log D))
 	 	$$
 such that $p^*\sigma^{G}=\sigma$.      
 Since $\sigma$ vanishes on $\nu^{-1}(R)$, it follows that $\sigma^G$ vanishes at $p(\nu^{-1}(R))=\pi(R)$, which is a proper Zariski closed subset of $\overline{X}$.
  Let $E\subset \pi(R)$ be the sum of prime divisors contained in $\pi(R)$.    This implies that there is a non-trivial morphism
 	 	\begin{align}\label{eq:CP1}
 	 		\mathcal{O}_{\overline{X}}(E)\to \Sym^{N}\Omega_{\overline{X}}(\log D).
 	 	\end{align} 
By \cref{cond:i1}, $\Sigma$ is of log general type. 
Since $\pi|_{\Sigma\backslash R}:\Sigma\backslash R\to X\backslash \pi(R)$ is finite \'etale, by \cite[Theorem 3]{Iitaka1977} or \cref{lem:KodairaDim} below,   
$X\backslash \pi(R)$ is also of log general type.  By our construction, $\pi(R)\backslash E$ is of codimension at least two in $\overline{X}$.     By \cite[Lemma 3]{NWY13}, $K_{\overline{X}}+E+D$ is big.  
	Together with \eqref{eq:CP1} we can apply \cite[Corollary 8.7]{CP19} to conclude that $K_{\overline{X}}+ D$ is big.
		Hence  $X$ is of log general type.  
 	 \end{proof}

\begin{lem}\label{lem:KodairaDim}
	Let $f':U\to V$ be a finite \'etale morphism between smooth quasi-projective varieties.   
	Then the logarithmic Kodaira dimension $\bar{\kappa}(U)=\bar{\kappa}(V)$. 
\end{lem}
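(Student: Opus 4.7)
The plan is to construct compatible log-compactifications, establish the pullback identity for the logarithmic canonical divisor under the finite \'etale cover $f'$, and then deduce both inequalities for $\bar{\kappa}$ from standard finite-morphism arguments. This recovers the classical theorem of Iitaka already invoked earlier in the proof of \cref{prop:log general type}.

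First, I would fix a smooth projective compactification $\overline{V}\supset V$ with SNC boundary $D_V:=\overline{V}\setminus V$, and take $\overline{U}_0$ to be the normalization of $\overline{V}$ inside the function field $\bC(U)$. This yields a finite morphism $\bar{f}_0:\overline{U}_0\to\overline{V}$ extending $f'$. Because $f'$ is \'etale on $U$, the map $\bar{f}_0$ is unramified over $V$, so $\bar{f}_0^{-1}(V)=U$ and the boundary $D_{U_0}:=\overline{U}_0\setminus U$ is precisely the reduced preimage of $D_V$. Writing $\bar{f}_0^{\,*}D_V=\sum e_i D_{U_0,i}$ with $e_i$ the ramification indices along the components of $D_{U_0}$, the Riemann--Hurwitz formula together with the absence of ramification over $V$ gives the Weil-divisor identity
\[
K_{\overline{U}_0}+D_{U_0}\;=\;\bar{f}_0^{\,*}\bigl(K_{\overline{V}}+D_V\bigr).
\]

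Next, both inequalities follow. Pullback along $\bar{f}_0$ provides an injection $H^0(\overline{V},m(K_{\overline{V}}+D_V))\hookrightarrow H^0(\overline{U}_0,m(K_{\overline{U}_0}+D_{U_0}))$, hence $\bar{\kappa}(V)\leq\bar{\kappa}(U)$. For the reverse direction, the projection formula yields
\[
H^0\bigl(\overline{U}_0, m(K_{\overline{U}_0}+D_{U_0})\bigr)\;=\;H^0\bigl(\overline{V}, m(K_{\overline{V}}+D_V)\otimes (\bar{f}_0)_*\mathcal{O}_{\overline{U}_0}\bigr),
\]
and since $(\bar{f}_0)_*\mathcal{O}_{\overline{U}_0}$ is a torsion-free coherent sheaf of generic rank $\deg f'$, the right-hand side has the same polynomial growth order in $m$ as $h^0(\overline{V},m(K_{\overline{V}}+D_V))$; this gives $\bar{\kappa}(U)\leq\bar{\kappa}(V)$. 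Both inequalities pass from the normal model $\overline{U}_0$ to any smooth log-resolution by the birational invariance of $\bar{\kappa}$.

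I anticipate no substantial obstacle; the only technicality is the clean pullback formula for the log canonical divisor, which relies on the concentration of ramification of $\bar{f}_0$ along the boundary $D_{U_0}$.
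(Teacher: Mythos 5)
Your Riemann--Hurwitz identity $K_{\overline{U}_0}+D_{U_0}=\bar{f}_0^{\,*}(K_{\overline{V}}+D_V)$ on the normal model is correct (as Weil divisors, and in fact as Cartier divisors since the right-hand side is Cartier), and the resulting pullback injection does give the easy inequality $\bar{\kappa}(V)\leq\bar{\kappa}(U)$. The reverse inequality, however, has two genuine gaps.

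The first is the projection-formula step. The claim that $h^0\bigl(\overline{V},\,m(K_{\overline{V}}+D_V)\otimes(\bar f_0)_*\mathcal{O}_{\overline{U}_0}\bigr)$ has ``the same polynomial growth order'' as $h^0(\overline{V},m(K_{\overline{V}}+D_V))$ \emph{because} $(\bar f_0)_*\mathcal{O}_{\overline{U}_0}$ is torsion-free of bounded rank is not an argument: twisting by a fixed bounded-rank sheaf can increase the Iitaka dimension (e.g.\ any sheaf containing an ample line subbundle). The correct principle that makes this step work is Ueno's Lemma~5.13 --- Iitaka dimension is preserved under pullback along a finite surjective morphism --- applied to $\bar f_0$ and $K_{\overline{V}}+D_V$; this is precisely what the paper cites. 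Alternatively one would have to analyze the trace splitting $(\bar f_0)_*\mathcal{O}_{\overline{U}_0}=\mathcal{O}_{\overline{V}}\oplus\mathcal{E}$ and show $\mathcal{E}$ contributes nothing to the growth, which you do not do.

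The second gap is the passage between the normal compactification $\overline{U}_0$ and a smooth one. Birational invariance of $\bar{\kappa}$ applies to different \emph{smooth SNC} compactifications of $U$; it does not say that a singular normal compactification $(\overline{U}_0,D_{U_0})$ computes $\bar{\kappa}(U)$. If $\mu:(Z,D_Z)\to(\overline{U}_0,D_{U_0})$ is a log resolution then $K_Z+D_Z=\mu^*(K_{\overline{U}_0}+D_{U_0})+E$ for some divisor $E$, and without analysis one only gets $h^0(Z,m(K_Z+D_Z))\geq h^0(\overline{U}_0,m(K_{\overline{U}_0}+D_{U_0}))$, which is the \emph{wrong} direction for the reverse inequality $\bar{\kappa}(U)\leq\bar{\kappa}(V)$. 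What one must show is that $E$ is effective \emph{and} $\mu$-exceptional. This is precisely the paper's Claim, proved by: (i) effectivity from the log-morphism property of $g=f\circ\mu$, and (ii) exceptionality from the local cyclic-cover model $(x_1,\dots,x_n)\mapsto(x_1^k,x_2,\dots,x_n)$ of $\bar f_0$ away from the singular locus $D_V^{\mathrm{sing}}$ (a codimension-two set), which forces the pullback equality on the nose there, so that $E$ lies over $f^{-1}(D_V^{\mathrm{sing}})$, of codimension at least two. This codimension-two analysis is the technical heart of the lemma. Once $E$ is known to be effective exceptional, Lazarsfeld's Example~2.1.16 discards it, and then Ueno's Lemma~5.13 finishes. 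Your proposal glides over exactly this step, so while the RH formula you write down is equivalent to what the paper establishes, you have not actually proved it (or its consequence for $\bar{\kappa}$) and the reverse inequality remains unjustified.
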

\begin{proof}
	We first take a smooth projective compactification $Y$ of $V$  so that $D_Y:=Y-V$ is a simple normal crossing divisor.  By \cref{sec:covGal}, there is a normal projective variety $X$ compactifying $U$ so that $f'$ extends to a finite morphism $f:X\to Y$.  Let $\mu:Z\to X$ be a strict desingularization so that $\mu^{-1}(U)\simeq U$ and $D_Z:=Z-\mu^{-1}(U)$ is   a simple normal crossing divisor.  Write $g=f\circ\mu$.  
	\begin{claim}\label{claim:exceptional}
		$E:=K_{Z}+D_Z-g^*(K_Y+D_Y)$ is an effective \emph{exceptional} divisor.  
	\end{claim}
	\begin{proof}[Proof of \cref{claim:exceptional}]
		Since $g:(Z,D_Z)\to (Y,D_Y) $ is a log morphism, $E$ is effective. 	Let $D_Y^{\rm sing}$ be the singularity of $D_Y$  which is a Zariski closed subset of $Y$ of codimension at least two.  Write $Y^\circ:=Y-D_Y^{\rm sing}$, and $X^\circ:=f^{-1}(Y^\circ)$. Note that $X^\circ$ is smooth, and $D_X^\circ:=X^\circ-U$ is a smooth divisor in $X^\circ$.  Moreover, it follows from the proof of \cite[Lemma A.12]{Den22} that at any $x\in D_X^\circ$ we can take a holomorphic coordinate  $(\Omega;x_1,\ldots,x_n)$ around $x$ with $D^\circ_X\cap \Omega=(x_1=0)$ and a holomorphic coordinate  $(\Omega';y_1,\ldots,y_n)$ around $f(x)$ with $D_Y\cap \Omega'=(y_1=0)$  so that 
		\begin{align*}
			f(x_1,\ldots,x_n)=(x_1^k,x_2,\ldots,x_n).
		\end{align*}
		This implies that $K_{X^\circ}+D_X^\circ=f^*(K_{Y^\circ}+D_Y^\circ)$. Since $\mu$ is a strict desingularization, it follows that $\mu^{-1}(X^\circ)\simeq X^\circ$.   Therefore,   $\mu (K_{Z}+D_Z-g^*(K_Y+D_Y))$ is contained in $f^{-1}(D_Y^{\rm sing})$  which is of codimension at least two. 
	\end{proof}
Therefore, one has
	$$
	\kappa(K_Z+D_Z)=	\kappa(g^*(K_Y+D_Y)+E)=\kappa(g^*(K_Y+D_Y))=	\kappa(K_Y+D_Y). 
	$$ 
	where the first equality follows from the above claim, the second one is due to  \cite[Example 2.1.16]{Laz04} and the last one follows from \cite[Lemma 5.13]{Uen75}.   This concludes that $\bar{\kappa}(U)=\bar{\kappa}(V)$. 
\end{proof}

  \subsection{Proof of \cref{main6}}\label{subsection:spread2} 
\begin{thm}[=\Cref{main6}]\label{thm:main33}
	Let $X$ be a complex  smooth quasi-projective variety and let $G$ be an almost simple algebraic group over some non-archimedean local field $K$. If  $\varrho:\pi_1(X)\to G(K)$ is  a big  representation which is   Zariski dense and unbounded, then $X$ is both strongly of log general type, and pseudo Picard hyperbolic. 
	\end{thm}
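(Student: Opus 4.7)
The plan is to combine the two main building blocks already proved, namely \cref{prop:log general type} and \cref{thm:20250911}, with an inheritance argument showing that subvarieties of $X$ avoiding a suitable exceptional locus also satisfy \Cref{property:20250914}.

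First I would apply \cref{lem:property} to deduce from the big, Zariski dense, unbounded representation $\varrho$ that $X$ satisfies \Cref{property:20250914}. Fix the associated data: the finite cover $\pi:\overline{\Sigma}\to\overline{X}$, the sections $\tau_1,\ldots,\tau_l\in H^0(\overline{\Sigma},\pi^*\Omega_{\overline{X}}(\log D))$, the branch locus $R\subsetneqq\overline{\Sigma}$, and the morphism $a:\Sigma\to A$ to a semi-abelian variety with $\dim a(\Sigma)=\dim\Sigma$. Because $\Sigma$ satisfies the hypotheses of \cref{cor:GGL}, the variety $\Sigma$ is itself strongly of log general type and pseudo Picard hyperbolic, so both $\Spalg(\Sigma)$ and $\Spp(\Sigma)$ are proper Zariski closed subsets of $\Sigma$. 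I then set
$$E:=\pi(R)\cup\pi(\Spalg(\Sigma))\cup\pi(\Spp(\Sigma))\cup\pi(B)\subsetneqq X,$$
where $B\subsetneqq\Sigma$ denotes the locus on which $a$ fails to be generically finite; since $\pi$ is finite and each piece is a proper Zariski closed subset of $\Sigma$, this $E$ is a proper Zariski closed subset of $X$.

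The key intermediate claim is that every positive-dimensional closed subvariety $V\subset X$ with $V\not\subset E$ itself satisfies \Cref{property:20250914}. To construct the required data for $V$, pick an irreducible component $W$ of $\pi^{-1}(V)\cap\Sigma$ that dominates $V$; such a component exists and lies outside $R$, because $V\not\subset\pi(R)$ together with $\pi^{-1}(\pi(R))=R$ from \cref{claim:ramified}. The restriction $\pi|_{\overline{W}}:\overline{W}\to\overline{V}$ is finite and étale outside $R\cap\overline{W}\subsetneqq\overline{W}$. Since $W\not\subset\Spalg(\Sigma)\cup B$, the variety $W$ is of log general type and $a|_{W}:W\to A$ has maximal image dimension. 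After passing to resolutions $\overline{\widetilde V}\to\overline{V}$ and $\overline{\widetilde W}\to\overline{W}$ with SNC boundary divisors, the natural map from the pullback of $\Omega_{\overline{X}}(\log D)$ to the log cotangent sheaf of $\overline{\widetilde V}$ produces sections $\widetilde\tau_i\in H^0(\overline{\widetilde W},\widetilde\pi^*\Omega_{\overline{\widetilde V}}(\log D_V))$ from the $\tau_i$; none of these is identically zero, since $W\not\subset R$ precludes any $\tau_i$ from vanishing on $W$, and their common zero locus $R_V$ is a proper subset of $\overline{\widetilde W}$ outside of which $\widetilde\pi$ inherits étaleness from $\pi$.

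Granted the claim, the two conclusions follow quickly. For (i), \cref{prop:log general type} applied to each such $V$ yields that $V$ is of log general type, so $\Spalg(X)\subset E\subsetneqq X$ and $X$ is strongly of log general type. For (ii), given any holomorphic map $f:\mathbb D^*\to X$ with $f(\mathbb D^*)\not\subset E$, the Zariski closure $V:=\overline{f(\mathbb D^*)}^{\mathrm{Zar}}\subset X$ is not contained in $E$, so $V$ satisfies \Cref{property:20250914}; \cref{thm:20250911} applied to a desingularization of $V$, after lifting $f$ through the birational modification using normality, then produces a holomorphic extension of $f$ to $\mathbb D\to\overline{X}$, giving pseudo Picard hyperbolicity. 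The main obstacle will be the technical content of the intermediate claim: constructing the logarithmic sections $\widetilde\tau_i$ as sections of $\widetilde\pi^*\Omega_{\overline{\widetilde V}}(\log D_V)$ via the resolutions, and verifying that étaleness of $\pi$ outside $R$ descends to étaleness of $\widetilde\pi$ outside the zero locus of the $\widetilde\tau_i$, which requires care with the behaviour of logarithmic structures under restriction to subvarieties and under birational modifications.
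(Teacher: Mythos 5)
Your overall plan coincides with the paper's: deduce Property~\ref{property:20250914} for $X$ from \cref{lem:property}, produce an exceptional locus $E\subsetneqq X$ outside which subvarieties inherit Property~\ref{property:20250914}, and then invoke \cref{prop:log general type} and \cref{thm:20250911}. This is exactly the shape of the paper's argument, which encapsulates the inheritance step as \cref{claim:202509132}. However, there is a genuine gap in the core technical step.

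The problem is your assertion that ``none of these is identically zero, since $W\not\subset R$ precludes any $\tau_i$ from vanishing on $W$.'' This conflates two distinct notions of vanishing. The condition $W\not\subset Z_i=\{\tau_i=0\}$ indeed says $\tau_i|_W$ is not identically zero \emph{as a section of} $\pi^*\Omega_{\overline X}(\log D)|_W$; but the section $\widetilde\tau_i$ that Property~\ref{property:20250914} asks for lives in $\widetilde\pi^*\Omega_{\overline{\widetilde V}}(\log D_V)$, i.e. it is the image of $\tau_i|_W$ under the natural \emph{surjection} from the restricted log cotangent sheaf of $\overline X$ to the log cotangent sheaf of $\overline{\widetilde V}$. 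That surjection has a large kernel (the conormal direction), so $\widetilde\tau_i$ can perfectly well be identically zero even when $\tau_i|_W\ne0$ — for instance whenever $W$ is tangent to the distribution defined by $\tau_i$. If this happens for a single index $i$, your $R_V$ is all of $\overline{\widetilde W}$ and condition (ii) of Property~\ref{property:20250914} fails; you cannot discard such an $i$, because your exceptional set $E$ gives no control over when this degeneracy occurs.

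The paper must do real work at precisely this point. It introduces, for each $i$, an auxiliary proper Zariski closed set $F_i\subsetneqq\Sigma$ constructed via a partial quasi-Albanese map (so that any $g:T\to\Sigma$ with $g^*\tau_i=0$ and $g(T)\not\subset F_i$ satisfies $g(T)\cap Z_i=\varnothing$), and it enlarges $E$ to include $\bigcup_i\pi(F_i)$. It also restricts attention to the sub-multiset $I$ of indices $i$ for which $W$ actually meets $Z_i$; only for those does it need $\psi_i\ne0$, and for those the $F_i$-construction yields it by contradiction. Your proposal omits both the $F_i$ enlargement of $E$ and the restriction to the relevant index set $I$, and the substitute justification does not hold. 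Until that is repaired — essentially by reproducing Step~1 of the paper's proof of \cref{claim:202509132} — the intermediate inheritance claim is not established.
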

	 The proof is based on \cref{thm:20250911,cor:GGL,prop:log general type} combined with \cref{lem:property}.
	We first prove the following lemma.

\begin{lem}\label{claim:202509132}
Let $X$ be a smooth quasi-projective variety satisfying \Cref{property:20250914}.
Then there exists a proper Zariski closed subset $E\subsetneqq X$ such that if $V\subset X$ is a closed subvariety with $V\not\subset E$ and of positive dimension, then every smooth modification $W\to V$ satisfies \Cref{property:20250914}.
\end{lem}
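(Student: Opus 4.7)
The plan is to combine the generic geometry of the morphism $a\colon\Sigma\to A$ with Kawamata's structure theorem for subvarieties of semi-abelian varieties, in order to exhibit a single proper Zariski closed subset $E\subsetneqq X$ outside of which the pullback of the given spectral data on $X$ induces analogous data on any subvariety. Since $a$ is generically finite onto its image, its exceptional locus $B\subsetneqq\Sigma$ (the locus where $a$ fails to be quasi-finite) is a proper closed subset. The image $Y:=a(\Sigma)$ is itself of log general type, hence has trivial stabilizer in $A$ by \cref{prop:Koddimabb}, and Kawamata's structure theorem produces a proper closed subset $Z\subsetneqq Y$ containing every positive-dimensional translate of a semi-abelian subvariety contained in $Y$. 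We will set
\[
E:=\pi(R)\cup\pi(B)\cup\pi(a^{-1}(Z))\cup F \;\subsetneqq\; X,
\]
where $F\subsetneqq X$ is an auxiliary proper closed subset whose role is explained in the third paragraph.

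For $V\not\subset E$ of positive dimension, choose an irreducible component $\overline{V'_0}$ of $\pi^{-1}(\overline{V})\subset\overline{\Sigma}$ dominating $\overline{V}$; since $V\not\subset\pi(R)$, the restriction $\pi|_{V'_0}\colon V'_0\to V$ is finite and generically \'etale. Next take compatible log-resolutions: a smooth projective modification $\mu\colon\overline{W}\to\overline{V}$ such that $D_W:=\mu^{-1}(D|_{\overline{V}}\cup V_{\mathrm{sing}})$ is simple normal crossing, together with a smooth model $\overline{W_0}$ of the normalization of $\overline{W}\times_{\overline{V}}\overline{V'_0}$; this produces a finite morphism $\pi_W\colon\overline{W_0}\to\overline{W}$ and a natural map $\overline{W_0}\to\overline{\Sigma}$. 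Since $V'_0\not\subset B\cup a^{-1}(Z)$, the map $a|_{V'_0}$ is generically finite onto $a(V'_0)\not\subset Z$; by the defining property of $Z$ applied to $a(V'_0)$, this image has trivial stabilizer in $A$, hence is of log general type. Therefore $V'_0$, and with it $W_0$, is of log general type, and the composite $W_0\to V'_0\hookrightarrow\Sigma\to A$ furnishes the morphism to a semi-abelian variety with image of maximal dimension required in (i) of \cref{property:20250914}.

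For (ii), the sections $\tau_i^W\in H^0(\overline{W_0},\pi_W^*\Omega_{\overline{W}}(\log D_W))$ are obtained by pulling the $\tau_i$'s along $\overline{W_0}\to\overline{\Sigma}$ and descending through the canonical map $p^*\Omega_{\overline{X}}(\log D)\to\pi_W^*\Omega_{\overline{W}}(\log D_W)$ induced by $\overline{W}\to\overline{X}$. Because $\pi|_{V'_0}$ is \'etale over $V\setminus\pi(R)$, $\pi_W$ is \'etale outside the preimage of $R$ in $\overline{W_0}$; as soon as each $\tau_i^W$ is non-identically zero, the joint vanishing locus $R_W:=\{s\in\overline{W_0}\mid\exists\, i,\ \tau_i^W(s)=0\}$ is a proper closed subset containing this preimage, which establishes (ii). The main technical point, and the reason for the auxiliary closed set $F$, is precisely to guarantee that the descent map does not annihilate any $\tau_i^W$ --- equivalently, that no $\tau_i|_{\overline{V'_0}}$ takes values entirely in the logarithmic conormal of $V$ inside $\overline{X}$. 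Showing that the locus of such bad subvarieties is contained in a proper Zariski closed subset $F\subset X$ (i.e.\ that the relevant restriction map of sheaves drops rank on a proper closed locus of $\overline{\Sigma}$, whose image under $\pi$ is therefore proper closed in $X$) is the delicate step and the main obstacle of the argument.
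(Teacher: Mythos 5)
Your overall architecture — building $E$ from the branch locus, the quasi-finiteness locus of $a$, and a ``special'' locus, and then pulling the spectral data back to the normalized fiber product over a modification of $V$ — matches the paper's, but there are two genuine gaps, the second of which you concede yourself. First, the assertion that $Y:=a(\Sigma)$ is of log general type does not follow from \cref{property:20250914}: that condition only gives that $\Sigma$ is of log general type and that $a$ is generically finite onto its image, and this does not pass to the Zariski closure of the image. For example, take a sub-semi-abelian variety $A'\subset A$ and a divisor $D'\subset A'$ making $\Sigma:=A'\setminus D'$ of log general type, and let $a$ be the inclusion; then $\overline{a(\Sigma)}=A'$ has $\overline{\kappa}=0$, the Kawamata special locus $Z$ is all of $Y$, and your $E$ is all of $X$. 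The paper avoids this by applying \cref{cor:GGL} to a resolution $Y$ of $\overline{\Sigma}$ — which \emph{is} of log general type with a generically finite map to $A$ — to obtain $\Spalg(Y)\subsetneqq Y$, and including its image in $E$.

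Second — and this is the crux — the step you postpone at the end, forcing the descended sections $\tau_i^W$ to be nonzero, is not a deferrable detail but the heart of the lemma, and as you state it the target is actually false: a fixed $\tau_i$ may pull back to zero along a positive-dimensional family of subvarieties sweeping out $X$, so no proper closed $F$ can force all $\tau_i^W\neq 0$ whenever $V\not\subset F$. What the paper proves is a dichotomy. For each $i$, set $Z_i:=\{\tau_i=0\}$. On a resolution $Y$ of $\Sigma$, pull $\tau_i$ back to $\omega_i\in H^0(Y,\Omega_Y)$, and let $\beta_i\colon Y\to\cA_Y/B_i$ be the quotient of the quasi-Albanese of $Y$ by the largest sub-semi-abelian variety on which the log form corresponding to $\omega_i$ vanishes. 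By \cref{lem:critpointalb}, $\beta_i$ contracts each irreducible component of $\{\omega_i=0\}$ to a point, so the saturated preimage $\beta_i^{-1}(\beta_i(\{\omega_i=0\}))$ yields a proper closed $F_i\subsetneqq\Sigma$ with the key property: if $g\colon T\to\Sigma$ satisfies $g^*\tau_i=0$ and $g(T)\not\subset F_i$, then $g(T)\cap Z_i=\emptyset$. Including $\bigcup_i\pi(F_i)$ in $E$, one obtains over any $V\not\subset E$ that for each $i$, either $\tau_i$ descends to a nonzero section $\psi_i$ over the cover of $W$ (so $Z_i$'s contribution to the branch locus is absorbed by $\{\psi_i=0\}$), or $Z_i$ misses the preimage of $V$ in $\Sigma$ (so $Z_i$ contributes nothing to the branch locus). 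The index set $I$ and the claim ``$\psi_i\neq 0$ for $i\in I$'' in the paper's proof encode exactly this case split, which your sketch lacks.
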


\begin{proof}
Let us use the following objects described in \Cref{property:20250914}: $\pi:\overline{\Sigma}\to\overline{X}$, $\tau_1,\ldots,\tau_l \in H^0(\overline{\Sigma}, \pi^*\Omega_{\overline{X}}(\log D))$, $a:\Sigma\to A$, and $R\subset \overline{\Sigma}$.
Moreover we set $Z_{i}:=\{z\in\overline{\Sigma}\mid \tau_i(z)=0\}$ so that $R=\cup_iZ_i$.

\noindent {\bf Step 1.}
{\em We construct    $E\subsetneqq X$.}
Given $i$, we first construct a proper Zariski closed subset $F_i\subsetneqq \Sigma$ with the following property: 
If $g:T\to \Sigma$ is a morphism from a smooth quasi-projective variety $T$ such that $g^*\tau_i$ vanishes in $H^0(T,\Omega_T)$ and $g(T)\not\subset F_i$, then $g(T)\cap Z_i=\emptyset$.

The construction of $F_i\subsetneqq \Sigma$ goes as follows.
Let $\mu:  \overline{Y}\to \overline{\Sigma}$ be a resolution of singularities with $Y:=\mu^{-1}(\Sigma)$.  
Let $\alpha_Y:Y\to \cA_Y$ be the quasi-Albanese morphism.
By the universal property of the quasi-Albanese morphism, the composite $a\circ\mu:Y\to A$ factors $\alpha_Y$.
Set $\omega_i:=\mu^*\tau_i$. 
We know that there exist a log one form $\widetilde{\omega}_i\in T_1(\cA_Y)$ so that $\omega_i=\alpha_Y^*\widetilde{\omega}_i$. 
Consider the maximal semi-abelian subvariety $B_i$ of $\cA_Y$  on which $\widetilde{\omega}_i$ vanishes. 
 Denote by $q_i:\cA_Y\to \cA_Y/B_i$ the quotient map and by $\beta_i:Y\to \cA_Y/B_i$ the composed morphism.
 Write $Z_{i}':=(\omega_i=0)$, which is a proper Zariski closed subset  of $Y$.
By \cref{lem:critpointalb}, for any irreducible component  $Z$ of $Z_{i}'$,  $\beta_i(Z)$ is a point in $\cA_Y/B_i$.   
	Since $Z_{i}'$ has   finitely many irreducible components, it follows that $\beta_i(Z_{i}')$ is a set of finite points in $\cA_Y/B_i$. 
	Denote by $F_{i}':=\beta_i^{-1}(\beta_i(Z_i'))$, which is a proper Zariski closed subset of $Y$.  
Enlarging $F_i'$, we may assume that $\mu:Y-F_i'\to \Sigma$ is an open immersion.
We set $F_i=\mu(F_i')$.
Then $F_i\subsetneqq \Sigma$ is a proper  Zariski closed set.

To show $F_i$ has the desired property, let $g:T\to \Sigma$ be a morphism from a smooth quasi-projective variety $T$ such that $g^*\tau_i$ vanishes in $H^0(T,\Omega_T)$ and $g(T)\not\subset F_i$.
Since $\mu:Y\to \Sigma$ is an isomorphism over $F_i$, we may take a smooth modification $T'\to T$ such that $g:T\to\Sigma$ lifts to $g':T'\to Y$.
By $g^*\tau_i=0$, we have $(g')^*\omega_i=0$.
By \cref{lem:critpointalb}, we conclude that $\beta_i\circ g'(T')$ is a point in $\cA_Y/B_i$.
By the construction of $F_i'$, we have $\beta_i\circ g'(T')\not\subset \beta_i(Z_{i}')$.
Hence $g'(T)\cap Z_i'=\emptyset$.
By $\mu^{-1}(Z_i)\subset Z_i'$, we have $g(T)\cap Z_i=\emptyset$. 
Thus we have constructed $F_i\subsetneqq \Sigma$ with the desired property.

By \cref{cor:GGL}, we have $\Spalg(Y)\subsetneqq Y$.
By   \Cref{cond:i1}, we take a proper Zariski closed set $\Xi\subsetneqq \Sigma $such that $a:\Sigma-\Xi\to A$ is quasi-finite.
We set $E=\pi\circ\mu(\Spalg(Y))\cup \pi (\Xi)\cup\cup_i\pi(F_i)$.
Then $E\subsetneqq X$ is a proper Zariski closed subset.

\noindent  {\bf Step 2.}
Let $V\subset X$ be a positive dimensional closed subvariety such that $V\not\subset E$.
 We denote by $W\to V$ a smooth modification.
Let $\overline{W}$ be a smooth projective compactification of $W$ so that $D_{\overline{W}}:=\overline{W}-W$ is a simple normal crossing divisor and $(\overline{W},D_{\overline{W}})\to (\overline{X}, D)$ is a log morphism.  
 Let $\overline{S}$ be a normalization of an irreducible component of $\overline{W}\times_{\overline{X}}\overline{\Sigma}$. 
 \begin{equation*}
 	\begin{tikzcd}
 		\overline{S}  \arrow[r, "g"] \arrow[d, "p"]&  \overline{\Sigma}\arrow[d, "\pi"]\\
 		\overline{W} \arrow[r] & \overline{X}
 	\end{tikzcd}
 \end{equation*}

Set $S:=p^{-1}(W)$. 
 Then $g(S)\not\subset \Xi\cup\mu(\Spalg(Y))$.
 Hence $S$ is of log-general type and $\dim S=\dim a\circ g(S)$.
 Therefore  \Cref{cond:i1} of \cref{property:20250914} is satisfied.

 Let $\psi_i$ be the pull back of $\tau_i\in H^0(\overline{\Sigma}, \pi^*\Omega_{\overline{X}}(\log D))$ by $\overline{S}\to \overline{\Sigma}$. 
 Then   we can consider  $\psi_i$ as a section in $H^0(\overline{S}, p^*\Omega_{\overline{W}}(\log D_{\overline{W}}))$. 
 Let $I$ be the set of all $i$ such that
 \begin{itemize}
 	\item   the image of $S\to \Sigma$ intersects with $Z_{i}:=\{z\in\overline{\Sigma}\mid \tau_i(z)=0\}$. 
 \end{itemize}

 \begin{claim}
 For $i\in I$, $\psi_i\not=0$ in $H^0(\overline{S}, p^*\Omega_{\overline{W}}(\log D_{\overline{W}}))$.
 \end{claim}
	\begin{proof}
		Assume by contradiction that $\psi_i=0$ in $H^0(\overline{S}, p^*\Omega_{\overline{W}}(\log D_{\overline{W}}))$.
		Let $\varphi:T\to S$ be a smooth modification.
		Then $\varphi^*\psi_i=0$.
		Hence by the choice of $F_i$, we have $g\circ\varphi(T)\cap Z_i=\emptyset$, hence $g(S)\cap Z_i=\emptyset$.
		This contradicts to the assumption $i\in I$.
	\end{proof}

 We set
 \begin{align*} 
 	 		R':=\{z\in \overline{S} \mid \exists   i\in I  \mbox{ with } \psi_i(z)=0 \}.
 	 	\end{align*}  
By the claim above, $R'$ is a proper Zariski closed subset of $\overline{S}$.  	
Recall that  $\pi:\overline{\Sigma}\to \overline{X}$  is \'etale over $\overline{\Sigma}-R$, where $R:=\cup_{i}Z_{i}'.$   
 Since the base change of an \'etale morphism   is also étale, it follows that 
$p$ is \'etale over $\overline{S}-g^{-1}(R)$.  
We have $g^{-1}(R)\subset R'$, since for $i\not\in I$, the image of $S\to \Sigma$ does not intersect with $Z_{i}$, so such $Z_{i}$ does not contribute to $g^{-1}(R)\cap S$. 
Hence $p$ is \'etale over $\overline{S}-R'$.
The proof of \cref{claim:202509132} is completed.
\end{proof}

\begin{proof}[Proof of \cref{thm:main33}]
	By \cref{lem:property}, the variety $X$ satisfies \Cref{property:20250914}.
	Let $E \subsetneqq X$ be a proper Zariski closed subset as in \cref{claim:202509132}. 
	
	Let $Z \subset X$ be a positive-dimensional closed subvariety that is not contained in $E$, and let $Z' \to Z$ be a smooth modification. 
	By \cref{claim:202509132}, the variety $Z'$ satisfies \Cref{property:20250914}. 
	Hence, by \cref{prop:log general type}, $Z'$, and therefore $Z$, is of log general type.  
	It follows that 
	\[
	\Spalg \subset E \subsetneq X,
	\] 
	and thus $X$ is strongly of log general type. 
	
	\medskip
	
	Now let $f:\mathbb{D}^* \to X$ be a holomorphic map with $f(\mathbb{D}^*) \not\subset E$. 
	Let $V \subset X$ denote the Zariski closure of $f(\mathbb{D}^*)$. 
	Then $V \not\subset E$. 
	Let $W \to V$ be a smooth modification. 
	Then, by \cref{claim:202509132}, $W$ satisfies \Cref{property:20250914}. 
	Applying \cref{thm:20250911}, we conclude that $f$ extends to a holomorphic map 
	\[
	\bar{f}:\mathbb{D} \to \overline{X}.
	\] 
	Therefore, 
	\[
	\Spp \subset E \subsetneq X,
	\] 
	and $X$ is pseudo-Picard hyperbolic. 
	This completes the proof of the theorem. 
\end{proof}

\section{Rigid representation and $\bC$-VHS} \label{sec:rigid}
In this section we show that rigid representations of the fundamental groups of smooth quasi-projective varieties underlie  complex variations of Hodge structures (abbreviated $\bC$-VHS), which is the content of \cref{thm:vhs}.
This result was first established by Mochizuki in his long and technically involved paper \cite{Moc06}. For the sake of completeness, we include a proof here. Our argument relies on Mochizuki's Kobayashi--Hitchin correspondence in the non-compact case and on his correspondence between reductive representations of $\pi_1(X)$ into ${\rm GL}_N(\bC)$ and tame pure imaginary harmonic bundles on $X$, as developed in \cite{Moc06,Moc07b}.

\subsection{Tame and pure imaginary harmonic bundles}\label{sec:tame}

\begin{dfn}[Higgs bundle]\label{Higgs}
	A \emph{Higgs bundle} on a complex manifold $X$  is a pair $(E,\theta)$ where $E$ is a holomorphic vector bundle  and $\theta:E\to E\otimes \Omega^1_X$ is a holomorphic one form with value in $\End(E)$, called the \emph{Higgs field},  satisfying $\theta\wedge\theta=0$.
\end{dfn}
Let  $(E,\theta)$ be a Higgs bundle  over a complex manifold $X$.  
Suppose that $h$ is a smooth hermitian metric of $E$.  Denote by $\nabla_h$  the Chern connection with respect to $h$, and by $\theta^\dagger_h$  the adjoint of $\theta$ with respect to $h$.  We write $\theta^\dagger$ for $\theta^\dagger_h$ for short.   The metric $h$ is  \emph{harmonic} if the connection  $\bD_1:=\nabla_h+\theta+\theta^\dagger$
is flat, i.e., if  $\bD_1^2=0$. 
\begin{dfn}[Harmonic bundle] A \emph{harmonic bundle} on  $X$ is
	a Higgs bundle $(E,\theta)$ endowed with a  harmonic metric $h$.
\end{dfn}

Let $\overline{X}$ be a compact complex manifold, $D=\sum_{i=1}^{\ell}D_i$ be a   simple normal crossing divisor of $\overline{X}$ and $X=\overline{X}\backslash D$ be the complement of $D$.
Let $(E,\theta,h)$ be a  harmonic bundle on $X$.	 Let $p$ be any point of $D$, and $(U;z_1,\ldots,z_n)$  be a coordinate system centered at  $p$ such that $D\cap U=(z_1\cdots z_\ell=0)$ . On $U$, we have the description:
\begin{align*}
	\theta=\sum_{j=1}^{\ell}f_jd\log z_j+\sum_{k=\ell+1}^{n}f_kdz_k.
\end{align*}

\begin{dfn}[Tameness]\label{def:tameness}
	Let $t$ be a formal variable. For any $j=1,\dots, \ell$, the characteristic polynomial $\det (f_j-t)\in \mathcal{O}(U\backslash D)[t]$, is a polynomial in $t$   whose coefficients are holomorphic functions. 
	If those functions can be extended
	to the holomorphic functions over $U$ 
	for all $j$, then the harmonic bundle is called \emph{tame} at $p$.  A harmonic bundle is \emph{tame} if it is tame at each point.
\end{dfn} 

For a tame harmonic bundle  $(E,\theta,h)$ over $X$,  we prolong $E$ over $\overline{X}$ by a sheaf of $\cO_{\overline{X}}$-module $\diae_h$  as follows:  for any open set $U$ of $X$, 
\begin{align*} 
	\diae_h(U):=\{\sigma\in\Gamma(U\backslash D,E|_{U\backslash D})\mid |\sigma|_h\lesssim {\prod_{i=1}^{\ell}|z_i|^{-\ep}}\  \ \mbox{for all}\ \ep>0\}. 
\end{align*}  
In \cite{Moc07} Mochizuki proved that $\diae_h$ is locally free and that  $\theta$ extends to a morphism
$$
\diae_h\to \diae_h\otimes \Omega_{\overline{X}}(\log D),
$$
which we still denote by $\theta$.

\begin{dfn}[Pure imaginary, nilpotent residue]\label{def:nilpotency}
	Let $(E, h,\theta)$ be a tame harmonic bundle on $\overline{X}\backslash D$. The residue $\Res_{D_i}\theta$ induces an endomorphism of $\diae_h|_{D_i}$. Its characteristic polynomial has constant coefficients, and thus the eigenvalues are constant. We say that $(E,\theta,h)$ is \emph{pure imaginary} (resp. has  \emph{nilpotent residue}) if for each component $D_i$ of $D$, the   eigenvalues of $\Res_{D_i}\theta$ are all pure imaginary (resp. all zero).
\end{dfn}   
One can verify that \cref{def:nilpotency} does not depend on the compactification $\overline{X}$ of $\overline{X}\backslash D$.

\begin{thm}[{Mochizuki \cite[Theorem 25.21]{Moc07b}}]\label{moc}
	Let $X$ be a smooth quasi-projective variety and let $(E,\theta,h)$ be a tame pure imaginary harmonic bundle on $X$. Then the flat bundle $(E, \nabla_h+\theta+\theta^\dagger)$ is semi-simple. Conversely, if $(V,\nabla)$ is a semisimple flat bundle on $X$, then there is a tame pure imaginary harmonic bundle $(E,\theta,h)$ on $X$ so that $(E, \nabla_h+\theta+\theta^\dagger)\simeq (V,\nabla)$. Moreover, when $\nabla$ is \emph{simple}, then any such harmonic metric $h$ is unique up to positive multiplication. 
\end{thm}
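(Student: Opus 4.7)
The plan is to establish the theorem via the non-abelian Hodge correspondence in the open setting, as developed by Mochizuki and extending Simpson's projective case. The direct (semisimplicity) direction and the uniqueness statement are comparatively soft, while producing a harmonic metric on a given simple flat bundle is the main analytic hurdle.

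First, for the semisimplicity direction, I would argue as follows. Let $F\subset E$ be a flat subbundle of $(E,\bD_1)$, and let $\pi_F$ denote the $h$-orthogonal projection onto $F$. In the compact K\"ahler setting a Bochner--K\"ahler identity shows that $\int_X |\db \pi_F|_h^2\,dV_\omega = 0$, forcing $\pi_F$ to be holomorphic with respect to $\db+\theta$ and hence flat. In the quasi-projective setting, the same identity must be integrated against cutoffs supported near the divisor $D$; the vanishing of the boundary contributions follows from tameness (polynomial growth of the sections of $F$ controlling the prolongation of $\pi_F$ inside $\diae_h$) together with the pure-imaginary residue condition, which ensures the parabolic weights vanish so that no degree term appears at infinity. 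This yields flatness of $\pi_F$ and the desired splitting $E = F \oplus F^\perp$ of harmonic bundles, from which semisimplicity of $\bD_1$ follows by induction on the rank.

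For the existence part, I would reduce to the case $(V,\nabla)$ simple. The strategy is to transfer the problem to the Higgs side via a parabolic analogue of Simpson's correspondence: equip $V$ with an adapted Hermitian metric modeled on the standard tame harmonic model for representations of $\pi_1(\bD^*)$ near each component of $D$, then show that the associated parabolic Higgs bundle on $(\overline{X},D)$ is polystable of parabolic degree zero, with rational weights determined by the real parts of the logarithms of the monodromy eigenvalues. Applying the parabolic Kobayashi--Hitchin correspondence of Mochizuki produces a Hermite--Einstein metric, which, under the non-abelian Hodge dictionary, is automatically a tame harmonic metric on $(E,\theta)$ whose associated flat connection $\bD_1$ is gauge-equivalent to $\nabla$. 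The pure-imaginary condition is then verified by tracking the adapted construction: the real parts of $\Res_{D_i}\theta$ match the parabolic weights, which vanish because $(V,\nabla)$ arises from a true representation of $\pi_1(X)$ rather than a more general twisted object.

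The principal obstacle is the parabolic Kobayashi--Hitchin theorem in the open setting: establishing the equivalence between parabolic polystability and the existence of Hermite--Einstein metrics requires a delicate combination of heat-flow analysis on the noncompact K\"ahler manifold $X$, uniform control of curvature near $D$ using weighted Sobolev spaces adapted to a Poincar\'e-type metric, and a subtle algebraic manipulation of parabolic weights under subsheaf comparison. Equally delicate is verifying that the resulting harmonic metric is not merely tame but \emph{pure imaginary}; this uses the topological origin of $(V,\nabla)$ as a genuine $\pi_1$-representation to kill the real parts of the residues. Finally, for the uniqueness statement in the simple case, let $h_1,h_2$ be two harmonic metrics and set $s := h_1^{-1}h_2 \in \End(E)$, which is positive self-adjoint with respect to both. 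A Bochner identity adapted to the tame pure imaginary setting (again using the parabolic prolongation to justify the boundary vanishing) shows $\bD_1 s = 0$; by Schur's lemma applied to the simple connection $\nabla$, $s$ must be a positive scalar, giving $h_2 = c\cdot h_1$.
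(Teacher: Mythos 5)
The paper does not prove this theorem; it is cited directly from Mochizuki \cite[Theorem 25.21]{Moc07b} and used as a black box. So there is no ``paper's own proof'' to compare against. That said, your sketch contains two substantive problems worth flagging.

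First, in the existence direction you write that from the flat bundle $(V,\nabla)$ equipped with an adapted initial Hermitian metric one can form ``the associated parabolic Higgs bundle'' and then invoke the parabolic Kobayashi--Hitchin correspondence. This is circular as stated: given a flat connection and an arbitrary Hermitian metric $h_0$, the operator extracted from the $(1,0)$-part of $\nabla$ minus the Chern connection is \emph{not} holomorphic, so it is not a Higgs field unless $h_0$ is already harmonic — which is exactly what you are trying to produce. Mochizuki's actual route on the de~Rham side works with regular filtered (parabolic) \emph{$\lambda$-flat bundles} (with $\lambda=1$ for ordinary flat connections), not with Higgs bundles: one prolongs $(V,\nabla)$ to a parabolic flat bundle on $(\overline{X},D)$ using growth conditions near $D$, proves polystability with vanishing parabolic characteristic numbers, and applies the Kobayashi--Hitchin/Corlette--Jost--Zuo type existence theorem for $\lambda$-connections. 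The harmonic metric, and hence the tame Higgs bundle, is an output, not an input.

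Second, the assertion that ``the pure-imaginary residue condition ... ensures the parabolic weights vanish'' is incorrect. For a tame pure imaginary harmonic bundle the parabolic weights of $\diae_h$ are in general nonzero real numbers; pure-imaginariness constrains the eigenvalues of $\Res_{D_i}\theta$, which in Mochizuki's KMS-structure is a different piece of data from the parabolic weight. What actually controls the boundary terms in the Bochner argument for semisimplicity (and in the uniqueness argument for $s=h_1^{-1}h_2$) is the logarithmic-growth/tameness estimates plus the vanishing of the parabolic \emph{degree}, not vanishing of the individual weights. With these two corrections your outline aligns with the standard strategy, but as written the existence step does not go through.
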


\subsection{Regular filtered Higgs bundles}  
In this subsection, we recall the notions of regular filtered Higgs bundles. For more details refer to \cite{Moc06}. Let $X$ be a complex manifold with a reduced simple normal crossing divisor $D=\sum_{i=1}^{\ell}D_i$, and let $U=X-D$ be the complement of $D$. We denote the inclusion map of $U$ into $X$ by $j$.

\begin{dfn}\label{dfn:parab-higgs}
	A \emph{regular filtered Higgs bundle} $(\bm{E}_*,\theta)$ on $(X, D)$ is holomorphic vector bundle $E$ on $X-D$, together with an $\mathbb{R}^\ell$-indexed
	filtration ${}_{ \bm{a}}E$ (so-called {\em parabolic structure}) by coherent subsheaves of $j_*E$ such that
	\begin{enumerate}[leftmargin=0.7cm]
		\item $\bm{a}\in \mathbb{R}^\ell$ and ${}_{\bm{a}}E|_U=E$. 
		\item  For $1\leq i\leq \ell$, ${}_{\bm{a}+\bm{1}_i}E = {}_{\bm{a}}E\otimes \cO_X(D_i)$, where $\bm{1}_i=(0,\ldots, 0, 1, 0, \ldots, 0)$ with $1$ in the $i$-th component.
		\item $_{\bm{a}+\bm{\ep}}E = {}_{\bm{a}}E$ for any vector $\bm{\ep}=(\ep, \ldots, \ep)$ with $0<\ep\ll 1$.
		\item  The set of {\em weights} \{$\bm{a}$\ |\  $_{\bm{a}}E/_{\bm{a}-\bm{\ep}}E\not= 0$  for any vector $\bm{\ep}=(\ep, \ldots, \ep)$ with $0<\ep\ll 1$\}  is  discrete in $\mathbb{R}^\ell$.
		\item There is a $\cO_X$-linear map, so-called Higgs field, 
		$$\theta:\diae\to \Omega_X^1(\log D)\otimes \diae$$
		such that
	 $\theta\wedge \theta=0$, 
		and
		$$\theta(_{\bm{a}}E)\subseteq \Omega_X^1(\log D)\otimes {}_{\bm{a}}E.$$ 
	\end{enumerate}
\end{dfn}
 Denote $_{\bm{0}}E$ by $\diae$, where $\bm{0}=(0, \ldots, 0)$.  By the work of Borne-Vistoli the parabolic structure of a parabolic bundle  is  \emph{locally abelian}, \emph{i.e.} it admits a local frame compatible with the filtration (see e.g. \cite{IS07} and \cite{BV12}).

A natural class of regular filtered   Higgs bundles comes from prolongations of tame harmonic bundles. We first   recall some notions in \cite[\S 2.2.1]{Moc07}.  Let $E$ be a holomorphic vector bundle with a $\cC^\infty$ hermitian metric $h$ over $X-D$.
Pick any $x\in D$. Let  $(\Omega; z_1, \ldots, z_n)$  be a coordinate system of $X$ centered at  $x$. For any section $\sigma\in \Gamma(\Omega-D,E|_{\Omega-D})$, let $|\sigma|_h$ denote the norm function of $\sigma$ with respect to the metric $h$. We denote $|\sigma|_h\in \cO(\prod_{i=1}^{\ell}|z_i|^{-b_i})$ if there exists a positive number $C$ such that $|\sigma|_h\leq C\cdot\prod_{i=1}^{\ell}|z_i|^{-b_i}$. For any $\bm{b}\in \bR^\ell$, say $-\mbox{ord}(\sigma)\leq \bm{b}$ means the following:
$$
|\sigma|_h=\cO(\prod_{i=1}^{\ell}|z_i|^{-b_i-\varepsilon})
$$
for any real number  $\varepsilon>0$ and $0<|z_i|\ll1$. For any $\bm{b}$, the sheaf ${}_{\bm{b}} E$ is defined as follows: 
\begin{align*}
	\Gamma(\Omega, {}_{\bm{b}} E):=\{\sigma\in\Gamma(\Omega-D,E|_{\Omega-D})\mid -\mbox{ord}(\sigma)\leq \bm{b} \}. 
\end{align*}
The sheaf ${}_{\bm{b}} E$ is called the prolongment of $E$ by an increasing order $\bm{b}$. In particular, we use the notation ${}^\diamond E$ in the case $\bm{b}=(0,\ldots,0)$.

According to  Simpson \cite[Theorem 2]{Sim90} and Mochizuki \cite[Theorem 8.58]{Moc07}, the above prolongation gives a regular filtered Higgs bundle.
\begin{thm}[Simpson, Mochizuki] \label{thm:SM} Let $(X, D)$ be a complex manifold $X$ with a simple normal crossing divisor $D$. If $(E , \theta, h)$ is a tame harmonic bundle on $X-D$, then the corresponding filtration $_{\bm{b}}E$ defined above defines a regular filtered Higgs bundle $(\bm{E}_*, \theta)$ on $(X,D)$.  	\qed
\end{thm}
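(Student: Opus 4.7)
The plan is to reduce to a local analysis in a polydisc neighborhood $(\Omega;z_1,\ldots,z_n)$ of a point $x\in D$ with $D\cap\Omega=(z_1\cdots z_\ell=0)$. The essential input is the asymptotic description of tame harmonic bundles: Simpson treated the one-variable case in \cite{Sim90}, and Mochizuki extended it to the normal crossings setting in \cite{Moc07}. Tameness (\cref{def:tameness}) ensures that each residue $\res_{D_i}\theta$ has well defined eigenvalues, and combined with the compatibility of $h$ with the Higgs structure this yields, on a suitable punctured polydisc, a decomposition of $E|_{\Omega-D}$ into generalized simultaneous eigenspaces of $(\res_{D_1}\theta,\ldots,\res_{D_\ell}\theta)$ indexed by tuples $(\alpha_1,\ldots,\alpha_\ell)\in\bC^\ell$ (the \emph{KMS spectrum}), together with a unipotent refinement by the weight filtrations of the nilpotent parts.

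Next I would identify ${}_{\bm{b}}E$ via this decomposition. The basic estimate, going back to Simpson, is that a holomorphic section $\sigma$ on $\Omega-D$ satisfies $|\sigma|_h=O\bigl(\prod_i|z_i|^{-b_i-\varepsilon}\bigr)$ for every $\varepsilon>0$ precisely when its KMS components lie in summands for which $\operatorname{Re}(\alpha_i)\le b_i$ for all $i$, with an additional control from the unipotent weight filtration on the boundary case $\operatorname{Re}(\alpha_i)=b_i$. From this description one reads off explicit local frames of ${}_{\bm{b}}E$, proving coherence and local freeness and showing that the parabolic structure is locally abelian. Axioms (1)--(2) of \cref{dfn:parab-higgs} are then tautological, while (3)--(4) (left-continuity and discreteness of the set of weights) follow from the discreteness of the KMS spectrum.

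For axiom (5), one uses that $\theta$ is compatible with the KMS decomposition: by tameness the matrix entries of $\theta$ in a KMS-adapted frame have at worst logarithmic poles along $D$, and $\theta$ maps the $(\alpha_1,\ldots,\alpha_\ell)$-summand into components whose KMS indices are shifted only by elements that do not increase $\operatorname{Re}(\alpha_i)$. Combined with the growth description of ${}_{\bm{a}}E$, this yields $\theta({}_{\bm{a}}E)\subseteq \Omega^1_X(\log D)\otimes{}_{\bm{a}}E$, which completes the verification that $(\bm{E}_*,\theta)$ is a regular filtered Higgs bundle.

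The main obstacle is the multi-variable asymptotic analysis at crossings of $D$: one must simultaneously control several commuting residues together with the harmonic norm of flat sections, and in particular prove that the joint KMS spectrum is discrete in $\bR^\ell$ and that the expected orthogonal-type decomposition exists up to polynomial error. This is the technical core of Mochizuki's analysis in \cite{Moc07} and is substantially harder than the one-variable case in \cite{Sim90}; it is also the step where the tameness hypothesis is used in an essential way, since without it the eigenvalues of $\res_{D_i}\theta$ are not even well defined and the whole KMS formalism collapses.
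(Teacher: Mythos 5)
The paper does not prove this theorem: it is cited verbatim from Simpson \cite[Theorem 2]{Sim90} (one-variable case) and Mochizuki \cite[Theorem 8.58]{Moc07} (general SNC case), and the \qed marks it as an imported fact. Your sketch therefore cannot be checked against a proof in the paper, but it does track the overall architecture of the Simpson--Mochizuki arguments: local reduction near a crossing point, decomposition into model pieces labelled by the KMS-spectrum, extraction of adapted local frames from norm-growth estimates, and verification of the axioms of \cref{dfn:parab-higgs}.

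One conflation is worth flagging. You assert that a section $\sigma$ lies in ${}_{\bm b}E$ precisely when its KMS components sit in summands with $\operatorname{Re}(\alpha_i)\le b_i$, where $\alpha_i$ is an eigenvalue of $\Res_{D_i}\theta$. That identifies the parabolic weight with the real part of the residue eigenvalue, but these are \emph{independent} components of the KMS-spectrum: a KMS-element is a pair $(a,\alpha)\in\bR\times\bC$ where $a$ governs the polynomial order of $|\cdot|_h$-growth and $\alpha$ is the residue eigenvalue. Already in the rank-one model $(\cO_{\bD^*},\ \theta=0,\ h=|z|^{-2a})$ one has $\alpha=0$ while the filtration ${}_{\bm b}E$ jumps at $b=a$, not at $b=\operatorname{Re}(\alpha)=0$. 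The filtration defined in the paper is indexed by the weight $a$; $\operatorname{Re}(\alpha)$ only enters when one passes between the Higgs prolongation and the De Rham prolongation, which is a shift the paper does not perform. With that correction, the remainder of your outline --- discreteness of the spectrum giving axiom (4), compatibility of $\theta$ with the filtration because its matrix entries have at worst logarithmic poles and tameness bounds the eigenvalues, locally abelian structure via adapted frames --- is consistent with what Simpson and Mochizuki actually establish.
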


\subsection{Character variety and rigid representation}  
In this subsection, we discuss the character variety and rigid representations. Let $X$ be a quasi-projective manifold, and $G$ be a reductive algebraic group defined over a number field $k$.   
We have an affine scheme $\Hom(\pi_1(X), G)$ defined over $\overline{\bQ}$ that represents the functor
$$
S\mapsto \Hom(\pi_1(X), G(S))
$$
for any ring $S$. 
The \emph{character variety} $M_B(\pi_1(X),G):=\Hom(\pi_1(X), G)\slash\!\!\slash G$  is the GIT quotient of $\Hom(\pi_1(X), G)$ by $G$, where $G$ acts by conjugation. Note that it might be reducible while it is called variety.  Thus, $\Hom(\pi_1(X), G)\to M_B(\pi_1(X),G)$ is surjective.  For any representation $\varrho:\pi_1(X)\to G(\bC)$, we write $[\varrho]$ to denote  its image in $M_B(\pi_1(X),G)$.  We list some properties of  character varieties which will be used in this paper, and we refer the readers to the comprehensive paper \cite{Sik12} for more details.

Let $K$ be any algebraically closed field of   characteristic zero containing $k$.  A representation $\varrho:\pi_1(X)\to G(K)$ is called  \emph{reductive} if the  Zariski closure of $\varrho(\pi_1(X))$ is a   reductive group. 
In particular,  Zariski dense representations in  $\Hom(\pi_1(X), G)(K)$ are reductive for $G$ is reductive.

By \cite[Theorem 30]{Sik12}, the orbit of  any representation $\varrho$ in $ \Hom(\pi_1(X), G)(K)$ is closed if and only if  $\varrho$ is   reductive. Two reductive representations $\varrho,\varrho'$ are conjugate under $G(K)$ if and only if $[\varrho]=[\varrho']$.

A reductive representation   
$\varrho:\pi_1(X)\to G(K)$  is   \emph{rigid} if  the irreducible component  of $M_B(\pi_1(X),G)$ containing $[\varrho]$  is zero dimensional,  otherwise it is called \emph{non-rigid}.  For a  rigid reductive  representation $\varrho$, by  above arguments any continuous deformation $\varrho':\pi_1(X)\to G(K)$ of $\varrho$ which is reductive is conjugate to $\varrho$  under $G(K)$. 

\subsection{Rigid representations underlie $\bC$-VHS}
We will prove that any rigid representation underlies a $\bC$-VHS. We refer the readers to    \cite[\S 3.1.3]{Moc06} for  the definition of  $\mu_L$-stability of regular filtered Higgs bundles with respect to some ample polarization $L$.
\begin{thm}\label{thm:vhs}
	Let $X$ be a quasi-projective manifold.  Let  $\sigma:\pi_1(X)\to G(\bC)$ be a Zariski dense representation where $G$ is a reductive  algebraic group over  $\bC$. If  $\sigma$ is  rigid, then $\sigma$ underlies a $\bC$-VHS.   
\end{thm}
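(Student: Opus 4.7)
The plan is to implement Simpson's $\mathbb{C}^{\ast}$-action strategy for detecting $\mathbb{C}$-VHS's among reductive representations, as extended to the quasi-projective setting by Mochizuki \cite{Moc06}. Fix a faithful linear embedding $G\hookrightarrow \mathrm{GL}_N$ and let $\varrho:\pi_1(X)\to\mathrm{GL}_N(\mathbb{C})$ denote the composition of $\sigma$ with this embedding; since $\sigma$ is Zariski dense in the reductive group $G$, $\varrho$ is reductive. It will also be rigid: any continuous deformation of $\varrho$ through semisimple representations stays inside the component of $[\varrho]$, whose image in $M_B(\pi_1(X),G)$ is the isolated point $[\sigma]$.

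First, by \cref{moc} the representation $\varrho$ corresponds to a tame pure imaginary harmonic bundle $(E,\theta,h)$ on $X$, and via \cref{thm:SM} this yields a $\mu_L$-polystable regular filtered Higgs bundle $(\bm{E}_{\ast},\theta)$ on a smooth log-pair $(\overline{X},D)$ compactifying $X$, with vanishing parabolic first and second Chern classes. For each $t\in\mathbb{C}^{\ast}$, the rescaled pair $(\bm{E}_{\ast},t\theta)$ is again a $\mu_L$-polystable regular filtered Higgs bundle with the same characteristic numbers. Mochizuki's Kobayashi--Hitchin correspondence then attaches to $(\bm{E}_{\ast},t\theta)$ a tame harmonic bundle, and after the standard normalization of parabolic weights enforcing pure imaginary residues, one recovers a semisimple flat bundle on $X$ and thus a representation $\varrho_t:\pi_1(X)\to\mathrm{GL}_N(\mathbb{C})$. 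I would check that the resulting assignment $t\mapsto [\varrho_t]$ is continuous into $M_B(\pi_1(X),\mathrm{GL}_N)$, with $[\varrho_1]=[\varrho]$, and that the limit $[\varrho_0]:=\lim_{t\to 0}[\varrho_t]$ exists in the Dolbeault moduli (for the latter, the parabolic analogue of the properness of the Hitchin map is used).

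Second, rigidity forces the continuous path $t\mapsto [\varrho_t]$ to be constant, so $[\varrho_0]=[\varrho]$. On the other hand, $[\varrho_0]$ is a fixed point of the $\mathbb{C}^{\ast}$-action on the Dolbeault moduli, and by Simpson's theorem (in Mochizuki's parabolic extension) such fixed points correspond exactly to \emph{systems of Hodge bundles}, i.e.\ polarizable $\mathbb{C}$-VHS's. Hence $\varrho$ underlies a $\mathbb{C}$-VHS. The Zariski density of $\sigma$ in $G$ then allows one to descend the $\mathrm{GL}_N$-Hodge structure to a $G$-VHS polarizing $\sigma$, by verifying that the Hodge group contains $\sigma(\pi_1(X))$ and is contained in $G$.

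The delicate step is the construction of the family $\{[\varrho_t]\}_{t\in\mathbb{C}^{\ast}}$ and of its limit at $t=0$ in the non-compact setting: tracking the parabolic weights along the $\mathbb{C}^{\ast}$-flow, ensuring that the pure-imaginary normalization can be carried out continuously in $t$, and invoking Mochizuki's uniform estimates on harmonic metrics so that the $t\to 0$ limit remains in the same filtered moduli space. In the compact case these points are handled by the real-analytic identification of Dolbeault and Betti moduli together with properness of the Hitchin map; extracting their parabolic analogues from \cite{Moc06,Moc07b} is the technical core of the argument.
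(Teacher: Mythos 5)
Your overall plan is the Simpson--Mochizuki $\mathbb{C}^{*}$-scaling strategy, which is indeed what the paper carries out, but you have transplanted the compact-case mechanics without addressing the quasi-projective subtlety that the paper's proof is specifically structured around, and this creates a genuine gap. The problem is your claim that for arbitrary $t\in\mathbb{C}^{*}$ the rescaled filtered Higgs bundle $(\bm{E}_{*}, t\theta)$ yields, via the Kobayashi--Hitchin correspondence and a ``normalization of parabolic weights'', a semisimple flat bundle and hence a point of $M_B(\pi_1(X),\mathrm{GL}_N)$. This fails for generic complex $t$: the theorem producing semisimple monodromy (\cref{moc}, i.e., \cite[Theorem 25.21]{Moc07b}) requires the tame harmonic bundle to be \emph{pure imaginary}, meaning the eigenvalues of $\mathrm{Res}_{D_i}\theta$ lie in $i\mathbb{R}$, and the eigenvalues for $t\theta$ are $t$ times those of $\theta$, which leave $i\mathbb{R}$ as soon as $t\notin\mathbb{R}$. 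The ``normalization of parabolic weights'' you invoke does not repair this: pure imaginarity is a condition on the Higgs residues, not the parabolic weights, and the KMS-type shifts in Mochizuki's theory that move residues and parabolic weights together change the underlying flat bundle and take you out of the Betti character variety you need to stay in.

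The paper circumvents this by running the scaling in two stages. First only real $t\in(0,1]$ is used --- this preserves pure imaginarity, since a real multiple of a pure imaginary number is pure imaginary --- and rigidity gives $\varrho_t\sim\varrho$; a characteristic-polynomial computation with the isomorphism conjugating $\theta$ to $t\theta$ (Step 4) then forces $\theta$ to have \emph{nilpotent} residues. Only after nilpotency is established does scaling by $\lambda\in{\rm U}(1)$ become legitimate, because nilpotent residues are preserved by any scaling. Your proposal to take the limit $t\to 0$ and invoke properness of the parabolic Hitchin map is also a different mechanism from the paper's: there the Hodge decomposition is built directly from the eigenspace decomposition of the isometry associated with a single $\lambda\in{\rm U}(1)$ which is not a root of unity, with no passage to a limit and no appeal to a Dolbeault moduli space. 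Finally, the closing sentence about descending to a $G$-VHS is extraneous: the statement only asserts that $\sigma$ underlies a $\mathbb{C}$-VHS, which follows directly once the faithful composite $\varrho=\iota\circ\sigma$ is shown to do so.
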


\begin{proof}
	Fixing a faithful representation \(G \to {\rm GL}_{N}\), \(\sigma\) induces a representation \(\varrho : \pi_{1}(X) \to {\rm GL}_{N}(\mathbb{C})\) whose Zariski closure is \(G\).
	\medskip
	
\noindent	{\em Step 1. We may assume that $\varrho$ is a simple representation.} Since the Zariski closure of $\varrho$ is $G$, it follows that $\varrho$ is semisimple and thus $\varrho=\oplus_{i=0}^{\ell}\varrho_i$ where   $\varrho_i:\pi_1(X)\to GL(V_i)$ is a simple representation with $\oplus_{i=0}^{\ell}V_i=\bC^N$. Let $G_i\subset GL(V_i)$ be the Zariski closure of $\varrho_i$ which is reductive. Then $G:=\prod_{i=0}^{\ell}G_i$, and it follows that	$M_B(X,G)=\prod_{i=1}^{\ell}M_B(X,G_i)$. Since $[\varrho]$ is an isolated point in $M_B(X,G)$, it implies that each $[\varrho_i]$ is an isolated point hence rigid.   We thus can assume that $\varrho:\pi_1(X)\to {\rm GL}_N(\bC)$ is a simple representation to prove the theorem. 
	
\noindent	{\em Step 2. Higgs field and scaling by a real number.}
	Take a projective compactification $\overline{X}$ of $X$ so that $D:=\overline{X}\backslash X$  is a simple normal crossing divisor.  Fix an ample polarization $L$ 
on $\overline{X}$. By \cref{moc}, we can find a tame purely imaginary harmonic bundle \((E, \theta, h)\) with monodromy \(\varrho\) by \cref{moc}. Then, we can let $(\mathbf{E}_*, \theta)$ be the associated regular filtered Higgs bundle  
of $(E,\theta,h)$  on the log pair $(\overline{X},D)$ defined in \cref{thm:SM}.  This filtered Higgs bundle is $\mu_L$-stable  regular filtered  Higgs bundle with trivial characteristic numbers.   
	
	Let $t$ be a real number in $]0,1]$.  Then   $(\mathbf{E}_*, t\theta)$ is also $\mu_L$-stable   with trivial characteristic numbers. By the Kobayashi-Hitchin correspondence \cite[Theorem 9.4]{Moc06}, there is a pluriharmonic metric $h_t$ for $(E,t\theta)$ which is adapted to the parabolic structure of $(\mathbf{E}_*, t\theta)$. In particular, $(E,t\theta,h_t)$ is a tame harmonic bundle. It is moreover pure imaginary since $t$ is real. Hence the associated monodromy representation $\varrho_t:\pi_1(X)\to {\rm GL}_N(\bC)$ of the flat connection $\bD_t:=\nabla_{h_t}+t\theta+(t\theta)_{h_t}^\dagger$ is  semisimple by \cref{moc}. Here $\nabla_t$ is the Chern connection of $(E,h_t)$.   
	By \cite[Lemma 10.10]{Moc06}, the Zariski closure of $\varrho_t$ coincides with that of $\varrho$, hence is also $G$.  
Hence $[\varrho_t]\in M_B(\pi_1(X),G)(\bC)$.  

	
\noindent	{\em Step 3. Existence of isometries between the deformations.} By the proofs of \cite[Theorem 10.1 \&  Lemma 10.11]{Moc06}, we know that the map
	\begin{align*}
		]0,1]&\to M_B(\pi_1(X),G)(\bC)\\
		t&\mapsto [\varrho_t]
	\end{align*}
	is \emph{continuous}, where we endow the complex affine variety $M_B(\pi_1(X),G)(\bC)$ with the analytic topology. Since $ [\varrho] \in M_B(X,G)(\bC)$ is isolated and $\varrho_t$ is reductive, it follows that $\varrho_t$ is conjugate to $\varrho$. This implies that $\varrho_t$ is also simple for each $t\in ]0,1]$.  Let  $V$ be the underlying smooth vector bundle of $E$.  Fix some $t\in ]0,1[$. One can thus construct a  smooth automorphism $\varphi:V\to V$ so that $\bD_t=\varphi^*\bD_1:=\varphi^{-1}\bD_1\varphi$.  Hence  $\varphi^*h$  defined by
	$
	\varphi^*h(u,v):=h(\varphi(u),\varphi(v)) 
	$  is the harmonic metric for the flat bundle $(V, \varphi^*\bD_1)=(V, \bD_t)$.   By the unicity of harmonic metric of  simple flat bundle in  \cref{moc},  there is a constant $c>0$ so that $c\varphi^*h=h_t$.  Let us  replace the automorphism $\varphi$ of $V$ by $\sqrt{c}\varphi$ so that   we will have $\varphi^*h=h_t$.
	\medskip
	
\noindent	{\em Step 4. \(\theta\) is nilpotent.} Since the decomposition of $\bD_t$ with respect to the metric $h_t$ into a sum of unitary connection   and self-adjoint operator of $V$ is unique, it implies that 
	\begin{align*}
		\nabla_{h_t}&=\nabla_{\varphi^*h}=\varphi^*\nabla_h\\
		t\theta+ (t\theta)^\dagger_{h_t}&=\varphi^*(\theta+\theta_{h}^\dagger).
	\end{align*} 
	Since $E=(V, \nabla_h^{0,1})=(V,\nabla_{h_t}^{(0,1)})$, the first equality means that $\varphi$ is a holomorphic isomorphism of $E$. The second one implies that  
	\begin{align}\label{eq:same}
		t\theta=\varphi^*\theta:=\varphi^{-1}\theta \varphi.
	\end{align} 
	Let $T$ be a formal variable. Consider the characteristic polynomial 
	$$
	\det(T-\varphi^{-1}\theta \varphi)=\det(T-\theta)=T^N+a_1T^{N-1}+\cdots+a_0
	$$
	with $a_i\in H^0(\overline{X}, \Omega^i_{\overline{X}}(\log D))$. Note that
	$$
	\det(T-t\theta)=T^N+ta_1T^{N-1}+\cdots+t^Na_0.
	$$
	By \eqref{eq:same} and $t\in (0,1)$, it follows that $a_i\equiv 0$ for each $i=1,\ldots,N$. Hence $\theta$ is \emph{nilpotent}.
	\medskip
	
\noindent	{\em Step 5. Scaling of $\theta$ by an element of \({\rm U}(1)\).} By the previous step,  $(E,\theta,h)$ has nilpotent residues in the sense of \cref{def:nilpotency}.  Then for any $\lambda\in  {\rm U}(1)$, $(E,\lambda\theta,h)$ also has nilpotent residues, hence is tame pure imaginary 
harmonic bundle.  We apply \cite[Lemmas 10.9 \&  10.10]{Moc06} to conclude that for the  monodromy representation $\varrho_\lambda$ of the flat connection  $\bD_\lambda:=\nabla_h+\lambda\theta+\overline{\lambda}\theta^\dagger_h$, its Zariski closure is also $G$, which implies that $\varrho_\lambda$ is reductive.  Hence $[\varrho_\lambda]\in M_B(\pi_1(X),G)(\bC)$.   
As $\varrho_\lambda$ is a continuous deformation of $\varrho$ in $\Hom(\pi_1(X), G)(\bC)$ and $ [\varrho] \in M_B(X,G)(\bC)$ is isolated, it follows that $\varrho_\lambda$ is conjugate to $\varrho$. 
	\medskip
	
\noindent	{\em Step 6. End of proof.} The rest of the proof is exactly the same as \cite[Proposition 4.8]{BDDM}; we provide it for completeness sake.  Now fix $\lambda\in {\rm U}(1)$ which is not root of unity. By the same argument as above, there is a  a  smooth automorphism $\phi:V\to V$ so that $\bD_\lambda=\phi^*\bD_1:=\phi^{-1}\bD_1\phi$ and $\phi^*h=h$.  Moreover, 
	\begin{align*}
		\nabla_{h}&=\nabla_{\phi^*h}=\phi^*\nabla_h\\
		\lambda\theta+ \overline{\lambda}\theta^\dagger_{h}&=\phi^*(\theta+\theta_{h}^\dagger).
	\end{align*} 
	In other words, $\phi:(E,h)\to (E,h)$ is a holomorphic automorphism which is moreover an isometry.  Moreover, $\phi^*\theta=\lambda\theta$. 
	Consider the prolongation $\diae_h$  over $\overline{X}$ via norm growth defined in \cref{sec:tame}. Since $\phi: (E,h)\to (E,h)$ is   an isometry, it thus extends to a holomorphic isomorphism $\diae_{h}\to \diae_h$, which we still denote by $\phi$. Recall that $(E,\theta,h)$ and $(E,\lambda\theta,h)$ 
is tame,  we thus have the following commutative diagram
	\begin{equation*}
		\begin{tikzcd}
			\diae_{h} \arrow[r, "\lambda\theta"] \arrow[d, "\phi"] & \diae_{h}\otimes \Omega_{\overline{X}}(\log D) \arrow[d, "\phi\otimes {\rm id}"] \\
			\diae_{h} \arrow[r, "\theta"]  & \diae_{h}\otimes \Omega_{\overline{X}}(\log D).
		\end{tikzcd}
	\end{equation*} 
Let $T$ be a formal variable. Consider the polynomial  $\det (\phi-T)=0$ of $T$. Since its coefficients are holomorphic functions defined over  $\overline{X}$, they are all constant.  Let $\eta$ be an eigenvalue of $\det (\phi-T)=0$.  Consider the generalized eigenspace $\diae_{h,\eta}$ defined by $\ker (\phi-\eta)^\ell=0$ for some sufficiently big $\ell$. One can check that $\theta:  \diae_{h,\eta} \to \diae_{h,\lambda^{-1}\eta}\otimes \Omega_{\overline{X}}(\log D)$. Since $\lambda$ is not root of unity,  the eigenvalues of $\phi$ break up into a finite number of  chains of the form $\lambda^{i}\eta,\ldots, \lambda^{-j}\eta$  so that $\lambda^{i+1}\eta$ and $\lambda^{-j-1}\eta$
	are not eigenvalues of $\phi$.  Therefore, there is decomposition $\diae_{h}=\oplus_{i=0}^{m} \diae_i$ so that 
	$$
	\theta: \diae_i\to \diae_{i+1}\otimes \Omega_{\overline{X}}(\log D).
	$$
	In the language of \cite[Definition 2.10]{Den22} $(\diae_{h}=\oplus_{i=0}^{m} E_i,\theta)$ is a \emph{system of log Hodge bundles}. One can apply \cite[Proposition 2.12]{Den22} to show that the harmonic metric  $h$ is moreover a \emph{Hodge metric} in the sense of \cite[Definition 2.11]{Den22}, \emph{i.e.}  $h(u,v)=0$ if $u\in E_i|_{X}$ and $v\in E_j|_{X}$ with $i\neq j$. Hence by \cite[\S 8]{Sim88}, $(E,\theta,h)$ corresponds to a complex variation of Hodge structures in $X$.     This proves the theorem. 
\end{proof}
\begin{rem}
The above theorem has been known to us for the following cases:	when $X$ is projective, it is proved by Simpson \cite{Sim92}; when $\sigma$ has quasi-unipotent monodromies at infinity and $G={\rm SL}_n$, it is proved by Corlette-Simpson \cite{CS08}; when $G={\rm GL}_N$ and the corresponding harmonic bundle of $\sigma$ has nilpotent residues at infinity, it is proved in \cite{BDDM}. In Steps 1-4 of the above proof we utilise Mochizuki's work to prove that $\theta$ is nilpotent. 
\end{rem}

\section{Proof of  Theorem A} \label{sec:proof1}
In this section, we  prove our first main result, \cref{main2}, of this paper.
The outline of the proof is as follows: The proof relies on dividing the argument into two essential cases depending on whether the representation $\varrho:\pi_1(X)\to G(\bC)$ is rigid or not.
This strategy is now standard as originated from \cite{Zuo96}.

\noindent
Case 1: Non-rigid. When the representation is non-rigid, we first construct an unbounded representation $\varrho':\pi_1(X)\to G(F)$, where $F$ is some non-archimedean local field of zero characteristic (cf. \cref{lem:20220819}), and then apply \cref{main6}.

\noindent
Case 2: Rigid. When $\varrho$ is rigid, we may assume that it is defined over some number field $L$. If there exists some non-archimedean place $\nu$ of $L$ such that the representation is unbounded with respect to $\nu$, we apply \cref{main6}. Otherwise, the representation becomes a complex direct factor of a $\bZ$-variation of Hodge structures, by \cref{thm:vhs}. 
We then apply \cref{thm:PicardVHS}.

In our proof, this strategy is first carried out under the stronger assumption that $G$ is almost simple, and then we reduce the general semisimple case to the almost simple case.
In addition, we need the stability of the existence of a Zariski dense and big representation $\varrho:\pi_1(X)\to \mathrm{GL}_n(\bC)$ under the Galois conjugates (cf. \cref{thm:conjugate}).

\subsection{Existence of unbounded representations}\label{sec:20251108}
 We first prove a theorem on constructing unbounded representation in almost simple algebraic groups over non-archimedean local field. This refines a previous result in \cite[\S 4.2]{Yam10}.

\begin{proposition}\label{lem:20220819}
	Let $X$ be a smooth quasi-projective variety and  let $G$ be an almost simple algebraic group defined over $\bC$. 
Assume that there is a Zariski dense, non-rigid representation $\varrho:\pi_1(X)\to G(\bC)$, then there is a Zariski dense, unbounded representation $\varrho':\pi_1(X)\to G(F)$ where $F$ is some non-archimedean local field of zero characteristic. 
	If moreover $\varrho$ is big, then $\varrho'$ is taken to be big.
\end{proposition}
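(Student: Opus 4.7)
The non-rigidity hypothesis places $[\varrho]$ on a positive-dimensional irreducible component $V$ of the character variety $M_B(\pi_1(X),G)$, which is a scheme defined over $\overline{\mathbb Q}$. My plan is first to cut out a smooth affine curve $C\subset V$ through $[\varrho]$, defined over some number field $L$, whose smooth projective model $\overline C$ admits a point $c_\infty\in\overline{C}\setminus C$ (always possible since $V$ is positive-dimensional and $C$ can be taken non-proper). Since $\varrho$ is Zariski dense in $G$ and $G$ is almost simple, the stabilizer of $\varrho$ under the conjugation action of $G$ on $\Hom(\pi_1(X),G)$ is the finite center $Z(G)$, so the quotient map $\Hom(\pi_1(X),G)\to M_B$ is a $G/Z(G)$-bundle in a Zariski neighborhood of $\varrho$. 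After possibly replacing $C$ by a finite étale cover, which does not affect the boundary structure, I can lift the inclusion $C\hookrightarrow M_B$ to an actual representation $\varrho_C:\pi_1(X)\to G(\mathcal{O}(C))$ specializing to a $G$-conjugate of $\varrho$ at the chosen lift of $[\varrho]$.

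\textbf{Completing at the boundary and verifying density/unboundedness.} The point $c_\infty$ defines a discrete valuation $\nu$ on $L(C)$ whose completion $F:=\widehat{L(C)}_\nu$ is a non-archimedean local field of characteristic zero. The inclusion $L(C)\hookrightarrow F$ induces $\varrho':\pi_1(X)\to G(F)$. For \emph{Zariski density}, the Zariski closure $H$ of $\varrho_C(\pi_1(X))$ in $G_{L(C)}$ has a model over a Zariski open part of $C$ whose fiber at the chosen specialization point contains the Zariski-dense set $\varrho(\pi_1(X))\subset G$; upper semicontinuity of fiber dimension then forces $H=G_{L(C)}$, and base change yields Zariski density of $\varrho'$. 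For \emph{unboundedness}, I will argue by contradiction: if $\varrho'(\pi_1(X))$ were bounded, then fixing a faithful embedding $G\hookrightarrow\mathrm{GL}_N$ the matrix entries (in particular the trace characters $\chi_\gamma=\mathrm{tr}(\varrho'(\gamma))$ for $\gamma\in\pi_1(X)$) would all lie in the valuation ring $O_F$. Each $\chi_\gamma$ is by construction the image of the restriction $\chi_\gamma|_C\in\mathcal O(C)$ under $L(C)\hookrightarrow F$, so $\chi_\gamma|_C$ would have non-negative order at $c_\infty$, i.e.\ extend regularly to $c_\infty$. The main technical point, which I will handle via Procesi's theorem, is that the functions $\{\chi_\gamma\}_{\gamma\in\pi_1(X)}$ generate $\mathcal{O}(M_B)$, hence (after restriction) generate $\mathcal{O}(C)$; their simultaneous regular extension across $c_\infty$ would contradict the choice $c_\infty\notin C$.

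\textbf{Preservation of bigness.} Assume $\varrho$ is big, with associated countable union of proper Zariski closed subvarieties $\bigcup_i Z_i\subsetneqq X$. I claim the same union witnesses bigness of $\varrho'$. Take $Y\subset X$ positive-dimensional with $Y\not\subset\bigcup_i Z_i$, and set $\Gamma_Y:=\mathrm{Im}(\pi_1(Y^{\mathrm{norm}})\to\pi_1(X))$, so that $\varrho(\Gamma_Y)$ is infinite. Suppose for contradiction that $\varrho'(\Gamma_Y)$ is finite; then its Zariski closure in $G_F$ is a $0$-dimensional subscheme, and (since $L(C)\hookrightarrow F$ is flat and $G$ is defined over $L(C)$) the Zariski closure of $\varrho_C(\Gamma_Y)$ in $G_{L(C)}$ is also $0$-dimensional. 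Taking the closure $\mathcal Z\subset G\times C$, the generic fiber has dimension $0$, so by upper semicontinuity every fiber has dimension $0$; in particular the fiber over the specialization point is finite and contains $\varrho(\Gamma_Y)$, contradicting that $\varrho(\Gamma_Y)$ is infinite.

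The main obstacle in carrying this out rigorously will be the first paragraph: producing the family $\varrho_C$ with good properties simultaneously over $C$. Concretely, guaranteeing a single representation defined over $\mathcal{O}(C)$ (rather than only over formal completions or analytic neighborhoods) and compatible with the GIT quotient requires a careful use of the local structure of $\Hom\to M_B$ at the reductive point $\varrho$, together with finite étale base change to trivialize the $G/Z(G)$-torsor; the subsequent steps are essentially valuation-theoretic and specialization-theoretic.
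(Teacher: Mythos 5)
The central step of your proposal fails: the field $F := \widehat{L(C)}_\nu$ is \emph{not} a non-archimedean local field. If $L$ is a number field and $c_\infty$ is a closed point of $\overline{C}$, then the residue field of the valuation $\nu$ is a finite extension of $L$, hence an \emph{infinite} field; consequently $F$ is a complete discretely valued field but not locally compact, and so it does not qualify as a local field of characteristic zero (these are exactly the finite extensions of $\bQ_p$). This is not a minor technical wrinkle: the proposition's conclusion requires an honest local field because the downstream applications (Bruhat–Tits buildings, equivariant pluriharmonic maps à la Gromov–Schoen, etc.) rely on local compactness. This is precisely the obstruction that historically motivated the ``reduction mod $p$'' route of Zuo, Eyssidieux, and Corlette–Simpson, and it is the obstacle the paper's proof is designed to circumvent. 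The paper's argument embeds the finitely generated field over which $\varrho$ and the generic point of the relevant component $R'\subset \Hom(\pi_1(X),G)$ are defined into a finite extension $L/\bQ_p$ (using that $\bQ_p$ has infinite transcendence degree over $\bQ$), then builds a curve $C\subset R'$ over $L$ and shows, via the compactness of the bounded locus in the $F$-points of the character variety (\cite[Lemma 4.2]{Yam10}), that some $y\in C(F)$ with $F/L$ finite gives an unbounded representation. The target field $F$ is then a genuine local field. Your function-field completion does not achieve this, and no amount of tweaking the curve $C$ or its compactification will fix it.

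Two secondary issues. First, your bigness argument via upper semicontinuity of fiber dimension is stated backwards: knowing the generic fiber of $\mathcal{Z}\subset G\times C$ is $0$-dimensional does \emph{not} bound the special fiber's dimension by upper semicontinuity (special fibers can jump \emph{up}). The conclusion you want is salvageable much more directly — if $\varrho_C(\Gamma_Y)\subset G(\mathcal{O}(C))$ is a finite set, then so is its image under evaluation at the specialization point — but note that the paper avoids this entirely by showing $\ker(\varrho')\subset\ker(\varrho)$ via a word-set argument, which handles bigness cleanly and simultaneously avoids your third issue. Second, lifting the curve $C\hookrightarrow M_B$ to $\varrho_C:\pi_1(X)\to G(\mathcal{O}(C))$ after ``a finite étale cover'' is not justified; trivializing the $G/Z(G)$-torsor $\Hom\to M_B$ over $C$ is not automatic (the Brauer-type obstructions need not vanish after a \emph{finite} cover). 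The paper sidesteps this by working directly inside $R=\Hom(\pi_1(X),G)$, taking $R'$ to be the irreducible component of $R_{\bar\bQ}$ containing $\varrho$ and only using the projection $p:R\to M$ to detect unboundedness, never needing to lift a curve from the quotient.
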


Before prove this, we prepare two lemmas.

\begin{lem}
Let $G$ be an almost simple algebraic group over a field $K$ of characteristic zero. Then its Lie algebra $\mathrm{Lie}(G)$ is simple. 
\end{lem}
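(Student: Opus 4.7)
The plan is to invoke the Lie correspondence between closed connected normal algebraic subgroups of $G$ and ideals of $\mathrm{Lie}(G)$, which holds in characteristic zero. Since the Lie algebra and the property of being almost simple are both preserved under base change to the algebraic closure $\overline{K}$, I would first reduce to the case where $K = \overline{K}$.

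Next, I would recall the classical fact (e.g.\ Borel, \emph{Linear Algebraic Groups}, Ch.~II, or Humphreys, \emph{Linear Algebraic Groups}, §13) that in characteristic zero, every ideal $\mathfrak{h} \subset \mathrm{Lie}(G)$ is of the form $\mathrm{Lie}(H)$ for a unique connected closed normal subgroup $H \subset G$. Since $G$ is almost simple, such an $H$ must be either trivial or equal to $G$, so $\mathfrak{h} = 0$ or $\mathfrak{h} = \mathrm{Lie}(G)$. Hence $\mathrm{Lie}(G)$ has no non-trivial ideals.

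Finally, I would verify that $\mathrm{Lie}(G)$ is non-abelian. Otherwise, $G$ itself would be commutative, hence solvable, contradicting the assumption that $G$ is almost simple (in particular, semisimple and non-trivial by the convention fixed in the introduction). Combining the two observations yields that $\mathrm{Lie}(G)$ is simple. There is no substantive obstacle here; the only delicate point is citing the correct form of the Lie correspondence in characteristic zero, where the bijection between ideals and connected normal algebraic subgroups is standard.
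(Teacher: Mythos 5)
Your overall strategy — exploit the characteristic-zero Lie correspondence to pass from ideals of $\mathrm{Lie}(G)$ to normal subgroups of $G$ — is the same as the paper's, but the specific ``classical fact'' you invoke is false as stated. It is not true that for an arbitrary connected algebraic group $G$ over a field of characteristic zero every ideal of $\mathrm{Lie}(G)$ is of the form $\mathrm{Lie}(H)$ for a closed connected (normal) subgroup $H$: take $G=\mathbb{G}_m\times\mathbb{G}_m$ over $\overline{K}$. Its Lie algebra $\overline{K}^2$ is abelian, so every line is an ideal, yet the one-dimensional closed connected subgroups are exactly the $\{(t^a,t^b)\}$ with $(a,b)\in\mathbb{Z}^2$, whose Lie algebras account only for the lines with ``integral slope.'' What Borel and Humphreys actually give is a bijection between closed connected subgroups and \emph{algebraic} Lie subalgebras, which is a strictly smaller class than all subalgebras or all ideals.

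Your argument is rescued by the extra structure you have not used: since $G$ is almost simple it is in particular semisimple, and in characteristic zero this forces $\mathrm{Lie}(G)$ to be a semisimple Lie algebra. This is precisely the first step the paper takes (citing Milne, Prop.~4.1), and it is where the characteristic-zero hypothesis genuinely enters. Once $\mathrm{Lie}(G)=\mathfrak{g}_1\oplus\cdots\oplus\mathfrak{g}_r$ is a sum of simple ideals, one still has to show that each partial sum is algebraic, and the paper supplies this concretely: assuming $r\geq 2$, it takes $H=C_G(\mathfrak{g}_2\oplus\cdots\oplus\mathfrak{g}_r)$, checks $\mathrm{Lie}(H)=\mathfrak{g}_1$, and then applies the one nontrivial direction of the Lie correspondence (a connected subgroup whose Lie algebra is an ideal is normal; Milne, Thm.~3.31) to produce a nontrivial proper connected normal subgroup $H^{\circ}$, contradicting almost simplicity. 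So your proof has the right shape, but it omits the reduction to a semisimple Lie algebra and appeals to a correspondence that is too strong to cite off the shelf; the centralizer construction is the missing ingredient that makes the ideal $\mathfrak{g}_1$ visibly algebraic.
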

\begin{proof} 
		Since $G$ is almost simple, $\mathrm{Lie}(G)$ is semi-simple thanks to the assumption $\mathrm{char}(K)=0$ (cf. \cite[Prop 4.1.]{Mil17}).
		We may decompose as $\mathrm{Lie}(G)=\mathfrak{g}_1\oplus \cdots\oplus \mathfrak{g}_r$, where $\mathfrak{g}_i$ is a simple ideal of $\mathrm{Lie}(G)$.
		We need to show $r=1$.
		So assume contrary that $r>1$.
		Then $\mathfrak{g}_1\subset \mathrm{Lie}(G)$ is neither trivial nor $\mathrm{Lie}(G)$ itself.
Set $H=C_G(\mathfrak{g}_2\oplus \cdots\oplus \mathfrak{g}_r)$, which is a subgroup of $G$ defined by 
		$$
	 C_G(\mathfrak{g}_2\oplus \cdots\oplus \mathfrak{g}_r)=(R \leadsto g \in G(R) \mid g x=x \text { for all } x \in \mathfrak{g}_2\oplus \cdots\oplus \mathfrak{g}_r(R))
		$$
		for all $k$-algebra $R$ 
		(cf. \cite[Proposition 3.40]{Mil17}). It means that  $H$ acts trivially on $\mathfrak{g}_2\oplus \cdots\oplus \mathfrak{g}_r$.
		Then $\mathrm{Lie}(H)=\mathfrak{g}_1$. 
		Let $H^0\subset H$ be the identity component of $H$.
		Then $\mathrm{Lie}(H^0)=\mathrm{Lie}(H)=\mathfrak{g}_1$.
		Hence $\mathrm{Lie}(H^0)\subset \mathrm{Lie}(G)$ is an ideal.
		Hence $H^0\subset G$ is normal (cf. \cite[Thm 3.31]{Mil17}), which is neither trivial nor $G$ itself.
		Since $G$ is almost simple, this is a contradiction.
		Thus $r=1$ and $\mathrm{Lie}(G)$ is simple. 
\end{proof}
\begin{lem}\label{lem:20220827}
Let $G$ be an almost simple algebraic group defined over a field $K$ of characteristic zero.
Let $\mathrm{Ad}:G\to \mathrm{Aut}(\mathrm{Lie}(G))$  be the adjoint representation.
Let $W\subset \mathrm{Lie}(G)$ be a $K$-linear subspace which is $G$-invariant.
Then either $W=\mathrm{Lie}(G)$ or $W=\{ 0\}$. 
\end{lem}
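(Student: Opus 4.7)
The plan is to deduce from the $G$-invariance of $W$ that $W$ is a Lie ideal of $\mathrm{Lie}(G)$, and then to apply the preceding lemma, which asserts the simplicity of $\mathrm{Lie}(G)$, to conclude.

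First I would invoke the standard fact that the differential at the identity of $\mathrm{Ad}:G\to \mathrm{Aut}(\mathrm{Lie}(G))$ is the Lie-algebra adjoint representation $\mathrm{ad}:\mathrm{Lie}(G)\to \mathrm{End}(\mathrm{Lie}(G))$ sending $x$ to $[x,-]$. Combined with the characteristic-zero correspondence between algebraic and Lie-theoretic invariant subspaces under a connected algebraic group, this gives: a finite-dimensional subspace of a $G^{0}$-module is stable under $G^{0}$ if and only if it is stable under the infinitesimal action of $\mathrm{Lie}(G^{0})=\mathrm{Lie}(G)$. Since $W$ is $G$-invariant, it is in particular $G^{0}$-invariant, and hence $\mathrm{ad}(x)(w)=[x,w]\in W$ for every $x\in \mathrm{Lie}(G)$ and every $w\in W$; that is, $W$ is a Lie ideal of $\mathrm{Lie}(G)$.

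Finally, the previous lemma ensures that $\mathrm{Lie}(G)$ is a simple Lie algebra, so its only ideals are $\{0\}$ and $\mathrm{Lie}(G)$ itself, which is exactly the claimed dichotomy. No serious obstacle is anticipated in the argument; the only point that deserves a bit of care is the passage from $G$-invariance to $\mathrm{ad}$-invariance on $\mathrm{Lie}(G)$, which relies essentially on the hypothesis $\mathrm{char}(K)=0$ (so that the reduction to the identity component is clean and the infinitesimal/integrated versions of invariance agree).
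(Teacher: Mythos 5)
Your argument is correct and matches the paper's in essence: both proofs show that $G$-invariance of $W$ forces $W$ to be an ideal of $\mathrm{Lie}(G)$ by passing to the infinitesimal action via the differential of $\mathrm{Ad}$, and then invoke the preceding lemma (simplicity of $\mathrm{Lie}(G)$) to conclude. The paper packages the differentiation step slightly more precisely, considering the scheme-theoretic stabilizer $G_W$ (so that $G_W = G$) and citing Milne's formula $\mathrm{Lie}(G_W) = \mathrm{Lie}(G)_W = \{x \in \mathrm{Lie}(G) : \mathrm{ad}(x)W \subset W\}$, whereas you appeal to the general characteristic-zero dictionary between $G^0$-invariant and $\mathrm{ad}$-invariant subspaces; these are the same fact, and your reduction to $G^0$ is a harmless detour that the paper avoids by working with the stabilizer directly.
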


\begin{proof}
Write $\kg:=\mathrm{Lie}(G)$. 	Consider the adjoint representation $G \rightarrow \mathrm{GL}_{\kg}$. For any $K$-subspace $P$  of $V$,  the functor
	$$
	R \rightsquigarrow\left\{g \in G(R) \mid g P_R=P_R\right\}\  \mbox{for all}\ K\mbox{-algebra } R
	$$
	is a subgroup of $G$, denoted $G_P$. Then by our assumption $G_W=G$.  By \cite[Proposition 3.38]{Mil17}, it follows that $\mathrm{Lie}(G_W)=\mathrm{Lie}(G)_W$,
	where 
	$$
	\mathrm{Lie}(G)_W:=\{x\in \kg\mid  \mathrm{ad}(x)(W)\subset W \}. 
	$$   
In other words, $[x,y]\in W$ for all $x\in\kg$, $y\in W$, i.e., $W\subset  \kg$ is an ideal.
Note that $\kg$ is a simple Lie algebra. 
Hence $W=\kg$ or $W=\{ 0\}$. 
\end{proof}

\medskip

\begin{proof}[Proof of \Cref{lem:20220819}] 
Since $G$ is almost simple,  by the classification of almost simple linear algebraic groups defined over $\bC$,   $G$ is isogenous to exactly one of the following:  $A_n, B_n, C_n, D_n, E_6, E_7, E_8, F_4, G_2$. Therefore, $G$ is defined over some number field $k\subset \bar{\bQ}$. Moreover, it is absolutely almost simple, i.e. for any field extension $L/k$, the base change $G_L$  is also an almost simple algebraic group over $L$.  

Since $\pi _1(X)$ is finitely presented, there exists an affine scheme $R$ defined over $k$ such that
$$R (L)=\Hom (\pi _1(X),G(L))$$
for every field extension $L/k$.
This space is defined as follows: We choose generators $\gamma _1,\ldots ,\gamma _\ell$ for $\pi _1(X)$. Let $\mathcal R$ be the set of relations among the generators $\gamma _i$. Then 
$$R \subset \underbrace{G\times \cdots \times G}_{\ell \ \text{times}}$$
is the closed subscheme defined by the equations $r(m_1,\ldots ,m_\ell)=1$ for $r\in \mathcal R$. A representation $\tau :\pi _1(X)\to G(L)$ corresponds to the point $(m_1,\ldots ,m_\ell)\in R (L)$ with $m_i=\tau (\gamma _i)$. 
Note that $R$ is an affine scheme, since it is a closed subscheme of an affine variety.

\medskip

\begin{claim}\label{claim:ZD}
There exists a Zariski closed subset $E\subset R$ defined over $k$ with the following property:
Let $L$ be a field extension of $k$ and $\tau :\pi _1(X)\to G(L)$ be a representation such that $\tau(\pi_1(X))$ is infinite.
Then $\tau(\pi_1(X))$ is not Zariski dense in $G({L})$ if and only if the corresponding point $[\tau]\in R(L)$ satisfies $[\tau]\in E(L)$. 
\end{claim}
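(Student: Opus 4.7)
The idea is to characterize ``infinite image but not Zariski dense'' as a closed condition on $R$, using the adjoint representation of $G$ together with the properness of Grassmannians. Concretely, I would set $d := \dim_k \kg$ and, for each $r \in \{1,\dots,d-1\}$, introduce the incidence variety
\[
Z_r := \bigl\{(\tau, W) \in R \times \mathrm{Gr}(r, \kg) \,:\, \mathrm{Ad}(\tau(\gamma_i))\, W \subset W \text{ for all } i = 1,\dots,\ell\bigr\}.
\]
This is a closed $k$-subscheme of $R \times \mathrm{Gr}(r, \kg)$, cut out by the pullback of the natural stabilizer conditions from the action of $\mathrm{GL}(\kg)$ on $\mathrm{Gr}(r, \kg)$. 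Since $\mathrm{Gr}(r, \kg)$ is proper over $k$, the projection $R \times \mathrm{Gr}(r, \kg) \to R$ is closed; hence $E_r := \mathrm{pr}_R(Z_r)$ is Zariski closed in $R$ and defined over $k$. I would then take
\[
E := \bigcup_{r=1}^{d-1} E_r \subset R,
\]
which is Zariski closed in $R$ and defined over $k$.

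For the direction $[\tau] \in E(L) \Rightarrow \tau$ is not Zariski dense, I would pass to an algebraic closure $\bar L / L$ to obtain some $r$ and some $W \in \mathrm{Gr}(r, \kg)(\bar L)$ stabilized by every $\mathrm{Ad}(\tau(\gamma_i))$, hence by $\mathrm{Ad}(\tau(\pi_1(X)))$. Consequently $\tau(\pi_1(X))$ lies in $\mathrm{Stab}_G(W)(\bar L)$. Because $G$ is absolutely almost simple (as it arises from a type in the Killing--Cartan classification), \cref{lem:20220827} applied over $\bar L$ shows that $\mathrm{Stab}_G(W) \neq G_{\bar L}$, since $0 < \dim W < d$. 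The Zariski closure of $\tau(\pi_1(X))$ in $G_{\bar L}$ is therefore a proper subgroup, and this descends to a proper closed $L$-subgroup of $G_L$ since the defining ideal of that closure is automatically Galois-invariant.

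For the converse, assume $\tau(\pi_1(X))$ is infinite and not Zariski dense in $G_L$. Let $H \subsetneq G_L$ be its Zariski closure. The key point is that in characteristic zero $H$ is automatically smooth, so its identity component $H^0$ has positive dimension (the image is infinite), and since $G$ is connected and $H \neq G_L$ one has $H^0 \subsetneq G_L$. Setting $\mathfrak{h} := \mathrm{Lie}(H^0) \subsetneq \kg \otimes_k L$, its $L$-dimension $r$ lies in $\{1, \dots, d-1\}$, and the normality of $H^0$ in $H$ makes $\mathfrak{h}$ stable under $\mathrm{Ad}(H)$, hence under $\mathrm{Ad}(\tau(\pi_1(X)))$. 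This exhibits an $L$-point $(\tau, \mathfrak{h}) \in Z_r(L)$, so projecting yields $[\tau] \in E_r(L) \subset E(L)$.

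The argument is essentially formal once the correct objects are chosen, and I do not anticipate a genuine obstacle. The only point requiring care will be the rationality bookkeeping: the forward direction forces one to pick $W$ over $\bar L$ and to invoke \cref{lem:20220827} over $\bar L$, which is legitimate thanks to the absolute almost simplicity of $G$; the backward direction, in contrast, automatically produces an $L$-rational subspace via $\mathrm{Lie}(H^0)$, so no Galois descent is needed.
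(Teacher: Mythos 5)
Your proof is correct and takes essentially the same route as the paper: the incidence variety inside the product of $R$ with a Grassmannian of subspaces of $\kg$, properness of the Grassmannian to obtain a closed image, \cref{lem:20220827} to rule out proper invariant subspaces when the image is dense, and $\mathrm{Lie}(H^0)$ for the converse. The one point where you diverge—passing to $\bar L$ to find the stabilized subspace $W$ rather than claiming an $L$-rational one exists—is actually more careful than the paper's assertion $E_d(L)\subset p(W_d(L))$ (which is not automatic, since rational points need not lift along proper surjections), and your descent argument closes that small gap cleanly.
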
 

\begin{proof}[Proof of \cref{claim:ZD}]
A stronger result is proved in \cite[Proposition 8.2]{AB}.
Here we give a proof for the sake of completeness.
 Consider  the adjoint action $G\curvearrowright \mathrm{Lie}(G)$ which is defined over $k$. 
This action induces the action $G\curvearrowright \mathrm{Gr}_d(\mathrm{Lie}(G)$ over $k$, where $0< d<\dim G$.
Here $\mathrm{Gr}_d(\mathrm{Lie}(G)$ is the grassmannian variety, which we denote by $X_d$. Recall that $G_{L}$ is  almost simple for the extension $L$ of $k$.   
We remark that the action $G(L)\curvearrowright X_d(L)$ has no fixed point. 
Indeed if there exists a fixed point $[P]\in X_d(L)$, where $P\subsetneqq \mathrm{Lie}(G)_L$ is a $d$-dimensional subspace defined over $L$, we get a $G(L)$ invariant subspace $P\subset \mathrm{Lie}(G)_L=\mathrm{Lie}(G_L)$ which is impossible by Lemma \ref{lem:20220827}. 
On the other hand, if $H\subset G_L$ is an algebraic subgroup of dimension $d$, then $H(L)$ fixes the point $[\mathrm{Lie}(H)]\in X_d(L)$. 

Let $S=\{\gamma_1,\ldots,\gamma_l\}\subset \pi_1(X)$ be a finite subset which generates $\pi_1(X)$.
We define a Zariski closed subset $W_{d}\subset X_d\times G^{S}$ by
$$W_{d}=\{ (x,h_1,\ldots,h_l); h_ix=x, \forall i\in\{1,\ldots,l\}\}.
$$
Then if a representation $\tau :\pi _1(X)\to G(L)$ satisfies $\tau(\pi_1(X))\subset H(L)$ for some algebraic subgroup of dimension $d$, where $0<d<\dim G$, then we have $([\mathrm{Lie(H)}],\tau(\gamma_1),\ldots,\tau(\gamma_l))\in W_{d}(L)$.

Let $p:X_d\times G^{S}\to G^{S}$ be the projection.
Then $p$ is a proper map, for $X_d$ is complete.
Hence $p(W_{d})\subset G^{S}$ is a Zariski closed subset defined over $k$.
We set $E_{d}=p(W_{d})\cap R$, which is a Zariski closed subset of $R$.
Hence if a representation $\tau :\pi _1(X)\to G(L)$ satisfies $\tau(\pi_1(X))\subset H(L)$ for some algebraic subgroup of dimension $d$, then the corresponding point satisfies $[\tau]\in E_{d}(L)$.

Now let $\tau :\pi _1(X)\to G(L)$ be a representation such that $\tau(\pi_1(X))$ is infinite.
Assume first that $\tau(\pi_1(X))$ is not Zariski dense in $G(L)$.
Let $H\subset G$ be the Zariski closure of $\tau(\pi_1(X))\subset G(L)$.
Then $H$ is defined over $L$.
Since $\tau(\pi_1(X))$ is infinite, we have $d=\dim H>0$.
Then by the above consideration, we have $[\tau]\in E_{d}(L)$.  
Next suppose $[\tau]\in E_{d}(L)$ for some $d$ with $0<d<\dim G$.
Then by $E_{d}(L)\subset p(W_{d}(L))$, there exists a $d$-dimensional $L$-subspace $P\subset \mathrm{Lie}(G)_L$  such that $([P],\tau(\gamma_1),\ldots,\tau(\gamma_l))\in W_{d}(L)$.
Hence $\tau(\pi_1(X))$ fixes the point $[P]\in X_d(L)$.
If $\tau(\pi_1(X))\subset G(L)$ is Zariski dense, then $G(L)$ also fixes $[P]\in X_d(L)$.
This is impossible as we see above.
Hence $\tau(\pi_1(X))\subset G(L)$ is not Zariski dense.

We set $E=E_1\cup \cdots\cup E_{\dim G-1}$.
This concludes the proof of the claim.
\end{proof} 

Now we return to the proof of the proposition. 
The group $G$ acts on $R$ by simultaneous conjugation.
Put $M=R // G$, and let $p:R\to M$ be the quotient map which is surjective.
Then $M$ is an affine scheme defined over $k$.
Let $\varrho\in R (\mathbb C)$ be the point which correspond to the Zariski dense representation $\varrho :\pi _1(X)\to G(\mathbb C)$.
\par

Let $W$ be the set of words in $x_1,\ldots,x_\ell$.
Given $w\in W$, we define a closed subscheme $Z_{w}\subset R$ defined by $Z_w=\{ (m_1,\ldots,m_\ell)\in R;\ w(m_1,\ldots,m_\ell)=1\}$.
Then $Z_w$ is defined over $k$. 

Let $M'$ be the irreducible component of $M_{\bar{\bQ}}$ such that $[\varrho]\in M'(\bC)$. 
Since $\varrho $ is non-rigid,   one has $\dim M'>0$.    Let $R'$ be the irreducible component   of $R_{\bar{\bQ}}$ containing $\varrho$. Since  $p:R_{\bar{\bQ}}\to M_{\bar{\bQ}}$  is surjective,   $p|_{R'}: R'\to M'$  is surjective.  Then $R'$ and $M'$ are defined over some finite extension  $K$  of $k$.  Note that $R'$ and $M'$ are geometrically irreducible.  
Let $\eta\in R'$ be the schematic generic point. 
Then $K(\eta)$ is a finitely generated over $K$.
Let $\mathfrak{K}$ be a field extension of $K$, which is finitely generated over $K$, such that an embedding $K(\eta)\hookrightarrow \mathfrak{K}$ exists and that $\varrho$ is defined over $\mathfrak{K}$, i.e., $\varrho:\pi_1(X)\to G(\mathfrak{K})$.

Let $p$ be a prime number and let $\mathbb Q_p$ be the completion.
Then since the transcendental degree of $\mathbb Q_p$ over $\mathbb Q$ is infinite and $\mathfrak{K}$ is finitely generated over $\mathbb Q$, there exists a finite extension $L/\mathbb Q_p$ such that an embedding $\mathfrak{K}\hookrightarrow L$ exists.
Then we have $[\varrho ]\in R (L)$. Recall that $G_L$ is   almost simple as an algebraic group defined over $L$. 
We remark that  $\varrho:\pi_1(X)\to G(L)$ is  Zariski dense.  
Thus by \cref{claim:ZD}, we have $[\varrho]\not\in E(L)$.
In particular, we have $E\subsetneqq R$.

We define $W_o\subset W$ by $w\in W_o$ if and only if $Z_w(\bar{L})\cap R'(\bar{L})\subsetneqq R'(\bar{L})$. 
We note that $W$, hence $W_o$ is countable. 
We may take $\eta_0\in R'(L)$ such that the corresponding map $\eta_0:\mathrm{Spec}\ L\to R'$ has image $\eta\in R'$.
Then $\eta_0\not\in Z_w(\bar{L})$ for all $w\in W_o$ and $\eta_0\not\in E(\bar{L})$.

\par
Since  $\dim M'>0$, there exists a morphism of $L$-scheme  $\psi :M'\to \mathbb A^1$ such that the image $\psi (M')$ is Zariski dense in $\mathbb A^1$.
Recall that  $p|_{R'}:R'\to M'$ is surjective.  Hence  the image $\psi \circ p(R')$ is Zariski dense in $\mathbb A^1$, and 
there exists an affine curve $C\subset R'$ defined over $L$ such that the restriction $\psi \circ p|_C:C\to \mathbb A^1$ is generically finite and $\eta_0\in C(L)$.
Since $R'$ is geometrically irreducible,  one has $C\not\subset Z_w$ for all $w\in W_o$ and $C\not\subset E$.
We may take a Zariski open subset $U\subset \mathbb A^1$ such that $\psi \circ p|_C$ is finite over $U$.
Let $x\in U(L)$ be a point, and let $y\in C(\bar{L})$ be a point over $x$.
Then $y$ is defined over some extension of $L$ whose extension degree is bounded by the degree of $\psi \circ p|_C:C\to \mathbb A^1$.
Note that there are only finitely many such field extensions.
Hence there exists a finite extension $F/L$ such that the points over $U(L)$ are all contained in $C(F)$.
Since $U(L)\subset \mathbb A^1(F)$ is unbounded, the image $\psi \circ p(C(F))\subset \mathbb A^1(F)$ is unbounded.
\par
Let $R_0\subset R(F)$ be the subset whose points correspond to $p$-bounded representations.
Let $M_0\subset M(F)$ be the image of $R_0$ under the quotient $p:R\to M$.
Then by \cite[Lemma 4.2]{Yam10}, $M_0$ is compact. 
Hence $\psi (M_0)$ is compact.
In particular it is bounded.
On the other hand, $\psi \circ p (C(F))\subset \mathbb A^1(F)$ is unbounded.
Hence $\psi \circ p (C(F))\backslash \psi (M_0)$ is an uncountable set.
Hence $C(F)\backslash R_0$ is an uncountable set.
Thus we may take $y\in C(F)$ such that $y\not\in R_0$ and $y\not\in Z_w(F)$ for all $w\in W_0$ and $y\not\in E(F)$.
Here we note that $C(F)\cap Z_w$ and $C(F)\cap E$ is a finite set, hence $\cup_{w\in W_o} (C(F)\cap Z_w)\cup (C(F)\cap E)$ is a countable set.
Let $\varrho':\pi _1(X)\to G(F)$ be the representation which corresponds to $y\in C(F)\subset R(F)$, i.e., $[\varrho']=y$.
Then $\varrho'$ is a $p$-unbounded representation such that $\varrho'$ is Zariski dense or has finite image  by \cref{claim:ZD}.

Next we show $\mathrm{ker}(\varrho')\subset \mathrm{ker}(\varrho)$.
Indeed, every element in $\mathrm{ker}(\varrho')$ is written as $w(\gamma_1,\ldots,\gamma_\ell)$ for some $w\in W$.
Then $[\varrho']\in Z_w$.
Hence $w\not\in W_0$.
Hence $R(F)\subset Z_w$.
In particular $[\varrho]\in Z_w$, hence $w(\gamma_1,\ldots,\gamma_\ell)\in \mathrm{ker}(\varrho)$.
This shows $\mathrm{ker}(\varrho')\subset \mathrm{ker}(\varrho)$.
Hence if $\varrho$ is big, then $\varrho'$ is also big.
In particular, $\varrho'$ has infinite image, hence Zariski dense image.
This completes the proof of the proposition.
\end{proof}

\subsection[Galois conjugate varieties]{Constructing linear representations of fundamental groups of Galois conjugate varieties}
Let $X$ be a complex smooth projective variety.  Recall that given any automorphism  $\sigma\in {\rm Aut}(\bC/\bQ)$, we can form the conjugate variety ${X}^\sigma$ defined as the complex variety ${X} \times_\sigma\spec \bC$, that is, by the cartesian diagram
\begin{equation*}
	\begin{tikzcd}
		{X}^\sigma\arrow[r,"\sigma^{-1}"]\arrow[d] & {X}\arrow[d]\\
		\spec\bC \arrow[r, "\sigma^*"]&\spec \bC 
	\end{tikzcd}
\end{equation*}
It is a smooth  projective variety. If $X$ is defined by homogeneous polynomials $P_1, \ldots, P_r$ in some projective space, then $X^\sigma$ is defined by the conjugates of the $P_i$ by $\sigma$. In this case, the morphism from $X^\sigma$ to $X$ in the cartesian diagram sends the closed point with coordinates $\left(x_0: \ldots: x_n\right)$ to the closed point with homogeneous coordinates $\left(\sigma^{-1}\left(x_0\right): \ldots: \sigma^{-1}\left(x_n\right)\right)$, which allows us to denote it by $\sigma^{-1}$.

The morphism $\sigma^{-1}: {X}^\sigma \rightarrow {X}$ is an isomorphism of abstract schemes, but it is not a morphism of complex varieties. It is important to note that, in general, the fundamental groups of the complex variety $X$ and $X^\sigma$ may be quite different, as demonstrated by the famous examples of Serre \cite{Ser64}. Despite this, their algebraic fundamental groups, which are the profinite completions of the topological fundamental groups, are canonically isomorphic.  

The following proposition plays a crucial role in the proof of \cref{main2}:  
\begin{proposition}\label{thm:conjugate}
	Let $X$ be a   smooth quasi-projective   variety and let $\rho:\pi_1(X)\to\mathrm{GL}_n(\mathbb C)$ be a representation.
	Let $\sigma\in\mathrm{Aut}(\mathbb C/\mathbb Q)$.
	Then there exists a representation $\tau:\pi_1(X^{\sigma})\to\mathrm{GL}_n(\mathbb C)$ such that the Zariki closures satisfy 
	\begin{equation}\label{eqn:202303111}
		\overline{\rho(\pi_1(X))}^{\mathrm{Zar}}=\overline{\tau(\pi_1(X^{\sigma}))}^{\mathrm{Zar}}.
	\end{equation}
	More precisely, $\tau$ satisfies the following property:
	If $Y\to X$ is a morphism from a smooth quasi-projective variety $Y$, we have
	\begin{equation}\label{eqn:202303115}
		\overline{\rho(\mathrm{Im}[\pi_1(Y)\to\pi_1(X)])}^{\mathrm{Zar}}=\overline{\tau(\mathrm{Im}[\pi_1(Y^{\sigma})\to\pi_1(X^{\sigma})])}^{\mathrm{Zar}}.
	\end{equation}
	In particular, if $\rho$ is big (resp. large), then $\tau$ is big (resp. large).
\end{proposition}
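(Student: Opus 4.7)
Proof plan:

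The plan is to construct $\tau$ via the Riemann--Hilbert correspondence combined with $\sigma$-conjugation of algebraic data, and then verify the two Zariski closure identities using a Tannakian argument. To begin, the representation $\rho$ corresponds to a complex local system $\mathbb{V}$ on $X^{\mathrm{an}}$, and by Deligne's theorem on algebraic flat connections with regular singularities, $\mathbb{V}$ is the local system of horizontal sections of a unique algebraic flat bundle $(E,\nabla)$ on $X$ with regular singularities along the boundary divisor of any smooth projective compactification. The $\sigma$-conjugate pair $(X^\sigma,(E^\sigma,\nabla^\sigma))$ is again an algebraic flat connection with regular singularities, these properties being algebraic and thus preserved under $\sigma$. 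Applying the analytic Riemann--Hilbert correspondence on $(X^\sigma)^{\mathrm{an}}$ yields a local system $\mathbb{V}^\sigma$, and a trivialization of $E^\sigma_{x^\sigma}$ obtained by $\sigma$-transport from a chosen trivialization of $E_x$ produces the monodromy representation $\tau:\pi_1(X^\sigma,x^\sigma)\to\mathrm{GL}_n(\mathbb{C})$.

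For the Zariski closure identity \eqref{eqn:202303111}, I would invoke Tannakian duality: the closure $G:=\overline{\rho(\pi_1(X))}^{\mathrm{Zar}}$ coincides with the Tannakian fundamental group of the subcategory $\langle\rho\rangle\subset\mathrm{Rep}_\mathbb{C}(\pi_1(X))$ (generated by $\rho$ under tensor products, duals and subquotients) with respect to the fiber functor $\omega_x$. Under Riemann--Hilbert, $\langle\rho\rangle$ is Tannaka-equivalent to $\langle(E,\nabla)\rangle\subset\mathrm{Conn}^{\mathrm{rs}}(X)$, and $\sigma$-conjugation gives an equivalence onto $\langle(E^\sigma,\nabla^\sigma)\rangle\subset\mathrm{Conn}^{\mathrm{rs}}(X^\sigma)$, corresponding on the Betti side to $\langle\tau\rangle$. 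The fiber functor $\omega_x$ is transported to $\omega_{x^\sigma}$ along this chain, and the compatible trivialization chosen above pins down the identification of fibers with $\mathbb{C}^n$, so that the transported Tannakian fundamental group coincides with $G$ as a subgroup of $\mathrm{GL}_n(\mathbb{C})$. For the relative version \eqref{eqn:202303115}, the same argument applies to the composite $\rho\circ f_\ast$: the pullback $f^\ast(E,\nabla)$ on $Y$ has monodromy equal to the restricted representation, and pullback commutes with $\sigma$-conjugation, so $(f^\sigma)^\ast(E^\sigma,\nabla^\sigma)\cong\bigl(f^\ast(E,\nabla)\bigr)^\sigma$, and the Tannakian argument goes through verbatim. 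The bigness/largeness statement is then immediate, since \eqref{eqn:202303115} implies that $\rho(\mathrm{Im}[\pi_1(Y)\to\pi_1(X)])$ is infinite iff its Zariski closure is positive-dimensional iff $\tau(\mathrm{Im}[\pi_1(Y^\sigma)\to\pi_1(X^\sigma)])$ is infinite.

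The hard part will be making the Zariski closures \emph{literally equal} as subgroups of $\mathrm{GL}_n(\mathbb{C})$, not merely isomorphic as abstract algebraic groups. The subtlety is that $\sigma$-conjugation of closed subgroups of $\mathrm{GL}_{n,\mathbb{C}}$ generically moves them, so one cannot expect the construction to yield the same subgroup without care. The resolution will lie in the explicit comparison of fiber functors: the canonical $\sigma$-linear identification $E^\sigma_{x^\sigma}=E_x\otimes_{\mathbb{C},\sigma}\mathbb{C}$ together with the choice of $\sigma$-transported basis absorbs the $\sigma$-twist, so that the Tannakian automorphism group of $\omega_{x^\sigma}|_{\langle\tau\rangle}$, viewed inside $\mathrm{GL}(E^\sigma_{x^\sigma})\cong\mathrm{GL}_n(\mathbb{C})$ via this basis, coincides with that of $\omega_x|_{\langle\rho\rangle}$ inside $\mathrm{GL}(E_x)\cong\mathrm{GL}_n(\mathbb{C})$. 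Writing this compatibility down rigorously—essentially, that the equivalence $\langle\rho\rangle\simeq\langle\tau\rangle$ together with the paired trivializations gives an equality, not just a conjugation, of Tannakian groups inside the common $\mathrm{GL}_n$—will require careful Tannakian bookkeeping and is the technical heart of the argument.
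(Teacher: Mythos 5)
The approach is fundamentally different from the paper's, and it contains a genuine gap at precisely the point you flag as "the technical heart."

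The problem is that the $\sigma$-conjugation of the algebraic flat connection induces, on the Tannakian side, a $\sigma$-\emph{semilinear} equivalence, and this cannot be absorbed by a choice of basis. Concretely: the canonical bijection $\varphi : E_x \to E^\sigma_{x^\sigma} = E_x \otimes_{\mathbb{C},\sigma}\mathbb{C}$ is $\sigma$-semilinear, so conjugation $A \mapsto \varphi A \varphi^{-1}$ sends a matrix $(a_{ij})$ (in a basis of $E_x$) to the matrix $(\sigma(a_{ij}))$ (in the $\sigma$-transported basis of $E^\sigma_{x^\sigma}$). Hence the Tannakian group of $\omega_{x^\sigma}\vert_{\langle\tau\rangle}$, read off in the transported basis, is the $\sigma$-conjugate subgroup $G^\sigma = \{(\sigma(a_{ij})) : (a_{ij}) \in G\}$, \emph{not} $G$. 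Since the Zariski closure $G$ of $\rho(\pi_1(X))$ is in general cut out by polynomials with transcendental coefficients (e.g.\ for a non-rigid $\rho$, or a unipotent one-parameter subgroup through $\mathrm{I}+N$ with $N$ having transcendental entries), one has $G^\sigma \neq G$ as subgroups of $\mathrm{GL}_n(\mathbb{C})$, and your construction does not yield \eqref{eqn:202303111}. The "Tannakian bookkeeping" you hope for doesn't exist: there is no $\mathbb{C}$-linear fiber-functor comparison in sight, only a $\sigma$-semilinear one.

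The paper avoids this $\sigma$-twist entirely and never touches Riemann--Hilbert. It applies Cassels' $p$-adic embedding theorem to choose a prime $p$ and an isomorphism $\mu : \overline{\mathbb{Q}_p} \xrightarrow{\sim} \mathbb{C}$ such that, under the induced identification $\mathrm{GL}_n(\overline{\mathbb{Q}_p}) = \mathrm{GL}_n(\mathbb{C})$, the image $\rho(\pi_1(X))$ lands in the profinite group $\mathrm{GL}_n(\mathbb{Z}_p)$. Then $\rho$ extends to a continuous $\widehat{\rho}$ on the profinite completion $\widehat{\pi_1(X)}$, the étale fundamental groups $\widehat{\pi_1(X)} \simeq \widehat{\pi_1(X^\sigma)}$ are canonically isomorphic as abstract scheme invariants, and $\tau$ is defined by composing $\pi_1(X^\sigma) \to \widehat{\pi_1(X^\sigma)} \simeq \widehat{\pi_1(X)} \xrightarrow{\widehat{\rho}} \mathrm{GL}_n(\mathbb{Z}_p) \subset \mathrm{GL}_n(\mathbb{C})$. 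The crucial point is that the coefficient embedding $\overline{\mathbb{Q}_p} \hookrightarrow \mathbb{C}$ is held fixed throughout: both $\rho(\pi_1(X))$ and $\tau(\pi_1(X^\sigma))$ are dense (in the $p$-adic topology) subsets of $\widehat{\rho}(\widehat{\pi_1(X)})$, literally the same profinite subgroup, so they have identical Zariski closures without any $\sigma$ ever acting on the coefficients. The relative statement \eqref{eqn:202303115} follows the same way via the compatible maps on profinite completions. You would need an analogous device to pin down the coefficient field on the Betti side before a Riemann--Hilbert argument could have a chance of working, and it is not clear one exists.
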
 
\begin{proof}
	Let $\gamma_1,\ldots,\gamma_k\in \pi_1(X)$ be a system of generators of $\pi_1(X)$ (as monoid).
	For $l=1,\ldots,k$, we set $\rho(\gamma_l)=(a_{ij}(\gamma_l))_{1\leq i,j,\leq n}\in \mathrm{GL}_n(\mathbb C)$.
	Let $S\subset \mathbb C$ be a finite subset defined by $S=\{a_{ij}(\gamma_l); 1\leq i,j\leq n,1\leq l\leq k\}$.
	Let $\mathbb Q(S)\subset \mathbb C$ be the subfield generated by $S$ over $\mathbb Q$.
	Then $\mathbb Q(S)$ is a finitely generated field over $\mathbb Q$.
	By Cassels' p-adic embedding theorem (cf. \cite{Cas76})
	there exist a prime number $p\in\mathbb N$ and an embedding $\iota:\mathbb Q(S)\hookrightarrow \mathbb Q_p$ such that 
	\begin{equation}\label{eqn:20230311}
		|\iota(a)|_p=1
	\end{equation}
	for all $a\in S$.
	
	We claim that there exists an isomorphism 
	$$\mu:\overline{\mathbb Q_p}\overset{\sim}{\to} \mathbb C$$
	such that $\mu\circ\iota(a)=a$ for all $a\in S$, where $\overline{\mathbb Q_p}$ is an algebraic closure of $\mathbb Q_p$.
	We prove this.
	
	Let $B\subset \mathbb C$ be a transcendence basis of $\mathbb C/\mathbb Q(S)$ 
	Let $B'\subset \overline{\mathbb Q_p}$ be a transcendence basis of $\overline{\mathbb Q_p}/ \iota(\mathbb Q(S))$.
	Then for the cardinality, we have $\#B=\#B'=\#\mathbb R$.
	Hence there exists an extension $\iota_1: \mathbb Q(S)(B)\hookrightarrow \overline{\mathbb Q_p}$ of $\iota$ such that $\iota_1(\mathbb Q(S)(B))=\iota(\mathbb Q(S))(B')$.
	Since $\mathbb C=\overline{\mathbb Q(S)(B)}$ and $\overline{\mathbb Q_p}=\overline{\iota(\mathbb Q(S))(B')}$, $\iota_1$ extends to an isomorphism $\iota_2:\mathbb C\to \overline{\mathbb Q_p}$.
 	Then we set $\mu=\iota_2^{-1}$, which is an isomorphism such that $\mu\circ\iota(a)=a$ for all $a\in S$.
	By this isomorphism $\mu$, we consider
	$$
	\mathrm{GL}_n(\mathbb Z_p)\subset \mathrm{GL}_n(\mathbb Q_p)\subset \mathrm{GL}_n(\overline{\mathbb Q_p})=\mathrm{GL}_n(\mathbb C).
	$$
	
	By \eqref{eqn:20230311}, we have $\rho(\pi_1(X))\subset \mathrm{GL}_n(\mathbb Z_p)$.
	Thus we may consider $\rho$ as $\rho:\pi_1(X)\to \mathrm{GL}_n(\mathbb Z_p)$ so that
	\begin{equation*}
		\rho(\gamma_l)=(\iota(a_{ij}(\gamma_l)))_{1\leq i,j,\leq n}\in \mathrm{GL}_n(\mathbb Z_p).
	\end{equation*}
	Since $\mathrm{GL}_n(\mathbb Z_p)$ is a profinite group, $\rho$ extends to $\widehat{\rho}:\widehat{\pi_1(X)}\to\mathrm{GL}_n(\mathbb Z_p)$, where $\widehat{\pi_1(X)}$ is the profinite completion of $\pi_1(X)$.
	Note that $\widehat{\pi_1(X)}$ is the etale fundamental group of $X$ and the etale fundamental groups of $X$ and $X^{\sigma}$ are naturally isomorphic:
	\begin{equation}\label{eqn:202303112}
		\widehat{\pi_1(X)}\simeq \widehat{\pi_1(X^{\sigma})}.
	\end{equation}
	Hence we define $\tau:\pi_1(X^{\sigma})\to \mathrm{GL}_n(\overline{\mathbb Q_p})$ by the composite of the followings:
	$$
	\pi_1(X^{\sigma})\to \widehat{\pi_1(X^{\sigma})}\simeq \widehat{\pi_1(X)}\overset{\widehat{\rho}}{\to} \mathrm{GL}_n(\mathbb Z_p)\hookrightarrow \mathrm{GL}_n(\overline{\mathbb Q_p}).
	$$
	
	Next we prove \eqref{eqn:202303111}.
	We first prove 
	\begin{equation}\label{eqn:20230313}
		\widehat{\rho}(\widehat{\pi_1(X)})\subset \overline{\rho(\pi_1(X))}^{\mathrm{Zar}}\subset \mathrm{GL}_n(\overline{\mathbb Q_p}).
	\end{equation}
	Note that $\overline{\rho(\pi_1(X))}^{\mathrm{Zar}}\subset \mathrm{GL}_n(\overline{\mathbb Q_p})$ is Zariski closed, hence $p$-adically closed.
	Since $\widehat{\rho}:\widehat{\pi_1(X)}\to\mathrm{GL}_n(\mathbb Z_p)$ is continuous, $\widehat{\rho}^{-1}(\overline{\rho(\pi_1(X))}^{\mathrm{Zar}})\subset \widehat{\pi_1(X)}$ is a closed subset.
	Since the image of $\pi_1(X)\to\widehat{\pi_1(X)}$ is dense and contained in $\widehat{\rho}^{-1}(\overline{\rho(\pi_1(X))}^{\mathrm{Zar}})$.
	Hence $\widehat{\rho}^{-1}(\overline{\rho(\pi_1(X))}^{\mathrm{Zar}})=\widehat{\pi_1(X)}$.
	This shows \eqref{eqn:20230313}.
	Hence $\overline{\widehat{\rho}(\widehat{\pi_1(X)})}^{\mathrm{Zar}}\subset\overline{\rho(\pi_1(X))}^{\mathrm{Zar}}$.
	The converse inclusion is obvious. 
	Hence we have
	\begin{equation}\label{eqn:202303133}
		\overline{\rho(\pi_1(X))}^{\mathrm{Zar}}=\overline{\widehat{\rho}(\widehat{\pi_1(X)})}^{\mathrm{Zar}}\subset \mathrm{GL}_n(\overline{\mathbb Q_p}).
	\end{equation}
	Similarly we have
	$$
	\overline{\tau(\pi_1(X^{\sigma}))}^{\mathrm{Zar}}=\overline{\widehat{\rho}(\widehat{\pi_1(X)})}^{\mathrm{Zar}}\subset \mathrm{GL}_n(\overline{\mathbb Q_p}).$$
	Thus we have \eqref{eqn:202303111} in $\mathrm{GL}_n(\overline{\mathbb Q_p})$.
	Thus \eqref{eqn:202303111} holds in $\mathrm{GL}_n(\mathbb C)$.
	
	Finally we take a morphism $Y\to X$ from a smooth quasi-projective variety $Y$.
	Then we have a natural isomorphism $\widehat{\pi_1(Y)}=\widehat{\pi_1(Y^{\sigma})}$ which commutes with \eqref{eqn:202303112}:
\begin{equation*}
\begin{tikzcd}
	 	\widehat{\pi_1(Y^{\sigma})} \arrow[r,equal] \arrow[d]& \widehat{\pi_1(Y)} \arrow[d]\\ 
	 	\widehat{\pi_1(X^{\sigma})}\arrow[r,equal]  & \widehat{\pi_1(X)}
\end{tikzcd}  
\end{equation*}
	Since the image of $\mathrm{Im}[\pi_1(Y)\to\pi_1(X)]\to \mathrm{Im}[\widehat{\pi_1(Y)} \to   \widehat{\pi_1(X)}]$ is dense, a similar argument for the proof of \eqref{eqn:202303133} yields
	$$
	\overline{\rho(\mathrm{Im}[\pi_1(Y)\to\pi_1(X)])}^{\mathrm{Zar}}=
	\overline{\widehat{\rho}(\mathrm{Im}[\widehat{\pi_1(Y)}\to\widehat{\pi_1(X)}])}^{\mathrm{Zar}}\subset \mathrm{GL}_n(\overline{\mathbb Q_p}).
	$$  
	Similarly we have
	$$
	\overline{\tau(\mathrm{Im}[\pi_1(Y^{\sigma})\to\pi_1(X^{\sigma})])}^{\mathrm{Zar}}=
	\overline{\widehat{\rho}(\mathrm{Im}[\widehat{\pi_1(Y)}\to\widehat{\pi_1(X)}])}^{\mathrm{Zar}}\subset \mathrm{GL}_n(\overline{\mathbb Q_p}).
	$$
	Hence we get \eqref{eqn:202303115} in $\mathrm{GL}_n(\overline{\mathbb Q_p})$, thus in $\mathrm{GL}_n(\mathbb C)$.      
	
	Note that $	\overline{\tau(\mathrm{Im}[\pi_1(Y^{\sigma})\to\pi_1(X^{\sigma})])}^{\mathrm{Zar}}$ is positive dimensional if and only if $\tau(\mathrm{Im}[\pi_1(Y^{\sigma})\to\pi_1(X^{\sigma})])$ is infinite. This concludes the last claim of the theorem.  
\end{proof} 

 \subsection{Proof of \Cref{main2}}
 Now we are able to prove the first main result  of this paper. 
\begin{thm}[=Theorem \ref{main2}]\label{thm:20220819}
Let $X$ be a  complex quasi-projective normal variety and $G$ be a semi-simple linear algebraic group over $\bC$. Suppose that $\varrho: \pi_1(X) \to G(\bC)$ is a Zariski dense and big representation. For any Galois conjugate variety $X^\sigma$ of $X$ under $\sigma \in \mathrm{Aut}(\bC/\bQ)$, 
\begin{thmlist}
	\item \label{item:log general type} there exists a proper Zariski closed subset $Z\subsetneqq X^\sigma$ such that any closed subvariety $V$ of $X^\sigma$ not contained in $Z$ is of log general type. 
	\item \label{item:pseudo Picard}Furthermore, $X^\sigma$ is pseudo Picard hyperbolic.
\end{thmlist}
   \end{thm}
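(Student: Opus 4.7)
The plan is to combine the non-Archimedean and Hodge-theoretic streams of the paper via a dichotomy based on the rigidity of $\varrho$. Both conclusions (i) and (ii) are invariant under resolutions $\mu:X'\to X$ of singularities: $\mu_*\pi_1(X')\twoheadrightarrow\pi_1(X)$ by \cref{lem:fun}, and the exceptional set can be absorbed into $Z$. Hence I may first assume $X$ is smooth. Next, by choosing a faithful embedding $G\hookrightarrow\mathrm{GL}_N$ and applying \cref{thm:conjugate}, I get a representation $\tau:\pi_1(X^\sigma)\to\mathrm{GL}_N(\bC)$ whose Zariski closure coincides with $G\subset\mathrm{GL}_N$, and which is big by (\ref{eqn:202303115}). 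Hence $\tau$ factors through a Zariski dense, big representation into $G(\bC)$, and it suffices to prove the theorem for $(X^\sigma,\tau)$ in the role of $(X,\varrho)$. In particular I may now forget the conjugation and work with $X$ itself. Finally, writing the simply connected cover of $G$ as a product of almost simple factors and passing to a suitable finite étale cover $\widehat{X}\to X$ (which preserves the bigness/Zariski density of the relevant projections up to replacing $Z$ by its preimage, and preserves both conclusions), I reduce to proving the theorem under the stronger assumption that $G$ is almost simple. The delicate point here is that bigness does not pass to factors directly; one applies \cref{lem:kollar} to each almost simple factor separately and combines the resulting fibrations.

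In the almost simple setting I split on the rigidity of $\varrho$. If $\varrho$ is non-rigid, then \cref{lem:20220819} yields a Zariski dense, big and \emph{unbounded} representation $\varrho':\pi_1(X)\to G(F)$ for some non-archimedean local field $F$ of characteristic zero. Applying \cref{main6} (=\cref{thm:main33}) to $\varrho'$ directly gives both $\Spalg(X)\subsetneqq X$ and the pseudo Picard hyperbolicity of $X$, which are exactly (i) and (ii).

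If $\varrho$ is rigid, it is defined over a number field $k$, and I examine its image at each non-archimedean place $v$ of $k$. If for some $v$ the induced representation $\pi_1(X)\to G(k_v)$ is unbounded, I again invoke \cref{main6} to conclude. Otherwise $\varrho$ is bounded at every finite place; then by \cref{thm:vhs}, $\varrho$ underlies a complex variation of Hodge structures on $X$ with period map $\Phi$. Bigness of $\varrho$, together with Zariski density, implies that $\Phi$ is generically finite onto its image (a fiber of $\Phi$ through a very general point would lie in a subvariety with finite $\varrho$-image, contradicting bigness), hence $\Phi$ is injective on a neighbourhood of some point. \cref{thm:PicardVHS} then gives (ii). For (i), the positivity of the Griffiths-line bundle along $\Phi$ combined with generic finiteness of $\Phi|_V$ for subvarieties $V$ through a very general point forces each such $V$ to be of log general type.

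The main obstacle I anticipate is the semisimple-to-almost-simple reduction in the first paragraph: the hypothesis of bigness is not stable under projection to factors, so one must run \cref{lem:kollar} on each factor and carefully glue the resulting base varieties so that the product morphism still captures all subvarieties on which $\varrho$ dies. A secondary technical point is, in the rigid bounded case, checking that generic finiteness of the period map suffices to trigger both \cref{thm:PicardVHS} and the log general type of every subvariety not contained in a fixed proper Zariski closed $Z$; this requires combining Brunebarbe-type positivity results for the canonical extension of the Hodge bundle with the generic immersivity of $\Phi$.
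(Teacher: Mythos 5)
Your roadmap matches the paper's proof almost step for step: reduce to $X$ smooth via \cref{lem:fun}, handle Galois conjugation with \cref{thm:conjugate}, dichotomize on the rigidity of $\varrho$ once $G$ is almost simple, use \cref{lem:20220819} together with \cref{thm:main33} whenever a Zariski dense, big and unbounded non-archimedean representation is available, and fall back on the $\bC$-VHS machinery (\cref{thm:vhs} and \cref{thm:PicardVHS}) in the rigid, everywhere-bounded case. The only rearrangement is that you invoke \cref{thm:conjugate} at the start while the paper defers it to the final paragraph; this is harmless. However, two concrete gaps remain, both at places you flag as delicate but leave unresolved.

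First, the passage from semisimple $G$ to its almost simple factors is not a genuine ``reduction to the almost simple case.'' After isogenizing $G \to G_1 \times \cdots \times G_k$ and applying \cref{lem:kollar} to each projection $\varrho_i$, one obtains (after an \'etale cover and a birational modification) dominant morphisms $p_i : X' \to Y_i$ with big Zariski dense $\tau_i:\pi_1(Y_i)\to G_i(\bC)$, and the map $X' \dashrightarrow Y_1\times\cdots\times Y_k$ is generically finite onto its image because $\varrho$ itself is big. One then has to deduce the conclusions for $X'$ from those for the $Y_i$. For (ii) this is quick: a $\bD^*\to X'$ avoiding the exceptional set composes to essential-singularity-free maps to each $Y_i$, hence extends through the finite part of the quasi-Stein factorisation. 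For (i) the paper runs an induction on $k$ in which the inductive step considers the fibration $V\to\overline{p(V)}$ induced on a subvariety $V\not\subset Z$: the base and the general fiber are both of log general type, and one then invokes Fujino's addition formula for log Kodaira dimensions \cite[Theorem 1.9]{Fuj17} to conclude that $V$ itself is of log general type. Your phrase ``carefully glue the resulting base varieties'' conceals precisely this Iitaka-type input; without it, part (i) does not close in the semisimple case.

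Second, in the rigid, everywhere-bounded case, knowing that $\varrho$ underlies \emph{some} $\bC$-VHS is not by itself sufficient for (i). The paper first records that every embedding-conjugate $\nu\tau$ underlies a $\bC$-VHS (via \cref{thm:vhs}) and then applies Langer--Simpson \cite[Proposition 7.1 and Lemma 7.2]{LS18} to promote $\tau$ to a complex direct factor of a $\bZ$-VHS; bigness then gives a generically finite period map to an arithmetic quotient, after which \cref{thm:PicardVHS} yields (ii) and the log-general-type statement for subvarieties is cited from Brunebarbe--Cadorel \cite{BC20}. You gesture at ``Brunebarbe-type positivity,'' but the intermediate promotion to a $\bZ$-VHS is what makes the cited result applicable and needs to be made explicit.
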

\begin{proof} 
Using \cref{lem:fun}, we can replace $X$ by a desingularization, and thus we may assume that $X$ is smooth.  We first prove the theorem for $X$ itself.   We prove in two steps.

\medskip

\noindent{\it Step 1. We assume that $G$ is almost simple.}   Note that $G$  is   isogenous to exactly one of the following:  $A_n, B_n, C_n, D_n, E_6, E_7, E_8, F_4, G_2$, and is defined over some number field $L'$. Moreover, it is absolutely almost simple.  
We prove the theorem in two cases.

\noindent{\em Case 1: $\varrho$ is rigid.}\quad 	 
Since $\varrho$ is  $G$-rigid,   it is conjugate to some Zariski dense representation $\tau:\pi_1(X)\to G(L)$ where $L$ is a finite extension of $L'$. Moreover, for every embedding $v:L\to \bC$, the representation $v\tau:\pi_1(X)\to G(\bC)$ is rigid, \emph{i.e.} $[v\tau]$ is an isolated point in the complex affine variety $M_B(\pi_1(X),G)_{\bC}$.   

\noindent {\em Case 1.1:} Assume that for some non-archimedean place $v$ of $L$, the associated representation $\tau_v:\pi_1(X)\to G(L_v)$ is unbounded, where $L_v$ denotes the completion of $L$ with respect to $v$. Note that $\tau_v$ is still Zariski dense and big, and $G_{L_{v}}$ is an absolutely simple algebraic group over $L_{v}$.   We apply \cref{thm:main33} to conclude \cref{thm:20220819}. 

\noindent {\em Case 1.2:} Fix a    faithful representation $G\to {\rm GL}_N$. Assume that  for  every non-archimedean place $v$ of $L$, the associated representation $\tau_v:\pi_1(X)\to G(L_v)$ is  bounded.  Then there is a factorisation $\tau:\pi_1(X)\to {\rm GL}_N(\cO_L)$, where $\cO_L$ is the ring of integers.    For every embedding $\nu:L\to \bC$, since   $\nu\tau:\pi_1(X)\to G(\bC)$ is rigid, by \cref{thm:vhs},  $\nu\tau:\pi_1(X)\to {\rm GL}_N(\bC)$ underlies a $\bC$-VHS.  We apply \cite[Proposition 7.1 \& Lemma 7.2]{LS18} to conclude that $\tau$ is  a complex direct factor of a $\bZ$-variation of Hodge structures $\tau''$. Since $\tau$ is big,  so is $\tau''$. Hence the period mapping $p:X\to \mathcal{D}/\Gamma$ of this  $\bZ$-VHS satisfies $\dim X=\dim p(X)$, where $\Gamma$ is the monodromy group.      Let $Z\subset X$ be a proper Zariski closed subset of $X$ so that $p|_{X-Z}$ is finite. 
Then by \cref{thm:PicardVHS}, we obtain the   pseudo Picard hyperbolicity of $X$.  It follows from \cite{BC20} (see also \cite{CD21}) that any closed   subvariety $V$ of $X$ not contained in $Z$ is  of log general type.

\smallskip

\noindent 
{\em Case 2: $\varrho$ is non-rigid.} \quad By \cref{lem:20220819}, it follows that there is a Zariski dense, big and unbounded repsentation $\pi_1(X)\to G(K)$ where $K$ is some non-archimedean local field of zero characteristic. 
 By \cref{thm:main33} again,  we conclude the proof.
 
 Thus we have proved the theorem for $X$  when $G$ is almost simple. 

\medskip

\noindent {\it Step 2. 
We treat the general case that $G$ is semi-simple.}
 Note that $G$ has only finitely many minimal normal subgroup varieties $H_1, \ldots, H_k$, and   the multiplication map
$$
H_1 \times \cdots \times H_k \rightarrow G
$$
is an isogeny. Each $H_i$ is almost-simple and it is centralized by the remaining ones. Then the product $H_1\cdots H_{i-1}H_{i+1}\cdots H_k$ is also a normal subgroup of $G$.  Define $G_i:=G/(H_1\cdots H_{i-1}H_{i+1}\cdots H_k)$. It follows that the natural map $G\to G_1\times\cdots \times G_k$ is an isogeny.   This enables us to replace $G$ by $ G_1\times\cdots \times G_k$,   and the representation $ \pi_1(X)\to G_1(\bC)\times\cdots \times G_k(\bC)$ induced by $\varrho$ is also Zariski dense and big. We use the same letter $\varrho$ to denote this representation.   Take the projection $\varrho_i:\pi_1(X)\to G_i(\mathbb C)$.
Then $\varrho_i$ is Zariski dense for all $i$.

We apply \cref{lem:kollar}.
Then after passing to an \'etale cover of $X$, there exists a rational map $p_i:X\dashrightarrow Y_i$ and a big representation $\tau_i:\pi_1(Y_i)\to G_i(\mathbb C)$ such that $p_i^*\tau_i=\varrho_i$. 
Then $\tau_i$ is big and Zariski dense.
So we may apply Step 1 above to get the proper Zariski closed set $Z_i\subsetneqq Y_i$ such that \cref{item:pseudo Picard} holds for $Y_i$ with $Z_i$ the exceptional set. 
Let $q:X\dashrightarrow Y_1\times \cdots\times Y_k$ be the natural map and let $\alpha:X\dashrightarrow S$ be the quasi-Stein factorisation of $q$ (cf. \cref{lem:Stein}).
Then $\alpha$ is birational.
Indeed let $F\subset X$ be a general fiber of $\alpha$.
To show $\dim F=0$, we assume contrary $\dim F>0$.
Since the induced map $F\to Y_i$ is constant, we have $\varrho_i(\mathrm{Im}[\pi_1(F)\to \pi_1(X)])=\{ 1\}$.
Hence $\varrho(\mathrm{Im}[\pi_1(F)\to \pi_1(X)])=\{ 1\}$.
This contradicts to the assumption that $\varrho$ is big.
Hence $\dim F=0$, so $\alpha$ is birational. 

Now we set $Z=\mathrm{Ex}(\alpha)\cup (\cup_ip_i^{-1}(Z_i))$.
Let $f:\bD^*\to X$ be holomorphic such that the image is not contained in $Z$.
Then by Step 1 above, the map $p_i\circ f:\bD^*\to Y_i$ does not have essential singularity at $0\in\bD$, hence the same holds for $\alpha\circ f:\bD^*\to S$.
This proves \cref{item:pseudo Picard} for $X$.

\medspace

Let us prove \cref{item:log general type}.  
We will prove the result by induction on $k$. The case $k=1$ is proved by Step 1.  Assume now the statement is true for $k-1$. For the representation $\varrho':\pi_1(X)\to G_2(\bC)\times\cdots\times G_k(\bC)$ defined by the composition of $\varrho$ and the quotient $G(\bC)\to G_2(\bC)\times\cdots\times G_k(\bC)$, by \cref{lem:kollar},  after we replace $X$ by a finite \'etale cover and a birational proper morphism, there is a dominant morphism  $p:X\to Y$   (resp. $p_1:X\to Y_1$) with connected general fibers and  a big and Zariski dense representation $\tau:\pi_1(Y)\to G_2(\bC)\times\cdots\times G_k(\bC)$ (resp. $\tau_1:\pi_1(Y_1)\to G_1(\bC)$) such that $p^*\tau=\varrho$ (resp. $p_1^*\tau_1=\varrho_1$). By the induction, there is a  proper Zariski closed set  $Z_0\subsetneqq Y$  (resp. $Z_1\subsetneqq Y_1$) such that any closed   subvariety $V\not\subset Z_0$ (resp. $V_1\not\subset  Z_1$) is of log general type.  As we have seen above, the natural morphism
\begin{align*}
	 q: X&\to Y_1\times Y\\
	 x&\mapsto (p_1(x), p(x))
\end{align*} 
satisfies $\dim X=\dim q(X)$.   Let $\alpha:X\dashrightarrow S$ be the quasi-Stein factorisation of $q$. 
Then $\alpha$ is birational.
  Set $Z:=p^{-1}(Z_0)\cup p_1^{-1}(Z_1)\cup {\rm Exc}(\alpha)$. Let $V\subset X$ be  any closed   subvariety  not contained in $Z$.  Then the  closure   $\overline{p(V)}$ 
 is of log general type.  Since $p_1(V)\not\subset Z_1$,      $\overline{p_1(F)}$  is not contained in $Z_1$ for general fibers $F$ of $p|_{V}:V\to \overline{p(V)}$. Hence $\overline{p_1(F)}$ is of log general type. Note that    $\alpha|_F:F\to \overline{\alpha(F)}$ of $p$ is  generically finite. Hence $p_1|_F:F\to \overline{p_1(F)}$  is also generically finite. It follows that $F$ is also of log general type.   
We use Fujino's addition formula for logarithmic Kodaira dimensions \cite[Theorem 1.9]{Fuj17} to show that $V$ is of log general type, which proves \cref{item:log general type} for $X$. Therefore, we have established the theorem for $X$.

Let $\sigma\in {\rm Aut}(\bC/\bQ)$. Using \cref{thm:conjugate}, we can construct a representation $\varrho^\sigma:\pi_1(X^\sigma)\to G(\bC)$ that is also Zariski dense and big, satisfying the conditions of the theorem. Hence, we have proven the theorem for $X^\sigma$.  
\end{proof}

\begin{rem}\label{rem:sharp}
Note that the condition in \cref{thm:20220819} is sharp. For example, consider an abelian variety $X$ of dimension $n$. The representation 
\begin{align*}
	\bZ^{2n}\simeq \pi_1(X)&\to (\bC^*)^{2n}\\
	(a_1,\ldots,a_{2n})&\mapsto (\exp({a_1}),\ldots, \exp(a_{2n}))
	\end{align*}
 is a Zariski dense representation. However, $X$ is not of general type and contains Zariski dense entire curves. This example demonstrates that the semisimplicity of $G$ is necessary for \cref{thm:20220819} to hold.
	
Another example to consider is a curve $C$ of genus at least 2. There exists a Zariski dense representation $\varrho:\pi_1(C)\to G(\bC)$ where $G$ is some semisimple algebraic group over $\bC$. The representation $\varrho:\pi_1(C\times \bP^1)\to G(\bC)$ is Zariski dense but not big. It is clear that $C\times \bP^1$ is neither pseudo Brody hyperbolic nor of general type. Thus, the bigness condition in \cref{thm:20220819} is also essential.
\end{rem}

\section[On the GGL conjecture]{On the generalized Green-Griffiths-Lang conjecture}\label{sec:proof2}

In this section we prove \cref{main:GGL,main:special}.
We first prove the following lemma.

\begin{lem}\label{lem:202305101} 
Let $X$ be a quasi-projective normal variety and let $\varrho:\pi_1(X)\to {\rm GL}_N(\bC)$ be a reductive and big representation.  
Then there exist 
\begin{itemize}
\item a semi-abelian variety $A$,
\item a smooth quasi projective variety $Y$ satisfying $\Spp(Y)\subsetneqq Y$ and $\Spalg(Y)\subsetneqq Y$,
\item a birational modification $\widehat{X}'\to \widehat{X}$ of a finite \'etale cover $\widehat{X}\to X$,
\item a morphism $g:\widehat{X}'\to A\times Y$ 
\end{itemize}
such that $\dim g(\widehat{X}')=\dim \widehat{X}'$.
Moreover $p:\widehat{X}'\to Y$ is dominant, where $p$ is the composite of $g$ and the second projection $A\times Y\to Y$.
\end{lem}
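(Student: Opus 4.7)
The plan is to split $\varrho$ into an abelian part and a semisimple part, apply the tools already established to each, and assemble them into the map $g$.

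First I would let $G\subset\mathrm{GL}_N$ be the Zariski closure of $\varrho(\pi_1(X))$, which is reductive by hypothesis. After passing to a finite étale cover of $X$ (to be absorbed later into $\widehat{X}$) I may assume $G$ is connected. Then the natural map
\[
\pi : G \longrightarrow T \times G_2, \qquad T := G/\mathcal{D}G, \quad G_2 := G/Z(G)^\circ,
\]
is an isogeny: $T$ is a torus, $G_2$ is semisimple (adjoint), and $\ker\pi = Z(G)^\circ\cap\mathcal{D}G$ is finite. Composing $\varrho$ with the two projections produces an abelian representation $\varrho_1:\pi_1(X)\to T(\bC)$ and a Zariski dense representation $\varrho_2:\pi_1(X)\to G_2(\bC)$ into the semisimple group $G_2$.

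Next I would apply \cref{main} to $\varrho_2$. This yields a finite étale cover $\nu:\widehat{X}\to X$, a proper birational modification $\mu:\widehat{X}'\to\widehat{X}$ (which I may take smooth after further desingularization), a dominant morphism $f:\widehat{X}'\to Y$ with connected general fibers, and a big, Zariski dense representation $\tau:\pi_1(Y)\to G_2(\bC)$ with $f^{\ast}\tau=(\nu\circ\mu)^{\ast}\varrho_2$; moreover $Y$ is pseudo Picard hyperbolic and strongly of log general type, so $\Spp(Y)\subsetneqq Y$ and $\Spalg(Y)\subsetneqq Y$. If $Y$ is not already smooth, one replaces it with a resolution and adjusts $f$; the listed properties survive this. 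I would then take $A:=\cA_{\widehat{X}'}$ to be the quasi-Albanese variety of $\widehat{X}'$ with its universal morphism $\alpha:\widehat{X}'\to A$, and set $g:=(\alpha,f):\widehat{X}'\to A\times Y$. The composite $p=f$ is dominant by construction.

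The main point to verify, and the chief technical step, is the equality $\dim g(\widehat{X}')=\dim \widehat{X}'$. Assume for contradiction that a general fiber $F$ of $g$ through a very general point of $\widehat{X}'$ is positive-dimensional. Since $\alpha(F)$ is a point and $\pi_1(A)=H_1(\widehat{X}',\bZ)/\mathrm{tors}$, the image of $\pi_1(F)\to\pi_1(\widehat{X}')\to H_1(\widehat{X}',\bZ)$ is torsion; as $\varrho_1$ factors through $H_1$, the image $(\nu\circ\mu)^{\ast}\varrho_1(\pi_1(F))$ is therefore finite. Since $f(F)$ is a point, $(\nu\circ\mu)^{\ast}\varrho_2(\pi_1(F))=f^{\ast}\tau(\pi_1(F))$ is trivial. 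Via the isogeny $\pi$ with finite kernel, it follows that $(\nu\circ\mu)^{\ast}\varrho(\pi_1(F))$ is finite, hence so is its restriction along $\pi_1(F^{\mathrm{norm}})\to\pi_1(F)\to\pi_1(\widehat{X}')$. But bigness of $\varrho$ is preserved by finite étale covers and proper birational morphisms, so $(\nu\circ\mu)^{\ast}\varrho$ is big; this contradicts the assumption that $F$ contains a very general point. Hence $g$ is generically finite. The only delicate bookkeeping lies in handling the finite kernel of $\pi$ and the torsion of $H_1(\widehat{X}',\bZ)$, both of which are harmless because bigness only sees the finite-versus-infinite dichotomy.
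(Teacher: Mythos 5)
Your proposal follows essentially the same strategy as the paper's own proof: pass to an étale cover making the Zariski closure $G$ connected, split $G$ up to isogeny into a torus times a semisimple group, apply the Kollár-type factorization and Theorem A to the semisimple part to produce $Y$ with the desired special-set properties, couple with a quasi-Albanese map to absorb the abelian part, and conclude generic finiteness of $g$ by a bigness contradiction on a positive-dimensional fiber $F$ that would have finite image under $\varrho$.

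Two small points to tighten. First, you take the semisimple quotient $G_2=G/Z(G)^\circ$ and then ``apply \cref{main} to $\varrho_2$'', but this requires $G_2$ to be non-trivial: if $\mathcal{D}G$ is trivial (i.e.\ $G$ a torus), $G_2=\{1\}$ and \cref{main} does not apply (by the paper's convention, semisimple groups are non-trivial); one should simply take $Y$ to be a point in that case, as the paper does explicitly. (Also, $G/Z(G)^\circ$ is semisimple but not adjoint in general --- e.g.\ $G=\mathrm{SL}_2$ --- though that remark does not affect the argument.) Second, you use $A=\mathcal{A}_{\widehat X'}$ and rely directly on the fact that $\pi_1(\mathcal{A}_{\widehat X'})\cong H_1(\widehat X',\bZ)/\mathrm{tors}$ to conclude that $\alpha(F)$ a point forces the image of $\pi_1(F)$ in $H_1$ to be torsion; the paper instead kills the torsion in $H_1$ by a further étale cover before taking the quasi-Albanese of (the replaced) $X$. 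Both routes rest on the same structural fact about the quasi-Albanese, and either choice of $A$ works. With the trivial-$\mathcal{D}G$ case added, your argument is complete and essentially identical to the paper's.
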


\begin{proof}
Let $G$ be the Zariski closure of $\varrho$ which is reductive.  
Let $G_0$ be the connected component of $G$ which contains the identity element of $G$. 
Then after replacing $X$ by a finite \'etale cover   corresponding to the finite index subgroup $\rho^{-1}(\varrho(\pi_1(X))\cap G_0(\bC))$ of $\pi_1(X)$, we can assume that the Zariski closure $G$ of $\varrho$ is connected.  
Hence the radical $R(G)$ of $G$ is a torus, and the derived group $\cD G$ is semisimple or trivial.
Write $G_2:=G/\cD G$ and $G_1=G/R(G)$. 
Then   $G_1$ is either semisimple or trivial and $G_2$ is a torus.  
Moreover, the natural  morphism $G\to G_1\times G_2$ is an isogeny.  
Let $\varrho':\pi_1(X)\to G_1(\bC)\times G_2(\bC)$  be the composition of $\varrho$ and $G(\bC)\to G_1(\bC)\times G_2(\bC)$. 
Then it is also big and Zariski dense. 
Denote by  $\varrho_i:\pi_1(X)\to G_i(\bC)$  the  composition of $\varrho:\pi_1(X)\to G_1(\bC)\times G_2(\bC)$ and $ G_1(\bC)\times G_2(\bC)\to G_i(\bC)$, which is Zariski dense. 
After replacing $X$ by a finite \'etale cover, we may assume that $ H_1(X,\bZ)$ is torsion free.
Hence $\varrho_2$ factors the quasi-albanese map $a:X\to A$, i.e., there exists $\varrho_2':H_1(X,\bZ)\to G_2(\bC)$ such that $a^*\varrho_2'=\varrho_2$.

If $G_1$ is not trivial, we apply \cref{lem:kollar}.
Then after replacing $X$ by a finite \'etale cover $\nu:\widehat{X}\to X$ and a birational modification $\mu:\widehat{X}'\to X$, there exists a dominant morphism $p:\widehat{X}'\rightarrow Y$ with connected general fibers and a representation $\tau:\pi_1(Y)\to G_1(\mathbb C)$ such that $p^*\tau=(\nu\circ\mu)^*\varrho_1$ and
	$\tau$ is big and Zariski dense. 
By \cref{main2}, $\Spalg(Y)\subsetneqq Y$ and $\Spp(Y)\subsetneqq Y$.
If $G_1$ is trivial, then we set $Y$ to be a point and   $p:\widehat{X}'\to Y$ is the constant map.  
In this case, we also have $\Spalg(Y)\subsetneqq Y$ and $\Spp(Y)\subsetneqq Y$.

Consider the morphism $g:\widehat{X}'\to A\times Y$ defined by $x\mapsto (a\circ\nu\circ\mu(x),p(x))$.
Let $\beta:\widehat{X}'\to S$ be the quasi-Stein factorisation of $g$ defined in \cref{lem:Stein}.  
 Then $\beta$ is birational.
	Indeed let $F\subset \widehat{X}'$ be a general fiber of $\beta$.
We shall show $\dim F=0$.
	Since the induced map $F\to Y$ is constant, we have $(\nu\circ\mu)^*\varrho_1(\mathrm{Im}[\pi_1(F^{\mathrm{norm}})\to \pi_1(\widehat{X}')])=\{ 1\}$. 
Since the induced map $F\to A$ is constant, we have $(\nu\circ\mu)^*\varrho_2(\mathrm{Im}[\pi_1(F^{\mathrm{norm}})\to \pi_1(\widehat{X}')])=\{ 1\}$. 
Therefore, one has $(\nu\circ\mu)^*\varrho(\mathrm{Im}[\pi_1(F^{\mathrm{norm}})\to \pi_1(\widehat{X}')])=\{ 1\}$.
Since $(\nu\circ\mu)^*\varrho:\pi_1(\widehat{X}')\to G$ is big, we have $\dim F=0$.
Hence $\beta$ is birational.
Thus $\dim g(\widehat{X}')=\dim S=\dim \widehat{X}'$.
\end{proof}

\begin{lem}[{\cite[Lemma 1.6]{CDY25}}]\label{LEM:20230509}  
	Let $\alpha:X\to \cA$ be  a  (possibly non-proper)  morphism  from a smooth quasi-projective variety $X$ to a semi-abelian variety $\cA$ with   $\overline{\kappa}(X)=0$. 
	Assume that $\dim X=\dim \alpha(X)$ and $\dim X>0$.
	Then $\Spab(X)=X$.\qed
\end{lem}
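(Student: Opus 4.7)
My plan is to first reduce to the case where the image of $\alpha$ is itself a translate of a semi-abelian subvariety, then apply a Kawamata--Ueno type structure theorem to produce a semi-abelian variety birational to a finite \'etale cover of $X$, and finally invoke \cref{lem:abelian pi0} to produce an explicit rational map from a semi-abelian variety to $X$ that witnesses $\Spab(X)=X$.

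For the reduction, I would use that since $\alpha$ is generically finite onto its image, \cref{lem:KodairaDim} yields $\bar\kappa(\overline{\alpha(X)}^{\mathrm{norm}})=\bar\kappa(X)=0$, and then \cref{prop:Koddimabb}(a) shows that $\overline{\alpha(X)}$ is a translate of a semi-abelian subvariety $B\subset\cA$. After translation I may thus assume $\alpha : X\to B$ is dominant and generically finite, with $B$ semi-abelian of dimension $\dim X$. By \cref{lem:finiteindex}, $\alpha_*\pi_1(X)$ is a finite-index subgroup of $\pi_1(B)$, and the corresponding connected finite \'etale cover $\widehat{B}\to B$ is again semi-abelian (since finite \'etale covers of semi-abelian varieties inherit a semi-abelian structure). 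The lifting property of \'etale covers then produces a dominant generically finite morphism $\widetilde\alpha : X\to\widehat{B}$ satisfying $\widetilde\alpha_*\pi_1(X) = \pi_1(\widehat{B})$.

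The central step in my plan is to produce a finite \'etale cover $\widetilde{X}\to X$ birational to a semi-abelian variety $\widetilde{B}$ (which will itself be an \'etale cover of $\widehat{B}$), via a quasi-projective Kawamata--Ueno type structure theorem. The logarithmic Riemann--Hurwitz formula
\[
K_{\overline{X}}+D_X \;=\; \widetilde\alpha^{\ast}\bigl(K_{\overline{\widehat{B}}}+D_{\widehat{B}}\bigr) + R',
\]
combined with the triviality $K_{\overline{\widehat{B}}}+D_{\widehat{B}}\sim 0$ of the log canonical bundle of a semi-abelian variety, yields $\kappa(R')=\bar\kappa(X)=0$ for the effective log ramification divisor $R'$. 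Using that $\pi_1(X)$ is abelian by \cref{lem:abelian pi}, I would then pass to an abelian \'etale cover $\widetilde{X}\to X$ on which the rigidity of $R'$ together with the translation action on $\widehat{B}$ forces $R'$ to vanish; hence $\widetilde\alpha$ becomes \'etale in codimension one and, by the purity of the branch locus, actually \'etale, so $\widetilde{X}$ inherits the structure of a semi-abelian variety $\widetilde{B}$.

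Finally, applying \cref{lem:abelian pi0} to the birational morphism $\widetilde{X}\to\widetilde{B}$ shows that the inverse rational map $\widetilde{B}\dashrightarrow \widetilde{X}$ is regular outside a Zariski closed subset $Z\subset\widetilde{B}$ of codimension at least two. Composing with the finite \'etale surjection $\widetilde{X}\to X$ then produces a dominant rational map $f:\widetilde{B}\dashrightarrow X$ from the semi-abelian variety $\widetilde{B}$, regular outside $Z$, which contributes directly to $\Spab(X)$ with $\overline{f(\widetilde{B}\setminus Z)}^{\mathrm{Zar}}=X$; thus $\Spab(X)=X$. The main obstacle I foresee is the Kawamata--Ueno vanishing of the log ramification divisor $R'$ in the quasi-projective/semi-abelian setting: the projective/abelian case is classical, but extending it requires exploiting the log canonical triviality of semi-abelian varieties together with an abelian \'etale cover trick powered by \cref{lem:abelian pi}.
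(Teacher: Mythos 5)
The paper does not reproduce the proof of this lemma (it is deferred to \cite[Lemma~1.6]{CDY25}), so I assess your proposal on its own terms. Your overall strategy --- reduce to a generically finite map onto a semi-abelian variety, show it becomes \'etale after covers, then invert --- is a re-derivation of Kawamata's structure theorem for the quasi-Albanese, and two of its steps do not hold up as written. First, you cite \cref{lem:KodairaDim} to conclude $\overline{\kappa}(\overline{\alpha(X)}^{\mathrm{norm}})=\overline{\kappa}(X)$, but that lemma concerns \emph{finite \'etale} covers, not generically finite maps, and the equality genuinely fails for ramified covers (a degree-two cover of $\bC^*$ by $\bP^1$ minus four points has $\overline{\kappa}=1>0$). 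The correct justification is that generically finite dominance gives $\overline{\kappa}(X)\geq\overline{\kappa}(\overline{\alpha(X)}^{\mathrm{norm}})\geq 0$, the second inequality by \cref{prop:Koddimabb}, so both are equalities; the conclusion you reach is right, but the cited lemma is the wrong tool.

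The second and more serious issue is the vanishing of the log ramification divisor $R'$, which is the heart of the matter. The phrase ``the rigidity of $R'$ together with the translation action on $\widehat B$ forces $R'$ to vanish'' is not an argument: $R'$ is a divisor on $\overline{X}$, which carries no translation action, and $\kappa(\overline{X},R')=0$ does not by itself force $R'=0$. What you are implicitly re-proving here is Kawamata's theorem (\emph{Characterization of abelian varieties}, Compositio 1981; see also \cite{Fuj15}): if $\overline{\kappa}(X)=0$ then the quasi-Albanese $a:X\to\cA_X$ is \emph{dominant with irreducible general fibers}. Since $\alpha$ factors through $a$, the hypothesis $\dim X=\dim\alpha(X)$ makes $a$ generically finite, and Kawamata's theorem then makes it birational; \cref{lem:abelian pi0} and $\dim X>0$ finish exactly as you indicate at the end. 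Re-deriving Kawamata from the log Riemann--Hurwitz equation is possible but a genuine project, and your sketch omits its substance. Note in addition that even granting $R'=0$, \'etaleness together with $\widetilde\alpha_*\pi_1(X)=\pi_1(\widehat B)$ does \emph{not} force degree one: an \'etale map $X\to U$ of degree $d>1$ onto a dense open $U\subsetneqq\widehat B$ can still give a surjection $\pi_1(X)\twoheadrightarrow\pi_1(\widehat B)$, because $\pi_1(U)\to\pi_1(\widehat B)$ already has a kernel absorbing the index. So there is a further unaddressed gap between ``\'etale'' and ``birational'' before the final step can be taken.
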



We recall one theorem from Part I of our paper series.

  \begin{thm}[{\cite[Corollary 10.8]{CDY25}}]\label{cor:GGL2}
  	Let $X$ be a smooth quasi-projective variety and let $a:X\to A\times S^\circ$ be a morphism such that $\dim X=\dim a(X)$, where $S^\circ$ is a smooth quasi-projective variety ($S$ can be a point). Write $b:X\to S^\circ$ as the composition of $a$ with the projection map $A\times S^\circ\to S^\circ$. Assume that $b$ is dominant.
  	\begin{thmlist}
  		\item\label{coritem1} 
  		Suppose $S^\circ$ is pseudo Picard hyperbolic.
  		If $X$ is of log general type, then $X$  is pseudo Picard hyperbolic.
  		\item \label{coritem2} Suppose $\Spalg(S^\circ)\subsetneqq S^\circ$.
  		If $\Spab(X)\subsetneqq X$, then $\Spalg(X)\subsetneqq X$.   \qed
  	\end{thmlist}  
  \end{thm}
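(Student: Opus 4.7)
The plan is to prove both parts by exploiting the fact that, since $\dim X = \dim a(X)$, the morphism $a : X \to A \times S^\circ$ is generically finite onto its image. Hence the restriction of $a$ to a very general fiber of $b$ realizes that fiber as generically finite onto a subvariety of the semi-abelian variety $A$. With this in hand one can combine the hypothesis on $S^\circ$ with the Nevanlinna theory for semi-abelian varieties from Part~I (in particular the Big Picard type theorem \cite[Theorem~B]{CDY25} for quasi-projective varieties of maximal quasi-Albanese dimension and log general type) together with the addition formula for logarithmic Kodaira dimensions.

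For \cref{coritem1}, first I would fix smooth projective compactifications $\overline{X}, \overline{S^\circ}$ with simple normal crossing boundary divisors, a smooth equivariant compactification $\overline{A}$ of $A$, and let $E \subset X$ be the exceptional locus of the generically finite morphism $a$. Let $\Xi_0 \subsetneqq S^\circ$ be a Zariski closed subset witnessing pseudo Picard hyperbolicity of $S^\circ$, and set $Z := E \cup b^{-1}(\Xi_0)$, a proper Zariski closed subset of $X$. Given $f : \bD^* \to X$ with $f(\bD^*) \not\subset Z$, the composition $b \circ f$ has image not contained in $\Xi_0$, so it extends to $\overline{b \circ f} : \bD \to \overline{S^\circ}$ with limit value $t_0$. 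One then reduces to a small (analytic or formal) neighborhood of $t_0$ in $\overline{S^\circ}$ and, after pulling back a log resolution of the rational map $\overline{X} \dashrightarrow \overline{S^\circ}$ at $t_0$, applies \cite[Theorem~B]{CDY25} to the composition $\pi_A \circ a \circ f : \bD^* \to A$, using that $X$ is of log general type and that $a$ is generically finite onto its image in $A \times S^\circ$, which together ensure maximal quasi-Albanese dimension for the slice we work with.

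For \cref{coritem2}, the argument is purely algebraic. Let $\Xi_1 \subsetneqq S^\circ$ be such that every positive-dimensional closed subvariety of $S^\circ$ not contained in $\Xi_1$ is of log general type, and set $Z := E \cup b^{-1}(\Xi_1) \cup \Spab(X)$, which is a proper Zariski closed subset by hypothesis. Let $V \subset X$ be a positive-dimensional closed subvariety with $V \not\subset Z$, and let $W \to V$ be a desingularization. If $\dim b(V) > 0$, then $\overline{b(V)}$ is of log general type, while a general fiber of $W \to \overline{b(V)}$ maps generically finitely onto its image in $A$ via $\pi_A \circ a$; since $V \not\subset \Spab(X)$, that image is not a translate of a semi-abelian subvariety, so by \cref{prop:Koddimabb} the fiber has positive log Kodaira dimension (in fact is of log general type by a further application of the addition formula together with \cref{prop:Koddimabb}). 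Fujino's addition formula for logarithmic Kodaira dimensions \cite[Theorem~1.9]{Fuj17} then forces $W$, hence $V$, to be of log general type. If $\dim b(V) = 0$, then $V$ sits in a single fiber of $b$ and is generically finite over its image in $A$; the hypothesis $V \not\subset \Spab(X)$ combined with \cref{prop:Koddimabb} directly yields log general type.

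The main obstacle is in \cref{coritem1} when $t_0$ lies on the boundary $\overline{S^\circ} \setminus S^\circ$: after resolving $\overline{X} \dashrightarrow \overline{S^\circ}$ at $t_0$, the map $\pi_A \circ a \circ f$ becomes a holomorphic disk in $A$ coming from a ramified covering of $\bD^*$. One must verify that the ramification introduced by this resolution is small enough to satisfy the hypothesis of \cite[Theorem~B]{CDY25}, so that the Big Picard extension applies and the extension of $f$ itself can be deduced from the extension of the composition. Everything else amounts to a careful but essentially formal bookkeeping of compactifications and exceptional loci.
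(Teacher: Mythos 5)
The theorem is cited from \cite[Corollary~10.8]{CDY25} and is not proved in the present paper, so there is no in-paper argument to compare against; your attempt can only be assessed on its own merits. The overall framework --- generic finiteness of $a$, a Zariski closed exceptional set $Z$ built from the exceptional locus, from $b^{-1}(\Xi)$ and from $\Spab(X)$, and Fujino's subadditivity --- is the right one, and in \cref{coritem1} you correctly identify the central difficulty: after extending $b\circ f$ to a map $\bD\to\overline{S^\circ}$ with limit $t_0$ possibly on the boundary, one must control the ramification near $t_0$ so that the Nevanlinna-theoretic input from Part~I applies. However you only flag this obstacle; you do not give the argument that the ramification over $t_0$ is small in the sense required by the ramified-covering version of \cite[Theorem~B]{CDY25}, nor the bootstrap from the extension of $\pi_A\circ a\circ f$ to the extension of $f$ itself. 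That is precisely the nontrivial content of the corollary.

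In \cref{coritem2} there is a concrete logical gap. The inference ``since $V\not\subset\Spab(X)$, the image of a general fiber of $W\to\overline{b(V)}$ in $A$ is not a translate of a semi-abelian subvariety'' does not hold: if that image were a translate of a subtorus $B$, a generically finite map $F\to B$ does not yield a rational map $B\dashrightarrow X$ regular outside codimension two whose image meets $V$, so one cannot directly place $V$ inside $\Spab(X)$. Even granting it, \cref{prop:Koddimabb} only gives $\overline{\kappa}>0$, not log general type, so the parenthetical ``in fact is of log general type'' is also unsupported. The natural mechanism is to apply \cref{cor:GGL} (\cite[Theorem~C]{CDY25}) to the general fiber $F$ together with its generically finite map to $A$: if $F$ is not of log general type, then $\Spab(F)=F$, and pushing the witnessing rational maps into $X$ over a Zariski-dense family of fibers forces $V\subset\Spab(X)$, a contradiction. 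With that dichotomy, Fujino's addition formula \cite[Theorem~1.9]{Fuj17} handles the case $\dim b(V)>0$, and the same argument with $F=W$ handles $\dim b(V)=0$.
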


Let us prove \cref{main:GGL}.
\begin{thm}[=\cref{main:GGL}]\label{thm:GGL}
	Let $X$ be a complex  smooth  quasi-projective  variety admitting a big and reductive representation  $\varrho:\pi_1(X)\to {\rm GL}_N(\bC)$. 
Then for any  automorphism $\sigma\in {\rm Aut}(\bC/\bQ)$, the following properties are equivalent:
	\begin{enumerate}[ font=\normalfont, label=(\alph*)] 
		\item $X^\sigma$ is of log general type. 
		\item   $\Spp(X^\sigma)\subsetneqq X^\sigma$.
\item  $\Sph(X^\sigma)\subsetneqq X^\sigma$.
\item  $\Spab(X^\sigma)\subsetneqq X^\sigma$.
\item  $\Spalg(X^\sigma)\subsetneqq X^\sigma$.
	\end{enumerate} 
\end{thm}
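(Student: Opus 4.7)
The plan is to exploit the quasi-Albanese reduction provided by \cref{lem:202305101}, combined with the results of Part~I of the series, to close a cycle among the five properties.

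First I would invoke \cref{thm:conjugate} to produce a big reductive representation $\tau:\pi_1(X^\sigma)\to\mathrm{GL}_N(\bC)$; this transfers the standing hypothesis from $X$ to $X^\sigma$. Applying \cref{lem:202305101} to $(X^\sigma,\tau)$ then yields a finite étale cover $\widehat{X^\sigma}\to X^\sigma$, a birational proper modification $\widehat{X}'\to\widehat{X^\sigma}$, and a morphism $g:\widehat{X}'\to A\times Y$ satisfying $\dim g(\widehat{X}')=\dim\widehat{X}'$ and such that the projection $p:\widehat{X}'\to Y$ is dominant, where $Y$ is a smooth quasi-projective variety with $\Spp(Y)\subsetneqq Y$ and $\Spalg(Y)\subsetneqq Y$. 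Since each of the properties (a)--(e) is invariant under finite étale covers and proper birational modifications, it suffices to prove the equivalences on $\widehat{X}'$.

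Next I would close the cycle $(a)\Rightarrow(b)\Rightarrow(c)\Rightarrow(d)\Rightarrow(e)\Rightarrow(a)$. The implication $(a)\Rightarrow(b)$ follows from \cref{coritem1} applied with $S^\circ=Y$ (which is pseudo Picard hyperbolic), and $(d)\Rightarrow(e)$ from \cref{coritem2} (using $\Spalg(Y)\subsetneqq Y$); both applications are justified because the diagram $\widehat{X}'\to A\times Y$ has the required dimension and dominance properties. The implication $(b)\Rightarrow(c)$ is the general fact, noted at the beginning of the introduction, that pseudo Picard hyperbolicity implies pseudo Brody hyperbolicity. The implication $(e)\Rightarrow(a)$ is immediate from the definition, since $\widehat{X}'$ itself is a subvariety of $\widehat{X}'$ not contained in any proper Zariski closed subset.

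The remaining implication $(c)\Rightarrow(d)$ reduces to the general inclusion $\Spab(\widehat{X}')\subset\Sph(\widehat{X}')$. Given a rational map $f:B\dashrightarrow\widehat{X}'$ from a semi-abelian variety $B$, regular on $B_0$ with $B\setminus B_0$ of codimension $\geq 2$, I would consider the family $c_{\xi,b}(z):=b\cdot\exp(z\xi)$ of translated one-parameter subgroups, indexed by $(\xi,b)\in\mathfrak{b}\times B$ with $\mathfrak{b}=\mathrm{Lie}(B)$. The evaluation map $\Phi:(\xi,b,z)\mapsto b\cdot\exp(z\xi)$ is a holomorphic submersion onto $B$, so $\Phi^{-1}(B\setminus B_0)$ is analytic of codimension $\geq 2$ in $\mathfrak{b}\times B\times\bC$, and its projection to $\mathfrak{b}\times B$ has dimension at most $2\dim B-1$. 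For each $(\xi,b)$ in the dense open complement, $c_{\xi,b}(\bC)\subset B_0$, and $f\circ c_{\xi,b}:\bC\to\widehat{X}'$ is a genuine entire curve; the union of the images $c_{\xi,b}(\bC)$ is Zariski dense in $B$, hence its image under $f$ is Zariski dense in $f(B_0)$, yielding $f(B_0)\subset\Sph(\widehat{X}')$. The main technical obstacle is thus already handled by \cref{cor:GGL2} of Part~I combined with the Albanese-type reduction of \cref{lem:202305101}; the remaining steps are either invocations of standard facts or elementary dimension counts.
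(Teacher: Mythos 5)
Your proposal follows the same skeleton as the paper's proof: transfer the representation to $X^\sigma$ via \cref{thm:conjugate}, reduce via \cref{lem:202305101} to a variety mapping to $A\times Y$ with $Y$ controlled by \cref{main2}, then close the cycle $(a)\Rightarrow(b)\Rightarrow(c)\Rightarrow(d)\Rightarrow(e)\Rightarrow(a)$ using \cref{coritem1} for $(a)\Rightarrow(b)$ and \cref{coritem2} for $(d)\Rightarrow(e)$. The only place you depart from the paper is $(b)\Rightarrow(c)\Rightarrow(d)$: the paper simply cites \cite[Lemma 2.1]{CDY25} for the two containments $\Spab\subset\Sph\subset\Spp$, whereas you re-derive them — the second from the remark that pseudo Picard hyperbolicity implies pseudo Brody hyperbolicity, and the first via the one-parameter-subgroup argument. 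That re-derivation is in the right spirit, but a couple of details would need tightening. Since $\exp$ is not algebraic, $\Phi^{-1}(B\setminus B_0)$ is an analytic (not constructible) subset and the projection $\pi:\mathfrak b\times B\times\bC\to\mathfrak b\times B$ is not proper, so the image of $\Phi^{-1}(B\setminus B_0)$ need not be closed and its complement, while dense (indeed of full measure), is not visibly open; your phrase ``dense open complement'' overstates what is available. Moreover, to conclude that the union of the $c_{\xi,b}(\bC)$ has Zariski dense image, the cleanest route is to pick a single $\xi$ for which $\exp(\bC\xi)$ is Zariski dense in $B$ (a full-measure set of $\xi$ works) and then, using the Fubini/measure argument, a single $b$ for which $c_{\xi,b}(\bC)\cap(B\setminus B_0)=\varnothing$; one such curve already gives a Zariski dense entire curve in $B_0$ and hence $f(B_0)\subset\Sph(\widehat X')$. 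With that fix the argument is sound, and it is essentially the content of the cited Lemma 2.1 from Part~I.
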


\begin{proof}  
	We use \cref{thm:conjugate} to show that it suffices to prove the theorem for $X$ itself. 
	We apply \cref{lem:202305101}.
Then by replacing $X$ with a finite \'etale cover and a birational modification, we obtain a smooth quasi-projective variety $Y$ (might be zero-dimensional), a semiabelian variety $A$, and a morphism $g:X\to A \times Y$ that satisfy the following properties:
	\begin{itemize}
		\item  $\dim X=\dim g(X)$.
		\item Let $p:X\to Y$ be  the composition of $g$ with the projective map $A\times Y\to Y$. 
		Then  $p$   is dominant.
		\item $\Spp(Y)\subsetneqq Y$ and $\Spalg(Y)\subsetneqq Y$.
	\end{itemize} 
Therefore, the conditions in \cref{cor:GGL2} are satisfied.
This yields the two implications: $(a)\implies (b)$ and $(d)\implies (e)$. 
By \cite[Lemma 2.1]{CDY25}, we have the implications: $(b)\implies (c)$ and $(c)\implies (d)$.
 Since the implication of $(e)\implies (a)$ is direct, we have proved the equivalence of $(a)$, $(b)$, $(c)$, $(d)$ and $(e)$.
\end{proof}

\begin{thm}[=\cref{main:special}]\label{thm:special}
	Let $X$ be a smooth quasi-projective   variety  and $\varrho:\pi_1(X)\to {\rm GL}_N(\bC)$ be a large and reductive representation. Then   for any  automorphism $\sigma\in {\rm Aut}(\bC/\bQ)$,  
	\begin{thmlist}
		\item \label{itemize:same} the four special subsets defined in \cref{def:special2} are the same, i.e., $$\Spalg(X^\sigma)=\Spab(X^\sigma)=\Sph(X^\sigma)=\Spp(X^\sigma).$$  
		\item These special subsets are conjugate under  automorphism   $\sigma$,  i.e., 
		\begin{align}\label{eq:conjugate}
			 \Sp_{\bullet}(X^\sigma)=\Sp_{\bullet}(X)^\sigma,
		\end{align} 
		where $\Sp_{\bullet}$  denotes any of $\Spab$, $\Sph$, $\Spalg$ or $ \Spp$.
	\end{thmlist}
\end{thm}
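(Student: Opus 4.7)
The plan is to first deduce (ii) from (i), and then establish (i) by applying Theorem~\ref{thm:GGL} to smooth models of closed subvarieties of $X$. For (ii): once (i) gives $\Sp_\bullet(X)=\Spalg(X)$ and $\Sp_\bullet(X^\sigma)=\Spalg(X^\sigma)$, Galois-invariance of logarithmic Kodaira dimension—so $V$ is of log general type iff $V^\sigma$ is—yields $\Spalg(X)^\sigma=\Spalg(X^\sigma)$, whence \eqref{eq:conjugate} follows for all four notions. Moreover, Theorem~\ref{thm:conjugate} furnishes a large reductive representation on $\pi_1(X^\sigma)$, so it suffices to prove (i) for $X$ itself.

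For (i), if $X$ is not of log general type, Theorem~\ref{thm:GGL} forces $\Spp(X)=\Sph(X)=\Spab(X)=\Spalg(X)=X$ and (i) is trivial. Assume then that $X$ is of log general type, so all four subsets are proper. The inclusion $\Spab(X)\subset\Sph(X)$ is immediate since every semi-abelian variety carries Zariski-dense entire curves. The remaining nontrivial inclusions I will deduce by applying Theorem~\ref{thm:GGL} to every positive-dimensional closed subvariety $V\subset X$. Let $\mu:\tilde V\to V$ be a desingularization. The strategy has two directions: (a) if $V$ is \emph{not} of log general type (i.e.\ $V\subset\Spalg(X)$), then Theorem~\ref{thm:GGL} applied to $\tilde V$ gives $\Sp_\bullet(\tilde V)=\tilde V$ for every $\bullet\in\{\mathrm{h},\mathrm{p},\mathrm{ab}\}$; pushing forward via $\mu$ places $V$ in each of $\Sph(X)$, $\Spp(X)$, and $\Spab(X)$. (b) Conversely, if $f:\bD^*\to X$ has essential singularity or $f:\bC\to X$ is nonconstant, and $V$ is its Zariski closure, then Zariski-density of the lifted map in $\tilde V$ forces $\Spp(\tilde V)=\tilde V$ or $\Sph(\tilde V)=\tilde V$, so Theorem~\ref{thm:GGL} applied to $\tilde V$ implies that $\tilde V$ is not of log general type and hence $V\subset\Spalg(X)$. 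Together with the mild reparametrisation argument $z\mapsto 1/z$ (handling algebraic entire curves by noting their closures are already in $\Spalg$), this gives all required inclusions.

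The implementation of this strategy requires applying Theorem~\ref{thm:GGL} to $\tilde V$, which in turn requires a \emph{big reductive} representation on $\pi_1(\tilde V)$. Consider the composite $\rho_V:\pi_1(\tilde V)\to G(\bC)$ of $\mu_*$, $(\mu\circ\text{incl})_*:\pi_1(\tilde V)\to\pi_1(X)$, and $\varrho$. Largeness of $\varrho$ transfers to $\rho_V$: for a positive-dimensional $W\subset\tilde V$, the image $\rho_V(\mathrm{Im}[\pi_1(W^{\mathrm{norm}})\to\pi_1(\tilde V)])$ is commensurable with $\varrho(\mathrm{Im}[\pi_1(\mu(W)^{\mathrm{norm}})\to\pi_1(X)])$, which is infinite. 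However, $\rho_V$ need not be reductive: its Zariski closure $H_V\subset G$ may have nontrivial unipotent radical $U_V$. Composing with the Levi quotient $H_V\twoheadrightarrow L_V$, we obtain a Zariski-dense reductive representation $\rho_V^{\mathrm{red}}:\pi_1(\tilde V)\to L_V(\bC)$.

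The main obstacle will be verifying the bigness of $\rho_V^{\mathrm{red}}$. Bigness can fail only if there exists $W\subset\tilde V$ through a very general point such that $\rho_V(\mathrm{Im}[\pi_1(W^{\mathrm{norm}})\to\pi_1(\tilde V)])$ lies virtually inside $U_V(\bC)$, that is, is virtually unipotent. The plan for this exceptional case is to exploit the nilpotent structure of $U_V$: composing with the abelianization $U_V\twoheadrightarrow U_V^{\mathrm{ab}}\simeq\bG_a^{\dim U_V^{\mathrm{ab}}}$ produces, via integration, nontrivial logarithmic $1$-forms on a finite étale cover of $\tilde V$, and the resulting partial quasi-Albanese morphism (cf.\ \cref{def:partial} and \cref{lem:critpointalb}) shows that $W$ already lies in $\Spab(\tilde V)\subset\Spab(X)$. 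This reduces the argument on $\tilde V$ to the complement of $\Spab(\tilde V)$, where $\rho_V^{\mathrm{red}}$ is big and Theorem~\ref{thm:GGL} applies. Feeding this back into the strategy of the previous paragraph closes the induction on the exceptional locus and yields $\Spalg(X)=\Spab(X)=\Sph(X)=\Spp(X)$.
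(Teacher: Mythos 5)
Your overall strategy coincides with the paper's: reduce (ii) to (i) via \cref{thm:conjugate} and Galois-invariance of log general type, and establish (i) by applying \cref{thm:GGL} to smooth models $\tilde V$ of positive-dimensional closed subvarieties $V\subset X$, using that largeness of $\varrho$ transfers to bigness (in fact largeness) of $\iota^*\varrho$ on $\pi_1(\tilde V)$. The paper's implementation is tighter: it proves only two inclusions, $\Spalg(X)\subset\Spab(X)$ and $\Spp(X)\subset\Spalg(X)$, and cites the chain $\Spab\subset\Sph\subset\Spp$ from [Lemma~2.1, CDY25], whereas you redo parts of this chain by hand (and your argument that $\Spab\subset\Sph$ via entire curves in semi-abelian varieties is correct but needs more care than a one-liner: one must choose a dense one-parameter subgroup whose translate avoids the codimension-two locus $A\setminus A_0$).

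The substantive difference is your last paragraph, and it introduces a gap where none exists. The worry that $\rho_V=\iota^*\varrho$ might fail to be reductive is unfounded in this context: pulling back the tame pure imaginary harmonic bundle $(E,\theta,h)$ associated to the semisimple $\varrho$ via \cref{moc} along $\iota:\tilde V\to X$ yields a tame pure imaginary harmonic bundle on $\tilde V$ (pluriharmonicity is stable under pullback by holomorphic maps; the residue eigenvalues along the boundary of a log compactification of $\tilde V$ are non-negative integral combinations of pullbacks of those of $\theta$, hence remain bounded and pure imaginary), so by the forward direction of \cref{moc} the monodromy $\iota^*\varrho$ is again semisimple, i.e.\ reductive. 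This is precisely what the paper uses implicitly when it asserts ``$\iota^*\varrho$ is big and reductive''. Your proposed substitute — pass to the Levi quotient $L_V$ and treat the unipotent-image locus separately — is not a valid replacement as written: \cref{thm:GGL} requires a big \emph{reductive} representation of the full $\pi_1(\tilde V)$ and does not localize to the complement of $\Spab(\tilde V)$; it is not shown that a subvariety $W$ on which $\rho_V$ is virtually unipotent is forced into $\Spab(\tilde V)$ merely by producing nonvanishing log $1$-forms from the abelianization of the unipotent radical; and the ``induction on the exceptional locus'' has no well-defined inductive quantity. Removing this detour and replacing it with the harmonic-bundle pullback argument brings your proof in line with the paper's.
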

\begin{proof}
\noindent {\em Step 1. }  Let $Y$ be a closed subvariety of $X$ that is not of log general type. Let $\iota:Z\to Y$ be a desingularization. 
Then $\iota^*\varrho$ is big and reductive. 
By \cref{thm:GGL}, we have $\Spab(Z)=Z$.
Hence $Y\subset \Spab(X)$, which implies $\Spalg(X)\subset \Spab(X)$.

\medspace

\noindent {\em Step 2. }   Let $f:\bD^*\to X$ be a holomorphic map with essential singularity at the origin. Let $Z$ be a desingularization of the Zariski closure of $f(\bD^*)$. 
Note that the natural morphism $\iota:Z\to X$ induces a big and reductive representation $\iota^*\varrho$.   
Since $\Spp(Z)=Z$,
\cref{thm:GGL} implies that $Z$ is not of log general type. 
Hence $\iota(Z)\subset \Spalg(X)$, which implies $\Spp(X)\subset \Spalg(X)$.
By  \cite[Lemma 2.1]{CDY25} and Step 1, we conclude  that $\Spalg(X)=\Spab(X)=\Sph(X)=\Spp(X)$.

\medspace

\noindent {\em Step 3. }  	 We use \cref{thm:conjugate} to show that there is a large and reductive representation $\varrho^\sigma:\pi_1(X^\sigma)\to {\rm GL}_N(\bC)$. By Step 2, we conclude that $\Spalg(X^\sigma)=\Spab(X^\sigma)=\Sph(X^\sigma)=\Spp(X^\sigma)$.

\medspace

\noindent {\em Step 4. }   A quasi-projective variety $V$ is of log general type if and only if its conjugate $V^\sigma$ is of log general type.   It follows that $\Spalg(X^\sigma)=\Spalg(X)^\sigma.$ By Step 3 we conclude \eqref{eq:conjugate}. We complete the proof of the theorem. 
\end{proof}
 We provide a class of quasi-projective varieties whose fundamental groups have reductive and large representations.  
\begin{proposition}\label{prop:large}
	Let $X$ be  a normal quasi-projective variety.  If $a:X\to A$ is a morphism to a semiabelian variety $A$ that satsifies $\dim a(X)=\dim X$, then there exists a big and reductive representation $\varrho:\pi_1(X)\to \mathrm{GL}_N(\mathbb C)$. Moreover, if $a$ is quasi-finite, then $\varrho$ is furthermore large.  
\end{proposition}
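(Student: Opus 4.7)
\textbf{Proof plan for \cref{prop:large}.} The strategy is to pull back a carefully chosen torus representation of $\pi_1(A)$ along $a_{\ast}$. Since $A$ is a semi-abelian variety, $\pi_1(A) \cong \mathbb{Z}^{m}$ for $m = 2g + \ell$, where $g$ is the dimension of the abelian quotient and $\ell$ the rank of the toric part. The plan is to pick complex numbers $\alpha_{1}, \ldots, \alpha_{m} \in \mathbb{C}^{*}$ that generate a free abelian subgroup of $\mathbb{C}^{*}$ (i.e., multiplicatively independent), define the diagonal representation $\rho : \mathbb{Z}^{m} \to (\mathbb{C}^{*})^{m} \subset \mathrm{GL}_{m}(\mathbb{C})$ sending the $i$-th standard generator to $\mathrm{diag}(1,\ldots,\alpha_{i},\ldots,1)$, and set $\varrho := \rho \circ a_{\ast}: \pi_{1}(X) \to \mathrm{GL}_{m}(\mathbb{C})$. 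Since the image lies in a torus, $\varrho$ is automatically reductive, and the multiplicative independence of the $\alpha_{i}$ ensures $\rho$ is injective, so $\varrho$ has the same kernel as $a_{\ast}$.

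The bigness (resp.\ largeness) of $\varrho$ will then be reduced to the following claim: for any positive-dimensional closed subvariety $Y \subset X$ with $\dim a(Y) > 0$, the image $a_{\ast}(\mathrm{Im}[\pi_{1}(Y^{\mathrm{norm}}) \to \pi_{1}(X)])$ is an infinite subgroup of $\pi_{1}(A)$. To prove this, choose a desingularization $\widetilde{Y} \to Y^{\mathrm{norm}}$; by \cref{lem:fun}, the induced map $\pi_{1}(\widetilde{Y}) \to \pi_{1}(Y^{\mathrm{norm}})$ is surjective, so it suffices to handle the smooth case. For the composite $\widetilde{Y} \to A$, the universal property of the quasi-Albanese morphism (\cref{lem:critpointalb}) factors it through $\mathcal{A}_{\widetilde{Y}} \to A$, which is a morphism of semi-abelian varieties. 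Its image is a semi-abelian subvariety of $A$ containing the positive-dimensional set $a(Y)$ (up to translation), hence positive-dimensional; therefore the induced map on $\pi_{1}$ has finite-index image in the $\mathbb{Z}$-lattice of that subvariety, in particular infinite.

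It remains to choose the countable family of exceptional subvarieties $Z_{i} \subsetneqq X$. Since $\dim a(X) = \dim X$, the morphism $a$ is generically finite, so there is a proper closed subset $E \subsetneqq X$ such that $a|_{X \setminus E}$ is quasi-finite. Take the collection $\{Z_{i}\}$ to consist of the irreducible components of $E$. If $Y \subset X$ is a positive-dimensional closed subvariety containing a very general point, then $Y \not\subset E$, so $Y \cap (X \setminus E)$ is a dense open subset of $Y$ on which $a$ is quasi-finite; consequently $\dim a(Y) = \dim Y > 0$. By the claim above, $\varrho(\mathrm{Im}[\pi_{1}(Y^{\mathrm{norm}}) \to \pi_{1}(X)])$ is infinite, proving that $\varrho$ is big. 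If moreover $a$ itself is quasi-finite, then $E$ can be taken empty and the same argument applies to \emph{every} positive-dimensional closed subvariety $Y \subset X$, yielding that $\varrho$ is large.

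The only genuinely delicate point is the existence of multiplicatively independent elements $\alpha_{1}, \ldots, \alpha_{m} \in \mathbb{C}^{*}$, but this is immediate since $\mathbb{C}^{*}$ contains copies of $\mathbb{Z}^{\infty}$ (for instance via transcendentally independent real numbers under $t \mapsto e^{t}$). Everything else is a straightforward application of results already collected in the preliminaries.
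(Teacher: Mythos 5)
Your proposal is correct and proves the statement; the overall skeleton (embed $\pi_1(A)\cong\mathbb{Z}^m$ faithfully into a torus in $\mathrm{GL}_N(\mathbb{C})$, pull back along $a_*$, take $\{Z_i\}$ to be the locus where $a$ fails to be quasi-finite) matches the paper. The genuine difference lies in how you establish the key claim that $a_*(\mathrm{Im}[\pi_1(Y^{\mathrm{norm}})\to\pi_1(X)])$ is infinite whenever $\dim a(Y)>0$. The paper works directly with the structure sequence $1\to(\mathbb{C}^*)^\ell\to A\to A_0\to 1$ and splits into cases: if the composite $Y\to A_0$ has positive-dimensional image it invokes a separate claim that $\pi_1(A_0)$ is large for an abelian variety (proved by lifting a finite cover to the universal cover $\mathbb{C}^g$ and using that a projective variety admits no nonconstant map to $\mathbb{C}^g$); otherwise $a(Y)$ lands in a translate of $(\mathbb{C}^*)^\ell$ and the claim reduces to the torus case via the exact sequence $\pi_2(A_0)\to\pi_1((\mathbb{C}^*)^\ell)\to\pi_1(A)$. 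You instead pass to a desingularization $\widetilde{Y}$, factor $\widetilde{Y}\to A$ through the quasi-Albanese $\mathcal{A}_{\widetilde{Y}}\to A$, observe that the image $B=\phi(\mathcal{A}_{\widetilde{Y}})$ is a positive-dimensional semi-abelian subvariety (up to translation), and conclude by finiteness-of-index. This is cleaner and avoids any case distinction between abelian and toric parts. Two small points you leave implicit and should record: (i) the map $\pi_1(\widetilde{Y})\to\pi_1(\mathcal{A}_{\widetilde{Y}})$ is surjective (it is the quotient by $\ker\rho$ where $\rho:H_1(\widetilde{Y},\mathbb{Z})\to T_1(\widetilde{Y})^*$ is the period map), which is what makes it legitimate to work with the surjective morphism $\mathcal{A}_{\widetilde{Y}}\to B$ of semi-abelian varieties rather than with the map $\widetilde{Y}\to B$ itself (the quasi-Albanese image need not be dense, so \cref{lem:finiteindex} does not apply directly to $\widetilde{Y}\to B$); and (ii) the inclusion of a semi-abelian subvariety $B\hookrightarrow A$ induces an injection $\pi_1(B)\hookrightarrow\pi_1(A)$, which you use tacitly when concluding that the image remains infinite inside $\pi_1(A)$. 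Also, the reference for the universal property of the quasi-Albanese should be the discussion preceding \cref{lem:critpointalb} rather than that lemma itself, which concerns the partial quasi-Albanese.
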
  
\begin{proof} 
We start from the following two claims.

\begin{claim}\label{claim:abelian large}
	If $A_0$ is an abelian variety, then $\pi_1(A_0)$ is large.  
\end{claim}
\begin{proof}[Proof of \cref{claim:abelian large}]
	Let $Z$ be any closed positive-dimensional subvariety of $A_0$. Suppose	 that  ${\rm Im}[\pi_1(Z^{\rm norm})\to \pi_1(A_0)]$ is finite. Then there is a finite \'etale cover $Y\to Z^{\rm norm}$ such that ${\rm Im}[\pi_1(Y)\to \pi_1(A_0)]=\{1\}$. Therefore, the natural morphism $Y\to A_0$ lifts to a holomorphic map $Y\to \widetilde{A}_0$, where $\widetilde{A}_0$ is the universal covering of $A_0$ that is  isomorphic to $\bC^N$.    Therefore $Y\to \widetilde{A}_0$ is constant, a contradiction.
\end{proof} 
\begin{claim}\label{claim:large}
Let $Y$ be a closed subvariety of $X$. If $a(Y)$ is not a point, then ${\rm Im}[\pi_1(Y^{\rm norm})\to \pi_1(A)]$ is infinite.   
\end{claim} 
\begin{proof}[Proof of \cref{claim:large}] 
 Note that $A$ admits a short exact sequence
\begin{align}\label{eq:short}
 0\to (\bC^*)^k\to A\stackrel{\pi}{\to} A_0\to 0.
\end{align}
 Let $Z$ be the closure of $\pi\circ a(Y)$.  If $Z$ is positive-dimensional, then ${\rm Im}[\pi_1(Z^{\rm norm})\to \pi_1(A_0)]$  is infinite by \cref{claim:abelian large}. It follows from    \cref{lem:finiteindex} that  ${\rm Im}[\pi_1(Y^{\rm norm})\to \pi_1(A)]$  is also infinite and the claim is proved.  
 
Assume now $\pi\circ a(Y)$ is a point.   Let $W$ be the closure of $a(Y)$. By  \eqref{eq:short}, $W$ is contained in $(\bC^*)^k$.   Since $W$ is assumed to be positive-dimensional, then it must dominate  some factor $\bC^*$ of   $(\bC^*)^k$.  By \cref{lem:finiteindex} again, ${\rm Im}[\pi_1(W^{\rm norm})\to \pi_1((\bC^*)^k)]$ is infinite.  Since	 we have the following short exact sequence:
 $$
 0=\pi_2(A_0)\to \pi_1((\bC^*)^k)\to \pi_1(A),
 $$
it follows that ${\rm Im}[\pi_1(W^{\rm norm})\to \pi_1(A)]$ is infinite.  Applying \cref{lem:finiteindex} once again, we conclude that ${\rm Im}[\pi_1(Y^{\rm norm})\to \pi_1(A)]$ is infinite. The claim is proved. 
\end{proof}
Since $\dim (X)=\dim a(X)$, there exists a proper Zariski closed subset  $\Xi\subsetneqq X$ such that $a|_{X\backslash \Xi}:X\backslash \Xi\to A$ is quasi-finite. Note that $\Xi=\varnothing$ if $a$ is quasi-finite.  Therefore, for any positive-dimensional closed subvariety $Y$ with $Y\not\subset  \Xi$,  we have $\dim(a(Y))>0$. According to Claim \ref{claim:large}, we can conclude that ${\rm Im}[\pi_1(Y^{\rm norm})\to \pi_1(A)]$ is infinite. 

Since $\pi_1(A)$ is abelian, we can embed it faithfully into $(\bC^*)^N\hookrightarrow {\rm GL}_N(\bC)$, where the later is the diagonal embedding.  Thus, the composition $\varrho:\pi_1(X)\to {\rm GL}_N(\bC)$ with $a_*:\pi_1(X)\to \pi_1(A)$   is a big representation. Furthermore, when $a$ is quasi-finite,  $\varrho$ is large. As  $\varrho(\pi_1(X))$ is contained in $(\bC^*)^N$, its Zariski closure is   a torus in ${\rm GL}_N(\bC)$. Hence, $\varrho$ is reductive. This completes the proof of the proposition.
\end{proof}  
 
We conclude the paper with some more recent developments.
\begin{rem}
Building on the ideas and results developed here, in \cite{DY23b} the second and third authors proved  results analogous to \cref{main2,main:GGL} for Zariski dense and big representations $\pi_{1}(X)\to G(K)$, where $G$ is a reductive or semisimple algebraic group defined over an algebraically closed field $K$ of \emph{positive characteristic}. 
\end{rem}

   
 \providecommand{\bysame}{\leavevmode ---\ }
 \providecommand{\og}{``}
 \providecommand{\fg}{''}
 \providecommand{\smfandname}{\&}
 \providecommand{\smfedsname}{\'eds.}
 \providecommand{\smfedname}{\'ed.}
 \providecommand{\smfmastersthesisname}{M\'emoire}
 \providecommand{\smfphdthesisname}{Th\`ese}

\end{document}